\documentclass[11pt]{article}
\usepackage[margin=1in]{geometry}
\usepackage[utf8]{inputenc}
\usepackage[T1]{fontenc}
\usepackage{authblk}
\usepackage{hyperref}
\usepackage{natbib}
\setcitestyle{authoryear,round,citesep={;},aysep={,},yysep={;}}

\usepackage{microtype}
\usepackage{graphicx}
\usepackage{subcaption}
\usepackage{booktabs}

\usepackage{amsmath}
\usepackage{amssymb}
\usepackage{mathtools}
\usepackage{amsthm}
\usepackage{thmtools}
\newcommand{\bs}{\boldsymbol}

\theoremstyle{plain}
\newtheorem{theorem}{Theorem}[section]
\newtheorem{proposition}[theorem]{Proposition}
\newtheorem{lemma}[theorem]{Lemma}

\newtheorem{remark}[theorem]{Remark}
\theoremstyle{definition}
\newtheorem{definition}[theorem]{Definition}
\newtheorem{assumption}[theorem]{Assumption}

\usepackage[capitalize,noabbrev]{cleveref}
\title{Asymmetric conformal prediction with penalized kernel sum-of-squares}

\author[1,2]{Louis Allain}
\author[2]{Sébastien Da Veiga}
\author[1]{Brian Staber}

\affil[1]{Safran Tech, Digital Sciences \& Technologies, 78114 Magny-Les-Hameaux, France}
\affil[2]{Univ Rennes, Ensai, CNRS, CREST - UMR 9194, F-35000 Rennes, France}

\date{\today}

\begin{document}

\maketitle

\begin{abstract}
Conformal prediction (CP) is a distribution-free method to construct reliable prediction intervals that has gained significant attention in recent years. Despite its success and various proposed extensions, a significant practical feature which has been overlooked in previous research is the potential skewed nature of the noise, or of the residuals when the predictive model exhibits bias. In this work, we leverage recent developments in CP to propose a new asymmetric procedure that bridges the gap between skewed and non-skewed noise distributions, while still maintaining adaptivity of the prediction intervals. 
We introduce a new statistical learning problem to construct adaptive and asymmetric prediction bands, with a unique feature based on a penalty which promotes symmetry: when its intensity varies, the intervals smoothly change from symmetric to asymmetric ones. This learning problem is based on reproducing kernel Hilbert spaces and the recently introduced kernel sum-of-squares framework. First, we establish representer theorems to make our problem tractable in practice, and derive dual formulations which are essential for scalability to larger datasets. Second, the intensity of the penalty is chosen using a novel data-driven method which automatically identifies the symmetric nature of the noise. We show that consenting to some asymmetry can let the learned prediction bands better adapt to small sample regimes or biased predictive models.
\end{abstract}

\section{Introduction}
\label{sec:introduction}

Quantifying the prediction uncertainty of machine learning models has become a major concern for machine learning adoption in high stakes industries such as healthcare, aeronautics, financial forecasting and autonomous driving, where predictions help make important decisions.
In practice, such prediction intervals should provide at least marginal coverage guarantees that hold in finite sample and without making any distributional assumptions on the data. Conformal prediction (see, e.g., \citep{gammerman1998learningbytransduction, papadopoulos2002inductiveconfidencemachineregression, shafer2007tutorialconformalprediction, angelopoulos2022gentleintroductionconformalprediction} has emerged recently as a simple yet very powerful tool to provide such uncertainty quantification in the form of prediction intervals, with guaranteed marginal coverage in finite sample, while being distribution-free. CP also distinguished itself for its simplicity as it is no more complex than computing a quantile.

However in real-word applications, additional features are now commonly sought for. First adaptivity, also known as conditional coverage, which ensures that prediction bands are wider when the model lacks confidence or if the variability in the data is high, and narrower when both the model is confident and the variability is low. Quickly identified as an important bottleneck, adaptivity has been the subject of intensive research and advances in recent years \citep{lei2012distributionfreepredictionbands,romano2019conformalizedquantileregression,hore2024conformalpredictionlocalweights,gibbs2025CPwithconditionalguarantees, allain2025scalableandadaptivepredictionbandsusingkerenlsumofsquares}. Second, the noise distribution may be skewed: this implies that symmetric intervals (as produced by standard split CP) would exhibit under- or over-coverage below or above the predictive model, even if they are calibrated to achieve marginal coverage. The same situation also arises if the noise is symmetric but the predictive model is biased. Contrary to adaptivity, accounting for asymmetric noise is a problematic that has gone slightly unnoticed in recent research, and is a common obstacle in real-word applications \citep{pouplin24a}. If the noise is by essence asymmetric, symmetric prediction bands were shown to perform poorly in terms of coverage \citep{linussonSignedErrorConformalRegression2014}. 

Mainly two alternatives have been introduced to account for asymmetric noise in CP. Updating the calibration step was first proposed by \citet{linussonSignedErrorConformalRegression2014}, with coverage guarantees outside the interval. This calibration procedure has the appealing property of being applicable directly to most CP methods (see e.g., \citep{romano2019conformalizedquantileregression,barberPredictiveInferenceJackknife2021,pion2024gaussianprocessinterpolationwithconformalprediction}). Concurrently, new score functions adapted to asymmetry were proposed:  Conformalized Quantile Regression (CQR) \citep{romano2019conformalizedquantileregression}, which builds upon quantile regression, and Distributional Conformal Prediction (DCP) \citep{chernozhukovDistributionalConformalPrediction2021} based on the estimation of the conditional density function. Unfortunately, if the noise is symmetric, relying on asymmetric intervals may be detrimental since they are built without using all available information and may be ultimately wider. Learning and adapting to the noise empirical distribution is thus essential and more robust than assuming a specific symmetry structure of the residuals. This flexibility is crucial, because the empirical distribution often diverges from the true noise distribution due to sampling artifacts: small sample settings can induce artificial asymmetry, while bias in the predictive model estimation can skew the observed residuals \citep{cheung2024regressionconformalpredictionbias}. Fully asymmetric bands may overfit this bias, while symmetric bands remain overly conservative. Penalizing asymmetry would allow the method to recover symmetry when appropriate, without enforcing it a priori. To adapt to these empirical imperfection, there is thus the need of a new paradigm that can transition between symmetric and asymmetric bands in a purely data-driven way.

Very recently, \citet{allain2025scalableandadaptivepredictionbandsusingkerenlsumofsquares} suggested to learn a score function to tackle adaptivity with symmetric prediction bands. Focusing on a normalized score function, they rely on reproducing kernel Hilbert spaces (RKHS) and especially kernel-sum-of-squares (kSoS) \citep{marteauferey2020nonparametricmodelsnonnegativefunctions} methods to handle the positivity constraint of the normalization function. Interestingly, they also discuss an extension of their work to learn an asymmetric score function, but only as a numerical illustration. Following their initial findings, we build upon their convenient learning framework to design a method that can handle symmetric and asymmetric bands seamlessly.

\paragraph{Contributions.}
We start by presenting CP in \Cref{sec:conformal prediction} and recent methods to account for asymmetric noise. In \Cref{sec:contributions}, we first formalize the underlying theory to learn an asymmetric score function in the kSoS framework, by providing a representer theorem and detailing its dual formulation to construct scalable asymmetric prediction bands. Second, we introduce two penalized versions to bridge the gap between asymmetric and symmetric prediction bands, for which we also prove a representer theorem and a dual formulation to enable faster computation on large datasets. These new problems can provide intermediate settings where information is shared between upper and lower prediction bands. Finally, we propose a new data-driven strategy to tune the penalty intensity and other critical hyperparameters, such as the kernel lengthscales. This strategy is based on an adaptivity criterion, for which we also provide warm-start strategies to speed up the hyperparameter search. This allows to automatically detect the amount of asymmetry needed for any dataset. In \Cref{sec:experiments}, we conduct extensive experiments to compare our methods to usual conformal prediction methods and asymmetric focused ones. In particular, we illustrate that the proposed method dynamically adapts to asymmetry induced by data defects, such as limited sample sizes. Crucially, it counterbalances estimation errors in the predictive model, that would otherwise degrade the performance of strictly symmetric models.

\section{Conformal prediction and asymmetry}
\label{sec:conformal prediction}

\paragraph{Split conformal prediction.} The full CP setting was introduced by \citet{gammerman1998learningbytransduction}, but we focus here on the split variant \citep{papadopoulos2002inductiveconfidencemachineregression}. We suppose we have a training dataset \(\mathcal{D}_N = \{\left(X_i, Y_i\right)\}_{i=1}^{N}\) from a pair \(\left(X, Y\right)\sim P_{XY}\) where \(X\in\mathcal{X}\subset\mathbb{R}^d\) and \(Y\in\mathcal{Y}\subset\mathbb{R}\). This dataset is split in two parts: a \emph{pre-training} dataset \(\mathcal{D}_n = \{\left(X_i, Y_i\right)\}_{i=1}^{n}\) and a \emph{calibration} one \(\mathcal{D}_m = \{\left(X_i, Y_i\right)\}_{i=1}^{m}\) with $N=n+m$.

The pre-training dataset \(\mathcal{D}_n\) is used to train a predictive model \(\widehat{m}_n(\cdot)\), which can be any machine learning algorithm. Then, the performance of the model is evaluated through so-called \emph{scores} on the hold-out calibration dataset \(\mathcal{D}_m\): the most common score in the literature is defined as the absolute errors \(S(X_i,Y_i):=S_i = \lvert Y_i-\widehat{m}_n(X_i) \rvert\) for \(i \in \mathcal{D}_m\). These scores are used to compute the quantile \(\widehat{q}_{\alpha}\) of the set \(\{S_i\}_{i \in \mathcal{D}_m}\) with an adjusted level \(\lceil(1-\alpha)(m+1)\rceil/m\), where \(\alpha\) is the desired error rate. Finally, for a new observation \(X_{N+1}\), the split CP prediction bands are \(\widehat{C}_{N}(X_{N+1}) = \left[\widehat{m}_n(X_{N+1}) \pm \widehat{q}_{\alpha}\right]\), which satisfy the marginal coverage guarantee
\begin{align}
    \label{eq:marginal_coverage}
    \mathbb{P}\left(Y_{N+1} \in \widehat{C}_{N}(X_{N+1})\right) \geq 1 - \alpha
\end{align}
for any $N$ if \(\left(X_1, Y_1\right),\ldots,\left(X_N, Y_N\right),\left(X_{N+1}, Y_{N+1}\right)\) are exchangeable. Importantly, these prediction bands are symmetric around the point prediction $\widehat{m}_n(X)$ and are not adaptive, i.e. they do not depend on \(X_{N+1}\). 

\paragraph{Accounting for asymmetry.}
Asymmetry in the prediction bands can be accommodated at different stages within CP. 
Modifying the calibration step was first proposed by \citet{linussonSignedErrorConformalRegression2014}: they consider the signed scores \(S_i = Y_i-\widehat{m}_n(X_i)\) and compute the lower and upper quantiles \(\widehat{q}_{\alpha_{\mathrm{low}}}\) and \(\widehat{q}_{\alpha_{\mathrm{up}}}\) of the set \(\{S_i\}_{i \in \mathcal{D}_m}\) at adjusted levels \(\lceil\alpha_{\mathrm{low}}(m+1)\rceil/m\) and \(\lceil(1-\alpha_{\mathrm{up}})(m+1)\rceil/m\).  For \(\alpha_{\mathrm{low}} + \alpha_{\mathrm{up}}=\alpha \), the new prediction bands \(\widehat{C}_{N}(X_{N+1}) = \left[\widehat{m}_n(X_{N+1}) - \widehat{q}_{\alpha_{\mathrm{low}}}; \widehat{m}_n(X_{N+1}) + \widehat{q}_{\alpha_{\mathrm{up}}}\right]\) satisfy the marginal coverage as in \Cref{eq:marginal_coverage}. But this calibration scheme further guarantees a lower and upper coverage with respective probability \(1-\alpha_{\mathrm{low}}\) and \(1-\alpha_{\mathrm{up}}\), and it can easily be adopted in many conformal prediction method (see e.g., \citet{romano2019conformalizedquantileregression, barberPredictiveInferenceJackknife2021, hanSplitLocalizedConformal2023, pion2024gaussianprocessinterpolationwithconformalprediction}). Unfortunately, those supplementary guarantees may come at the cost of inflating the width of the prediction bands \citep{romano2019conformalizedquantileregression}.

The second way to deal with asymmetric noise in CP is to modify the score function itself. The popular CQR \citep{romano2019conformalizedquantileregression} relies on quantile regression: instead of using an interval built around an estimate \(\widehat{m}_n(\cdot)\) of the regression function, they rely on estimates \(\widehat{q}^{\alpha_{\mathrm{low}}}_n(\cdot)\) and \(\widehat{q}^{\alpha_{\mathrm{up}}}_n(\cdot)\) of the conditional quantiles, and build the interval \(
\widehat{C}_{N}(X_{N+1}) = \left[\widehat{q}^{\alpha_{\mathrm{low}}}_n(X_{N+1}) - \widehat{q}_{\alpha},\widehat{q}^{\alpha_{\mathrm{up}}}_n(X_{N+1}) + \widehat{q}_{\alpha}\right] \)
where \(\widehat{q}_{\alpha}\) is the adjusted quantile of the set \(\{\max\left(\widehat{q}^{\alpha_{\mathrm{low}}}_n(X_i)-Y_i,Y_i-\widehat{q}^{\alpha_{\mathrm{up}}}_n(X_i) \right), \; i\in \mathcal{D}_m\}\). In other words, the score function is chosen as \(S(X,Y)=\max\left(\widehat{q}^{\alpha_{\mathrm{low}}}_n(X)-Y,Y-\widehat{q}^{\alpha_{\mathrm{up}}}_n(X) \right)\). By design, CQR builds asymmetric prediction bands with a symmetric calibration procedure. Although appealing, CQR suffers from two well known practical limitations: (a) in high stakes problems, decision makers usually prefer a point estimate with an interval around that point and (b) quantile regression in small data regime and/or in high dimensional problems can be quite challenging. In a parallel line of work, DCP \citep{chernozhukovDistributionalConformalPrediction2021} considers \(\widehat{F}_{Y\vert X}\) an estimate of the conditional CDF with scores \(S_i = \lvert \widehat{F}_{Y_{i}\vert X_{i}} - 1/2\rvert\), but with the same limitations as CQR, see also \citet{sesia2021conformalpredictionusingconditionalhistograms}.

\paragraph{Learning a score function for adaptivity.}
Recently, several authors proposed to \emph{learn} the score function in order to target adaptivity. The core idea is to parameterize the score with unknown functions, which are estimated on the pre-training set. For example, \citet{xie2024boostedconformalpredictionintervals} define a task-specific loss (e.g. conditional coverage or minimum interval width), and consider a score function given by \(S(X,Y)=\max\left(\mu_1(X)-Y,Y-\mu_2(X)\right)/\sigma(X)\). This score is parameterized by three unknown functions \((\mu_1,\mu_2,\sigma)\) such that \(\mu_1(\cdot) \leq \mu_2(\cdot)\) and \(\sigma(\cdot) \geq 0\), which are iteratively optimized with a boosting algorithm. In a similar vein, \citet{allain2025scalableandadaptivepredictionbandsusingkerenlsumofsquares} focus on a score function of the form \(S=(Y-m(X))^{2}/f(X)\), where \(f\) is a positive function parameterized using kernel sum-of-squares \citep{marteauferey2020nonparametricmodelsnonnegativefunctions}. Their learning problem is defined through several main ingredients: first, minimization of an objective function which includes the intervals mean width and regularity of $f$ and second, $100\%$ coverage constraints on the pre-training set to uncover the band shape, which is later adjusted with the CP calibration step. For small and medium-size datasets, their approach showed better adaptivity than traditional competitors. However, their initial procedure is only limited to symmetric intervals, even if they incidentally suggest a possible generalization to asymmetric prediction bands but do not provide theory, optimization, tuning, or empirical validation. This is the starting point of our proposal, which we tackle in the next section.

\section{Regularized kernel SoS for asymmetric prediction bands}
\label{sec:contributions}

Let us consider two RKHSs \(\mathcal{H}_{\mathrm{low}}\) and  \(\mathcal{H}_{\mathrm{up}}\) with respective kernels \(k_{\mathrm{low}}\), \(k_{\mathrm{up}}\) and feature maps \(\phi_{\mathrm{low}}\), \(\phi_{\mathrm{up}}\). For \(\mathcal{A_{\mathrm{low}}}\in\mathcal{S}_{+}\left(\mathcal{H}_{\mathrm{low}}\right)\) and \(\mathcal{A_{\mathrm{up}}}\in \mathcal{S}_{+}\left(\mathcal{H}_{\mathrm{up}}\right)\) two positive semi-definite (PSD) operators from \(\mathcal{H}_{\mathrm{low}}\) (resp. \(\mathcal{H}_{\mathrm{up}}\)) to \(\mathcal{H}_{\mathrm{low}}\) (resp. \(\mathcal{H}_{\mathrm{up}}\)), we define two non-negative functions \(f_{\mathrm{low}}(X) = \langle\phi_{\mathrm{low}}(X), \mathcal{A_{\mathrm{low}}}\phi_{\mathrm{low}}(X)\rangle_{\mathcal{H}_{\mathrm{low}}}\) and \(f_{\mathrm{up}}(X) = \langle\phi_{\mathrm{up}}(X), \mathcal{A_{\mathrm{up}}}\phi_{\mathrm{up}}(X)\rangle_{\mathcal{H}_{\mathrm{up}}}\), called \emph{kernel sum-of-squares}. For brevity, we will subsequently use the notation \((\cdot)\) when objects can be assessed for both \(_\mathrm{low}\) and \(_\mathrm{up}\). These two functions, thanks to their non-negativity property, are key components of our proposed new asymmetric score:
\begin{equation}
\label{eq:asymmetric score function}
    S(X, Y) = \max\bigl( \widehat{m}_n(X) - f_{\mathrm{low}}(X) - Y, Y - \widehat{m}_n(X) - f_{\mathrm{up}}(X) \bigr).
\end{equation}
This is a variant of the CQR score function, centered on a predictive model \(\widehat{m}_n(X)\), where \(\widehat{q}^{\alpha_{\mathrm{low}}}_n(X)\) and 
\(\widehat{q}^{\alpha_{\mathrm{up}}}_n(X)\) are replaced by \(\widehat{m}_n(X) - f_{\mathrm{low}}(X)\) and \(\widehat{m}_n(X) + f_{\mathrm{up}}(X)\), respectively. From there, we propose to estimate the functions  \(f_{\mathrm{low}}(X)\) and \(f_{\mathrm{up}}(X)\) defining the prediction bands by solving the following learning problem:
\begin{align}
    \underset{
        \substack{
            \mathcal{A}_{\mathrm{low}} \in \mathcal{S}_{+}\left(\mathcal{H}_{\mathrm{low}}\right)\\
            \mathcal{A}_{\mathrm{up}} \in \mathcal{S}_{+}\left(\mathcal{H}_{\mathrm{up}}\right)
        }
    }{\inf}
    \quad& \frac{b}{n}\sum_{i=1}^{n} \left(f_{\mathrm{low}}(X_{i}) + f_{\mathrm{up}}(X_{i})\right) + \Omega_{\mathrm{low}}(\mathcal{A}_{\mathrm{low}}) + \Omega_{\mathrm{up}}(\mathcal{A_{\mathrm{up}}})\label{eq:infinite asymmetric problem}\\
    \mathrm{s.t.} \quad &\widehat{m}_n(X_i) - Y_i - f_{\mathrm{low}}(X_{i})\leq 0, \;i \in \left[n\right] \nonumber\\
    \quad& Y_i - \widehat{m}_n(X_i) - f_{\mathrm{up}}(X_{i})\leq 0, \;i \in \left[n\right]\nonumber
\end{align}
where \(\Omega_{(\cdot)}(\mathcal{A}) = \lambda_{(\cdot)1}\lVert\mathcal{A}\rVert_{\star}+\lambda_{(\cdot)2}\lVert\mathcal{A}\rVert_{F}^{2}\) is a regularization function with nuclear and Frobenius norms. Such penalty controls the complexity of functions \(f_{(\cdot)}\) and the bands adaptivity, while the first term in the objective function promotes tighter intervals. Importantly, the constraints impose $100\%$ coverage on the pre-training set, which helps learn an adaptive shape and make the problem convex. 

Once the non-negative functions are estimated, we apply the usual split CP procedure using the asymmetric score defined in \Cref{eq:asymmetric score function}, with final calibrated prediction intervals constructed as
\begin{equation}
    \label{eq:asymmetric prediction intervals}
    \widehat{C}_{N}(X) = [\widehat{m}_n(X) - \widehat{f}_{\mathrm{low}}(X) - \widehat{q}_{\alpha}, \widehat{m}_n(X) + \widehat{f}_{\mathrm{up}}(X) + \widehat{q}_{\alpha}],
\end{equation}
where \(\widehat{q}_{\alpha}\) is the \(\lceil(1-\alpha)(m+1)\rceil/m\) quantile of the set \(\{S_i\}_{i \in \mathcal{D}_m}\). Here the predictive model \(\widehat{m}_n\) and the functions are estimated sequentially on the pre-training dataset \(\mathcal{D}_n\). 

Problem \ref{eq:infinite asymmetric problem} is infinite dimensional, but by noticing it is separable we can derive a useful representer theorem. Before stating it, let us introduce additional notations.  \(\mathbf{K}_{(\cdot)}\) and \(\mathbf{k}_{(\cdot)}(X) = \left(k_{(\cdot)}(X_{1}, X), \ldots, k_{(\cdot)}(X_{n}, X)\right)^{\top}\) denote the kernel matrix and vector associated to kernel \(k_{(\cdot)}\). We further consider the Cholesky decomposition of the kernel matrix \(\mathbf{K}_{(\cdot)} = \mathbf{V}^{\top}_{(\cdot)} \,\mathbf{V}_{(\cdot)} \) and \(r_{\mathrm{low}}\left(X_i, Y_i\right) = \widehat{m}_n(X_i) - Y_i\), \(r_{\mathrm{up}}\left(X_i, Y_i\right)=Y_i - \widehat{m}_n(X_i)\) the residual functions. Finally, for a PSD matrix \(\mathbf{A}\) with eigendecomposition \(\mathbf{A} = \mathbf{U} \mathbf{D} \mathbf{U}^{\top}\), its positive part is defined as \(\left[\mathbf{A}\right]_{+} = \mathbf{U} \max(0,\mathbf{D}) \mathbf{U}^{\top}\) and we write \(\forall x \in \mathbb{R}, \;\mathrm{Diag}\left({(\cdot)_x}\right) := \mathrm{Diag}\left({(\cdot)}\right) + \frac{x}{n}\mathbf{I}_{n}\).

\begin{theorem}[Representer theorem]
\label{thm:representer theorem for asymmetric problem}
    Let \((b,\lambda_{(\cdot){1}})\in\mathbb{R}_{+}^{2}\) and \(\lambda_{(\cdot){2}}>0\).
    Then \Cref{eq:infinite asymmetric problem} admits a unique solution \((\tilde{f}_{\mathbf{A}_{\mathrm{low}}^{\star}}, \tilde{f}_{\mathbf{A}_{\mathrm{up}}^{\star}})\) of the form \(\tilde{f}_{\mathbf{A}_{(\cdot)}^{\star}}(X) = \bs{\Phi}_{(\cdot)}(X)^{\top}\mathbf{A}_{(\cdot)}^{\star}\bs{\Phi}_{(\cdot)}(X)\) for some matrix \(\mathbf{A}_{(\cdot)}^{\star}\in \mathbb{S}_{+}^{n}\), given as the solution of the semi-definite programming (SDP) problem
    \begin{align}
    \underset{\mathbf{A}_{(\cdot)}\in \mathbb{S}_{+}^{n}}{\inf} \quad& \frac{b}{n}\sum_{i=1}^{n} \tilde{f}_{\mathbf{A}_{(\cdot)}}(X_{i}) + \lambda_{(\cdot)1}\lVert \mathbf{A}_{(\cdot)}\rVert_{\star} + \lambda_{(\cdot)2}\lVert \mathbf{A}_{(\cdot)}\rVert_{F}^{2}\nonumber\\
        \mathrm{s.t.} \quad& r_{(\cdot)}\left(X_i, Y_i\right)-\tilde{f}_{\mathbf{A}_{(\cdot)}}(X_{i}) \leq 0, \;i \in \left[n\right].\label{eq:sdp for asymmetric problem}
\end{align}
\end{theorem}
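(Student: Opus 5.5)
Problem~\eqref{eq:infinite asymmetric problem} separates: the objective is a sum of a term depending only on $\mathcal{A}_{\mathrm{low}}$ and a term depending only on $\mathcal{A}_{\mathrm{up}}$, and each constraint involves only one of the two operators. So the infimum over the pair equals the sum of two independent infima, one for each operator. It therefore suffices to fix $(\cdot)\in\{\mathrm{low},\mathrm{up}\}$ and treat the single-operator problem
\begin{align*}
\inf_{\mathcal{A}\in\mathcal{S}_+(\mathcal{H})}\ \frac{b}{n}\sum_{i=1}^n \langle\phi(X_i),\mathcal{A}\phi(X_i)\rangle + \lambda_1\lVert\mathcal{A}\rVert_\star+\lambda_2\lVert\mathcal{A}\rVert_F^2 \quad \mathrm{s.t.}\quad r(X_i,Y_i)-\langle\phi(X_i),\mathcal{A}\phi(X_i)\rangle\le 0 .
\end{align*}
This problem has the same structure as in \citet{marteauferey2020nonparametricmodelsnonnegativefunctions} and in the symmetric setting of \citet{allain2025scalableandadaptivepredictionbandsusingkerenlsumofsquares}. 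Their representer argument can be reused once it is checked that the objective and constraints depend on $\mathcal{A}$ only through the values $\langle\phi(X_i),\mathcal{A}\phi(X_i)\rangle$ and spectral norms.

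\textbf{Reduction to the span.} Let $\Pi$ be the orthogonal projection onto $\mathcal{H}_n=\mathrm{span}\{\phi(X_1),\dots,\phi(X_n)\}$, and replace $\mathcal{A}$ by $\Pi\mathcal{A}\Pi$. The new operator is still PSD. It gives the same values $\langle\phi(X_i),\mathcal{A}\phi(X_i)\rangle$, so the data-fit term and all constraints are unchanged. Its nuclear and Frobenius norms do not increase, since both are unitarily invariant and $\Pi$ is a contraction. For the Frobenius norm the inequality is strict unless $\mathcal{A}=\Pi\mathcal{A}\Pi$, because $\lVert\mathcal{A}\rVert_F^2=\lVert\Pi\mathcal{A}\Pi\rVert_F^2+\lVert\mathcal{A}-\Pi\mathcal{A}\Pi\rVert_F^2$ by orthogonality of the blocks. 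Hence, since $\lambda_2>0$, any minimizer lies in $\mathcal{S}_+(\mathcal{H}_n)$.

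\textbf{Matrix parametrization.} Take an orthonormal basis of $\mathcal{H}_n$ built from the Cholesky factor: $e_j=\sum_i (\mathbf{V}^{-\top})_{ij}\phi(X_i)$. If $\mathbf{K}$ is singular, use a pivoted or rank-revealing factor; this is harmless. Let $\bs{\Phi}(X)=\mathbf{V}^{-\top}\mathbf{k}(X)$ be the coordinate vector of $\Pi\phi(X)$ in this basis. Then any $\mathcal{A}\in\mathcal{S}_+(\mathcal{H}_n)$ is $\mathcal{A}=\sum_{jl}\mathbf{A}_{jl}\,e_j\otimes e_l$ with $\mathbf{A}\in\mathbb{S}^n_+$. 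In these coordinates $\langle\phi(X),\mathcal{A}\phi(X)\rangle=\bs{\Phi}(X)^\top\mathbf{A}\bs{\Phi}(X)=\tilde f_{\mathbf{A}}(X)$. Also $\lVert\mathcal{A}\rVert_\star=\lVert\mathbf{A}\rVert_\star$ and $\lVert\mathcal{A}\rVert_F=\lVert\mathbf{A}\rVert_F$, because the basis is orthonormal. Substituting gives exactly the SDP~\eqref{eq:sdp for asymmetric problem}, and the infimum of the infinite-dimensional problem equals that of the SDP.

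\textbf{Existence and uniqueness.} This is the main point to check carefully. The SDP objective is $\lambda_2$-strongly convex, since it is linear plus nuclear norm plus $\lambda_2\lVert\cdot\rVert_F^2$. The feasible set is closed and convex, being the PSD cone intersected with affine half-spaces in $\mathbf{A}$. Strong convexity also makes the objective coercive. So a minimizer exists and is unique, provided the feasible set is nonempty.

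Nonemptiness is the obstacle. Positivity of the $r_{(\cdot)}$ values must be attainable at every $X_i$, which holds if $\bs{\Phi}(X_i)\neq 0$, i.e.\ $k(X_i,X_i)>0$. In that case one can take $\mathbf{A}=c\,\mathbf{I}$ with $c$ large. I would state this assumption implicitly, for instance a universal or positive-definite kernel with distinct points.

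Uniqueness of the operator then follows from uniqueness in the SDP together with the strict-inequality argument in the span reduction. Hence the pair $(\tilde f_{\mathbf{A}^\star_{\mathrm{low}}},\tilde f_{\mathbf{A}^\star_{\mathrm{up}}})$ is the unique solution of~\eqref{eq:infinite asymmetric problem}.
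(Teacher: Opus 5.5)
Your proof is correct and takes essentially the paper's route: split the problem into its low and up parts, then run the Marteau-Ferey/Allain representer argument (project onto $\mathrm{span}\{\phi(X_i)\}$, parametrize isometrically through the Cholesky factor so that $\bs{\Phi}(X)=\mathbf{V}^{-\top}\mathbf{k}(X)$, and get uniqueness from $\lambda_{(\cdot)2}>0$). Your feasibility check and Frobenius--Pythagoras uniqueness step spell out points the paper leaves implicit.

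Two small corrections. First, the orthonormal basis is $e_j=\sum_i(\mathbf{V}^{-1})_{ij}\phi(X_i)$, not $e_j=\sum_i(\mathbf{V}^{-\top})_{ij}\phi(X_i)$, since the latter is not orthonormal in general. Second, a singular $\mathbf{K}$ is not harmless for the $n\times n$ form with $\mathbf{V}^{-\top}$, which is why the paper assumes $\mathbf{K}$ has full rank.
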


The proof can be found in \Cref{sec:apdx:asymmetric problem with different kernels}.
In practice, the associated SDP problem can be solved efficiently up to \(200\) samples using off-the-shelves solvers \citep{odonoghue2016conicoptimizationoperatorsplittinghomogeneousselfdualembedding}. Crucially, to scale up to larger training sizes, we obtain a dual formulation for both functions.
\begin{proposition}[Dual formulation]
\label{prop:dual formulation for asymmetric problem}
    Let \((b,\lambda_{(\cdot){1}})\in\mathbb{R}_{+}^{2}\) and \(\lambda_{(\cdot){2}}>0\). \Cref{eq:sdp for asymmetric problem} admits a dual formulation of the form
    \begin{align*}
        \underset{
        \substack{
            \bs{\Gamma_{(\cdot)}} \in \mathbb{R}_+^{n} 
        }
        }{\sup}
        \bs{\Gamma_{(\cdot)}}\mathbf{r}_{(\cdot)}^{\top}(\mathbf{X}, Y)  - \Omega^{\star}_{+,(\cdot)}(\mathbf{V}_{(\cdot)}\mathrm{Diag}({\bs{\Gamma}_{(\cdot)}}_{-\mathbf{b}})\mathbf{V}_{(\cdot)}^{\top})
    \end{align*}
    where \(\Omega_{+,(\cdot)}^{\star}(\mathbf{B}) = \frac{1}{4\lambda_{(\cdot)2}}\lVert \left[\mathbf{B}-\lambda_{(\cdot)1}\mathbf{I}_{n}\right]_{+}\rVert_{F}^{2}\).
    Moreover, if \(\widehat{\bs{\Gamma}}_{(\cdot)}\) is a solution of the dual formulation, a solution of \Cref{eq:sdp for asymmetric problem} can be retrieved as 
    \begin{align*}
        \widehat{\mathbf{A}}_{(\cdot)} = \frac{1}{2\lambda_{(\cdot)2}}\left[\mathbf{V}_{(\cdot)}\mathrm{Diag}(\widehat{\bs{\Gamma}}_{(\cdot){-\mathbf{b}}})\mathbf{V}_{(\cdot)}^{\top}-\lambda_{(\cdot){1}}\mathbf{I}_{n}\right]_{+}.
    \end{align*}
\end{proposition}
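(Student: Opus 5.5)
Since \Cref{eq:sdp for asymmetric problem} decouples into two independent problems, one for $\mathbf{A}_{\mathrm{low}}$ and one for $\mathbf{A}_{\mathrm{up}}$, I will derive the dual for a generic index $(\cdot)$ and drop the subscript. The first step is to write $\tilde f_{\mathbf{A}}(X_i)$ as a linear function of $\mathbf{A}$. From the representer theorem (\Cref{thm:representer theorem for asymmetric problem}), $\bs{\Phi}(X_i)$ is the $i$-th column $\mathbf{v}_i$ of $\mathbf{V}$. Hence $\tilde f_{\mathbf{A}}(X_i)=\mathbf{v}_i^\top\mathbf{A}\mathbf{v}_i=\langle \mathbf{A},\mathbf{v}_i\mathbf{v}_i^\top\rangle_F$. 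Summing over $i$ gives
\[
\sum_i \tilde f_{\mathbf{A}}(X_i)=\langle \mathbf{A},\mathbf{V}\mathbf{V}^\top\rangle_F,
\qquad
\sum_i \Gamma_i \tilde f_{\mathbf{A}}(X_i)=\langle \mathbf{A},\mathbf{V}\mathrm{Diag}(\bs\Gamma)\mathbf{V}^\top\rangle_F .
\]
Combining the two, the data-fit term and the constraint terms merge into the single matrix $\mathbf{V}\mathrm{Diag}(\bs\Gamma_{-\mathbf{b}})\mathbf{V}^\top$, where $\mathrm{Diag}(\bs\Gamma_{-\mathbf b})=\mathrm{Diag}(\bs\Gamma)-\frac{b}{n}\mathbf I_n$.

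Next I introduce multipliers $\bs\Gamma\in\mathbb{R}_+^n$ for the $n$ coverage constraints and form the Lagrangian
\[
L(\mathbf{A},\bs\Gamma)=\lambda_1\lVert\mathbf A\rVert_\star+\lambda_2\lVert\mathbf A\rVert_F^2+\bs\Gamma^\top\mathbf r-\langle \mathbf A,\mathbf V\mathrm{Diag}(\bs\Gamma_{-\mathbf b})\mathbf V^\top\rangle_F .
\]
The dual function is therefore $\bs\Gamma^\top\mathbf r-\Omega^\star_+(\mathbf B)$ with $\mathbf B=\mathbf V\mathrm{Diag}(\bs\Gamma_{-\mathbf b})\mathbf V^\top$. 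Here $\Omega^\star_+(\mathbf B)=\sup_{\mathbf A\succeq 0}\,\langle\mathbf A,\mathbf B\rangle-\lambda_1\lVert\mathbf A\rVert_\star-\lambda_2\lVert\mathbf A\rVert_F^2$ is the Fenchel conjugate of $\Omega$ restricted to the PSD cone.

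The key computation is this conjugate. On $\mathbb S_+^n$ the nuclear norm equals the trace, so the objective becomes $\langle\mathbf A,\mathbf B-\lambda_1\mathbf I\rangle-\lambda_2\lVert\mathbf A\rVert_F^2$. Completing the square rewrites it as
\[
\frac{1}{4\lambda_2}\lVert\mathbf B-\lambda_1\mathbf I\rVert_F^2-\lambda_2\Bigl\lVert\mathbf A-\frac{1}{2\lambda_2}(\mathbf B-\lambda_1\mathbf I)\Bigr\rVert_F^2 .
\]
Maximizing over $\mathbf A\succeq0$ thus means projecting $\frac{1}{2\lambda_2}(\mathbf B-\lambda_1\mathbf I)$ onto the PSD cone in Frobenius norm, and that projection is its positive part $[\cdot]_+$. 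Substituting the maximizer and using $\lVert \mathbf C\rVert_F^2-\lVert \mathbf C-[\mathbf C]_+\rVert_F^2=\lVert[\mathbf C]_+\rVert_F^2$, which holds because $[\mathbf C]_+$ and $\mathbf C-[\mathbf C]_+$ are orthogonal, gives $\Omega^\star_+(\mathbf B)=\frac{1}{4\lambda_2}\lVert[\mathbf B-\lambda_1\mathbf I]_+\rVert_F^2$. The same maximizer, evaluated at the optimal multipliers, yields the primal recovery formula for $\widehat{\mathbf A}$.

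Finally I need strong duality and the validity of this recovery. The primal problem is convex, and because $\lambda_2>0$ it is strongly convex, so its solution is unique. Slater's condition holds: taking $\mathbf A=t\mathbf I$ with $t$ large makes $\mathbf v_i^\top\mathbf A\mathbf v_i=tK_{ii}$ exceed every residual strictly, assuming $K_{ii}>0$ (true, e.g., for positive definite kernels, as implicitly required by the Cholesky factorization). Hence there is no duality gap and the dual supremum is attained. Since $L(\cdot,\widehat{\bs\Gamma})$ is strongly convex, its unique minimizer is the primal optimum, which is the displayed $\widehat{\mathbf A}$. I expect the main obstacle to be the careful handling of the conjugate restricted to the PSD cone (trace equal to nuclear norm, plus the projection identity). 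Beyond that, I expect only the sign and indexing conventions for $\mathrm{Diag}(\bs\Gamma_{-\mathbf b})$ and the product $\bs\Gamma\mathbf r^\top$ to need care.
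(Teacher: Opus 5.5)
Your proposal is correct and follows essentially the same route as the paper. The paper's proof reuses the Lagrangian/Fenchel-conjugate argument of Proposition~2 in \citet{allain2025scalableandadaptivepredictionbandsusingkerenlsumofsquares} with $r^2$ replaced by $r_{(\cdot)}$, and it recovers $\widehat{\mathbf{A}}_{(\cdot)}=\nabla\Omega^{\star}_{+,(\cdot)}\bigl(\mathbf{V}_{(\cdot)}\mathrm{Diag}(\widehat{\bs{\Gamma}}_{(\cdot)-\mathbf{b}})\mathbf{V}_{(\cdot)}^{\top}\bigr)$, which is exactly your maximizer in the conjugate; your version additionally makes explicit what the paper delegates to that reference, namely the closed form of $\Omega^{\star}_{+,(\cdot)}$ via completing the square and projecting onto $\mathbb{S}^{n}_{+}$, and strong duality via Slater's condition with $\mathbf{A}=t\mathbf{I}_n$, where $K_{ii}>0$ follows from the paper's full-rank assumption on $\mathbf{K}_{(\cdot)}$.
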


As with CQR, the prediction bands obtained with \Cref{eq:asymmetric prediction intervals} are asymmetric and flexible: lower and upper bands can have different kernel functions.

\subsection{Symmetric penalization}
\label{sec:symmetric penalization}

To go further, the nature of the noise is often unknown and assuming asymmetry can be detrimental since upper and lower bands are estimated separately. Taking inspiration from the literature on penalization for supervised learning, our central idea is to add a symmetric penalty in the objective function of \Cref{eq:infinite asymmetric problem}, so that the higher the penalty, the more symmetric the prediction bands. Introducing such a continuum between asymmetric and symmetric prediction bands can be advantageous in practice: it allows to mitigate the impact of small samples and compensate biased predictive models. As an illustration, in \Cref{fig:impact of penalty on the shape of the bands} we show that even for a test case with true symmetric noise, penalized asymmetric prediction bands can lead to better local coverage. Importantly, these new problems bridge the gap between our asymmetric Problem (\ref{eq:sdp for asymmetric problem}) and the symmetric one proposed by \citet{allain2025scalableandadaptivepredictionbandsusingkerenlsumofsquares}. In the following, we discuss two different penalties that can achieve this behavior.

\begin{figure*}[!htbp]
    \centering
    \includegraphics[scale=0.9]{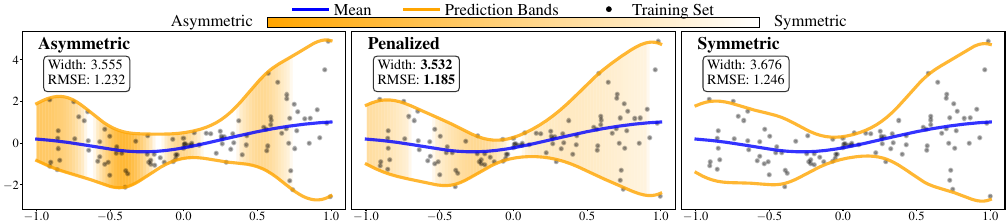}
    \caption{Penalized kSoS with varying penalty (dataset \(1\), symmetric noise). Left: asymmetric predictions bands produce tighter bands. Right: symmetric prediction bands tend to be overly conservative. An intermediate penalty value in the middle achieves tighter bands without being overly conservative and is closer to the oracle (asymmetry of prediction bands magnified with orange color).}
    \label{fig:impact of penalty on the shape of the bands}
\end{figure*}

\paragraph{Operator penalty.}
First, let us consider that lower and upper bounds are associated to the same RKHS \(\mathcal{H}_{\mathrm{low}}=\mathcal{H}_{\mathrm{up}}=\mathcal{H}\); the functions are thus only parameterized by their operators, defined on the same space. In this case, a natural way to incorporate a symmetric penalty in our problem is to enforce equality of the operators in \Cref{eq:infinite asymmetric problem}:
\begin{align}
    \underset{
        \substack{
            \mathcal{A}_{\mathrm{low}} \in \mathcal{S}_{+}\left(\mathcal{H}\right) \\
            \mathcal{A}_{\mathrm{up}} \in \mathcal{S}_{+}\left(\mathcal{H}\right)
        }
    }{\inf}
    \quad& \frac{b}{n}\sum_{i=1}^{n} \left(f_{\mathcal{A_{\mathrm{low}}}}(X_{i}) + f_{\mathcal{A_{\mathrm{up}}}}(X_{i})\right)+\Omega_{\mathrm{low}}(\mathcal{A}_{\mathrm{low}}) + \Omega_{\mathrm{up}}(\mathcal{A_{\mathrm{up}}}) +\Omega_{\mathrm{pen}}(\mathcal{A}_{\mathrm{low}}-\mathcal{A}_{\mathrm{up}})\label{eq:infinite asymmetric problem operator penalty}\\
    \mathrm{s.t.} \quad &\widehat{m}_n(X_i) - Y_i - f_{\mathrm{low}}(X_{i})\leq 0, \;i \in \left[n\right] \nonumber\\
    \quad& Y_i - \widehat{m}_n(X_i) - f_{\mathrm{up}}(X_{i})\leq 0, \;i \in \left[n\right],\nonumber
\end{align}
where $\Omega_{\mathrm{pen}}(\cdot)$ is the new regularization term given by
\begin{equation*}
    \Omega_{\mathrm{pen}}(\mathcal{A}_{\mathrm{low}}-\mathcal{A}_{\mathrm{up}}) = \lambda_{\mathrm{pen}1}\lVert\mathcal{A}_\mathrm{low}-\mathcal{A}_\mathrm{up}\rVert_{\star}+\lambda_{\mathrm{pen}2}\lVert\mathcal{A}_\mathrm{low}-\mathcal{A}_\mathrm{up}\rVert_{F}^{2}.
\end{equation*}
Intuitively, as \(\lambda_{\mathrm{pen}1}, \lambda_{\mathrm{pen}2} \rightarrow \infty\), the operators will tend to be equal and thus the lower and upper bands will coincide on their whole domain of definition (see \Cref{prop:error bounds}). 

\paragraph{Penalization on the training set.}
\label{sec:penalization on the training set}
Another way to impose symmetry is to control the difference of the lower and upper bands on the training points only, which gives rise to the following infinite dimensional problem:
\begin{align}
    \underset{
        \substack{
            \mathcal{A}_{\mathrm{low}} \in \mathcal{S}_{+}\left(\mathcal{H}_{\mathrm{low}}\right) \\
            \mathcal{A}_{\mathrm{up}} \in \mathcal{S}_{+}\left(\mathcal{H}_{\mathrm{up}}\right)
        }
    }{\inf}
    \quad& \frac{b}{n}\sum_{i=1}^{n} \left(f_{\mathcal{A_{\mathrm{low}}}}(X_{i}) + f_{\mathcal{A_{\mathrm{up}}}}(X_{i})\right) \label{eq:infinite asymmetric problem train data penalty} \\
    &+\Omega_{\mathrm{low}}(\mathcal{A}_{\mathrm{low}}) + \Omega_{\mathrm{up}}(\mathcal{A_{\mathrm{up}}}) \nonumber\\
    &+\lambda_{\mathrm{pen}} \sum_{i=1}^{n}\left(f_{\mathcal{A_{\mathrm{low}}}}(X_{i}) - f_{\mathcal{A_{\mathrm{up}}}}(X_{i})\right)^2\nonumber \\
    \mathrm{s.t.} \quad &\widehat{m}_n(X_i) - Y_i - f_{\mathrm{low}}(X_{i})\leq 0, \;i \in \left[n\right] \nonumber\\
    \quad& Y_i - \widehat{m}_n(X_i) - f_{\mathrm{up}}(X_{i})\leq 0, \;i \in \left[n\right].\nonumber
\end{align}
As \(\lambda_{\mathrm{pen}} \rightarrow \infty\), the lower and upper function will coincide on all training points. Contrary to the previous operator penalty, this no longer imposes equality on their domain of definition. But we show in \Cref{prop:error bounds} that, under mild assumptions, their difference can be upper bounded.

\paragraph{Representer theorems and dual formulations}

To make the penalized problems tractable, we prove the following representer theorems, see Sections \ref{sec:asymmetric problem with penalty 2} and \ref{sec:asymmetric problem with penalty 1} for the proofs and the resulting SDP problems.
\begin{theorem}[Representer theorems with penalty]
\label{thm:representer theorem for penalty}
    Let \((b,\lambda_{\mathrm{low}{1}}, \lambda_{\mathrm{up}{1}}, \lambda_{\mathrm{pen}{1}})\in\mathbb{R}_{+}^{4}\) and \(\lambda_{\mathrm{low}{2}}, \lambda_{\mathrm{up}{2}}, \lambda_{\mathrm{pen}{2}}>0\). Then Problem (\ref{eq:infinite asymmetric problem operator penalty}) and Problem (\ref{eq:infinite asymmetric problem train data penalty}) admit a unique solution \((\tilde{f}_{\mathbf{A}_{\mathrm{low}}^{\star}}(X), \tilde{f}_{\mathbf{A}_{\mathrm{up}}^{\star}}(X)) = \bigl(\bs{\Phi}(X)^{\top}\mathbf{A}_{\mathrm{low}}^{\star}\bs{\Phi}(X),\ \bs{\Phi}(X)^{\top}\mathbf{A}_{\mathrm{up}}^{\star}\bs{\Phi}(X)\bigr)\) for some matrices \(\bigl(\mathbf{A}_{\mathrm{low}}^{\star}, \mathbf{A}_{\mathrm{up}}^{\star}\bigr)\in (\mathbb{S}_{+}^{n})^{2}\).
\end{theorem}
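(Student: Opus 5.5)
We follow the template of the representer theorem for the unpenalized problem (Theorem \ref{thm:representer theorem for asymmetric problem}), which in turn rests on the kSoS representer theorem of \citet{marteauferey2020nonparametricmodelsnonnegativefunctions}. The new feature is that the two operators are now coupled, either through $\Omega_{\mathrm{pen}}(\mathcal{A}_{\mathrm{low}}-\mathcal{A}_{\mathrm{up}})$ or through the quadratic term on the training points. As a result, we cannot treat the lower and upper problems separately and must work with the pair $(\mathcal{A}_{\mathrm{low}},\mathcal{A}_{\mathrm{up}})$ jointly.

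\textbf{Compression step.} Let $\mathcal{H}_n=\mathrm{span}\{\phi(X_1),\dots,\phi(X_n)\}$ and let $P$ be the orthogonal projection onto it. For the training-set penalty with distinct RKHSs, use the corresponding projections $P_{\mathrm{low}}$ and $P_{\mathrm{up}}$. For any feasible pair, replace $\mathcal{A}_{(\cdot)}$ by $P\mathcal{A}_{(\cdot)}P$; this operator is still PSD. Since $P\phi(X_i)=\phi(X_i)$, we have $f_{P\mathcal{A}P}(X_i)=f_{\mathcal{A}}(X_i)$. Hence the constraints, the width term, and the training-set penalty $\lambda_{\mathrm{pen}}\sum_i(f_{\mathrm{low}}(X_i)-f_{\mathrm{up}}(X_i))^2$ are unchanged. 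For the operator penalty, with a common $P$ we get $P\mathcal{A}_{\mathrm{low}}P-P\mathcal{A}_{\mathrm{up}}P=P(\mathcal{A}_{\mathrm{low}}-\mathcal{A}_{\mathrm{up}})P$. Because $\|P\mathcal{B}P\|_\star\le\|\mathcal{B}\|_\star$ and $\|P\mathcal{B}P\|_F\le\|\mathcal{B}\|_F$ for any self-adjoint $\mathcal{B}$, the penalties $\Omega_{(\cdot)}$ and $\Omega_{\mathrm{pen}}$ do not increase. The Frobenius inequality is strict unless $P\mathcal{B}P=\mathcal{B}$. This is precisely where the shared RKHS assumption of Problem (\ref{eq:infinite asymmetric problem operator penalty}) is needed.

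\textbf{Parameterization.} Every PSD operator supported on $\mathcal{H}_n$ can be written $\sum_{i,j}C_{ij}\,\phi(X_i)\otimes\phi(X_j)$. Using the Cholesky factor $\mathbf{K}=\mathbf{V}^\top\mathbf{V}$ and an orthonormal basis $e_j=\sum_i(\mathbf{V}^{-1})_{ij}\phi(X_i)$ of $\mathcal{H}_n$ (with a pseudo-inverse if $\mathbf{K}$ is singular), this is equivalent to a matrix $\mathbf{A}\in\mathbb{S}_+^n$. Then $f(X)=\bs{\Phi}(X)^\top\mathbf{A}\bs{\Phi}(X)$ with $\bs{\Phi}(X)=\mathbf{V}^{-\top}\mathbf{k}(X)$. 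Moreover, nuclear and Frobenius norms are preserved, as is the difference $\mathbf{A}_{\mathrm{low}}-\mathbf{A}_{\mathrm{up}}$, since both operators are expressed in the same basis. This gives a finite-dimensional problem over $(\mathbb{S}_+^n)^2$.

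\textbf{Existence and uniqueness.} This is the main obstacle. For existence, note that the objective is continuous and coercive because $\lambda_{\mathrm{low}2},\lambda_{\mathrm{up}2}>0$ give strongly convex Frobenius terms. The feasible set is closed, convex and nonempty: take $\mathbf{A}=c\,\mathbf{I}$ with $c$ large, provided $\bs{\Phi}(X_i)\neq0$, which holds for positive-definite kernels. Hence a minimizer exists. For uniqueness, every other term is convex and the strongly convex terms $\lambda_{(\cdot)2}\|\mathbf{A}_{(\cdot)}\|_F^2$ act on the two blocks separately. The objective is therefore jointly strictly convex in $(\mathbf{A}_{\mathrm{low}},\mathbf{A}_{\mathrm{up}})$, so the finite-dimensional minimizer is unique. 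To transfer uniqueness back to the infinite-dimensional problem, use the strict decrease of the Frobenius norm under compression. Any infinite-dimensional optimum must satisfy $P\mathcal{A}P=\mathcal{A}$, since otherwise compressing would strictly lower the objective. It thus lies in the finite-dimensional class and coincides with the unique matrix solution. The solution of the SDP is then $(\tilde f_{\mathbf{A}^\star_{\mathrm{low}}},\tilde f_{\mathbf{A}^\star_{\mathrm{up}}})$.
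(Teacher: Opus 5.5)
Your proposal is correct and follows essentially the paper's route. The paper invokes its $p$-operator extension of the Marteau-Ferey representer theorem (\Cref{thm:generalization_mf_p_operators}), whose core is exactly your compression onto $\mathrm{span}\{\phi(X_i)\}$, checked for the non-separable penalty through the properties of \Cref{lem:Omega_properties}, followed by the Cholesky isometry $O_n$ yielding $\bs{\Phi}(X)=\mathbf{V}^{-\top}\mathbf{k}(X)$. Your uniqueness argument is in fact more complete than the paper's, which only asserts uniqueness ``if $L$ is convex'': you use joint strict convexity from $\lambda_{\mathrm{low}2},\lambda_{\mathrm{up}2}>0$, plus strict Frobenius decrease under compression to exclude optima not supported on $\mathcal{H}_n$. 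The pseudo-inverse aside is unneeded and would not give an orthonormal family; the statement's $\bs{\Phi}$ already presupposes $\mathbf{K}$ invertible, as the paper assumes.
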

For scalability, we also derive dual formulations which allow to consider datasets up to thousand samples.

\begin{proposition}[Dual formulations with penalty]
\label{prop:dual formulation penalty}
Let \((b,\lambda_{\mathrm{low}{1}}, \lambda_{\mathrm{up}{1}}, \lambda_{\mathrm{pen}{1}})\in\mathbb{R}_{+}^{4}\) and \(\lambda_{\mathrm{low}{2}}, \lambda_{\mathrm{up}{2}}, \lambda_{\mathrm{pen}{2}}>0\).
Problem (\ref{eq:infinite asymmetric problem operator penalty}) admits a dual formulation of the form
\begin{align*}
        \underset{
        \substack{
            (\bs{\Gamma}_{\mathrm{low}},\bs{\Gamma}_{\mathrm{up}}) \in \mathbb{R}_+^{2n}\\
            \mathbf{W} \in \mathbb{S}^{n}
        }
        }{\sup}
        &(\bs{\Gamma}_{\mathrm{up}}-\bs{\Gamma}_{\mathrm{low}})\mathbf{r}^{\top} 
        - \Omega_{\mathrm{pen}}^{\star}(\mathbf{W})\\
        -&\Omega^{\star}_{+,\mathrm{low}}(\mathbf{V}_{\mathrm{low}}\mathrm{Diag}({\bs{\Gamma}_{\mathrm{low}}}_{-\mathbf{b}})\mathbf{V}_{\mathrm{low}}^{\top} - \mathbf{W})\nonumber\\
        -&\Omega^{\star}_{+,\mathrm{up}}(\mathbf{V}_{\mathrm{up}}\mathrm{Diag}({\bs{\Gamma}_{\mathrm{up}}}_{-\mathbf{b}})\mathbf{V}_{\mathrm{up}}^{\top} + \mathbf{W})\nonumber
    \end{align*}
    where $\mathbf{r}$ is the vector of residuals $r_i=Y_i-m(X_i)$, \(\Omega_{+,(\cdot)}^{\star}(\mathbf{B}) = \frac{1}{4\lambda_{(\cdot)2}}\lVert \left[\mathbf{B}-\lambda_{(\cdot)1}\mathbf{I}_{n}\right]_{+}\rVert_{F}^{2}\), $\Omega_{\mathrm{pen}}^{\star}(\mathbf{B})=(1/4\lambda_{\mathrm{pen}2})\sum_{i=1}^{n}\max(0, \lvert \lambda_{i}(\mathbf{B}) \rvert-\lambda_{\mathrm{pen}1})^{2}$, while Problem (\ref{eq:infinite asymmetric problem train data penalty}) admits a dual formulation of the form
    \begin{align*}
        \underset{
        \substack{
            (\bs{\Gamma}_{\mathrm{low}},\bs{\Gamma}_{\mathrm{up}}) \in \mathbb{R}_+^{2n}\\
            \bs{\alpha}_0 \in \mathbb{R}^{n}
        }
        }{\sup}
        &(\bs{\Gamma}_{\mathrm{up}}-\bs{\Gamma}_{\mathrm{low}})\mathbf{r}^{\top} - \frac{1}{4\lambda_{\mathrm{pen}}} \bs{\alpha}_0\bs{\alpha}_0^{\top}\\
        &-\Omega^{\star}_{+,\mathrm{low}}(\mathbf{V}_{\mathrm{low}}\mathrm{Diag}(({\bs{\Gamma}_{\mathrm{low}}}+\bs{\alpha}_0)_{-\mathbf{b}})\mathbf{V}_{\mathrm{low}}^{\top})\nonumber\\
        &-\Omega^{\star}_{+,\mathrm{up}}(\mathbf{V}_{\mathrm{up}}\mathrm{Diag}(({\bs{\Gamma}_{\mathrm{up}}}-\bs{\alpha}_0)_{-\mathbf{b}})\mathbf{V}_{\mathrm{up}}^{\top}). \nonumber
    \end{align*}
\end{proposition}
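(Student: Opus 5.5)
We obtain both duals by Lagrangian duality applied to the finite-dimensional SDP reformulations given by \Cref{thm:representer theorem for penalty}, following the same template as for \Cref{prop:dual formulation for asymmetric problem}. We use the parameterization $f_{(\cdot)}(X_i)=\mathbf{v}_{(\cdot),i}^{\top}\mathbf{A}_{(\cdot)}\mathbf{v}_{(\cdot),i}$, where $\mathbf{v}_{(\cdot),i}$ is the $i$-th column of $\mathbf{V}_{(\cdot)}$. Then
\[
\sum_i c_i f_{(\cdot)}(X_i)=\langle \mathbf{A}_{(\cdot)},\mathbf{V}_{(\cdot)}\mathrm{Diag}(\mathbf{c})\mathbf{V}_{(\cdot)}^{\top}\rangle_F ,
\]
so every linear term in $f$ becomes a trace pairing with $\mathbf{A}$.

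\emph{Operator penalty.} The plan has four steps.
\begin{enumerate}
\item Introduce the auxiliary variable $\mathbf{Z}=\mathbf{A}_{\mathrm{low}}-\mathbf{A}_{\mathrm{up}}\in\mathbb{S}^n$. It is not PSD in general, which is why $\Omega_{\mathrm{pen}}^{\star}$ involves $|\lambda_i|$. This presumes the representer theorem gives a common basis, i.e.\ $\mathbf{V}_{\mathrm{low}}=\mathbf{V}_{\mathrm{up}}$ since the RKHS is shared.
\item Attach multipliers $\mathbf{W}\in\mathbb{S}^n$ to the equality constraint $\mathbf{Z}=\mathbf{A}_{\mathrm{low}}-\mathbf{A}_{\mathrm{up}}$, and $\bs\Gamma_{\mathrm{low}},\bs\Gamma_{\mathrm{up}}\in\mathbb{R}^n_+$ to the coverage constraints. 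The constraint terms give $\sum_i\Gamma_{\mathrm{low},i}(-r_i-f_{\mathrm{low}}(X_i))+\Gamma_{\mathrm{up},i}(r_i-f_{\mathrm{up}}(X_i))$. This produces the linear term $(\bs\Gamma_{\mathrm{up}}-\bs\Gamma_{\mathrm{low}})\mathbf{r}^\top$, using $r_{\mathrm{low}}=-\mathbf{r}$.
\item Collect the terms in $\mathbf{A}_{\mathrm{low}}$: $\Omega_{\mathrm{low}}(\mathbf{A}_{\mathrm{low}})-\langle\mathbf{A}_{\mathrm{low}},\mathbf{V}\mathrm{Diag}({\bs\Gamma_{\mathrm{low}}}_{-\mathbf{b}})\mathbf{V}^\top-\mathbf{W}\rangle$. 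The sign of $\mathbf{W}$ depends on the convention chosen in step 2. Minimizing over $\mathbf{A}_{\mathrm{low}}\succeq0$ gives $-\Omega^\star_{+,\mathrm{low}}(\cdot)$.
\item Treat $\mathbf{A}_{\mathrm{up}}$ in the same way, with $+\mathbf{W}$. Minimizing over $\mathbf{Z}\in\mathbb{S}^n$ gives $-\Omega_{\mathrm{pen}}^\star(\mathbf{W})$.
\end{enumerate}

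Two conjugate computations are needed.
\begin{itemize}
\item The restricted conjugate $\sup_{\mathbf{A}\succeq0}\langle\mathbf{A},\mathbf{B}\rangle-\lambda_1\operatorname{tr}\mathbf{A}-\lambda_2\|\mathbf{A}\|_F^2$. This is already available from \Cref{prop:dual formulation for asymmetric problem}. Its value is $\frac{1}{4\lambda_2}\|[\mathbf{B}-\lambda_1\mathbf{I}]_+\|_F^2$, attained at $\mathbf{A}=\frac{1}{2\lambda_2}[\mathbf{B}-\lambda_1\mathbf{I}]_+$.
\item The unrestricted conjugate of $\lambda_{\mathrm{pen}1}\|\cdot\|_\star+\lambda_{\mathrm{pen}2}\|\cdot\|_F^2$ on $\mathbb{S}^n$. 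Both norms are unitarily invariant, so by von Neumann's trace inequality the supremum is attained at $\mathbf{Z}$ sharing eigenvectors with $\mathbf{W}$. It then reduces to the scalar problems $\sup_z wz-\lambda_1|z|-\lambda_2z^2=\max(0,|w|-\lambda_1)^2/(4\lambda_2)$, which gives exactly $\Omega_{\mathrm{pen}}^\star$.
\end{itemize}

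\emph{Training-set penalty.} Introduce $\mathbf{u}\in\mathbb{R}^n$ with $u_i=f_{\mathrm{low}}(X_i)-f_{\mathrm{up}}(X_i)$ and dualize with $\bs\alpha_0\in\mathbb{R}^n$.
\begin{itemize}
\item The term $\lambda_{\mathrm{pen}}\|\mathbf{u}\|^2-\bs\alpha_0^\top\mathbf{u}$ minimizes to $-\frac{1}{4\lambda_{\mathrm{pen}}}\bs\alpha_0\bs\alpha_0^\top$.
\item The coefficient of $f_{\mathrm{low}}(X_i)$ becomes $b/n-\Gamma_{\mathrm{low},i}-\alpha_{0,i}$, and that of $f_{\mathrm{up}}(X_i)$ becomes $b/n-\Gamma_{\mathrm{up},i}+\alpha_{0,i}$. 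These give the shifted diagonal matrices $\mathrm{Diag}((\bs\Gamma_{\mathrm{low}}+\bs\alpha_0)_{-\mathbf{b}})$ and $\mathrm{Diag}((\bs\Gamma_{\mathrm{up}}-\bs\alpha_0)_{-\mathbf{b}})$.
\item Applying the restricted conjugate separately in each space, with $\mathbf{V}_{\mathrm{low}}$ and $\mathbf{V}_{\mathrm{up}}$ respectively, yields the stated dual.
\end{itemize}

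\emph{Strong duality.} We justify it via Slater's condition. Take $\mathbf{A}_{(\cdot)}=t\mathbf{I}$ with $t$ large. This is strictly feasible because $\tilde f(X_i)=t\|\mathbf{v}_i\|^2>0$, assuming $\mathbf{K}$ has a positive diagonal, and all remaining constraints are equalities or cone constraints. The primal objective is strongly convex in $\mathbf{A}$, which also gives uniqueness and justifies the minimizer recovery.

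\emph{Main obstacle.} We expect the hard step to be bookkeeping rather than conceptual. The sign conventions for $\mathbf{W}$ and $\bs\alpha_0$ must match the stated formulas. More delicately, the operator-penalty dual must be well defined in the finite representation. If the representer basis for the operator-penalty problem differs from the single-problem one, so that $\mathbf{V}_{\mathrm{low}}\neq\mathbf{V}_{\mathrm{up}}$ in general, then $\mathbf{W}$ must act on a common coordinate system. We would first check that, because $\mathcal{H}$ is shared, both operators live on $\mathrm{span}\{\phi(X_i)\}$ with the same $\mathbf{V}$. This makes the difference $\mathbf{A}_{\mathrm{low}}-\mathbf{A}_{\mathrm{up}}$, and hence its Frobenius and nuclear norms, correctly represented.
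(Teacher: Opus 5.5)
Your proposal is correct. For the operator penalty it is the paper's argument in different clothing. The paper writes the inner infimum over $(\mathbf{A}_{\mathrm{low}},\mathbf{A}_{\mathrm{up}})$ as $-\Psi_+^\star$ evaluated at $\bigl(\mathbf{V}_{\mathrm{low}}\mathrm{Diag}({\bs{\Gamma}_{\mathrm{low}}}_{-\mathbf{b}})\mathbf{V}_{\mathrm{low}}^{\top},\mathbf{V}_{\mathrm{up}}\mathrm{Diag}({\bs{\Gamma}_{\mathrm{up}}}_{-\mathbf{b}})\mathbf{V}_{\mathrm{up}}^{\top}\bigr)$. It then computes the conjugate of $\Psi_+=f\circ L$, with $L(\mathbf{A}_1,\mathbf{A}_2)=(\mathbf{A}_1,\mathbf{A}_2,\mathbf{A}_1-\mathbf{A}_2)$, through Rockafellar's composition rule (Thm.~16.3). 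That rule is exactly what your splitting $\mathbf{Z}=\mathbf{A}_{\mathrm{low}}-\mathbf{A}_{\mathrm{up}}$ with multiplier $\mathbf{W}$ produces. The paper obtains $\Omega_{\mathrm{pen}}^\star$ from Lewis's theorem on unitarily invariant functions, which is the same reduction to scalar problems that you do via von Neumann's inequality.

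The genuine difference is in the training-set penalty. The paper applies Fenchel--Rockafellar duality (Borwein--Lewis Thm.~3.3.5) to $L(R\mathbf{x})+f(\mathbf{x})$, where $L$ contains a degenerate quadratic. It therefore has to compute $L^\star$ through the pseudo-inverse of the singular $2n\times 2n$ matrix $\mathbf{M}$ and restrict the dual variable to $\mathrm{Range}(\mathbf{M})+\mathbf{a}$; $\bs{\alpha}_0$ appears only as the parameter of that affine range. Your auxiliary variable $\mathbf{u}$ makes $\bs{\alpha}_0$ an ordinary multiplier. The term $-\frac{1}{4\lambda_{\mathrm{pen}}}\bs{\alpha}_0\bs{\alpha}_0^{\top}$ then follows from a separable scalar minimization, with no pseudo-inverse or domain bookkeeping. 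Because you dualize all constraints in one Lagrangian and check Slater at $\mathbf{A}_{(\cdot)}=t\mathbf{I}$, you also get a zero duality gap explicitly, which the paper does not spell out.

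There is one small sign slip. With the term $\lambda_{\mathrm{pen}}\lVert\mathbf{u}\rVert^2-\bs{\alpha}_0^{\top}\mathbf{u}$, the coefficient of $f_{\mathrm{low}}(X_i)$ is $b/n-\Gamma_{\mathrm{low},i}+\alpha_{0,i}$, not $b/n-\Gamma_{\mathrm{low},i}-\alpha_{0,i}$. This is harmless: $\bs{\alpha}_0$ is free and the quadratic is even, so just dualize with the opposite sign.

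Your worry about $\mathbf{V}_{\mathrm{low}}\neq\mathbf{V}_{\mathrm{up}}$ does not arise. The operator-penalty problem uses a single RKHS, and the paper's matrix reduction uses the same isometry $O_n$ for both operators. Hence $\Omega_{\mathrm{pen}}(\mathcal{A}_{\mathrm{low}}-\mathcal{A}_{\mathrm{up}})=\Omega_{\mathrm{pen}}(\mathbf{A}_{\mathrm{low}}-\mathbf{A}_{\mathrm{up}})$ exactly.
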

Contrary to the asymmetric case, observe that the dual formulation for the operator penalty involves an optimization problem with \(\mathcal{O}(n^2)\) unknowns. Interestingly, the training set penalty however scales linearly, and is thus more suited to larger datasets. The only downside is that controlling equality of lower and upper bands everywhere requires additional assumptions, as elaborated in the next paragraph.

\paragraph{Errors bounds.}
We now give theoretical insights on these two novel penalties. We place ourselves in the ideal setting \(\mathcal{H}_{\mathrm{low}}=\mathcal{H}_{\mathrm{up}}=\mathcal{H}\) where we can reach strict equality of lower and upper bands, and introduce assumptions on \(\mathcal{H}\) and \(\Omega\), the domain of definition of the inputs $X$.

\begin{assumption}
    \label{assumption:paper:rkhs_1}
    For a bounded open set $\Omega\in\mathbb{R}^d$, the RKHS $\mathcal{H}$ of functions on $\Omega$ with norm $\Vert \cdot \Vert_{\mathcal{H}}$ satisfies $f\vert\Omega\in\mathcal{H}$, $\forall f \in C^{\infty}(\mathbb{R}^d)$. Moreover $\forall u,v\in\mathcal{H}$, $u\cdot v \in\mathcal{H}$ and $\exists M\geq 1$ such that $\Vert u\cdot v \Vert_{\mathcal{H}} \leq \mathrm{M} \Vert u \Vert_{\mathcal{H}} \Vert v \Vert_{\mathcal{H}}$.
\end{assumption}
\begin{assumption}
    \label{assumption:paper:rkhs_2}
    For a bounded open set $\Omega\in\mathbb{R}^d$, the kernel k of $\mathcal{H}$ satisfies
    $\max_{\vert \alpha\vert=1} \sup_{x,y\in\Omega} \vert \partial^{\alpha}_x \partial^{\alpha}_y k(x,y)\vert \leq \mathrm{D}^2 < \infty$
    for some $\mathrm{D}\geq 1$.
\end{assumption}
\begin{assumption}
\label{assumption:paper:geometric of the domain}
    $\Omega = \cup_{x\in S}\mathrm{B}_{r}(x)$, where $S$ is a bounded subset of $\mathbb{R}^d$ and $\mathrm{B}_{r}(x)$ is the ball of center $x$ and radius $r$.
\end{assumption}

Assumptions \ref{assumption:paper:rkhs_1} and \ref{assumption:paper:rkhs_2} are mild assumptions that hold for classic kernels such as the Matérn \(5/2\) one. As noted by \citet{rudi2025finding}, Assumption \ref{assumption:paper:geometric of the domain} can be relaxed to $\Omega$ having Lipschitz continuous boundaries, which will typically hold for most datasets in practice. We can now state our result which provides an upper bound on \(\Delta_f \coloneq \sup_{x\in\Omega}\, \lvert \tilde{f}_{\mathbf{A}_{\mathrm{low}}}(x) - \tilde{f}_{\mathbf{A}_{\mathrm{up}}}(x) \rvert\), the difference between lower and upper bands,  see \Cref{sec:apdx:error bounds} for the proof.

\begin{figure*}[!htbp]
    \centering
    \includegraphics[scale=0.9]{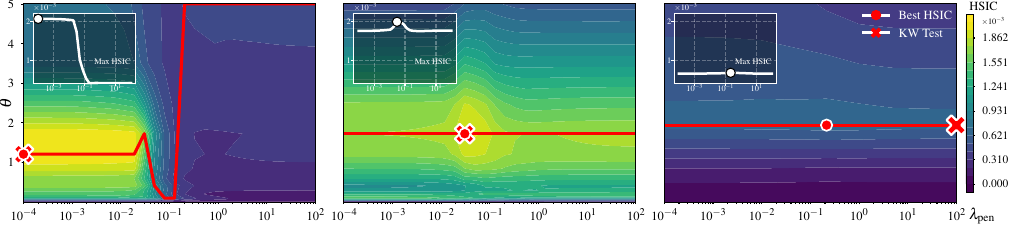}
    \caption{\(\mathrm{HSIC}\) contour plots for asymmetric noise distribution (left) and symmetric ones (middle, right). 
    Left: lower \(\lambda_{\mathrm{pen}}\) values are clearly favored by HSIC.
    Middle: allowing for some asymmetry produces more adaptive bands than asymmetric or symmetric ones. 
    Right: all \(\lambda_{\mathrm{pen}}\) values achieve similar adaptivity. According to the KW test, the symmetric model is preferred over the highest \(\mathrm{HSIC}\) model.}
    \label{fig:contour plots for multiple noise distributions}
\end{figure*}

\begin{proposition}[Error bounds]
\label{prop:error bounds}
Let $\mathcal{H}$ be a RKHS with associated kernel $k$ which satisfies Assumption \ref{assumption:paper:rkhs_1}. If $k$ is bounded such that $\Vert k\Vert_{\infty}:= \sup_{x\in\Omega} \sqrt{k(X,X)} < \infty$, then for any two PSD matrices $\mathbf{A}_{\mathrm{low}}, \mathbf{A}_{\mathrm{up}} \in \mathbb{S}^{n}_{+}$, we have
\begin{equation*}
    \Delta_f \leq \mathrm{M} \Vert k\Vert_{\infty} \,\lVert \mathbf{A}_{\mathrm{low}} - \mathbf{A}_{\mathrm{up}}\rVert_{\star}.
\end{equation*}
If furthermore \(\Omega\) satisfies Assumption \ref{assumption:paper:geometric of the domain} and \(\mathcal{H}\) satisfies Assumption \ref{assumption:paper:rkhs_2}, then for any finite subset \(\widehat{X}=\{X_i\}_{i=1}^n\) of \(\Omega\) and any two PSD matrices $\mathbf{A}_{\mathrm{low}}, \mathbf{A}_{\mathrm{up}} \in \mathbb{S}^{n}_{+}$, we have
\begin{equation*}
    \Delta_f \leq 2\,C_{\tilde{f}_{\mathbf{A}_{\mathrm{low}}},\tilde{f}_{\mathbf{A}_{\mathrm{up}}}} \rho_{\widehat{X}, \Omega}+\sqrt{\sum_{i=1}^{n}(\tilde{f}_{\mathbf{A}_{\mathrm{low}}}(X_i) - \tilde{f}_{\mathbf{A}_{\mathrm{up}}}(X_i))^{2}}
\end{equation*}
where $C_{\tilde{f}_{\mathbf{A}_{\mathrm{low}}},\tilde{f}_{\mathbf{A}_{\mathrm{up}}}}=2d\mathrm{D}\mathrm{M}\lVert \mathbf{A}_{\mathrm{low}}-\mathbf{A}_{\mathrm{up}} \rVert_{\star}$ with \(\mathrm{D}\) and \(\mathrm{M}\) constants depending on kernel \(k\). \(\rho_{\widehat{X}, \Omega}\) is the so-called fill-in distance defined by \(\rho_{\widehat{X}, \Omega} = \sup_{x\in\Omega}\min_{X_i\in\widehat{X}}\lVert X-X_i\rVert\).
\end{proposition}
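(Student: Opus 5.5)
The plan is to work with the difference function $g := \tilde{f}_{\mathbf{A}_{\mathrm{low}}} - \tilde{f}_{\mathbf{A}_{\mathrm{up}}} = \bs{\Phi}(\cdot)^{\top}(\mathbf{A}_{\mathrm{low}}-\mathbf{A}_{\mathrm{up}})\bs{\Phi}(\cdot)$. This is a quadratic form in the features with a symmetric, though not PSD, matrix $\mathbf{B} := \mathbf{A}_{\mathrm{low}}-\mathbf{A}_{\mathrm{up}}$.

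For the first bound, I diagonalize $\mathbf{B} = \sum_j \mu_j \mathbf{u}_j\mathbf{u}_j^{\top}$ and write $g(x) = \sum_j \mu_j\, h_j(x)^2$. Here $h_j = \sum_i (\mathbf{u}_j)_i\, \psi_i$, and $\psi_i$ are the functions whose evaluations form $\bs{\Phi}(x)$. Since $\bs{\Phi}(x) = \mathbf{V}^{-\top}\mathbf{k}(x)$ with $\mathbf{K}=\mathbf{V}^{\top}\mathbf{V}$, the $\psi_i$ form an orthonormal basis of $\mathrm{span}\{k(X_i,\cdot)\}$ in $\mathcal{H}$. Therefore $\lVert h_j\rVert_{\mathcal{H}} = \lVert \mathbf{u}_j\rVert = 1$.

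Assumption \ref{assumption:paper:rkhs_1} then gives $h_j^2\in\mathcal{H}$ with $\lVert h_j^2\rVert_{\mathcal{H}}\le \mathrm{M}$. By the reproducing property and Cauchy--Schwarz, $\lvert h_j(x)^2\rvert = \lvert\langle h_j^2, k(x,\cdot)\rangle\rvert \le \mathrm{M}\lVert k\rVert_\infty$. Summing over $j$ yields $\lvert g(x)\rvert \le \mathrm{M}\lVert k\rVert_\infty \sum_j\lvert\mu_j\rvert$, which is the claimed bound. The same computation also shows $\lVert g\rVert_{\mathcal{H}}\le \mathrm{M}\lVert \mathbf{B}\rVert_\star$, and I will reuse this below.

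For the second bound, I use a fill-distance (sampling-inequality) argument. Fix $x\in\Omega$ and choose $X_i$ with $\lVert x-X_i\rVert\le\rho_{\widehat{X},\Omega}$. Then
\[
\lvert g(x)\rvert \le \lvert g(x)-g(X_i)\rvert + \lvert g(X_i)\rvert .
\]
The second term is bounded by $\sqrt{\sum_i g(X_i)^2}$. For the first, I bound $\lvert g(x)-g(X_i)\rvert$ by $\sup_{z}\lVert\nabla g(z)\rVert_1\,\lVert x-X_i\rVert$ along a path in $\Omega$. Assumption \ref{assumption:paper:geometric of the domain} supplies the geometry needed: $x$ lies in a ball $\mathrm{B}_r(s)\subset\Omega$, and the factor $2$ in front of $C\rho$ accounts for possibly routing through the ball centre. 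Each partial derivative satisfies $\lvert\partial^{\alpha}g(z)\rvert = \lvert\langle g, \partial^{\alpha}k(z,\cdot)\rangle\rvert \le \lVert g\rVert_{\mathcal{H}}\,\mathrm{D}$ by Assumption \ref{assumption:paper:rkhs_2}. Summing over the $d$ directions gives a Lipschitz constant $d\,\mathrm{D}\,\mathrm{M}\lVert\mathbf{B}\rVert_\star$ up to the factor $2$ absorbed in $C$. Taking the supremum over $x$ concludes.

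The main obstacle is the Lipschitz/path step. If the segment $[X_i,x]$ leaves $\Omega$, the mean value theorem cannot be applied directly. I would first handle this with the ball structure, bounding via a path of length at most twice $\rho$, or by allowing a factor $2$ slack. If that fails, I would invoke the local lemma of \citet{rudi2025finding} for functions in such RKHSs, which states exactly this type of bound with constant $2d\mathrm{D}$ times the RKHS norm.
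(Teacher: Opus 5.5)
Your proposal is correct and is essentially the paper's proof. For the first bound, you combine $\lvert g(x)\rvert\le\lVert g\rVert_{\mathcal{H}}\lVert k\rVert_\infty$ with $\lVert g\rVert_{\mathcal{H}}\le\mathrm{M}\lVert\mathbf{A}_{\mathrm{low}}-\mathbf{A}_{\mathrm{up}}\rVert_\star$. You prove this directly through the orthonormal empirical features $\psi_i$, whereas the paper cites Lemma 9 of \citet{rudi2025finding} and transfers nuclear norms with a partial-isometry lemma. For the second bound, you apply a fill-distance sampling inequality to $g$, with first derivatives controlled by $\mathrm{D}\lVert g\rVert_{\mathcal{H}}$. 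The paper does the same but simply cites that inequality, which is your fallback.

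If you prove the path step yourself, ``routing through the centre'' is not what gives length $2\rho_{\widehat{X},\Omega}$. Instead, move $x$ toward the centre $s$ by at most $\rho_{\widehat{X},\Omega}$ to a point $z$ with $\lVert z-s\rVert\le r-\rho_{\widehat{X},\Omega}$. The sample nearest to $z$ then lies in $\overline{\mathrm{B}_r(s)}$, and convexity of the ball keeps the segment from $x$ to it inside $\Omega$. This construction requires $\rho_{\widehat{X},\Omega}\le r$, a smallness condition the statement leaves implicit; without it the segment argument fails, e.g.\ for disconnected $\Omega$.
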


The first part of \Cref{prop:error bounds} gives an upper bound which involves the penalty on the operators, while the second part relates to the penalty on the training set.

\subsection{Hyperparameter tuning}
\label{sec:hyperparameter tuning}

In our experiments, we observe that \(\lambda_{(\cdot)1}\) and \(\lambda_{(\cdot)2}\) have small impact on the estimated bands, which confirms the findings of \citet{allain2025scalableandadaptivepredictionbandsusingkerenlsumofsquares}: we thus propose to fix their value at \(\lambda_{(\cdot)1} = \lambda_{(\cdot)2} = 1\). On the other hand, 
\(b, \lambda_{\mathrm{pen}}\) and \(\theta_{\mathrm{low}}, \theta_{\mathrm{up}}\) play an important role on the shape of the prediction bands. Actually, the width penalty \(b\) is compensated by the kernel lengthscales \(\theta_{\mathrm{low}}\) and \(\theta_{\mathrm{up}}\) (see Appendix \ref{sec:implementation details}), such that it is sufficient to fix \(b\) and optimize the lengthscales. We thus focus now exclusively on \((\theta_{\mathrm{low}}, \theta_{\mathrm{up}}, \lambda_{\mathrm{pen}})\).

Given that adaptivity is a critical property, it can be used as a principled objective for hyperparameter selection.
Perfectly adaptive intervals would satisfy the conditional coverage \(\mathbb{P}(Y_{N+1} \in \widehat{C}_{N}(X_{N+1}) \vert X_{N+1} = x) \geq 1 - \alpha\), but such coverage is unfortunately impossible to achieve in a distribution-free setting \citep{vovk2012conditionalvalidityinductiveconformal, barber2020limitsdistributionfreeconditionalpredictive}. Alternatively, we consider a relaxed version of conditional coverage \(p_{\mathcal{D}_{N}} = \mathbb{P}(Y_{N+1} \in \widehat{C}_{\mathcal{D}_N}(X_{N+1}) \vert \mathcal{D}_N, X_{N+1} \in \omega_{X})\), where we condition on \(X\) being in a small neighborhood \(\omega_{X} \in \mathcal{F}_{X}\) from the event space \(\mathcal{F}_{X}\) such that for all \(x\in \mathcal{X}\), \(\mathbb{P}(x\in \omega_{X})\geq \delta\).
\citet{deutschmann2023adaptiveconformalregressionjackknife} recently proved that such coverage with split CP can be controlled with the mutual information between the inputs and the score function. Their bound was generalized in \citet{allain2025scalableandadaptivepredictionbandsusingkerenlsumofsquares} for normalized scores, with the Hilbert Schmidt Independence Criterion (HSIC, \citep{gretton2005measuring}) between the residuals and the width of the intervals. The dependence measure is now between one-dimensional variables and quantified with HSIC to improve numerical stability. Interestingly, we show below that it is possible to generalize further their result for asymmetric scores as in \Cref{eq:asymmetric score function}.
\begin{proposition}
\label{prop:hyperparameter tuning criterion}
    Let \(\widehat{C}_{\mathcal{D}_N}\) be prediction intervals built from a score function \(S(X, Y) = \max\bigl(l(X) - Y, Y - u(X) \bigr)\) through split CP with \(\mathcal{D}_N=\mathcal{D}_n\cup \mathcal{D}_m\). Then:
    \begin{align*}
        p_{\mathcal{D}_{N}} &\geq 1 - \alpha 
        -\frac{1}{\delta}\sqrt{1-\frac{\alpha_1}{1-\alpha_2\mathrm{HSIC}(\tilde{r}_{\mathcal{D}_n},W_{\mathcal{D}_n})}}
    \end{align*}
    where \(\tilde{r}_{\mathcal{D}_n} = \lvert Y - (\widehat{u}_{\mathcal{D}_n}(X)+\widehat{l}_{\mathcal{D}_n}(X))/2 \rvert\) are the centered residuals and \(W_{\mathcal{D}_n}=(\widehat{u}_{\mathcal{D}_n}(X)-\widehat{l}_{\mathcal{D}_n}(X))/2\) is the width of the prediction bands. \(\alpha_1\) is a constant and \(\alpha_2\) only depends on the kernel used for \(\mathrm{HSIC}\).
\end{proposition}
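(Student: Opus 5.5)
The plan is to reduce the asymmetric score to a symmetric normalized-type score, then reuse the chain of arguments of \citet{deutschmann2023adaptiveconformalregressionjackknife} and \citet{allain2025scalableandadaptivepredictionbandsusingkerenlsumofsquares}.

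\emph{Step 1: rewrite the score.} Set $c(X)=(u(X)+l(X))/2$ and $W(X)=(u(X)-l(X))/2$. Then $l=c-W$ and $u=c+W$. A direct computation gives
\begin{equation*}
S(X,Y)=\max\bigl(c(X)-Y,\;Y-c(X)\bigr)-W(X)=\lvert Y-c(X)\rvert-W(X)=\tilde r-W .
\end{equation*}
The coverage event $\{Y\in\widehat C_{\mathcal D_N}(X)\}=\{S\le\widehat q_\alpha\}$ is therefore $\{\tilde r\le W+\widehat q_\alpha\}$. This is a function of the pair $(\tilde r,W)$ only. Conditionally on $\mathcal D_n$, the functions $l$ and $u$ are fixed, so $(\tilde r,W)$ is a pair of one-dimensional random variables. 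This is exactly the setting of the cited bounds.

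\emph{Step 2: localize and pass to total variation.} Conditioning on $\mathcal D_n$, write the local miscoverage on $\omega_X$ as the joint law of $(\tilde r,W)$ restricted to $X\in\omega_X$. Marginal validity of split CP gives $\mathbb P(S\le\widehat q_\alpha)\ge1-\alpha$. If $\tilde r$ and $W$ were independent, conditioning on $X\in\omega_X$ would change the law of $(\tilde r,W)$ only through $W$. The coverage of $\{\tilde r\le W+q\}$ would then be controlled uniformly, as in the Deutschmann et al. argument.

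The deviation from independence is bounded by
\begin{equation*}
\bigl|p_{\mathcal D_N}-(1-\alpha)\bigr|\le\frac1\delta\,\mathrm{TV}\bigl(P_{\tilde r,W},\,P_{\tilde r}\otimes P_W\bigr).
\end{equation*}
The factor $1/\delta$ comes from dividing by $\mathbb P(X\in\omega_X)\ge\delta$. I would follow the proof in \citet{allain2025scalableandadaptivepredictionbandsusingkerenlsumofsquares}, replacing their normalized pair (residual, $f$) with $(\tilde r,W)$.

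\emph{Step 3: from total variation to HSIC.} Bound TV by the Hellinger distance, $\mathrm{TV}\le\sqrt{1-BC^2}$ up to constants, where $BC$ is the Bhattacharyya coefficient. Then lower bound $BC$ through HSIC, using the inequality already used in the symmetric case. This yields a lower bound of the form $\alpha_1/(1-\alpha_2\mathrm{HSIC})$, with $\alpha_2$ depending on the HSIC kernel (its sup norm / characteristic constant). Substituting gives the stated bound.

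\emph{Main obstacle.} Step 1 is elementary. The only real care is in Step 2: the event must be expressed through $(\tilde r,W)$ rather than through $(X,Y)$. Only then can the data-processing inequality be applied to the map $(X,Y)\mapsto(\tilde r,W)$, with the localization to $\omega_X$ inherited through $W$. The risk is that conditioning on $\omega_X$ also acts on $\tilde r$, not only on $W$. If that happens, I would do as the previous work does: state the bound for neighborhoods defined through the width variable, which is the implicit convention behind $\delta$.
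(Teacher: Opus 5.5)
Your Step~1 is fine, and $S=\tilde r-W$ is the consistent way to write the score. Step~2, however, is false and has the wrong monotonicity, and Step~3 inherits the problem.

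The inequality $\lvert p_{\mathcal D_N}-(1-\alpha)\rvert\le\frac1\delta\,\mathrm{TV}(P_{\tilde r,W},P_{\tilde r}\otimes P_W)$ says that independence of $\tilde r$ and $W$ forces exact local coverage. Take a constant band $W\equiv w$ with heteroscedastic noise $Y=m(X)+\sigma(X)\epsilon$. Then $\tilde r$ and $W$ are trivially independent and the right-hand side vanishes. Yet on a neighborhood $\omega_X$ where $\sigma$ is large, the coverage is far below $1-\alpha$.

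Restricting to width-defined neighborhoods does not rescue it. Let $W\in\{1,10\}$ with equal probabilities and $\tilde r\sim\mathrm{Exp}(1)$ independent of $W$. Calibrating $\{\tilde r\le W+q\}$ to marginal level $0.9$ gives coverage of about $0.8$ on $\{W=1\}$. Independence between centered residual and width is exactly the non-adaptive case. The statement's bound \emph{improves} as $\mathrm{HSIC}(\tilde r,W)$ grows, so no argument that bounds the deviation by the dependence within $(\tilde r,W)$ can produce it.

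The missing idea is that local coverage is governed by the dependence between the input and the \emph{score}. The paper starts from the bound of Deutschmann et al.,
\[
p_{\mathcal D_N}\ge 1-\alpha-\frac1\delta\sqrt{1-\exp(-\mathrm{MI}(X,S))},
\]
and then shows that $\mathrm{MI}(W,\tilde r)$ enters $\mathrm{MI}(X,S)$ with a \emph{minus} sign.

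Since $W$ is a function of $X$, $\mathrm{MI}(W,\tilde r\mid X)=0$, and the chain rule gives $\mathrm{MI}(X,\tilde r\mid W)=\mathrm{MI}(X,\tilde r)-\mathrm{MI}(W,\tilde r)$. Because $S$ is a function of $(\tilde r,W)$, this yields
\[
\mathrm{MI}(X,S)\le\mathrm{MI}((X,W),(\tilde r,W))=\mathrm{MI}(X,\tilde r\mid W)+H(W)\le\mathrm{MI}(X,\tilde r)-\mathrm{MI}(W,\tilde r)+H(X).
\]
Hence $1-\exp(-\mathrm{MI}(X,S))\le1-\alpha_1\exp(\mathrm{MI}(W,\tilde r))$ with $\alpha_1=\exp(-\mathrm{MI}(X,\tilde r)-H(X))$.

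One then needs a \emph{lower} bound on $\mathrm{MI}(W,\tilde r)$, which is the direction kernel discrepancies actually supply. Bretagnolle--Huber together with $\mathrm{MMD}_k\le2\sqrt{M_k}\,\mathrm{TV}$ give
\[
1-\exp(-\mathrm{MI}(W,\tilde r))\ge\mathrm{TV}^2(P_{W\tilde r},P_W\otimes P_{\tilde r})\ge\alpha_2\mathrm{HSIC}(W,\tilde r),\qquad \alpha_2=1/(4M_k).
\]
Equivalently, $\exp(\mathrm{MI}(W,\tilde r))\ge1/(1-\alpha_2\mathrm{HSIC})$, and plugging this in gives the statement.

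Your Step~3 requires the reverse inequality: an upper bound on a TV or Hellinger distance by HSIC, i.e.\ a lower bound on the Bhattacharyya coefficient between $P_{\tilde r W}$ and $P_{\tilde r}\otimes P_W$ that increases with HSIC. HSIC only lower-bounds TV, and that coefficient can only shrink as dependence grows, so such a bound cannot hold.
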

The proof can be found in \Cref{sec:apdx:local coverage}.
\Cref{prop:hyperparameter tuning criterion} provides an intuitive framework: a stronger dependence between the width of the intervals and the absolute centered residuals promotes neighborhood coverage, a relaxed version of conditional coverage. To target local coverage specifically, we thus propose to maximize \(\mathrm{HSIC}(\tilde{r}_{\mathcal{D}_n},W_{\mathcal{D}_n})\). This allows for the precise tuning of \(\theta_{\mathrm{low}}, \theta_{\mathrm{up}}\) according to this criterion, where HSIC is estimated with a cross-validation procedure.
To address potentially very small values of \(\mathrm{HSIC}\), \citet{allain2025scalableandadaptivepredictionbandsusingkerenlsumofsquares} advocated the use of a test of independence to determine if its value is significantly different from \(0\). If not, the simpler homoscedastic model with arbitrary large \(\theta_{\mathrm{low}}, \theta_{\mathrm{up}}\) is chosen.

The final hyperparameter \(\lambda_{\mathrm{pen}}\) determines the choice between symmetric and asymmetric prediction bands.
A natural approach is to select the level of symmetry that maximizes adaptivity by identifying \(\theta_{\mathrm{low}}, \theta_{\mathrm{up}}\) with highest \(\mathrm{HSIC}\) for each \(\lambda_{\mathrm{pen}}\). However, maximum \(\mathrm{HSIC}\) values may remain close across different \(\lambda_{\mathrm{pen}}\) values.
This raises a question similar to the homoscedastic case, where we ask if the variation in \(\mathrm{HSIC}\) is statistically significant across multiple values of \(\lambda_{\mathrm{pen}}\). To answer this, we perform a Kruskal-Wallis rank test over bootstrapped \(\mathrm{HSIC}\) values. If significant differences exist, we select \((\theta_{\mathrm{low}}, \theta_{\mathrm{up}}, \lambda_{\mathrm{pen}})\) that maximizes \(\mathrm{HSIC}\). Conversely, if the differences are not significant, we conclude that the asymmetric bands do not offer a clear advantage over symmetric ones. In that case, we default to the simpler, symmetric model: we show this phenomenon in \Cref{fig:contour plots for multiple noise distributions}. Finally, identifying the best penalty can be computationally expensive, as it requires evaluating multiple \(\lambda_{\mathrm{pen}}\). But we can take full advantage of our dual formulations, by using a warm-start approach which reduces computational cost by $\sim 65\%$ when optimizing over multiple \(\lambda_{\mathrm{pen}}\) values, making hyperparameter search practical even for $n \sim 1000$, see Appendix \ref{sec:implementation details}.

\section{Experiments}
\label{sec:experiments}

We compare our method against established baselines representative of 
different approaches to CP: CQR as the standard 
asymmetric method using quantile regression \footnote{We implement CQR with random forests following the original 
paper's recommendations for tabular data. While other base learners 
(gradient boosting, neural networks) could be used, random forests provide 
a strong baseline for our dataset sizes.}, homoscedastic GP as a 
symmetric adaptive baseline, and heteroscedastic GP \citep{binois2018practical}  
as an adaptive baseline that can capture varying noise scales. While 
additional recent methods exist (e.g., DCP, locally weighted CP), these 
baselines cover the key trade-offs: CQR provides asymmetric intervals but 
relies on quantile estimation which can be challenging in small samples, 
GPs provide probabilistic predictions with theoretical guarantees but 
typically assume Gaussian noise. Our method aims to combine the strengths 
of both while automatically detecting when asymmetry is beneficial.

\subsection{Synthetic datasets}
\label{sec:synthetic datasets}

\Cref{tab:synthetic datasets} details the \(4\) datasets considered here: datasets \(1\) and \(2\) (symmetric) and datasets \(3\) and \(4\) (asymmetric).

\begin{table}[htbp]
    \centering
    \begin{tabular}{c l l l l}
    \toprule
    \textbf{Dataset} & \textbf{Input} $X$ & \textbf{Mean} $\mu(X)$ & \textbf{Scale} $\sigma(X)$ & \textbf{Noise} $\epsilon$ \\
    \midrule
    \(1\) & $\mathcal{U}(-1, 1)$ & $\omega(X)\cdot\mathbb{I}_{X \le c} + X\cdot\mathbb{I}_{X > c}$ & $\sqrt{0.1 + 2X^2}$ & $\mathcal{N}(0, 1)$ \\
    \(2\) & $\mathcal{N}(0, 1)$ & $0.5X$ & $|\sin(X)|$ & $\mathcal{N}(0, 1)$ \\
    \(3\) & $\mathcal{U}(-1, 1)$ & $\sin(5X)$ & $X$ & $\text{LogNormal}(1)$ \\
    \(4\) & $\mathcal{U}(-1, 1)$ & see \ref{sec:additional numerical experiments} & see \ref{sec:additional numerical experiments} & see \ref{sec:additional numerical experiments}\\
    \bottomrule
    \end{tabular}
    \caption{Synthetic datasets, \(b=10\), \(n=100\), 20 repetitions. Target is $Y = \mu(X) + \sigma(X)\epsilon$, see Appendix \ref{sec:additional numerical experiments} for details.}
    \label{tab:synthetic datasets}
\end{table}

To assess the quality of prediction intervals, we rely on mean width and introduce two global measures of local coverage. The natural measure of adaptivity is local coverage, which can be estimated for synthetic datasets with conditional samples. We could then compare methods with the \emph{absolute coverage gap}, the distance to the target level \(\alpha\):  \(\mathrm{ACG} = \frac{1}{n_X} \sum_{i=1}^{n_X} \vert \hat{p}(X_i) - (1-\alpha)\vert\) where \(\hat{p}\) is an estimate of the local coverage obtained with conditional samples \(\{(X_i,\{Y_{ij}\}_{j=1}^{n_Y})\}_{i=1}^{n_X}\). However, in an asymmetric case, local coverage can be misleading as it does not account for the distribution tails: instead, we seek bands that satisfy lower and upper local coverage at \(1-\alpha/2\). This readily implies that local coverage will be at least \(1-\alpha\), but the converse does not hold. We thus consider low and up alternate versions of the absolute coverage gap: \(\mathrm{ACG}^{(\cdot)} = \frac{1}{n_X} \sum_{i=1}^{n_X} \vert \hat{p}^{(\cdot)}(X_i) - (1-\alpha/2)\vert\) where \(\hat{p}^{(\cdot)}\) are low/up local coverage estimations. These metrics allow to capture defects in both tails of the noise distribution, see Appendix \ref{sec:additional numerical experiments} for an in-depth discussion. We denote by \(\mathrm{ACG}^{c}\) the combination of the low/up variants.

\begin{figure}[htbp]
    \centering
    \includegraphics[width=\linewidth]{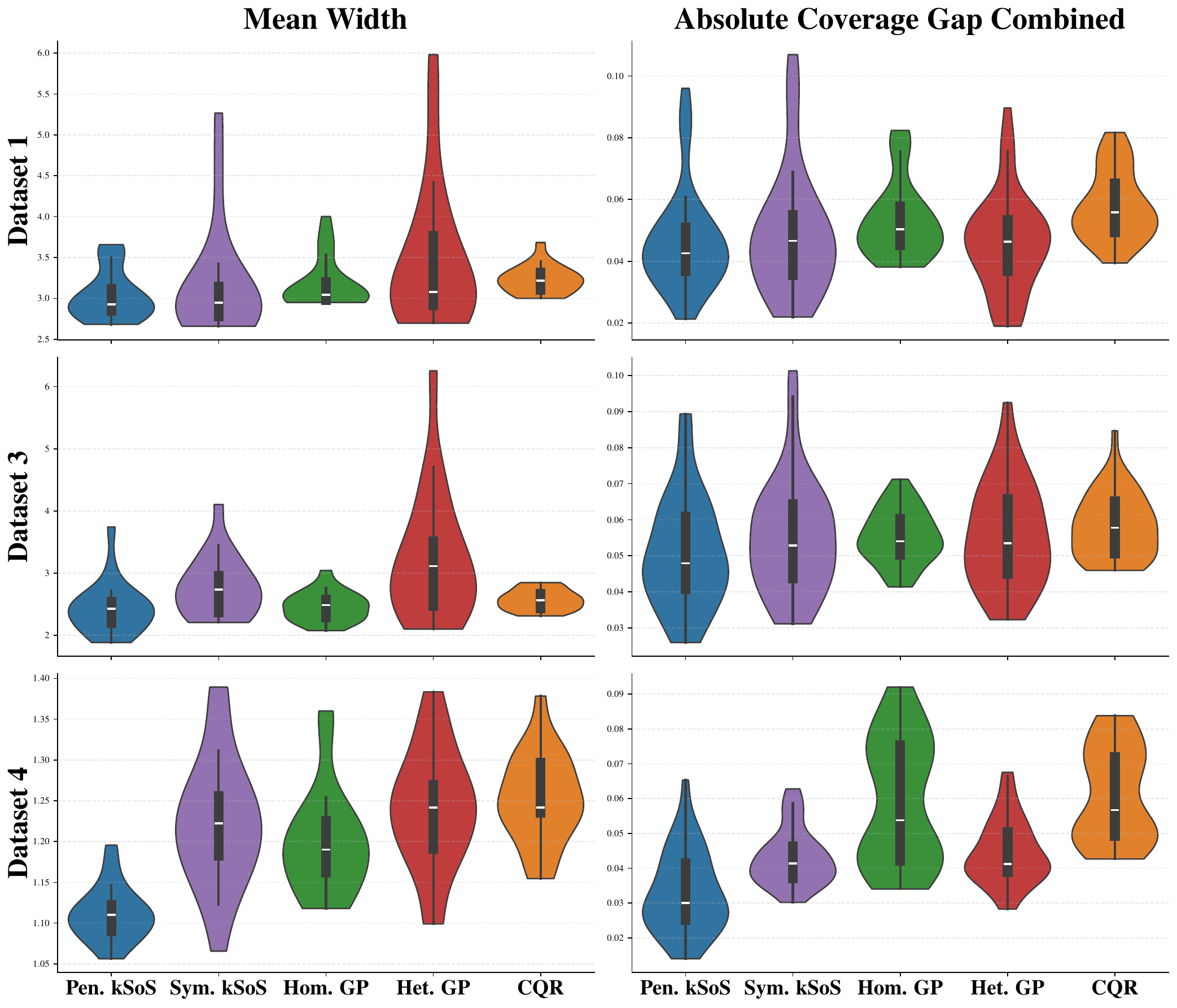}
    \caption{Mean width (left) and absolute coverage gap combined (right) for datasets \(1\), \(3\) and \(4\) with \(n=100\), \(20\) repetitions.}
    \label{fig:synthetic results datasets 1 and 4}
\end{figure}

For dataset \(1\) with symmetric noise, \Cref{fig:synthetic results datasets 1 and 4} top row, shows that both CQR and homGP produce intervals with poor \(\mathrm{ACG}^{c}\). On the other hand, pen. kSoS achieves slightly lower \(\mathrm{ACG}^{c}\) than hetGP and sym. kSoS, but with smaller mean width than hetGP and equivalent to sym. kSoS. The superior performance of penalized kSoS over symmetric kSoS possibly comes from a small sample counterbalancing effect and the biased predictive model, see \Cref{fig:symmetric vs asymmetric choice dataset 1} for a discussion.
For the asymmetric noise distribution in dataset \(3\), we observe in \Cref{fig:synthetic results datasets 1 and 4} middle row, that homGP and pen. kSoS are the only methods to produce tight intervals. However, homGP has much higher \(\mathrm{ACG}^{c}\) while pen. kSoS achieves the lowest \(\mathrm{ACG}^{c}\) among all methods.

\begin{figure}[htbp]
    \centering
    \includegraphics[]{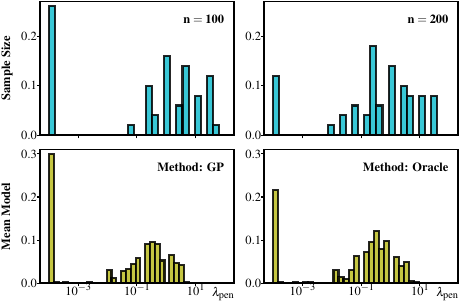}
    \caption{Histogram of selected \(\lambda_{\mathrm{pen}}\) among \(500\) repetitions. When the sample size increases from \(n=100\) to \(n=200\) (top row, dataset \(2\)), and when the predictive model changes from a Gaussian Process to the oracle (bottom row, dataset \(1\)), the purely asymmetric model is selected less often. Our hyperparameter tuning method favors a symmetric model when the sample size increases and when the learned predictive model is more accurate.}
    \label{fig:symmetric vs asymmetric choice dataset 1}
\end{figure}

\begin{figure*}[htbp]
    \centering
    \includegraphics[scale=0.47]{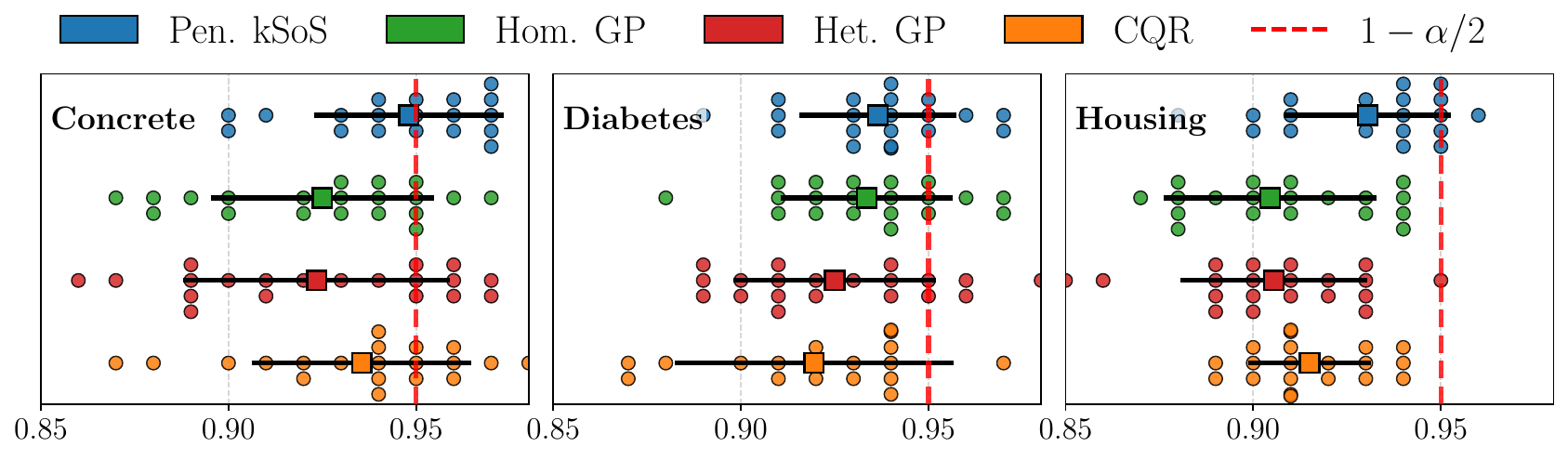}
    \begin{tabular}{l|ccccc}
        \toprule
        Dataset & CQR & Het GP & Hom GP & Pen. kSoS \\
        \midrule
        Concrete   & $22.68 \pm 1.06$ & $\mathbf{21.42} \pm 1.58$ & $\mathbf{21.32} \pm 1.58$ & $\mathbf{21.56} \pm 1.19$ \\
        Bike       & $216.19 \pm 6.54$ & $196.91 \pm 14.27$ & $168.57 \pm 7.83$ & $\mathbf{162.66} \pm 5.86$ \\
        Diabetes   & $\mathbf{189.07} \pm 12.59$ & $193.04 \pm 16.01$ & $194.86 \pm 15.8$ & $\mathbf{190.23} \pm 13.58$ \\
        Housing log   & $0.98 \pm 0.039$ & $0.86 \pm 0.041$ & $\mathbf{0.77} \pm 0.04$ & $0.83 \pm 0.037$\\
        Housing    & $1.76 \pm 0.05$ & $1.73 \pm 0.14$ & $\mathbf{1.60 }\pm 0.08$ & $1.84 \pm 0.14$\\
        MPG        & $9.86 \pm 1.06$ & $9.40 \pm 1.31$ & $\mathbf{9.15} \pm 1.02$ & $\mathbf{9.35} \pm 1.13$ \\
        Boston        & $12.51 \pm 1.18$ & $10.51 \pm 0.92$ & $\mathbf{9.58} \pm 0.80$ & $11.42 \pm 1.22$ \\
        Energy        & $1.46 \pm 0.16$ & $1.45 \pm 0.10$ & $1.74 \pm 0.08$ & $\mathbf{1.36} \pm 0.12$ \\
        Miami        & $31.5e^5 \pm 1.17e^5$ & $34.1e^5 \pm 8.2e^5$ & $29.1e^5 \pm 2.29e^5$ & $\mathbf{26.4e^5} \pm 1.45e^5$ \\
        Sulfur        & $\mathbf{0.05} \pm 0.003$ & $0.052 \pm 0.009$ & $\mathbf{0.050} \pm 0.003$ & $\mathbf{0.050} \pm 0.002$ \\
        Power        & $13.27 \pm 0.30$ & $\mathbf{13.00} \pm 0.40$ & $\mathbf{12.91} \pm 0.36$ & $\mathbf{12.97} \pm 0.29$ \\
        Yacht        & $0.611 \pm 0.085$ & $\mathbf{0.579} \pm 0.086$ & $0.614 \pm 0.082$ & $\mathbf{0.577} \pm 0.25$ \\
        \bottomrule
    \end{tabular}
\caption{Left: mean width of prediction intervals on the test set for twelve real-world datasets (median$\pm$sd on 10 repetitions, values within $1\%$ of the minimum in bold). Right: worst-set coverage low/up combined for three datasets.}\label{tab:meanwidth_realworld_main}
\end{figure*}

Finally, for dataset \(4\), pen. kSoS has both mean width and \(\mathrm{ACG}^{c}\) much lower than all competitors.
Penalized kSoS is the only method to achieve good adaptivity while maintaining small mean width in both types of noise distribution, see Appendix \ref{sec:additional numerical experiments} for additional test cases.

\subsection{Real-word datasets}
\label{sec:real world datasets}

Finally, we consider \(12\) real-word datasets commonly used for regression. To measure the performance of prediction bands we again consider mean width, but \(\mathrm{ACG}\) is now out of reach. To measure adaptivity in practice, we rely on the worst-set coverage introduced by \citet{thurin2025optimal}:  \(\min_{l=1,\ldots,L} \mathbb{P}(Y_{N+1}\in\widehat{C}_{\mathcal{D}_N}(X_{N+1}) \lvert X_{N+1}\in \mathcal{R}_l)\) where \(\{\mathcal{R}_l\}_{l=1,\ldots,L}\) is a partition of \(\Omega\). As before, we consider low/up variants and denote \(\mathrm{WSC}^{c}\) their combination.

\Cref{tab:meanwidth_realworld_main} reports the obtained mean widths and \(\mathrm{WSC}^{c}\). 
On Concrete, both GPs and penalized kSoS have the smallest mean width, however only penalized kSoS achieves \(\mathrm{WSC}^{c}\) close to \(1-\alpha/2\). On Diabetes, homGP and penalized kSoS attain similar \(\mathrm{WSC}^{c}\), but penalized kSoS has much smaller mean width. On this dataset, CQR exhibits similar mean width as penalized kSoS, but underperforms for \(\mathrm{WSC}^{c}\). On Housing, penalized kSoS has a slightly larger mean width but is, by far, the closest one to  \(1-\alpha/2\) in terms of \(\mathrm{WSC}^{c}\), meaning that it has much better adaptivity. On the remaining datasets, penalized kSoS usually performs better in terms of \(\mathrm{WSC}^{c}\), and when ties occur, it reaches at least similar mean width, if not smaller (see Appendix \ref{sec:additional numerical experiments} for detailed results on all datasets). Penalized kSoS is always first or close second when measured by \(\mathrm{WSC}^{c}\) and mean width, a robustness never achieved by any other method.

\section{Conclusion}
\label{sec:conclusion}
In this work, we introduce a flexible framework for asymmetric conformal prediction bands using kernel sum-of-squares. By incorporating two novel symmetric penalties, our approach seamlessly transitions between symmetric and asymmetric bands. We establish representer theorems that reduce these infinite-dimensional problems to SDP formulations, and derive dual versions to ensure scalability to larger datasets. Beyond the core optimization, we contribute two significant tuning strategies: an HSIC-based approach for optimizing kernel lengthscales to enhance adaptivity, and a data-driven method for calibrating symmetric penalization levels. Our results demonstrate that this flexibility enhances resilience against small sample sizes, and compensates for bias in predictive models. Crucially, our experiments illustrate that while our framework can automatically detect symmetric noise distributions, allowing for controlled asymmetry can often yield superior adaptivity even when the noise itself is symmetric.

While effective, the current approach faces two primary constraints. First, consistent with standard kernel methods, performance is best suited for dimensions up to approximately 15. However, the framework remains compatible with specific kernels for structured objects such as time series or graphs, which would allow for higher-dimensional applications. The current approach scales efficiently to $n\sim 1000$ via dual formulations 
and warm-start optimization. For larger datasets, the framework remains 
compatible with kernel approximation techniques (Nyström, random features) 
and mini-batch approaches, though these extensions require careful 
theoretical analysis of their impact on bands, which is a promising 
direction for future work. The $\mathcal{O}(n^3)$ eigendecomposition in dual optimization 
could also benefit from GPU acceleration or randomized linear algebra 
techniques \citep{halko2011}.

\bibliographystyle{plainnat}
\bibliography{main}


\newpage
\appendix
\onecolumn

This appendix is organized as follows:
\begin{itemize}
\item \textbf{Section A: Theoretical Proofs}
\begin{itemize}
\item A.1: Asymmetric problem without penalty (\Cref{thm:representer theorem for asymmetric problem}, \Cref{prop:dual formulation for asymmetric problem})
\item  A.2: Asymmetric problem with operator penalty (\Cref{thm:representer theorem for penalty}, \Cref{prop:dual formulation penalty})
\item A.3: Asymmetric problem with training set penalty (\Cref{thm:representer theorem for penalty}, \Cref{prop:dual formulation penalty})
\item A.4: Error bounds for both penalties (\Cref{prop:error bounds})
\item A.5: HSIC-based coverage bounds (\Cref{prop:hyperparameter tuning criterion})
\end{itemize}

\item \textbf{Section B: Additional Experiments}
\begin{itemize}
\item B.1: Cross-validation and Kruskal-Wallis test details
\item B.2: Implementation details, computational complexity, warm-start
\item B.3: Extended experimental results
\begin{itemize}
\item Evaluation metrics discussion and justification
\item Symmetric vs asymmetric calibration comparison
\item Additional synthetic test cases (Cases 1-5)
\item Complete real-world dataset descriptions and results
  \end{itemize}
  \end{itemize}
    \end{itemize}

Key results we reference from the main text:
\begin{itemize}
\item For scalability: Section \ref{sec:implementation details} (warm-start), Figure \ref{fig:cold_vs_warm_start_iterations}
\item For metric justification: Section \ref{sec:additional numerical experiments} (Figure \ref{fig:mw_loccov_case_5})
\item For penalty selection patterns: Section \ref{sec:implementation details}  (Figure \ref{fig:contour_plots_evolution_with_b}), \ref{sec:additional numerical experiments} (Figure \ref{fig:hist_lambda_rw})
  \end{itemize}
  
\section{Proofs}

To begin, we first introduce some notation common to several proofs. Since we are working with kernel SoS functions depending on operators defined on different RKHSs, we use subscripts $(\cdot)$ to differentiate them. Associated to each kernel SoS function, we thus consider a RKHS $\mathcal{H}_{(\cdot)}$, a kernel $k_{(\cdot)}$, a kernel matrix $\mathbf{K}_{(\cdot)}$, a feature map $\phi_{(\cdot)}$ and a column vector
\begin{align*}
    \mathbf{k}_{(\cdot)}(X) = \left(k_{(\cdot)}(X_{1}, X), \ldots, k_{(\cdot)}(X_{n}, X)\right)^{\top}.
\end{align*}
defined for all $X \in \mathcal{X}$.

For kernel matrices $[\mathbf{K}_{(\cdot)}]_{ij}=k_{(\cdot)}(X_i,X_j)$, we further consider their Cholesky decomposition and empirical feature map
\begin{align*}
    \mathbf{K}_{(\cdot)} = \mathbf{V}^{\top}_{(\cdot)} \,\mathbf{V}_{(\cdot)} \quad \mathrm{and} \quad \bs{\Phi}_{(\cdot)}(X) = \mathbf{V}^{-T}_{(\cdot)}\mathbf{k}_{(\cdot)}(X).
\end{align*}
Depending on the theorems, $(\cdot)$ will refer to either $_\mathrm{low}$ and $_\mathrm{up}$, or will be used to enumerate a collection of kernel SoS functions $s=1,\ldots,p$.

Next, for a Hilbert space $\mathcal{H}$ we write $\mathcal{S}(\mathcal{H})$ the set of bounded Hermitian linear operators from $\mathcal{H}$ to $\mathcal{H}$ and $\mathcal{S}_{+}(\mathcal{H})$ those that are positive-definite.
We also write $\mathbb{S}^{n}= \mathbb{S}\left(\mathbb{R}^{n\times n}\right)$ the set of real, symmetric and square matrices of size $n$ and $\mathbb{S}_{+}^{n} = \mathbb{S}_{+}\left(\mathbb{R}^{n\times n}\right)$ the set of real, symmetric and positive-definite square matrices of size $n$. We also consider $m$ to be a fixed predictive model which has been trained separately.

In some of the following proofs, we will rely on a generalization of Theorem 1 from \citet{marteauferey2020nonparametricmodelsnonnegativefunctions} to different operators on $p$ different spaces, that we will particularize to our setting. We then start by proving this extension.

Let us consider a collection of $p$ operators
\begin{equation*}
    \mathcal{A} = \left(\mathcal{A}_1, \ldots, \mathcal{A}_p\right) \in \mathcal{K}^{C}\left(\mathcal{H}_1, \ldots, \mathcal{H}_p\right) \coloneq \mathcal{S}_{+}(\mathcal{H}_{1}) \otimes \ldots \otimes \mathcal{S}_{+}(\mathcal{H}_{p}),
\end{equation*}
and denote the following multivariate function
\begin{equation*}
    f_{\mathcal{A}}(X) = f_{\mathcal{A}_1, \ldots, \mathcal{A}_p}(X) = \left(f_{\mathcal{A}_1}(X), \ldots, f_{\mathcal{A}_p}(X)\right) \in \mathbb{R}_{+}^{p}, \; \forall X \in \mathcal{X}
\end{equation*}
where \(f_{\mathcal{A}_s}(X) = \langle\phi_{s}(X), \mathcal{A}_{s}\phi_{s}(X)\rangle_{\mathcal{H}_{s}}\), \(s=1,\ldots,p\). 
We also introduce below a specific class of regularizers.
\begin{assumption}
\label{assumption:regularizer}
    Let $\mathcal{H}$ be a Hilbert space, for any $\mathcal{A} \in \mathcal{S}(\mathcal{H})$, $\Omega(\mathcal{A})$ is of the form
    \begin{equation}
        \Omega(\mathcal{A}) = 
        \begin{cases}
            \mathrm{Tr}(q(\mathcal{A})) = \sum_k q(\sigma_k) \quad &\mathrm{if} \; \mathcal{A} = U\mathrm{Diag}(\sigma)U^{\top} \in \mathcal{S}_{\infty}(\mathcal{H}), \; \sum_k q(\sigma_k) < +\infty\\
            + \infty &\mathrm{otherwise},
        \end{cases}
    \end{equation}
    where $q\colon \mathbb{R} \rightarrow \mathbb{R}_{+}$ is:
    \begin{itemize}
        \item non decreasing on $\mathbb{R}_{+}$ with $q(0)=0$
        \item lower semi-continuous
        \item $q(\sigma) \underset{|\sigma|\rightarrow +\infty}{\longrightarrow} +\infty.$
    \end{itemize}
\end{assumption}
We can define for each operator $\mathcal{A}_s$ a penalty function $\Omega_{s}$, and consider the aggregated penalty function
\begin{equation}
\label{eq:penalty_functions_1}
    \Omega_{\mathrm{agg}}(\mathcal{A}) = \sum_{s=1}^{p}\Omega_{s}(\mathcal{A}_s).
\end{equation}

The following theorem is the main result and shows that \Cref{eq:inf_problem_p_operator} admits a solution that has a finite dimensional representation.

\begin{theorem}
\label{thm:generalization_mf_p_operators}
    Let $L$ be a lower semi-continuous and bounded below function and let $\Omega_{\mathrm{agg}}$ be defined as in \Cref{eq:penalty_functions_1} where each $\Omega_{s}, s=1,\ldots,p$ satisfies Assumption \ref{assumption:regularizer}. The problem
    \begin{equation}
    \label{eq:inf_problem_p_operator}
        \underset{
            \mathcal{A} \in \mathcal{K}^{C}\left(\mathcal{H}_1, \ldots, \mathcal{H}_p\right)
        }{\inf}
        L\left(f_{\mathcal{A}}(X_1), \ldots, f_{\mathcal{A}}(X_n)\right) + \Omega_{\mathrm{agg}}(\mathcal{A})
    \end{equation}
    admits a solution $\mathbf{A}^{\star}$, which can be written as 
    \begin{equation}
    \label{eq:solution_f}
        f_{\mathbf{A}^{\star}}(X) = \Bigl(\sum_{i,j=1}^{n}\mathbf{B}_{sij}\phi_{s}(X_i)\phi_{s}(X_j)^{\top}\Bigr)_{1\leq s\leq p}
    \end{equation}
    for $p$ matrices $\mathbf{B}_{s} \in \mathbb{R}^{n\times n}$, $\mathbf{B}_{s} \succeq 0$.
\end{theorem}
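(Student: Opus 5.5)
The proof will follow Theorem 1 of \citet{marteauferey2020nonparametricmodelsnonnegativefunctions}, run componentwise on the $p$ spaces. Two steps are needed: a projection argument showing that we may restrict to operators supported on the span of the data features, and an existence argument on the resulting finite-dimensional problem.

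\textbf{Projection.} For each $s$, let $\mathcal{H}_{s,n}=\mathrm{span}\{\phi_s(X_1),\ldots,\phi_s(X_n)\}$ and let $P_s$ be the orthogonal projector onto it. Given any feasible $\mathcal{A}=(\mathcal{A}_1,\ldots,\mathcal{A}_p)$, set $\mathcal{A}'_s=P_s\mathcal{A}_sP_s$. The new tuple has three properties.
\begin{itemize}
\item It stays in $\mathcal{K}^C$, since $P_s\mathcal{A}_sP_s\succeq 0$.
\item It leaves the data term unchanged: $P_s\phi_s(X_i)=\phi_s(X_i)$ gives $f_{\mathcal{A}'_s}(X_i)=f_{\mathcal{A}_s}(X_i)$ for all $i,s$, so $L$ takes the same value.
\item It does not increase the penalty: $\Omega_s(P_s\mathcal{A}_sP_s)\le\Omega_s(\mathcal{A}_s)$.
\end{itemize}
The third property is the one genuine inequality. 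It follows because the nonzero eigenvalues of $P\mathcal{A}P=(\mathcal{A}^{1/2}P)^{*}(\mathcal{A}^{1/2}P)$ coincide with those of $\mathcal{A}^{1/2}P\mathcal{A}^{1/2}\preceq\mathcal{A}$. By the min–max (Courant–Fischer) principle, the ordered eigenvalues satisfy $\sigma_k(P\mathcal{A}P)\le\sigma_k(\mathcal{A})$. Since $q$ is nondecreasing on $\mathbb{R}_+$ with $q(0)=0$, we get $\sum_k q(\sigma_k(P\mathcal{A}P))\le\sum_k q(\sigma_k(\mathcal{A}))$. This is exactly Lemma-type reasoning from Marteau-Ferey et al. Because $\Omega_{\mathrm{agg}}$ is a separable sum, it applies to each $s$ independently.

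\textbf{Existence.} It is therefore enough to minimize over tuples with $\mathcal{A}_s=\sum_{i,j}\mathbf{B}_{sij}\phi_s(X_i)\otimes\phi_s(X_j)$. Such an $\mathcal{A}_s$ is PSD if and only if it is PSD on $\mathcal{H}_{s,n}$. Identifying $\mathcal{H}_{s,n}$ with $\mathbb{R}^{r_s}$ through the Cholesky/empirical feature map $\bs{\Phi}_s$, this becomes a finite-dimensional problem over $(\mathbb{S}_+^{r_s})_s$, and there one can always choose $\mathbf{B}_s\succeq 0$. The objective is lower semicontinuous, since $L$ is l.s.c., the maps $\mathbf{B}\mapsto f(X_i)$ are continuous, and $q$ is l.s.c. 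It is bounded below. It is also coercive: $L$ is bounded below, $q\ge 0$, and $q(\sigma)\to\infty$ as $|\sigma|\to\infty$, so $\Omega_{\mathrm{agg}}\to\infty$ as any eigenvalue of any block blows up. Hence sublevel sets are bounded and closed, hence compact, and a minimizer $\mathbf{B}^\star$ exists. The projection step guarantees that the infimum over finite-rank operators equals the infimum over all of $\mathcal{K}^C$, so this is a global solution. Writing $f_{\mathbf{A}^\star}$ componentwise gives \Cref{eq:solution_f}.

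\textbf{Main obstacle.} I expect the hardest part to be the eigenvalue comparison $\sigma_k(P\mathcal{A}P)\le\sigma_k(\mathcal{A})$ for possibly non-compact bounded $\mathcal{A}$. If $\mathcal{A}\notin\mathcal{S}_\infty$, then $\Omega=+\infty$ and there is nothing to prove. Otherwise min–max applies to compact self-adjoint operators. A second, minor detail is the case where the empirical kernel matrix is singular: there one should work with a basis of $\mathcal{H}_{s,n}$ rather than invert $\mathbf{V}$, or assume $\mathbf{K}_s\succ0$ as the paper implicitly does.
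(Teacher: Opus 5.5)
Your proposal is correct and follows essentially the same route as the paper. You replace each $\mathcal{A}_s$ by $P_s\mathcal{A}_sP_s$ without increasing the objective; the paper gets this by citing Lemma 2 of Marteau-Ferey et al.\ componentwise, whereas you check the eigenvalue domination directly via Courant--Fischer. You then obtain a minimizer by lower semicontinuity and coercivity on the finite-dimensional isometric copy of $\mathcal{H}_{s,n}$, and finally write it with matrices $\mathbf{B}_s\succeq 0$, exactly as the paper does with its injections $V_{ns}$ and its representation lemma.
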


\begin{proof}
    We follow the proof of \citet{marteauferey2020nonparametricmodelsnonnegativefunctions}, Section B.3. The first step is to prove the following lemma, which generalizes Lemma 2 of \citet{marteauferey2020nonparametricmodelsnonnegativefunctions}.

    \begin{lemma}\label{lem:Omega_properties}
        The function $\Omega_{\mathrm{agg}}$ in \Cref{eq:penalty_functions_1} satisfies the following properties:
        \begin{enumerate}
            \item For any collection of Hilbert spaces $\left(\mathcal{H}_{1}^{1}, \ldots, \mathcal{H}_{p}^{1}\right)$ and $\left(\mathcal{H}_{1}^{2}, \ldots, \mathcal{H}_{p}^{2}\right)$ and any linear isometries $O_{s}\colon \mathcal{H}_{s}^{1} \rightarrow \mathcal{H}_{s}^{2}$, $s=1,\ldots,p$, it holds that for all $\mathcal{A}_{1}, \ldots, \mathcal{A}_p \in \mathcal{K}^{C}\left(\mathcal{H}_{1}^{1}, \ldots, \mathcal{H}_{p}^{1}\right)$:
            \begin{equation*}
                \sum_{s=1}^{p}\Omega_{s}(O_{s}\mathcal{A}_{s}O_{s}^{*}) = \sum_{s=1}^{p}\Omega_{s}(\mathcal{A}_{s}).
            \end{equation*}
            \item For any collection of Hilbert spaces $\left(\mathcal{H}_{1}, \ldots, \mathcal{H}_{p}\right)$ and any orthogonal projections $\Pi_{s} \in \mathcal{S}(\mathcal{H}_{s})$, $s=1,\ldots,p$, it holds that for all $\mathcal{A}_{1}, \ldots, \mathcal{A}_p \in \mathcal{K}^{C}\left(\mathcal{H}_{1}, \ldots, \mathcal{H}_{p}\right)$:
            \begin{equation*}
                \sum_{s=1}^{p}\Omega_{s}(\Pi_{s}\mathcal{A}_{s}\Pi_{s}) \leq \sum_{s=1}^{p}\Omega_{s}(\mathcal{A}_{s})
            \end{equation*}
            \item For any collection of finite dimensional Hilbert spaces $\left(\mathcal{H}_{n1}, \ldots, \mathcal{H}_{np}\right)$, it holds that:
            \begin{equation}
                \Omega_{\mathrm{agg}} \ \mathrm{is\ lower\ semi-continuous} \quad \mathrm{and} \quad \Omega_{\mathrm{agg}}(\mathcal{A})= \sum_{s=1}^{p}\Omega_{s}(\mathcal{A}_{s}) \underset{\sup_s \|\mathcal{A}_{s}\|_{\mathrm{op}} \rightarrow +\infty}{\longrightarrow} + \infty
            \end{equation}
        \end{enumerate}
    \end{lemma}
    \begin{proof}
        $\Omega_{\mathrm{agg}}$ is defined as a separable sum of $\Omega_{s}, s=1,\ldots,p$ which all satisfy Lemma $2$ of \citet{marteauferey2020nonparametricmodelsnonnegativefunctions} under Assumption \ref{assumption:regularizer}. Properties 1, 2 and continuity in 3 are thus straightforward by applying equalities and inequalities element-wise on each $\Omega_{s}, s=1,\ldots,p$. The last part of Property 3 comes from Lemma $2$, (iii) from \citet{marteauferey2020nonparametricmodelsnonnegativefunctions} applied to $\Omega_{s^*}(\mathcal{A}_{s^*})$ for $s^*$ corresponding to $\sup_s \|\mathcal{A}_{s}\|_{\mathrm{op}}$.
    \end{proof}

    Now, the first part of \Cref{thm:generalization_mf_p_operators} comes from the following proposition, which shows that the solution of the infinite-dimensional problem of \Cref{eq:inf_problem_p_operator} with $p$ operators and penalty function in \Cref{eq:penalty_functions_1} lies in a space of finite dimension. For every Hilbert space $\mathcal{H}_s$, we write $\mathcal{H}_{ns}$ for the finite dimensional subspace of $\mathcal{H}_s$ generated by $\{(\phi_{s}(X_i))_{1\leq i\leq n}\}$, and define $\Pi_{ns}$ the orthogonal projection on $\mathcal{H}_{ns}$ such that:
    \begin{equation*}
        \Pi_{ns} \in \mathcal{S}(\mathcal{H}_s), \quad \Pi_{ns}^{2}=\Pi_{ns},\quad \mathrm{range}(\Pi_{ns}) = \mathcal{H}_{ns}.
    \end{equation*}
    Finally, we denote
    \begin{equation*}
        \mathcal{K}_{n}^{C}(\mathcal{H}_1, \ldots, \mathcal{H}_p) \coloneq \left\{\left(\Pi_{n1}\mathcal{A}_1\Pi_{n1}, \ldots, \Pi_{np}\mathcal{A}_p\Pi_{np}\right)\colon \; \left(\mathcal{A}_1, \ldots, \mathcal{A}_p\right) \in \mathcal{K}^{C}\left(\mathcal{H}_1, \ldots, \mathcal{H}_p\right) \right\}.
    \end{equation*}
    Since $\forall 1\leq s\leq p, \; \Pi_{ns}\mathcal{A}_{s}\Pi_{ns} \in \mathcal{S}(\mathcal{H}_{s})$ and $\Pi_{ns}\mathcal{A}_{s}\Pi_{ns} \geq 0$, we have the inclusion:
    \begin{equation}
    \label{eq:inclusion_Kn_in_K}
        \mathcal{K}_{n}^{C}(\mathcal{H}_1, \ldots, \mathcal{H}_p) \subset \mathcal{K}^{C}(\mathcal{H}_1, \ldots, \mathcal{H}_p).
    \end{equation}

    \begin{proposition}
    \label{prop:generalization_mf_p_operators}
        Let $L$ be a lower semi-continuous function, bounded below and let $\Omega_{\mathrm{agg}}$ be defined as in \Cref{eq:penalty_functions_1} where each $\Omega_{s}, s=1,\ldots,p$ satisfies Assumption \ref{assumption:regularizer}. The problem
        \begin{equation*}
            \underset{
                \mathcal{A} \in \mathcal{K}^{C}\left(\mathcal{H}_1, \ldots, \mathcal{H}_p\right)
            }{\inf}
            L\left(f_{\mathcal{A}}(X_1), \ldots, f_{\mathcal{A}}(X_n)\right) + \Omega_{\mathrm{agg}}(\mathcal{A})
        \end{equation*}
        admits a solution $\mathbf{A}^{\star} \in \mathcal{K}_{n}^{C}(\mathcal{H}_1, \ldots, \mathcal{H}_p)$.
    \end{proposition}
    \begin{proof}
        Let us first write for any $\mathcal{A} \in \bigotimes_{s=1}^{p}\mathcal{S}(\mathcal{H}_{s}), \; J(\mathcal{A})\coloneq L\left((f_{\mathcal{A}}(X_i))_{1\leq i\leq n}) + \Omega_{\mathrm{agg}}(\mathcal{A}\right)$.
        The goal here is to show that $\underset{\mathcal{A} \in \mathcal{K}^{C}\left(\mathcal{H}_1, \ldots, \mathcal{H}_p\right)}{\inf} J(\mathcal{A})$ admits a solution in $\mathcal{K}_{n}^{C}\left(\mathcal{H}_1, \ldots, \mathcal{H}_p\right)$. The proof consists of three main steps:
        \begin{itemize}
            \item we first show that 
            \begin{equation}
            \label{eq:representer theorem step 1}
                \underset{\mathcal{A} \in \mathcal{K}_{n}^{C}\left(\mathcal{H}_1, \ldots, \mathcal{H}_p\right)}{\inf} J(\mathcal{A}) = \underset{\mathcal{A} \in \mathcal{K}^{C}\left(\mathcal{H}_1, \ldots, \mathcal{H}_p\right)}{\inf}J(\mathcal{A})
            \end{equation}
            \item then, we show that if $\underset{\mathcal{A} \in \mathcal{K}_{n}^{C}\left(\mathcal{H}_1, \ldots, \mathcal{H}_p\right)}{\inf} J(\mathcal{A})$ exists, it is attained for operators with bounded Frobenius norm
            \item finally, we show that such minimum exists.
        \end{itemize}

        \paragraph{First step.}
        This is a direct generalization of the proof of Proposition $7$ in \citet{marteauferey2020nonparametricmodelsnonnegativefunctions}.
        Using the projections $\Pi_{ns}, s=1, \ldots, p$, it is easy to show that for all $1\leq i\leq n$, $f_{\mathcal{A}}(X_i) = f_{\left(\Pi_{ns}\mathcal{A}_{s}\Pi_{ns}\right)_{1\leq s\leq p}}(X_i)$ and that $\Omega_{\mathrm{agg}}(\mathcal{A}) = \Omega_{\mathrm{agg}}\left(\left(\Pi_{ns}\mathcal{A}_{s}\Pi_{ns}\right)_{1\leq s\leq p}\right)$ (using Property $2$ from \Cref{lem:Omega_properties}). Putting those results together and using the inclusion in \Cref{eq:inclusion_Kn_in_K}, the first step follows.

        \paragraph{Second step.}
        We rigorously mirror the proof of Proposition $7$ in \citet{marteauferey2020nonparametricmodelsnonnegativefunctions}. The key idea is to show that we can replace $\mathcal{K}_{n}^{C}\left(\mathcal{H}_1, \ldots, \mathcal{H}_p\right)$ with 
        \begin{equation}
        \label{eq:refined_search_space}
            \mathcal{K}_{R_0} \coloneq \left\{\tilde{\mathcal{A}} \in \mathcal{K}^{C}\left(\mathcal{H}_{n1}, \ldots, \mathcal{H}_{np}\right)\colon \, \forall 1\leq s\leq p, \|\tilde{\mathcal{A}_{s}}\|_{F} \leq R_{0}\right\}
        \end{equation}
        where $R_0$ is a constant. First, by defining for all $1\leq s\leq p$ the injections $V_{ns}\colon \mathcal{H}_{ns} \rightarrow \mathcal{H}_{s}$ (s.t. $V_{ns}V_{ns}^{*} = \Pi_{ns}$ and $V_{ns}^{*}V_{ns} = I_{H_{ns}}$), we have:  
        \begin{equation*}
            \underset{\mathcal{A} \in \mathcal{K}_{n}^{C}\left(\mathcal{H}_1, \ldots, \mathcal{H}_p\right)}{\inf} J(\mathcal{A}) = \underset{\tilde{\mathcal{A}_{1}}, \ldots, \tilde{\mathcal{A}_{p}} \in \mathcal{K}^{C}\left(\mathcal{H}_{n1}, \ldots, \mathcal{H}_{np}\right)}{\inf} J\left((V_{ns}\tilde{\mathcal{A}}_{s}V_{ns}^{*})_{1\leq s\leq p}\right).
        \end{equation*}
        Next, using Property $1$ from \Cref{lem:Omega_properties} it is immediate to show that for any $\tilde{\mathcal{A}_{1}}, \ldots, \tilde{\mathcal{A}_{p}} \in \mathcal{K}^{C}\left(\mathcal{H}_{n1}, \ldots, \mathcal{H}_{np}\right)$,
        \begin{equation}
        \label{eq:equality_of_omega_with_injection}
            J\left((V_{ns}\tilde{\mathcal{A}}_{s}V_{ns}^{*})_{1\leq s\leq p}\right) = L\left(f_{(V_{ns}\tilde{\mathcal{A}}_{s}V_{ns}^{*})_{1\leq s\leq p}}(X_i)\right) + \sum_{s=1}^{p}\Omega_{s}(\tilde{\mathcal{A}}_{s}).
        \end{equation}
        Now, let $(\tilde{\mathcal{A}}_{0s})_{1\leq s\leq p} \in \mathcal{K^{C}}(\mathcal{H}_{n1}, \ldots ,\mathcal{H}_{np})$ be a point such that $J_0 \coloneq J\left((\tilde{\mathcal{A}}_{0s})_{1\leq s\leq p}\right) < +\infty$, and let $c_0$ be a lower bound for $L$. By Property $3$ of \Cref{lem:Omega_properties}, there exists a radius $R_0$ such that $\forall (\tilde{\mathcal{A}}_{s})_{1\leq s\leq p} \in \mathcal{K^{C}}(\mathcal{H}_{n1}, \ldots ,\mathcal{H}_{np})$, $\exists s_0\in[p]$ satisfying:
        \begin{equation*}
        \label{eq:Omega_bigger_than_constant}
            \|\tilde{\mathcal{A}}_{s_0}\|_{F} > R_0 \Longrightarrow \sum_{s=1}^{p}\Omega_{s}(\tilde{\mathcal{A}}_{s}) > J_0 - c_0.
        \end{equation*}
        This means that the infimum of \Cref{eq:equality_of_omega_with_injection}, if it exists, lies in the space of operators with bounded Frobenius norm. Finally, $c_0$ being a lower bound for $L$, this implies that
        \begin{equation*}
            \underset{\tilde{\mathcal{A}_{1}}, \ldots, \tilde{\mathcal{A}_{p}} \in \mathcal{K}^{C}\left(\mathcal{H}_{n1}, \ldots, \mathcal{H}_{np}\right)}{\inf} J\left((V_{ns}\tilde{\mathcal{A}}_{s}V_{ns}^{*})_{1\leq s\leq p}\right) = \underset{
            \substack{
            \tilde{\mathcal{A}_{1}}, \ldots, \tilde{\mathcal{A}_{p}} \in \mathcal{K}^{C}\left(\mathcal{H}_{n1}, \ldots, \mathcal{H}_{np}\right)\\
            \forall s, \; \|\tilde{\mathcal{A}_{s}}\| \leq R_0
            }
            }{\inf} J\left((V_{ns}\tilde{\mathcal{A}}_{s}V_{ns}^{*})_{1\leq s\leq p}\right).
        \end{equation*}

        \paragraph{Third step.}
        We finally show that our problem admits a solution, because we minimize a lower semi-continuous function on a non empty compact space. Observe first that $\mathcal{K}_{R_0}$ is a non empty compact space. This follows directly from the fact that $\mathcal{H}_{n1}, \ldots, \mathcal{H}_{np}$ are finite dimensional: the space $\mathcal{K}_{R_0}$ is compact (closed and bounded) and non empty (it contains $(\tilde{\mathcal{A}}_{0s})_{1\leq s\leq p}$). Next, the function 
        \begin{equation}
        \label{eq:function_to_minimize}
            (\tilde{\mathcal{A}}_{s})_{1\leq s\leq p} \longrightarrow J\left((V_{ns}\tilde{\mathcal{A}}_{s}V_{ns}^{*})_{1\leq s\leq p}\right)
        \end{equation}
        is lower semi-continuous, as a composition of the linear semi-continuous function $L$ with the linear (thus continuous) function $(\tilde{\mathcal{A}}_{s})_{1\leq s\leq p} \longrightarrow (f_{(V_{ns}\tilde{\mathcal{A}}_{s}V_{ns}^{*})_{1\leq s\leq p}})_{1\leq i\leq n}$, plus the linear semi-continuous penalty function $\Omega$ (Assumption \ref{assumption:regularizer}).

        Thus, the function in \Cref{eq:function_to_minimize} admits a minimum on any non empty compact set, and in particular $\mathcal{K}_{R_0}$:
        \begin{equation*}
            \exists \tilde{A}^{\star} \in \mathcal{K}_{R_0}\colon \; J\left((V_{ns}\tilde{A}_{s}^{\star}V_{ns}^{*})_{1\leq s\leq p}\right) =  \underset{
            \substack{
            \tilde{\mathcal{A}_{1}}, \ldots, \tilde{\mathcal{A}_{p}} \in \mathcal{K}^{C}\left(\mathcal{H}_{n1}, \ldots, \mathcal{H}_{np}\right)\\
            \forall s, \; \|\tilde{\mathcal{A}_{s}}\| \leq R_0
            }
            }{\inf} J\left((V_{ns}\tilde{\mathcal{A}}_{s}V_{ns}^{*})_{1\leq s\leq p}\right).
        \end{equation*}
        By going back up to the previous equalities, we have that
        \begin{equation*}
            J(A^{\star}) = \underset{\mathcal{A} \in \mathcal{K}^{C}\left(\mathcal{H}_1, \ldots, \mathcal{H}_p\right)}{\inf} J(\mathcal{A})
        \end{equation*}
        with $A^{\star}=(V_{ns}\tilde{\mathcal{A}}_{s}V_{ns}^{*})_{1\leq s\leq p}$, which completes the proof of \Cref{prop:generalization_mf_p_operators}.
    \end{proof}

    The last part is to show that $\mathbf{A}^{\star} \in \mathcal{K}_{n}^{C}(\mathcal{H}_{1}, \ldots, \mathcal{H}_{p})$ the solution from \Cref{prop:generalization_mf_p_operators},  leads to a function $f$ as in \Cref{eq:solution_f}. This is achieved with the following lemma.

    \begin{lemma}
    \label{lem:finite_representation_as_matrices}
        The set $\mathcal{K}_{n}^{C}(\mathcal{H}_{1}, \ldots, \mathcal{H}_{p})$ can be represented as
        \begin{equation*}
            \mathcal{K}_{n}^{C}(\mathcal{H}_{1}, \ldots, \mathcal{H}_{p})  = \left\{ \Bigl(\sum_{i,j=1}^{n}\mathbf{B}_{sij}\phi_{s}(X_i)\phi_{s}(X_j)^{\top}\Bigr)_{1\leq s\leq p} \; \colon \; \forall 1\leq s\leq p, \; \mathbf{B}_{s} \in \mathbb{R}^{n\times n}, \mathbf{B}_{s}\succeq 0\right\}.
        \end{equation*}
        In particular, for any $A \in \mathcal{K}_{n}^{C}(\mathcal{H}_{1}, \ldots, \mathcal{H}_{p})$, there exists $p$ matrices $\mathbf{B}_{s} \in \mathbb{R}^{n\times n}$, $\mathbf{B}_{s} \succeq 0$ such that
        \begin{equation*}
            A = \Bigl(\sum_{i,j=1}^{n}\mathbf{B}_{sij}\phi_{s}(X_i)\phi_{s}(X_j)^{\top}\Bigr)_{1\leq s\leq p} \Longrightarrow \forall X \in \mathcal{X}, f_{A}(X) = \Bigl(\sum_{i,j=1}^{n}\mathbf{B}_{sij}k_{s}(X_i, X)k_{s}(X_j, X)\Bigr)_{1\leq s\leq p}.
        \end{equation*}
    \end{lemma}
    \begin{proof}
        We apply Lemma 3 of \citet{marteauferey2020nonparametricmodelsnonnegativefunctions} to each individual operators $A_{s} \in  \mathcal{S}_{+}(\mathcal{H}_s)$ to retrieve the result on the collection of operators $(A_{s})_{1\leq s\leq p}$.
    \end{proof}
    Applying \Cref{lem:finite_representation_as_matrices} to the solution $\mathbf{A}^{\star} \in \mathcal{K}_{n}^{C}(\mathcal{H}_{1}, \ldots, \mathcal{H}_{p})$ concludes the proof of \Cref{thm:generalization_mf_p_operators}.
\end{proof}

\Cref{thm:generalization_mf_p_operators} provides a finite-dimensional equivalent problem which involves unknown PSD matrices $\mathbf{B}_1,\ldots,\mathbf{B}_p$. \citet{marteauferey2020nonparametricmodelsnonnegativefunctions} and \citet{allain2025scalableandadaptivepredictionbandsusingkerenlsumofsquares} also propose an equivalent formulation with a different parameterization in terms of PSD matrices $\mathbf{A}_1,\ldots,\mathbf{A}_p$ based on the Cholesky decomposition of the kernels. More precisely, define 
\begin{equation}
\label{eq:f_tilde}
    \tilde{f}_{\mathbf{A}_{s}}(X)=\bs{\Phi}_s(x)^\top \mathbf{A}_s \, \bs{\Phi}_s(x)
\end{equation}
for $s=1,\ldots,p$. With these notations, the following proposition shows that we obtain the same solution if we optimize the PSD matrices $\mathbf{A}_1,\ldots,\mathbf{A}_p$ instead of $\mathbf{B}_1,\ldots,\mathbf{B}_p$.

\begin{proposition}
\label{prop:formulation A p operators}
    Under the assumptions of \Cref{thm:generalization_mf_p_operators}, the following problem has at least one solution, which is unique if for all $1\leq s \leq p$,  $\lambda_{s2} > 0$ and $L$ is convex:
    \begin{equation}
    \label{eq:thm rep formulation A}
        \inf_{\mathbf{A}_{1}, \ldots, \mathbf{A}_{p} \in \mathbb{S}_{+}^{n}}L\left(\tilde{f}_{(\mathbf{A}_{s})_{1\leq s\leq p}}(X_i)\right) + \sum_{s=1}^{p}\Omega_{s}(\mathbf{A}_{s}).
    \end{equation}
Moreover, for any given solution $\mathbf{A}_{1}^{\star}, \ldots, \mathbf{A}_{p}^{\star} \in \mathbb{S}_{+}^{n}$ of \Cref{eq:thm rep formulation A}, the function $\tilde{f}_{(\mathbf{A}_{s}^{\star})_{1\leq s\leq p}}$ is a minimizer of \Cref{eq:inf_problem_p_operator}.
\end{proposition}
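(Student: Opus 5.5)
The plan is to reduce \Cref{eq:thm rep formulation A} to the $\mathbf{B}$-parameterization of \Cref{thm:generalization_mf_p_operators} through an explicit isometric change of variables. The key identity is obtained by writing $\mathbf{A}_s=\mathbf{V}_s\mathbf{B}_s\mathbf{V}_s^{\top}$. Then
\[
\tilde f_{\mathbf{A}_s}(X)=\mathbf{k}_s(X)^{\top}\mathbf{V}_s^{-1}\mathbf{A}_s\mathbf{V}_s^{-\top}\mathbf{k}_s(X)=\mathbf{k}_s(X)^{\top}\mathbf{B}_s\mathbf{k}_s(X),
\]
which is exactly $f_{\mathcal{A}_s}(X)$ for the operator $\mathcal{A}_s=\sum_{i,j}\mathbf{B}_{sij}\phi_s(X_i)\phi_s(X_j)^{\top}$ of \Cref{lem:finite_representation_as_matrices}. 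Since $\mathbf{V}_s$ is invertible (we assume $\mathbf{K}_s\succ 0$, as the Cholesky notation implicitly does), the map $\mathbf{B}_s\mapsto\mathbf{V}_s\mathbf{B}_s\mathbf{V}_s^{\top}$ is a bijection of $\mathbb{S}_+^n$ onto itself.

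Next I check that the penalties agree. Define $O_s:\mathbb{R}^n\to\mathcal{H}_{ns}$ by $O_s e = \sum_i (\mathbf{V}_s^{-1}e)_i\,\phi_s(X_i)$. Then
\[
O_s^{*}O_s=\mathbf{V}_s^{-\top}\mathbf{K}_s\mathbf{V}_s^{-1}=\mathbf{I}_n,
\]
so $O_s$ is a linear isometry onto $\mathcal{H}_{ns}$, and it satisfies $O_s\mathbf{A}_sO_s^{*}=\sum_{ij}\mathbf{B}_{sij}\phi_s(X_i)\phi_s(X_j)^{\top}=\mathcal{A}_s$. By Property 1 of \Cref{lem:Omega_properties}, $\Omega_s(\mathbf{A}_s)=\Omega_s(\mathcal{A}_s)$. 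Combined with the identity above, this gives the following equivalence: the objective of \Cref{eq:thm rep formulation A} at $(\mathbf{A}_s)_s$ equals $J$ at the corresponding element of $\mathcal{K}_n^C$. Moreover, as $(\mathbf{A}_s)_s$ ranges over $(\mathbb{S}_+^n)^p$, this element ranges over all of $\mathcal{K}_n^C$, by \Cref{lem:finite_representation_as_matrices}.

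Existence and the minimizer statement then follow. The first step of \Cref{prop:generalization_mf_p_operators} gives $\inf_{\mathcal{K}_n^C}J=\inf_{\mathcal{K}^C}J$, and that proposition shows the infimum is attained in $\mathcal{K}_n^C$. Pulling the minimizer back through the bijection yields a minimizer of \Cref{eq:thm rep formulation A}. Conversely, any minimizer $(\mathbf{A}_s^\star)_s$ maps to an operator tuple achieving $\inf_{\mathcal{K}^C}J$. Its function $f$ coincides with $\tilde f_{(\mathbf{A}_s^\star)}$ everywhere, not only at the data points, by the identity above, so $\tilde f_{(\mathbf{A}_s^\star)}$ minimizes \Cref{eq:inf_problem_p_operator}.

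Uniqueness is the step needing care. With $\Omega_s=\lambda_{s1}\|\cdot\|_\star+\lambda_{s2}\|\cdot\|_F^2$ and $\lambda_{s2}>0$, each $\Omega_s$ is strictly convex on $\mathbb{S}^n$. Also, $\mathbf{A}\mapsto\tilde f_{\mathbf{A}}(X_i)$ is linear, so $L\circ\tilde f$ is convex when $L$ is convex. Hence the total objective is strictly convex on the convex set $(\mathbb{S}_+^n)^p$ and has at most one minimizer. The main obstacle I expect is that a constrained problem makes $L$ take the value $+\infty$ outside the feasible set, and the argument must hold for such extended-valued $L$. Strict convexity still works for proper convex $L$ with a nonempty feasible set, and the domain of $L\circ\tilde f$ is convex. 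Feasibility itself must be assumed, or inherited from the existence argument via a point with $J_0<\infty$.
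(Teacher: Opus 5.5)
Your proposal is correct and follows essentially the same route as the paper. Both use the isometry $O_s=S_{ns}^{*}\mathbf{V}_s^{-1}$, which is exactly the paper's $O_{ns}=S_{ns}^{*}\mathbf{V}_s^{\top}(\mathbf{V}_s\mathbf{V}_s^{\top})^{-1}$ when $\mathbf{V}_s$ is invertible, the invariance of $\Omega_s$ under it, the identity $f_{O_s\mathbf{A}_sO_s^{*}}=\tilde f_{\mathbf{A}_s}$, and the equality of infima over $(\mathbb{S}_+^n)^p$, $\mathcal{K}_n^C$ and $\mathcal{K}^C$; your strict-convexity argument for uniqueness, including the remarks on extended-valued $L$ and feasibility, covers a point the paper's proof does not address.
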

\begin{proof}
For each \(s \in \{1, \dots, p\}\), define the operator
\begin{equation*}
    S_{ns} : \mathcal{H}_s \to \mathbb{R}^n, 
\quad 
S_{ns}(h) = \big( \langle h, \phi_s(x_i) \rangle_{\mathcal{H}_s} \big)_{1 \le i \le n}
\end{equation*}
and its adjoint \(S_{ns}^* : \mathbb{R}^n \to \mathcal{H}_s\) is then given by
\begin{equation*}
    S_{ns}^{*} \alpha = \sum_{i=1}^n \alpha_i \, \phi_s(x_i),
\quad \forall \alpha \in \mathbb{R}^n.
\end{equation*}
It follows that $\mathbf{K}_s = S_{ns} S_{ns}^{*}$.
Then, for each \(s \in \{1, \dots, p\}\), we define
\begin{equation*}
    O_{ns} : \mathbb{R}^{n} \to \mathcal{H}_s, \quad O_{ns} = S_{ns}^* \mathbf{V}_s^\top \big( \mathbf{V}_s \mathbf{V}_s^\top \big)^{-1},
\end{equation*}
where we assume that $\mathbf{K}_{s}$ is full rank (this is the case when using a universal kernel as the Matérn one, and if all training points $X_i$ are distinct). Further note that all $O_{ns}$ satisfy Lemma $4$ from \citet{marteauferey2020nonparametricmodelsnonnegativefunctions}.

Now, for each \(s \in \{1, \dots, p\}\), define:
\begin{equation*}
    \tilde{f}_{\mathbf{A}_s}(x) = \bs{\Phi}_s(x)^\top \mathbf{A}_s \, \bs{\Phi}_s(x),
\quad 
\mathbf{A}_s \in \mathbb{R}^{n \times n}, 
\quad 
\mathbf{A}_s \succeq 0,
\end{equation*}
where the feature map $\bs{\Phi}_s : \mathcal{X} \to \mathbb{R}^{n}$ is defined as $\bs{\Phi}_s(x) = O_{ns}^* \, \phi_s(x)$, such that 
\begin{equation}
\label{eq:equality f tilde f}
    f_{O_{ns} \mathbf{A}_s O_{ns}^*} = \tilde{f}_{\mathbf{A}_s}.
\end{equation}

Moreover, as in \citet{marteauferey2020nonparametricmodelsnonnegativefunctions} proof of Proposition $3$, Equation (b),
\begin{equation}
\label{eq:equality of space}
    \{\, O_{ns} \mathbf{A}_s O_{ns}^* : \mathbf{A}_s \in \mathbb{R}^{n \times n}, \; \mathbf{A}_s \succeq 0 \,\}
    = \mathcal{S}_{n}(\mathcal{H}_s)_+
\end{equation}
where $\mathcal{S}_{n}(\mathcal{H}_s)_+$ corresponds to $\mathcal{K}_{n}^{C}(\mathcal{H}_1, \ldots, \mathcal{H}_p) = \bigotimes_{s=1}^{p} \mathcal{S}_{n}(\mathcal{H}_s)_+$.

\noindent
Finally, since \(O_{ns}\) is an isometry, we have
\begin{equation}
\label{eq:equality omega with O}
    \Omega_s(O_{ns} \mathbf{A}_s O_{ns}^*) = \Omega_s(\mathbf{A}_s),
\end{equation}
and using Equations \ref{eq:equality f tilde f} and \ref{eq:equality omega with O} it holds that
\begin{equation*}
J\left((O_{ns}\mathbf{A}_{s}O_{ns}^{*})_{1\leq s\leq p}\right) 
= J\left((\mathbf{A}_{s})_{1\leq s\leq p}\right).
\end{equation*}

Finally, it follows from \Cref{eq:equality of space} that
\begin{equation*}
    \inf_{\mathbf{A}_{1}, \ldots, \mathbf{A}_{p} \in \mathbb{S}_{+}^{n}}J\left((\mathbf{A}_{s})_{1\leq s\leq p}\right)= \inf_{\mathcal{B}_1, \ldots, \mathcal{B}_{p}\in \mathcal{K}_{n}^{C}(\mathcal{H}_{1}, \ldots, \mathcal{H}_{p})} J\left((\mathcal{B}_{s})_{1\leq s\leq p}\right) =\underset{\mathcal{B} \in \mathcal{K}^{C}\left(\mathcal{H}_1, \ldots, \mathcal{H}_p\right)}{\inf}J(\mathcal{B})
\end{equation*}
where the last equality comes from \Cref{eq:representer theorem step 1}.
This shows that the solution of the left hand side problem (i.e. \Cref{eq:thm rep formulation A}) is a solution of the right hand side problem (i.e. \Cref{eq:inf_problem_p_operator}), which concludes the proof.
\end{proof}

\subsection{Asymmetric problem with different kernels}
\label{sec:apdx:asymmetric problem with different kernels}

For any linear operator $\mathcal{A}\in\mathcal{S}(\mathcal{H})$, let us first define the penalty function
\begin{equation}
\label{eq:regularizer function on one operator}
    \Omega_{(\cdot)}(\mathcal{A}) = \lambda_{(\cdot)1}\lVert\mathcal{A}\rVert_{\star}+\lambda_{(\cdot)2}\lVert\mathcal{A}\rVert_{F}^{2}
\end{equation}
with Fenchel conjugate $\Omega^{\star}_{(\cdot)}$, and the residuals
\begin{equation*}
    r_{(\cdot)}\left(X_i, Y_i\right) = 
    \begin{cases}
        m(X_i) - Y_i & \mathrm{if} \; {(\cdot)} \ \mathrm{is} \ {\mathrm{low}}\\
        Y_i - m(X_i) & \mathrm{if} \; {(\cdot)} \ \mathrm{is} \ {\mathrm{up}}.
    \end{cases}
\end{equation*}

For the asymmetric setting without penalty, our infinite dimensional problem writes:
\begin{align}
    \underset{
        \substack{
            \mathcal{A}_{\mathrm{low}} \in \mathcal{S}_{+}\left(\mathcal{H}_{\mathrm{low}}\right) \\
            \mathcal{A}_{\mathrm{up}} \in \mathcal{S}_{+}\left(\mathcal{H}_{\mathrm{up}}\right)
        }
    }{\inf}
    \quad& \frac{b}{n}\sum_{i=1}^{n} \left(f_{\mathcal{A_{\mathrm{low}}}}(X_{i}) + f_{\mathcal{A_{\mathrm{up}}}}(X_{i})\right) + \Omega_{\mathrm{low}}(\mathcal{A}_{\mathrm{low}}) + \Omega_{\mathrm{up}}(\mathcal{A_{\mathrm{up}}})\nonumber\\
    \mathrm{s.t.} \quad& r_{\mathrm{low}}\left(X_i, Y_i\right) - f_{\mathcal{A}_{\mathrm{low}}}(X_{i})\leq 0, \;i \in \left[n\right] \label{eq:eq:infdim_1}\\
    \quad& r_{\mathrm{up}}\left(X_i, Y_i\right) - f_{\mathcal{A_{\mathrm{up}}}}(X_{i})\leq 0, \;i \in \left[n\right].\nonumber
\end{align}

Since this problem is separable in both objectives and constraints, it is sufficient to prove a representer theorem for each problem independently:
\begin{align}
    \underset{
        \mathcal{A}_{(\cdot)} \in \mathcal{S}_{+}\left(\mathcal{H}_{(\cdot)}\right)
    }{\mathrm{inf}} \label{eq:infdim_separate_1}
    \quad& \frac{b}{n}\sum_{i=1}^{n} f_{\mathcal{A}_{(\cdot)}}(X_{i}) + \Omega_{(\cdot)}(\mathcal{A}_{(\cdot)})\\
    \mathrm{s.t.} \quad& r_{(\cdot)}\left(X_i, Y_i\right) - f_{\mathcal{A}_{(\cdot)}}(X_{i}) \leq 0, \;i \in \left[n\right]. \nonumber
\end{align}

\begin{theorem}[Representer theorem]
\label{thm:representer theorem with no penalty}
    Let \((b,\lambda_{(\cdot){1}})\in\mathbb{R}_{+}^{2}\) and \(\lambda_{(\cdot){2}}>0\).
    Then \Cref{{eq:infdim_separate_1}} admits a unique solution \(f_{\mathbf{A}_{(\cdot)}^{\star}}\) of the form \(f_{\mathbf{A}_{(\cdot)}^{\star}}(X) = \bs{\Phi}_{(\cdot)}(X)^{\top}\mathbf{A}_{(\cdot)}^{\star}\bs{\Phi}_{(\cdot)}(X)\) for some matrix \(\mathbf{A}_{(\cdot)}^{\star}\in \mathbb{S}_{+}^{n}\).
\end{theorem}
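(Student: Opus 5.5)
The plan is to cast \Cref{eq:infdim_separate_1} as an instance of \Cref{thm:generalization_mf_p_operators} with $p=1$, and then use \Cref{prop:formulation A p operators} both to pass to the $\mathbf{A}$-parameterization and to get uniqueness. The hard constraints will be absorbed into the loss as an indicator function. Concretely, for $z=(z_1,\ldots,z_n)\in\mathbb{R}_+^n$ we set
\begin{equation*}
L(z)=\frac{b}{n}\sum_{i=1}^n z_i+\sum_{i=1}^n \iota_{\{z_i\ge r_{(\cdot)}(X_i,Y_i)\}}(z_i),
\end{equation*}
where $\iota_C$ is $0$ on $C$ and $+\infty$ outside. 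With this choice, \Cref{eq:infdim_separate_1} becomes exactly $\inf_{\mathcal{A}} L(f_{\mathcal{A}}(X_1),\ldots,f_{\mathcal{A}}(X_n))+\Omega_{(\cdot)}(\mathcal{A})$.

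We then verify the hypotheses of \Cref{thm:generalization_mf_p_operators} in turn.
\begin{enumerate}
\item $L$ is lower semi-continuous, since it is a linear function plus the indicator of a closed half-space product.
\item $L$ is bounded below on the relevant domain: the SoS values are nonnegative and $b\ge0$, so $L\ge 0$ there.
\item $L$ is convex, which will be needed for uniqueness.
\item $\Omega_{(\cdot)}$ satisfies Assumption~\ref{assumption:regularizer} with $q(\sigma)=\lambda_{(\cdot)1}|\sigma|+\lambda_{(\cdot)2}\sigma^2$. This $q$ is nonnegative, nondecreasing on $\mathbb{R}_+$, has $q(0)=0$, is continuous, and is coercive because $\lambda_{(\cdot)2}>0$.
\end{enumerate}

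One subtlety is that the proof of \Cref{thm:generalization_mf_p_operators} needs a feasible point with $J_0<+\infty$. We would exhibit one explicitly by taking $\mathcal{A}=c\,\phi(X_1)\phi(X_1)^\top$-type sums or, more simply, $\mathcal{A}=c\sum_i \phi_{(\cdot)}(X_i)\otimes\phi_{(\cdot)}(X_i)$ restricted suitably. In the finite representation, the cleanest choice is $\mathbf{A}=c\mathbf{I}_n$, which gives $\tilde f(X_i)=c\,\bs{\Phi}(X_i)^\top\bs{\Phi}(X_i)=c\,k(X_i,X_i)>0$. Taking $c$ large satisfies all $n$ constraints, using a positive-diagonal kernel such as Matérn. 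Its penalty is finite, so $J_0<\infty$.

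Granting these hypotheses, \Cref{thm:generalization_mf_p_operators} gives a minimizer in $\mathcal{K}_n^C$. \Cref{prop:formulation A p operators}, which assumes $\mathbf{K}_{(\cdot)}$ is invertible (distinct points, universal kernel), then yields an equivalent problem over $\mathbf{A}\in\mathbb{S}_+^n$ whose minimizers produce $f_{\mathbf{A}^\star}(X)=\bs{\Phi}_{(\cdot)}(X)^\top\mathbf{A}^\star\bs{\Phi}_{(\cdot)}(X)$.

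Uniqueness is the step needing care, and we would argue it directly. The map $\mathbf{A}\mapsto\tilde f_{\mathbf{A}}(X_i)$ is linear, so the feasible set is convex and the term $\frac{b}{n}\sum_i\tilde f_{\mathbf{A}}(X_i)$ is linear. The nuclear norm is convex, and $\lambda_{(\cdot)2}\|\mathbf{A}\|_F^2$ is strictly convex. The objective is therefore strictly convex on a convex feasible set, so the minimizer $\mathbf{A}^\star$ is unique, and hence so is the function $\tilde f_{\mathbf{A}^\star}$. To transfer uniqueness back to the infinite-dimensional problem, note that any minimizer $\mathcal{A}$ of \Cref{eq:infdim_separate_1} satisfies $\Omega(\Pi_n\mathcal{A}\Pi_n)\le\Omega(\mathcal{A})$ with the same data-fit term. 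Since $\|\Pi_n\mathcal{A}\Pi_n\|_F<\|\mathcal{A}\|_F$ unless $\mathcal{A}=\Pi_n\mathcal{A}\Pi_n$, optimality forces $\mathcal{A}\in\mathcal{K}_n^C$. The same strict-convexity argument, applied on the operator side via the isometry $O_n$, then shows the optimal operator is unique, which completes the argument.
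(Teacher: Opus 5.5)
Your proposal is correct and follows essentially the paper's route. The paper's proof defers to the Marteau-Ferey/Allain et al.\ representer theorem, with the coverage constraints folded into the loss and a final passage to the $\mathbf{A}$-parameterization, which is exactly the $p=1$ case of \Cref{thm:generalization_mf_p_operators} combined with \Cref{prop:formulation A p operators}. Your explicit feasible point $\mathbf{A}=c\,\mathbf{I}_n$ and your strict-convexity plus projection argument for uniqueness fill in details that the paper leaves to the citation.
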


\begin{proof}
We follow the proof of Theorem $2$ from \citet{allain2025scalableandadaptivepredictionbandsusingkerenlsumofsquares} with a fixed mean function, by replacing their residuals $r^2$ with  $r_{(\cdot)}$. In the end, Problem (\ref{eq:infdim_separate_1}) admits a finite representation entirely characterized by a PSD matrix $\mathbf{A}_{(\cdot)} \in \mathbb{S}_{+}^{n}$ given by
\begin{align}
    \underset{\mathbf{A}_{(\cdot)}\in \mathbb{S}_{+}^{n}}{\inf} \quad& \frac{b}{n}\sum_{i=1}^{n} \tilde{f}_{\mathbf{A}_{(\cdot)}}(X_{i}) + \lambda_{1}\lVert \mathbf{A}_{(\cdot)}\rVert_{\star} + \lambda_{2}\lVert \mathbf{A}_{(\cdot)}\rVert_{F}^{2}\label{eq:finite_separate_2}\\
    \mathrm{s.t.} \quad& r_{(\cdot)}\left(X_i, Y_i\right)-\tilde{f}_{\mathbf{A}_{(\cdot)}}(X_{i}) \leq 0, \;i \in \left[n\right],\nonumber
\end{align}
with $\tilde{f}_{\mathbf{A}_{(\cdot)}}(X_{i}) = \bs{\Phi}_{(\cdot)}(X_{i})^{\top}\mathbf{A}_{(\cdot)}\bs{\Phi}_{(\cdot)}(X_{i})$.
\end{proof}

The optimal solution of Problem (\ref{eq:eq:infdim_1}) is thus recovered as \(f_{\mathbf{A}_{\mathrm{low}}^{\star}}(X) = \bs{\Phi}_{\mathrm{low}}(X)^{\top}\mathbf{A}_{\mathrm{low}}^{\star}\bs{\Phi}_{\mathrm{low}}(X)\) and \(f_{\mathbf{A}_{\mathrm{up}}^{\star}}(X) = \bs{\Phi}_{\mathrm{up}}(X)^{\top}\mathbf{A}_{\mathrm{up}}^{\star}\bs{\Phi}_{\mathrm{up}}(X)\) for matrices \(\mathbf{A}_{\mathrm{low}}^{\star}, \mathbf{A}_{\mathrm{up}}^{\star}\in \mathbb{S}_{+}^{n}\) solutions of \Cref{eq:finite_separate_2} with $(\cdot)$ equal to $_\mathrm{low}$ and $_\mathrm{up}$, respectively.

Now, for each independent problem $_\mathrm{low}$ and $_\mathrm{up}$, we exhibit a dual formulation. Let us first introduce, for any symmetric matrix $\mathbf{A}\in \mathbb{S}^{n}$, the penalty function $\Omega_{+,(\cdot)}$ defined as
\begin{equation}
        \Omega_{+,(\cdot)}(\mathbf{A}) = 
        \begin{cases}
            \Omega_{(\cdot)}(\mathbf{A}) \quad &\mathrm{if}\quad \mathbf{A} \succeq 0\\
            + \infty &\mathrm{otherwise}
        \end{cases}
    \end{equation}
where $\Omega_{(\cdot)}(\mathbf{A})$ is the matrix equivalent of \Cref{eq:regularizer function on one operator}.

\begin{proposition}[Dual formulation]
\label{prop:dual formulation}
    Let \((b,\lambda_{(\cdot){1}})\in\mathbb{R}_{+}^{2}\) and \(\lambda_{(\cdot){2}}>0\). \Cref{eq:finite_separate_2} admits a dual formulation of the form
    \begin{align}
    \label{eq:asymmetric_dual}
        \underset{
        \substack{
            \bs{\Gamma_{(\cdot)}} \in \mathbb{R}_+^{n} 
        }
        }{\sup}
        \bs{\Gamma_{(\cdot)}}\mathbf{r}_{(\cdot)}(\mathbf{X}, Y)^{\top}  - \Omega^{\star}_{+,(\cdot)}(\mathbf{V}_{(\cdot)}\mathrm{Diag}({\bs{\Gamma}_{(\cdot)}}_{-\mathbf{b}})\mathbf{V}_{(\cdot)}^{\top})
    \end{align}
    where \(\Omega_{+,(\cdot)}^{\star}(\mathbf{B}) = \frac{1}{4\lambda_{(\cdot)2}}\lVert \left[\mathbf{B}-\lambda_{(\cdot)1}\mathbf{I}_{n}\right]_{+}\rVert_{F}^{2}\) and \(\forall x \in \mathbb{R}, \;\mathrm{Diag}\left(\bs{\Gamma}_{(\cdot)x}\right) := \mathrm{Diag}\left(\bs{\Gamma}_{(\cdot)}\right) + \frac{x}{n}\mathbf{I}_{n}\).
    Moreover, if \(\widehat{\bs{\Gamma}}_{(\cdot)}\) is solution of \Cref{eq:asymmetric_dual}, a solution of \Cref{eq:finite_separate_2} can be retrieved as 
    \begin{align*}
        \widehat{\mathbf{A}}_{(\cdot)} = \frac{1}{2\lambda_{(\cdot)2}}\left[\mathbf{V}_{(\cdot)}\mathrm{Diag}(\widehat{\bs{\Gamma}}_{(\cdot){-\mathbf{b}}})\mathbf{V}_{(\cdot)}^{\top}-\lambda_{(\cdot){1}}\mathbf{I}_{n}\right]_{+}
    \end{align*}
    where \(\left[\mathbf{A}\right]_{+}\) denotes the positive part of \(\mathbf{A}\). For a PSD matrix \(\mathbf{A}\) with eigendecomposition \(\mathbf{A} = \mathbf{U} \mathbf{D} \mathbf{U}^{\top}\), its positive part is defined as \(\left[\mathbf{A}\right]_{+} = \mathbf{U} \max(0,\mathbf{D}) \mathbf{U}^{\top}\).
\end{proposition}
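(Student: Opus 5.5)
Our plan is to derive the dual by standard Lagrangian duality for the finite-dimensional convex problem \Cref{eq:finite_separate_2}. The key step is rewriting the objective and constraints as linear functionals of $\mathbf{A}_{(\cdot)}$, using $\tilde{f}_{\mathbf{A}}(X_i)=\bs{\Phi}(X_i)^{\top}\mathbf{A}\bs{\Phi}(X_i)=\mathrm{Tr}(\mathbf{A}\bs{\Phi}(X_i)\bs{\Phi}(X_i)^{\top})$. Since $\bs{\Phi}(X_i)=\mathbf{V}^{-\top}\mathbf{k}(X_i)$ and $\mathbf{k}(X_i)$ is the $i$-th column of $\mathbf{K}=\mathbf{V}^{\top}\mathbf{V}$, we get $\bs{\Phi}(X_i)=\mathbf{V}e_i$. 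Hence
\[
\sum_i c_i\tilde{f}_{\mathbf{A}}(X_i)=\mathrm{Tr}\bigl(\mathbf{A}\,\mathbf{V}\mathrm{Diag}(c)\mathbf{V}^{\top}\bigr)
\]
for any weights $c$. We encode the PSD constraint in $\Omega_{+,(\cdot)}$, so the primal becomes an unconstrained minimization over $\mathbb{S}^n$ of $\frac{b}{n}\mathrm{Tr}(\mathbf{A}\mathbf{V}\mathbf{V}^{\top})+\Omega_{+}(\mathbf{A})$, subject to $n$ affine inequalities.

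Next we introduce multipliers $\bs{\Gamma}\in\mathbb{R}^n_+$ for the constraints $r_i-\tilde f_{\mathbf{A}}(X_i)\le 0$. The Lagrangian is
\[
\mathcal{L}(\mathbf{A},\bs{\Gamma})=\sum_i\Gamma_i r_i-\mathrm{Tr}\bigl(\mathbf{A}\,\mathbf{V}\mathrm{Diag}(\bs{\Gamma}_{-b})\mathbf{V}^{\top}\bigr)+\Omega_{+}(\mathbf{A}),
\]
with $\mathrm{Diag}(\bs{\Gamma}_{-b})=\mathrm{Diag}(\bs{\Gamma})-\frac{b}{n}\mathbf{I}$. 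Minimizing over $\mathbf{A}$ gives $\bs{\Gamma}\mathbf{r}^{\top}-\Omega_{+}^{\star}(\mathbf{V}\mathrm{Diag}(\bs{\Gamma}_{-b})\mathbf{V}^{\top})$ by the definition of the Fenchel conjugate, which yields the stated dual. Strong duality follows from Slater's condition, since the constraints are affine in $\mathbf{A}$ and strictly feasible. For instance, $\mathbf{A}=t\mathbf{I}$ with $t$ large gives $\tilde f(X_i)=t\,\mathbf{K}_{ii}>r_i$, because $\mathbf{K}$ is full rank. The objective is finite on the domain, so the dual optimum is attained and the primal is recovered from the minimizer of the Lagrangian.

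The main computation is the conjugate $\Omega_{+}^{\star}(\mathbf{B})=\sup_{\mathbf{A}\succeq 0}\mathrm{Tr}(\mathbf{A}\mathbf{B})-\lambda_1\mathrm{Tr}(\mathbf{A})-\lambda_2\|\mathbf{A}\|_F^2$, where we use $\|\mathbf{A}\|_\star=\mathrm{Tr}(\mathbf{A})$ for PSD $\mathbf{A}$. This equals
\[
\sup_{\mathbf{A}\succeq0}\ \mathrm{Tr}\bigl(\mathbf{A}(\mathbf{B}-\lambda_1\mathbf{I})\bigr)-\lambda_2\|\mathbf{A}\|_F^2 .
\]
Completing the square gives $\frac{1}{4\lambda_2}\|\mathbf{C}\|_F^2-\lambda_2\|\mathbf{A}-\mathbf{C}/(2\lambda_2)\|_F^2$ with $\mathbf{C}=\mathbf{B}-\lambda_1\mathbf{I}$. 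The supremum is therefore attained at the Frobenius projection of $\mathbf{C}/(2\lambda_2)$ onto the PSD cone, which is $[\mathbf{C}]_+/(2\lambda_2)$ by eigendecomposition. Using $\|\mathbf{C}\|_F^2-\|[\mathbf{C}]_-\|_F^2=\|[\mathbf{C}]_+\|_F^2$, the value is $\frac{1}{4\lambda_2}\|[\mathbf{C}]_+\|_F^2$.

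The primal recovery formula $\widehat{\mathbf{A}}=\frac{1}{2\lambda_2}[\mathbf{V}\mathrm{Diag}(\widehat{\bs{\Gamma}}_{-b})\mathbf{V}^{\top}-\lambda_1\mathbf{I}]_+$ is exactly this maximizer evaluated at the optimal $\widehat{\bs{\Gamma}}$. It is the unique Lagrangian minimizer because $\lambda_2>0$ makes the Lagrangian strongly convex in $\mathbf{A}$, so strong duality (KKT) identifies it as the primal solution. The main obstacle is carrying out this PSD-cone projection argument carefully; the remaining steps are standard.
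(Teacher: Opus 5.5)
Your proposal is correct and is essentially the paper's argument. The paper invokes the Lagrangian/Fenchel-conjugate derivation of Proposition 2 in Allain et al.\ (with $r^2$ replaced by $r_{(\cdot)}$) and recovers $\widehat{\mathbf{A}}_{(\cdot)}=\nabla\Omega^{\star}_{+,(\cdot)}(\mathbf{V}_{(\cdot)}\mathrm{Diag}(\widehat{\bs{\Gamma}}_{(\cdot)-\mathbf{b}})\mathbf{V}_{(\cdot)}^{\top})$; you carry out the PSD-cone projection for the conjugate and the Slater/strong-duality step explicitly, and for the recovery step you should also state that a primal minimizer exists (the objective is nonnegative, lower semicontinuous and strongly convex on a nonempty closed feasible set), so the saddle-point property forces it to be the unique Lagrangian minimizer at $\widehat{\bs{\Gamma}}_{(\cdot)}$.
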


\begin{proof}
Again, we apply the proof of Proposition $2$ from \citet{allain2025scalableandadaptivepredictionbandsusingkerenlsumofsquares} with a fixed mean function and by replacing $r^2$ with  $r_{(\cdot)}$.

Finally, to solve such dual formulation, we rely on explicit gradients given below.

\paragraph{Gradient computation.}
\begin{equation*}
\label{eq:grad_dual_1}
    \frac{\partial\left(\bs{\Gamma_{(\cdot)}}\mathbf{r}_{(\cdot)}(\mathbf{X}, Y)^{\top}\right)}{\partial\bs{\Gamma_{(\cdot)}}} = \mathbf{r}_{(\cdot)}(\mathbf{X}, Y).
\end{equation*}
\begin{equation*}
\label{eq:grad_dual_2}
    \frac{\partial\Omega^{\star}_{+,(\cdot)}(\mathbf{V}_{(\cdot)}\mathrm{Diag}({\bs{\Gamma}_{(\cdot)}}_{-\mathbf{b}})\mathbf{V}_{(\cdot)}^{\top})}{\partial\bs{\Gamma_{(\cdot)}}} = \mathrm{Diag}\left(\mathbf{V}_{(\cdot)}^{\top}\left[\nabla\Omega^{\star}_{+,(\cdot)}\left(\mathbf{V}_{(\cdot)}\mathrm{Diag}({\bs{\Gamma}_{(\cdot)}}_{-\mathbf{b}})\mathbf{V}_{(\cdot)}^{\top}\right)\right]^{\top}\mathbf{V}_{(\cdot)}\right).
\end{equation*}
More details can be found in \citet{allain2025scalableandadaptivepredictionbandsusingkerenlsumofsquares}.

\paragraph{Recovering the solution from optimal Lagrange multipliers.}
 Once the dual problem is solved (in practice the convergence of our accelerated gradient algorithm is checked with some small relative tolerances on the constraints and the duality gap, \textit{e.g.} $10^{-2}$, see Appendix \cref{sec:implementation details}), we need to recover the optimal solutions of the primal problem. Denoting $\widehat{\bs{\Gamma}}_{(\cdot)} \in \mathbb{R}_{+}^{n}$ the optimal Lagrange multipliers for both problems, to reconstruct the matrices $\mathbf{A}_{(\cdot)}$ we follow \citet{allain2025scalableandadaptivepredictionbandsusingkerenlsumofsquares}:
\begin{align*}
    \widehat{\mathbf{A}}_{(\cdot)} &= \nabla\Omega^{\star}_{+,(\cdot)}\left(\mathbf{V}_{(\cdot)}\mathrm{Diag}(\widehat{\bs{\Gamma}}_{(\cdot)-\mathbf{b}})\mathbf{V}^{\top}_{(\cdot)}\right)\\ 
    &= \frac{1}{2\lambda_{(\cdot)2}}\left[\mathbf{V}_{(\cdot)}\mathrm{Diag}(\widehat{\bs{\Gamma}}_{(\cdot)-\mathbf{b}})\mathbf{V}^{\top}_{(\cdot)}-\lambda_{(\cdot)1}\mathbf{I}_{n}\right]_{+}.
\end{align*}
\end{proof}

\newpage
\subsection{Asymmetric problem with operator penalty}
\label{sec:asymmetric problem with penalty 2}

For the asymmetric setting with the penalty on operators, our infinite dimensional problem writes:
\begin{align}
    \underset{
        \substack{
            \mathcal{A}_{\mathrm{low}} \in \mathcal{S}_{+}\left(\mathcal{H}\right) \\
            \mathcal{A}_{\mathrm{up}} \in \mathcal{S}_{+}\left(\mathcal{H}\right)
        }
    }{\inf}
    \quad& \frac{b}{n}\sum_{i=1}^{n} \left(f_{\mathcal{A_{\mathrm{low}}}}(X_{i}) + f_{\mathcal{A_{\mathrm{up}}}}(X_{i})\right) + \Psi(\mathcal{A}_{\mathrm{low}}, \mathcal{A}_{\mathrm{up}})\nonumber \\
    \mathrm{s.t.} \quad& r_{\mathrm{low}}\left(X_i, Y_i\right) - f_{\mathcal{A}_{\mathrm{low}}}(X_{i})\leq 0, \;i \in \left[n\right] \label{eq:infdim penalty 2} \\
    \quad& r_{\mathrm{up}}\left(X_i, Y_i\right) - f_{\mathcal{A_{\mathrm{up}}}}(X_{i})\leq 0, \;i \in \left[n\right]\nonumber.
\end{align}
where $\Psi(\mathcal{A}_{\mathrm{low}}, \mathcal{A}_{\mathrm{up}}) =  \Omega_{\mathrm{low}}(\mathcal{A}_{\mathrm{low}}) + \Omega_{\mathrm{up}}(\mathcal{A_{\mathrm{up}}}) +\Omega_{\mathrm{pen}}(\mathcal{A}_{\mathrm{low}}-\mathcal{A}_{\mathrm{up}})$. This time, the problem is no longer separable because of the penalty term.

\begin{theorem}[Representer theorem with operator penalty]
\label{thm:representer theorem with penalty 2}
    Let \((b,\lambda_{\mathrm{low}{1}}, \lambda_{\mathrm{up}{1}}, \lambda_{\mathrm{pen}{1}})\in\mathbb{R}_{+}^{4}\) and \(\lambda_{\mathrm{low}{2}}, \lambda_{\mathrm{up}{2}}, \lambda_{\mathrm{pen}{2}}>0\). Then Problem \ref{eq:infdim penalty 2} admits a unique solution \((f_{\mathbf{B}_{\mathrm{low}}^{\star}}, f_{\mathbf{B}_{\mathrm{up}}^{\star}})\) of the form \((f_{\mathbf{B}_{\mathrm{low}}^{\star}}(X), f_{\mathbf{B}_{\mathrm{up}}^{\star}}(X)) = \bigl(\mathbf{k}(X)^{\top}\mathbf{B}_{\mathrm{low}}^{\star}\mathbf{k}(X),\ \mathbf{k}(X)^{\top}\mathbf{B}_{\mathrm{up}}^{\star}\mathbf{k}(X)\bigr)\) for some matrices \(\bigl(\mathbf{B}_{\mathrm{low}}^{\star}, \mathbf{B}_{\mathrm{up}}^{\star}\bigr)\in (\mathbb{S}_{+}^{n})^{2}\).
\end{theorem}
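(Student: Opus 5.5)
The plan is to adapt the projection argument behind \Cref{thm:generalization_mf_p_operators}. That result does not apply verbatim, because the coupling term $\Omega_{\mathrm{pen}}(\mathcal{A}_{\mathrm{low}}-\mathcal{A}_{\mathrm{up}})$ is not separable. However, since $\mathcal{H}_{\mathrm{low}}=\mathcal{H}_{\mathrm{up}}=\mathcal{H}$, both operators live on the same space, so we can project both with the \emph{same} orthogonal projection $\Pi_n$ onto $\mathcal{H}_n=\mathrm{span}\{\phi(X_i)\}_{i=1}^n$.

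First we encode the constraints as indicators. We write the problem as $\inf J(\mathcal{A}_{\mathrm{low}},\mathcal{A}_{\mathrm{up}}) = L\bigl((f_{\mathcal{A}_{\mathrm{low}}}(X_i),f_{\mathcal{A}_{\mathrm{up}}}(X_i))_i\bigr)+\Psi(\mathcal{A}_{\mathrm{low}},\mathcal{A}_{\mathrm{up}})$. Here $L$ is the linear width term plus the indicator of the $2n$ half-space constraints. This $L$ is convex, lower semi-continuous, and bounded below by $0$, since $f\ge 0$.

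\emph{Step 1 (reduction to $\mathcal{H}_n$).} For any pair, replace $\mathcal{A}_{(\cdot)}$ by $\Pi_n\mathcal{A}_{(\cdot)}\Pi_n$. Because $\Pi_n\phi(X_i)=\phi(X_i)$, the values $f(X_i)$ are unchanged, so $L$ is unchanged. For the penalties we use Property 2 of \Cref{lem:Omega_properties}, i.e.\ $\Omega(\Pi\mathcal{A}\Pi)\le\Omega(\mathcal{A})$ for spectral regularizers satisfying Assumption \ref{assumption:regularizer}. This handles $\Omega_{\mathrm{low}}$ and $\Omega_{\mathrm{up}}$. For the coupling term we observe that $\Pi_n\mathcal{A}_{\mathrm{low}}\Pi_n-\Pi_n\mathcal{A}_{\mathrm{up}}\Pi_n=\Pi_n(\mathcal{A}_{\mathrm{low}}-\mathcal{A}_{\mathrm{up}})\Pi_n$. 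The difference is only Hermitian, not PSD, so we need the projection inequality for a general self-adjoint operator. For the nuclear norm this follows from $\|\Pi T\Pi\|_\star\le\|\Pi\|_{\mathrm{op}}^2\|T\|_\star$, and for the Frobenius norm from $\|\Pi T\Pi\|_F\le\|T\|_F$. Hence $J$ does not increase, and the infimum equals the infimum over pairs supported on $\mathcal{H}_n$. Using the same projection for both operators is the key point.

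\emph{Step 2 (existence).} On $\mathcal{H}_n$ the problem is finite dimensional. The feasible set is nonempty: take $\mathcal{A}=c\,\Pi_n$ with $c$ large, which gives $f(X_i)=c\,k(X_i,X_i)$ times a positive factor, so all constraints hold. The set is closed, since constraints of the form $f\ge r$ are linear in $\mathcal{A}$. The objective is coercive because $\lambda_{\mathrm{low}2},\lambda_{\mathrm{up}2}>0$ and $\Omega_{\mathrm{pen}}\ge0$. It is also lower semi-continuous. Exactly as in the second and third steps of \Cref{prop:generalization_mf_p_operators}, a minimizer therefore exists on a Frobenius ball. Applying \Cref{lem:finite_representation_as_matrices} to each component, we get $\Pi_n\mathcal{A}\Pi_n=\sum_{ij}\mathbf{B}_{ij}\phi(X_i)\phi(X_j)^\top$ with $\mathbf{B}\succeq0$, which yields $f(X)=\mathbf{k}(X)^\top\mathbf{B}\mathbf{k}(X)$.

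\emph{Step 3 (uniqueness).} The objective is strictly convex in $(\mathcal{A}_{\mathrm{low}},\mathcal{A}_{\mathrm{up}})$. The reason is that $\lambda_{\mathrm{low}2}\|\mathcal{A}_{\mathrm{low}}\|_F^2+\lambda_{\mathrm{up}2}\|\mathcal{A}_{\mathrm{up}}\|_F^2$ is strictly convex, while the remaining terms and the feasible set are convex. So the minimizer among operators supported on $\mathcal{H}_n$ is unique. Any minimizer over all of $\mathcal{S}_+(\mathcal{H})^2$ must in fact coincide with its projection. Otherwise projection would strictly decrease $\|\cdot\|_F^2$, contradicting optimality, since $\|\Pi\mathcal{A}\Pi\|_F<\|\mathcal{A}\|_F$ whenever $\mathcal{A}\ne\Pi\mathcal{A}\Pi$. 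Because $\mathbf{K}$ is invertible, the map $\mathbf{B}\mapsto\sum_{ij}\mathbf{B}_{ij}\phi(X_i)\phi(X_j)^\top$ is injective, so $\mathbf{B}^\star$ is unique as well.

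I expect the main obstacle to be Step 1 for the non-separable, non-PSD coupling term. The projection inequality for $\Omega_{\mathrm{pen}}$ on Hermitian operators requires a short separate justification, since Property 2 of \Cref{lem:Omega_properties} is stated only for PSD operators.
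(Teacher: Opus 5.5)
Your proof is correct and follows essentially the same route as the paper. The paper encodes the constraints into $L$, checks that the non-separable penalty $\Psi$ still satisfies the isometry, projection and coercivity/lower semi-continuity properties of \Cref{lem:Omega_properties}, and then invokes \Cref{thm:generalization_mf_p_operators}; the projection property for $\Omega_{\mathrm{pen}}$ holds precisely because both operators are compressed by the same $\Pi_n$, as you point out, and you go slightly further than the paper by exhibiting a feasible point explicitly and proving uniqueness via strict convexity of the Frobenius terms, both of which the paper's argument leaves implicit.
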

\begin{proof}
    We apply \Cref{thm:generalization_mf_p_operators} to the case $p=2$ with  $\mathcal{H}_{1} = \mathcal{H}_{2}=\mathcal{H}$ with kernel $k$ and loss function
    \begin{equation*}
        L((f_{{\mathcal{A}_{\mathrm{low}}}}(X_i), f_{{\mathcal{A}_{\mathrm{up}}}}(X_i))_{1\leq i\leq n}) = \begin{cases}
            \frac{b}{n}\sum_{i=1}^{n} \left(f_{\mathcal{A_{\mathrm{low}}}}(X_{i}) + f_{\mathcal{A_{\mathrm{up}}}}(X_{i})\right) 
            &\mathrm{if} \; (\mathcal{A}_{\mathrm{low}},\mathcal{A}_{\mathrm{up}})\in C\\
            +\infty & \mathrm{otherwise}
        \end{cases}
    \end{equation*}
    where $C$ is the set of all positive definite operators $(\mathcal{A}_{\mathrm{low}},\mathcal{A}_{\mathrm{up}})$ such that
    \begin{equation*}
         \begin{cases}
            r_{\mathrm{low}}\left(X_i, Y_i\right) - f_{\mathcal{A}_{\mathrm{low}}}(X_{i})\leq 0, \;i \in \left[n\right]\\
            r_{\mathrm{up}}\left(X_i, Y_i\right) - f_{\mathcal{A_{\mathrm{up}}}}(X_{i})\leq 0, \;i \in \left[n\right]
        \end{cases}
    \end{equation*}
     and the new penalty function $\Psi((\mathcal{A}_{\mathrm{low}}, \mathcal{A}_{\mathrm{up}})) = \Omega_{\mathrm{low}}(\mathcal{A}_\mathrm{low}) + \Omega_{\mathrm{up}}(\mathcal{A}_\mathrm{up}) + \Omega_{\mathrm{pen}}(\mathcal{A}_\mathrm{low}-\mathcal{A}_\mathrm{up})$, where $\Omega_{\mathrm{low}}$, $\Omega_{\mathrm{up}}$ and $\Omega_{\mathrm{pen}}$ are defined as in \Cref{eq:regularizer function on one operator}.
     
     First, notice that $\mathcal{A}_\mathrm{low}-\mathcal{A}_\mathrm{up}$ is not a positive semi-definite operator anymore but Remark $3$ in \citet{marteauferey2020nonparametricmodelsnonnegativefunctions} still applies since $\mathcal{A}_\mathrm{low}-\mathcal{A}_\mathrm{up} \in \mathcal{S}\left(\mathcal{H}\right)$. Furthermore, $\Omega_{\mathrm{pen}}$ verifies Assumption \ref{assumption:regularizer}. However, $\Psi$ no longer writes as a separable sum as in \Cref{eq:penalty_functions_1}, and we need to show that it satisfies the three properties of \Cref{lem:Omega_properties} to apply \Cref{thm:generalization_mf_p_operators}.
     
     For the first and second property, we can directly apply Lemma $2$ from \citet{marteauferey2020nonparametricmodelsnonnegativefunctions} individually to $\Omega_{\mathrm{low}}, \Omega_{\mathrm{up}}$ and $\Omega_{\mathrm{pen}}$, since it requires the operators to be symmetric only (not necessarily positive semi-definite), and thus also applies to $\Omega_{\mathrm{pen}}$. For the third property, $\Psi$ is continuous as a composition of the linear (hence continuous) function $(\mathcal{A}_\mathrm{low},\mathcal{A}_\mathrm{up}) \rightarrow (\mathcal{A}_\mathrm{low},\mathcal{A}_\mathrm{up}, \mathcal{A}_\mathrm{low}-\mathcal{A}_\mathrm{up})$ and lower-semi continuous functions ($\Omega_{\mathrm{low}}, \Omega_{\mathrm{up}}$ and $\Omega_{\mathrm{pen}}$). The last part of property $3$ holds because if at least one of $\lvert \lVert\mathcal{A}_\mathrm{low}\rVert_{\mathrm{op}},\ \lVert\mathcal{A}_\mathrm{up}\rVert_{\mathrm{op}}$ goes to infinity, we have:
     \begin{equation*}
         \Psi((\mathcal{A}_\mathrm{low},\mathcal{A}_\mathrm{up})) \underset{\max(\lVert\mathcal{A}_\mathrm{low}\rVert_{\mathrm{op}}, \lVert\mathcal{A}_\mathrm{up}\rVert_{\mathrm{op}})\rightarrow +\infty}{\longrightarrow} +\infty.
     \end{equation*}
     
     Finally, since $L\colon \mathbb{R}^{2n} \rightarrow \mathbb{R}$ is lower semi-continuous (notice that it is linear and bounded below by $0$) and $\Psi$ satisfy all properties from \Cref{lem:Omega_properties}, we can apply \Cref{thm:generalization_mf_p_operators} with $p=2$ to deduce that the solution is entirely characterized by two PSD matrices \(\bigl(\mathbf{B}_{\mathrm{low}}^{\star}, \mathbf{B}_{\mathrm{up}}^{\star}\bigr)\in (\mathbb{S}_{+}^{n})^{2}\) and
    \begin{equation*}
        (f_{\mathbf{B}_{\mathrm{low}}^{\star}}(X), f_{\mathbf{B}_{\mathrm{up}}^{\star}}(X)) = \bigl(\mathbf{k}(X)^{\top}\mathbf{B}_{\mathrm{low}}^{\star}\mathbf{k}(X),\ \mathbf{k}(X)^{\top}\mathbf{B}_{\mathrm{up}}^{\star}\mathbf{k}(X)\bigr).
    \end{equation*}

\end{proof}

\Cref{thm:representer theorem with penalty 2} also has an equivalent with matrices $\mathbf{A}_{\mathrm{low}}$, $\mathbf{A}_{\mathrm{up}}$ instead of $\mathbf{B}_{\mathrm{low}}$, $\mathbf{B}_{\mathrm{up}}$ involving $\tilde{f}_{\mathbf{A}_{(\cdot)}}$ as in \Cref{eq:f_tilde}.

\begin{proposition}[Representer theorem with operator penalty, formulation $\mathbf{A}$]
\label{prop:representer theorem with penalty 2 formulation A}
    Let \((b,\lambda_{\mathrm{low}{1}}, \lambda_{\mathrm{up}{1}}, \lambda_{\mathrm{pen}{1}})\in\mathbb{R}_{+}^{4}\) and \(\lambda_{\mathrm{low}{2}}, \lambda_{\mathrm{up}{2}}, \lambda_{\mathrm{pen}{2}}>0\). The following problem admits a unique solution if $L$ is convex:
    \begin{align}
        \underset{
            \substack{
                \mathbf{A}_{\mathrm{low}} \in \mathbb{S}_{+}^{n} \\
                \mathbf{A}_{\mathrm{up}} \in \mathbb{S}_{+}^{n}
            }
        }{\inf}
        \quad& \frac{b}{n}\sum_{i=1}^{n} \left(\tilde{f}_{\mathbf{A_{\mathrm{low}}}}(X_{i}) + \tilde{f}_{\mathbf{A_{\mathrm{up}}}}(X_{i})\right) + \Psi(\mathbf{A}_{\mathrm{low}}, \mathbf{A}_{\mathrm{up}})\nonumber \\
        \mathrm{s.t.} \quad& r_{\mathrm{low}}\left(X_i, Y_i\right) - \tilde{f}_{\mathbf{A}_{\mathrm{low}}}(X_{i})\leq 0, \;i \in \left[n\right] \label{eq:formulationA penalty 2} \\
        \quad& r_{\mathrm{up}}\left(X_i, Y_i\right) - \tilde{f}_{\mathbf{A_{\mathrm{up}}}}(X_{i})\leq 0, \;i \in \left[n\right]\nonumber.
    \end{align}
    Moreover, for any given solution $\mathbf{A}_{\mathrm{low}}^{\star}, \mathbf{A}_{\mathrm{up}}^{\star} \in \mathbb{S}_{+}^{n}$ of \Cref{eq:formulationA penalty 2}, the function $(\tilde{f}_{\mathbf{A}_{\mathrm{low}}^{\star}}, \tilde{f}_{\mathbf{A}_{\mathrm{up}}^{\star} })$ is a minimizer of \Cref{eq:infdim penalty 2}.
\end{proposition}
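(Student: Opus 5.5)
The plan follows the proof of \Cref{prop:formulation A p operators} with $p=2$ and $\mathcal{H}_1=\mathcal{H}_2=\mathcal{H}$. The one new feature is the coupling penalty $\Omega_{\mathrm{pen}}$, which must be shown to commute with the isometric change of parameterization.

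\textbf{Step 1 (change of parameterization).} Assume $\mathbf{K}=\mathbf{V}^\top\mathbf{V}$ is full rank, as in \Cref{prop:formulation A p operators}. Define $S_n$, $O_n=S_n^*\mathbf{V}^\top(\mathbf{V}\mathbf{V}^\top)^{-1}$ and $\bs{\Phi}(X)=O_n^*\phi(X)=\mathbf{V}^{-\top}\mathbf{k}(X)$. Since both bands live in the same RKHS, a single $O_n$ serves both operators. Then:
\begin{itemize}
\item $O_n$ is an isometry from $\mathbb{R}^n$ onto $\mathcal{H}_n$, and $f_{O_n\mathbf{A}O_n^*}=\tilde f_{\mathbf{A}}$ for every $\mathbf{A}$.
\item $\{O_n\mathbf{A}O_n^*:\mathbf{A}\succeq0\}$ equals the set of PSD operators supported on $\mathcal{H}_n$.
\end{itemize}

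\textbf{Step 2 (invariance of the penalty).} The key identity is
\[
\Psi(O_n\mathbf{A}_{\mathrm{low}}O_n^*,O_n\mathbf{A}_{\mathrm{up}}O_n^*)=\Psi(\mathbf{A}_{\mathrm{low}},\mathbf{A}_{\mathrm{up}}).
\]
For $\Omega_{\mathrm{low}}$ and $\Omega_{\mathrm{up}}$ this is Property 1 of \Cref{lem:Omega_properties}. For the coupling term I will use linearity, $O_n\mathbf{A}_{\mathrm{low}}O_n^*-O_n\mathbf{A}_{\mathrm{up}}O_n^*=O_n(\mathbf{A}_{\mathrm{low}}-\mathbf{A}_{\mathrm{up}})O_n^*$. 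Conjugating a symmetric (not necessarily PSD) operator by an isometry preserves its nonzero spectrum. Nuclear and Frobenius norms are spectral functions, so $\Omega_{\mathrm{pen}}$ is invariant too. This is the step where the same RKHS for both bands is essential, and it is also what makes the argument of \Cref{thm:representer theorem with penalty 2} go through.

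\textbf{Step 3 (equality of infima and transfer of minimizers).} Write $J$ for the full objective, with the constraints encoded as $+\infty$ in $L$ as in the proof of \Cref{thm:representer theorem with penalty 2}. Steps 1 and 2 give $J(O_n\mathbf{A}_{\mathrm{low}}O_n^*,O_n\mathbf{A}_{\mathrm{up}}O_n^*)=J(\mathbf{A}_{\mathrm{low}},\mathbf{A}_{\mathrm{up}})$. The first step of \Cref{prop:generalization_mf_p_operators}, already verified for $\Psi$ in \Cref{thm:representer theorem with penalty 2}, shows that the infimum over all PSD operators equals the infimum over operators supported on $\mathcal{H}_n$. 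Combining this with the surjectivity in Step 1, the matrix problem \Cref{eq:formulationA penalty 2} has the same value as \Cref{eq:infdim penalty 2}. Any matrix minimizer $(\mathbf{A}^\star_{\mathrm{low}},\mathbf{A}^\star_{\mathrm{up}})$ therefore maps to an operator minimizer, which yields the functions $(\tilde f_{\mathbf{A}^\star_{\mathrm{low}}},\tilde f_{\mathbf{A}^\star_{\mathrm{up}}})$. Existence of a matrix minimizer follows either from pulling back the minimizer given by \Cref{thm:representer theorem with penalty 2}, or directly: the objective is coercive and lower semi-continuous on a closed set.

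\textbf{Step 4 (uniqueness).} The constraints $r_{(\cdot)}(X_i,Y_i)-\bs{\Phi}(X_i)^\top\mathbf{A}\bs{\Phi}(X_i)\le0$ are linear in $\mathbf{A}$, so the feasible set is convex. The width term is linear, and the nuclear-norm terms, including $\lVert\mathbf{A}_{\mathrm{low}}-\mathbf{A}_{\mathrm{up}}\rVert_\star$, are convex. Because $\lambda_{\mathrm{low}2},\lambda_{\mathrm{up}2}>0$, the term $\lambda_{\mathrm{low}2}\lVert\mathbf{A}_{\mathrm{low}}\rVert_F^2+\lambda_{\mathrm{up}2}\lVert\mathbf{A}_{\mathrm{up}}\rVert_F^2$ is strictly convex jointly in $(\mathbf{A}_{\mathrm{low}},\mathbf{A}_{\mathrm{up}})$. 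Adding the convex $\lambda_{\mathrm{pen}2}\lVert\mathbf{A}_{\mathrm{low}}-\mathbf{A}_{\mathrm{up}}\rVert_F^2$ keeps it strictly convex, so the objective is strictly convex on a convex feasible set and the minimizer is unique.

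The main point of care is Step 2: checking that the non-PSD difference operator behaves correctly under the isometry. Step 3 also needs care, because the feasibility constraints are encoded in $L$ as $+\infty$, so lower semi-continuity rather than continuity is all that holds there.
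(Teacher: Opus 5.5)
Your proposal is correct and follows essentially the same route as the paper: apply \Cref{prop:formulation A p operators} with $p=2$ and a single isometry $O_n$ shared by both bands, and check that $\Psi$ is invariant under this isometry using $O_n\mathbf{A}_{\mathrm{low}}O_n^*-O_n\mathbf{A}_{\mathrm{up}}O_n^*=O_n(\mathbf{A}_{\mathrm{low}}-\mathbf{A}_{\mathrm{up}})O_n^*$. Your Step 4 goes slightly beyond the paper by actually proving the uniqueness claim, via joint strict convexity from $\lambda_{\mathrm{low}2},\lambda_{\mathrm{up}2}>0$ on the convex feasible set, which the paper only asserts.
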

\begin{proof}
    Similarly to the proof of \Cref{thm:representer theorem with penalty 2}, we apply \Cref{prop:formulation A p operators} to the case $p=2$ with  $\mathcal{H}_{1} = \mathcal{H}_{2}=\mathcal{H}$, but $\Psi$ does not write as a separable sum: we thus must check if it satisfies the isometry invariance property to conclude. This is trivial, since
    \begin{align*}
        \Psi((O_{n} \mathbf{A}_\mathrm{low}O_{n}^*,O_{n}\mathbf{A}_\mathrm{up}O_{n}^*)) &= \Omega_\mathrm{low}(O_{n}\mathbf{A}_\mathrm{low}O_{n}^*) + \Omega_\mathrm{up}(O_{n}\mathbf{A}_\mathrm{up}O_{n}^*) + \Omega_\mathrm{pen}(O_{n}\mathbf{A}_\mathrm{low}O_{n}^*-O_{n}\mathbf{A}_\mathrm{up}O_{n}^*)\\
        &=\Omega_\mathrm{low}(\mathbf{A}_\mathrm{low}) + \Omega_\mathrm{up}(\mathbf{A}_\mathrm{up}) + 
        \Omega_\mathrm{pen}(O_{n}(\mathbf{A}_\mathrm{low}-\mathbf{A}_\mathrm{up})O_{n}^*)\\
        &= \Omega_\mathrm{low}(\mathbf{A}_\mathrm{low}) + \Omega_\mathrm{up}(\mathbf{A}_\mathrm{up}) + \Omega_\mathrm{pen}(\mathbf{A}_\mathrm{low}-\mathbf{A}_\mathrm{up})\\
        &= \Psi((\mathbf{A}_\mathrm{low},\mathbf{A}_\mathrm{up})).
    \end{align*}
\end{proof}

\begin{proposition}[Dual formulation with operator penalty]
\label{prop:dual formulation penalty 2}
Let \((b,\lambda_{\mathrm{low}{1}}, \lambda_{\mathrm{up}{1}}, \lambda_{\mathrm{pen}{1}})\in\mathbb{R}_{+}^{4}\) and \(\lambda_{\mathrm{low}{2}}, \lambda_{\mathrm{up}{2}}, \lambda_{\mathrm{pen}{2}}>0\).
Problem \ref{eq:formulationA penalty 2} admits a dual formulation of the form
    \begin{align}
    \label{eq:asymmetric_dual_penalty_2}
        \underset{
        \substack{
            \bs{\Gamma}_{\mathrm{low}} \in \mathbb{R}_+^{n},\bs{\Gamma}_{\mathrm{up}} \in \mathbb{R}_+^{n}\\
            \mathbf{W} \in \mathbb{S}^{n}
        }
        }{\sup}
        &(\bs{\Gamma}_{\mathrm{up}}-\bs{\Gamma}_{\mathrm{low}})\mathbf{r}^{\top} 
        - \Omega_{\mathrm{pen}}^{\star}(\mathbf{W})\\
        &-\Omega^{\star}_{+,\mathrm{low}}(\mathbf{V}_{\mathrm{low}}\mathrm{Diag}({\bs{\Gamma}_{\mathrm{low}}}_{-\mathbf{b}})\mathbf{V}_{\mathrm{low}}^{\top} - \mathbf{W})
        -\Omega^{\star}_{+,\mathrm{up}}(\mathbf{V}_{\mathrm{up}}\mathrm{Diag}({\bs{\Gamma}_{\mathrm{up}}}_{-\mathbf{b}})\mathbf{V}_{\mathrm{up}}^{\top} + \mathbf{W})\nonumber
    \end{align}
    where $\mathbf{r}$ is the vector of residuals $r_i=Y_i-m(X_i)$, \(\Omega_{+,(\cdot)}^{\star}(\mathbf{B}) = \frac{1}{4\lambda_{(\cdot)2}}\lVert \left[\mathbf{B}-\lambda_{(\cdot)1}\mathbf{I}_{n}\right]_{+}\rVert_{F}^{2}\), $\Omega_{\mathrm{pen}}^{\star}(\mathbf{B})=(1/4\lambda_{\mathrm{pen}2})\sum_{i=1}^{n}\max(0, \lvert \lambda_{i}(\mathbf{B}) \rvert-\lambda_{\mathrm{pen}1})^{2}$ and \(\forall x \in \mathbb{R}, \;\mathrm{Diag}\left({(\cdot)_x}\right) := \mathrm{Diag}\left({(\cdot)}\right) + \frac{x}{n}\mathbf{I}_{n}\).
    Moreover, if \((\widehat{\bs{\Gamma}}_{\mathrm{low}}, \widehat{\bs{\Gamma}}_{\mathrm{up}}, \widehat{\mathbf{W}})\) is a solution of \Cref{eq:asymmetric_dual_penalty_2}, a solution of Problem \ref{eq:formulationA penalty 2} can be retrieved as 
    \begin{align*}
        \widehat{\mathbf{A}}_{\mathrm{low}} &= \frac{1}{2\lambda_{\mathrm{low}2}}\left[\mathbf{V}_{\mathrm{low}}\mathrm{Diag}(\widehat{\bs{\Gamma}}_{\mathrm{low}})_{-\mathbf{b}}\mathbf{V}_{\mathrm{low}}^{\top}-\mathbf{W}-\lambda_{\mathrm{low}{1}}\mathbf{I}_{n}\right]_{+}\\
        \widehat{\mathbf{A}}_{\mathrm{up}} &= \frac{1}{2\lambda_{\mathrm{up}2}}\left[\mathbf{V}_{\mathrm{up}}\mathrm{Diag}(\widehat{\bs{\Gamma}}_{\mathrm{up}})_{-\mathbf{b}}\mathbf{V}_{\mathrm{up}}^{\top}+\mathbf{W}-\lambda_{\mathrm{up}{1}}\mathbf{I}_{n}\right]_{+}.
    \end{align*}
\end{proposition}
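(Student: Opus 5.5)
We derive the dual by Lagrangian duality, splitting the non-separable penalty. Introduce an auxiliary symmetric variable $\mathbf{D}=\mathbf{A}_{\mathrm{low}}-\mathbf{A}_{\mathrm{up}}\in\mathbb{S}^n$ with an equality constraint, whose multiplier will be $\mathbf{W}\in\mathbb{S}^n$. Encode positivity through $\Omega_{+,\mathrm{low}},\Omega_{+,\mathrm{up}}$, so that $\mathbf{A}_{(\cdot)}$ ranges over $\mathbb{S}^n$. First rewrite the linear terms using $\tilde f_{\mathbf{A}}(X_i)=\bs{\Phi}(X_i)^\top\mathbf{A}\bs{\Phi}(X_i)$ and $\bs{\Phi}(X_i)=\mathbf{V}^{-\top}\mathbf{k}(X_i)=\mathbf{V}e_i$, which holds because $\mathbf{K}=\mathbf{V}^\top\mathbf{V}$. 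This gives $\tilde f_{\mathbf{A}}(X_i)=(\mathbf{V}^\top\mathbf{A}\mathbf{V})_{ii}$. Hence $\sum_i\Gamma_i\tilde f_{\mathbf{A}}(X_i)=\mathrm{Tr}(\mathbf{A}\,\mathbf{V}\mathrm{Diag}(\bs{\Gamma})\mathbf{V}^\top)$ and $\frac{b}{n}\sum_i\tilde f_{\mathbf{A}}(X_i)=\mathrm{Tr}(\mathbf{A}\,\mathbf{V}\tfrac{b}{n}\mathbf{I}\mathbf{V}^\top)$.

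The Lagrangian, with $\bs{\Gamma}_{\mathrm{low}},\bs{\Gamma}_{\mathrm{up}}\ge0$ attached to the coverage constraints, is
\begin{align*}
\mathcal{L}={}&\mathrm{Tr}\bigl(\mathbf{A}_{\mathrm{low}}\mathbf{V}_{\mathrm{low}}\mathrm{Diag}(-{\bs{\Gamma}_{\mathrm{low}}}_{-\mathbf{b}})\mathbf{V}_{\mathrm{low}}^\top\bigr)+\mathrm{Tr}\bigl(\mathbf{A}_{\mathrm{up}}\mathbf{V}_{\mathrm{up}}\mathrm{Diag}(-{\bs{\Gamma}_{\mathrm{up}}}_{-\mathbf{b}})\mathbf{V}_{\mathrm{up}}^\top\bigr)\\
&+\sum_i\bigl(\Gamma_{\mathrm{low},i}r_{\mathrm{low},i}+\Gamma_{\mathrm{up},i}r_{\mathrm{up},i}\bigr)+\Omega_{+,\mathrm{low}}(\mathbf{A}_{\mathrm{low}})+\Omega_{+,\mathrm{up}}(\mathbf{A}_{\mathrm{up}})+\Omega_{\mathrm{pen}}(\mathbf{D})\\
&+\mathrm{Tr}\bigl(\mathbf{W}(\mathbf{A}_{\mathrm{low}}-\mathbf{A}_{\mathrm{up}}-\mathbf{D})\bigr).
\end{align*}
Since $r_{\mathrm{low}}=-\mathbf{r}$ and $r_{\mathrm{up}}=\mathbf{r}$, the residual terms give $(\bs{\Gamma}_{\mathrm{up}}-\bs{\Gamma}_{\mathrm{low}})\mathbf{r}^\top$.

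The infimum over each block is a Fenchel conjugate. Minimizing over $\mathbf{A}_{\mathrm{low}}$ gives $-\Omega^\star_{+,\mathrm{low}}(\mathbf{V}_{\mathrm{low}}\mathrm{Diag}({\bs{\Gamma}_{\mathrm{low}}}_{-\mathbf{b}})\mathbf{V}_{\mathrm{low}}^\top-\mathbf{W})$. Minimizing over $\mathbf{A}_{\mathrm{up}}$ gives the same form with $+\mathbf{W}$. Minimizing over $\mathbf{D}$ gives $-\Omega_{\mathrm{pen}}^\star(\mathbf{W})$, where the sign of $\mathbf{W}$ is immaterial because $\Omega_{\mathrm{pen}}$ is even.

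The conjugates are computed next. The formula for $\Omega^\star_{+,(\cdot)}$ is the one already used in Proposition \ref{prop:dual formulation}, taken from the cited work. For $\Omega^\star_{\mathrm{pen}}$, $\Omega_{\mathrm{pen}}$ is a spectral function $\sum_k q(\sigma_k)$ with $q(\sigma)=\lambda_{\mathrm{pen}1}|\sigma|+\lambda_{\mathrm{pen}2}\sigma^2$ on all of $\mathbb{S}^n$, with no PSD restriction. By Lewis' theorem on conjugates of spectral functions, its conjugate is $\sum_k q^\star(\lambda_k(\mathbf{W}))$. The scalar conjugate is $q^\star(y)=\max(0,|y|-\lambda_{\mathrm{pen}1})^2/(4\lambda_{\mathrm{pen}2})$, obtained by soft-thresholding. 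This yields the stated $\Omega^\star_{\mathrm{pen}}$.

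Strong duality then needs to be justified. The primal problem is convex, with a strongly convex objective since $\lambda_{(\cdot)2}>0$, and it is strictly feasible. Taking $\mathbf{A}_{(\cdot)}=t\mathbf{I}$ with $t$ large gives $\tilde f(X_i)=t\,\mathbf{K}_{ii}>|r_i|$, because $\mathbf{K}$ is full rank. Slater's condition therefore holds for the inequality constraints, and the equality constraint is affine. The dual is attained and the gap is zero.

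The primal solution is recovered from the saddle-point conditions. $\widehat{\mathbf{A}}_{\mathrm{low}}$ is the gradient of the conjugate $\Omega^\star_{+,\mathrm{low}}$ at its argument. This gradient is the thresholded projection $\frac{1}{2\lambda_{\mathrm{low}2}}[\,\cdot-\lambda_{\mathrm{low}1}\mathbf{I}]_+$, and similarly for $\widehat{\mathbf{A}}_{\mathrm{up}}$. This is valid because $\Omega^\star_{+,(\cdot)}$ is differentiable, so the minimizer of the Lagrangian is unique.

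The main obstacle is the careful treatment of the $\mathbf{D}$-splitting. One must check that the sign conventions $\pm\mathbf{W}$ are consistent and that $\Omega_{\mathrm{pen}}^\star$ is computed over the full symmetric cone rather than the PSD cone. This is why the $|\lambda_i|$ appears there but not in $\Omega^\star_{+}$. I would verify this first on the scalar case $n=1$.
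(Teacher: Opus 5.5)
Your proposal is correct and follows essentially the same route as the paper: Lagrangian duality on the coverage constraints, a matrix variable $\mathbf{W}$ dual to $\mathbf{A}_{\mathrm{low}}-\mathbf{A}_{\mathrm{up}}$, the closed-form conjugates $\Omega^{\star}_{+,(\cdot)}$ and (via Lewis' spectral conjugacy) $\Omega^{\star}_{\mathrm{pen}}$, and primal recovery through $\nabla\Omega^{\star}_{+,(\cdot)}$. The only differences are minor. You obtain the infimal convolution over $\mathbf{W}$ by splitting $\mathbf{D}=\mathbf{A}_{\mathrm{low}}-\mathbf{A}_{\mathrm{up}}$ with an equality multiplier, whereas the paper quotes Rockafellar's Theorem 16.3 for the conjugate of $\Psi_+=f\circ L$; the two are equivalent, and your Slater check at $(t\mathbf{I},t\mathbf{I},\mathbf{0})$ supplies the qualification condition (zero gap, dual attainment) that the paper leaves implicit.
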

\begin{proof}
    The dual problem is defined as
    \begin{equation}
        d = \underset{
        \substack{
            \bs{\Gamma}_{\mathrm{low}} \in \mathbb{R}_+^{n}\\
            \bs{\Gamma}_{\mathrm{up}} \in \mathbb{R}_+^{n}
        }
        }{\sup}
        \underset{
        \substack{
            \mathbf{A}_{\mathrm{low}} \in \mathbb{S}^{n}\\
            \mathbf{A}_{\mathrm{up}} \in \mathbb{S}^{n}
        }
        }{\inf}
        \mathcal{L}(\bs{\Gamma}_{\mathrm{low}}, \bs{\Gamma}_{\mathrm{up}}, \mathbf{A}_{\mathrm{low}}, \mathbf{A}_{\mathrm{up}})
        = \underset{
        \substack{
            \bs{\Gamma}_{\mathrm{low}} \in \mathbb{R}_+^{n}\\
            \bs{\Gamma}_{\mathrm{up}} \in \mathbb{R}_+^{n}
        }
        }{\sup} (\bs{\Gamma}_{\mathrm{up}}-\bs{\Gamma}_{\mathrm{low}})\mathbf{r}^{\top} + D(\bs{\Gamma}_{\mathrm{low}}, \bs{\Gamma}_{\mathrm{up}}) \label{eq:primal problem penalty 2}
    \end{equation}
    where we denote 
    \begin{align}
    \label{eq:function D penalty 2}
    D(\bs{\Gamma}_{\mathrm{low}}, \bs{\Gamma}_{\mathrm{up}})\coloneq \underset{
        \substack{
            \mathbf{A}_{\mathrm{low}} \in \mathbb{S}^{n},\;
            \mathbf{A}_{\mathrm{up}} \in \mathbb{S}^{n}
        }
        }{\inf}
        \overline{\mathcal{L}}(\bs{\Gamma}_{\mathrm{low}}, \bs{\Gamma}_{\mathrm{up}}, \mathbf{A}_{\mathrm{low}}, \mathbf{A}_{\mathrm{up}}) 
    \end{align}
        with $\overline{\mathcal{L}}$ defined as
    \begin{align*}
        \overline{\mathcal{L}}(\bs{\Gamma}_{\mathrm{low}}, \bs{\Gamma}_{\mathrm{up}}, \mathbf{A}_{\mathrm{low}}, \mathbf{A}_{\mathrm{up}}) = 
        &\sum_{i=1}^{n} (\frac{b}{n} - \Gamma_{\mathrm{low},i})\tilde{f}_{\mathbf{A}_{\mathrm{low}}}(X_{i}) + \sum_{i=1}^{n} (\frac{b}{n} - \Gamma_{\mathrm{up},i})\tilde{f}_{\mathbf{A}_{\mathrm{up}}}(X_{i})\\
        & + \Omega_{+,\mathrm{low}}(\mathbf{A}_{\mathrm{low}}) + \Omega_{+,\mathrm{up}}(\mathbf{A}_{\mathrm{up}}) + \Omega_{\mathrm{pen}}(\mathbf{A}_{\mathrm{low}}-\mathbf{A}_{\mathrm{up}}).
    \end{align*}

    This time, to derive the optimality conditions for $(\mathbf{A}_{\mathrm{low}}, \mathbf{A}_{\mathrm{up}})$, we follow \citet{allain2025scalableandadaptivepredictionbandsusingkerenlsumofsquares} Appendix A.2, Equation (15). First, observe that
    \begin{equation*}
        \overline{\mathcal{L}} = \Psi_+(\mathbf{A}_{\mathrm{low}}, \mathbf{A}_{\mathrm{up}}) - \langle(\mathbf{A}_{\mathrm{low}}, \mathbf{A}_{\mathrm{up}}), (\mathbf{V}_{\mathrm{low}}\mathrm{Diag}(\widehat{\bs{\Gamma}}_{\mathrm{low}})_{-\mathbf{b}}\mathbf{V}_{\mathrm{low}}^{\top}, \mathbf{V}_{\mathrm{up}}\mathrm{Diag}(\widehat{\bs{\Gamma}}_{\mathrm{up}})_{-\mathbf{b}}\mathbf{V}_{\mathrm{up}}^{\top})\rangle_{\mathbb{S}^{n}\times \mathbb{S}^{n}}
    \end{equation*}
    where $\Psi_+(\mathbf{A}_{\mathrm{low}}, \mathbf{A}_{\mathrm{up}})=  \Omega_{+,\mathrm{low}}(\mathbf{A}_{\mathrm{low}}) + \Omega_{+,\mathrm{up}}(\mathbf{A}_{\mathrm{up}}) +\Omega_{\mathrm{pen}}(\mathbf{A}_{\mathrm{low}}-\mathbf{A}_{\mathrm{up}})$.
    Then, by definition, $D(\bs{\Gamma}_{\mathrm{low}}, \bs{\Gamma}_{\mathrm{up}})$ writes as the Fenchel conjugate function of $\Psi_+$ evaluated at specific matrices
    \begin{equation}
    \label{eq:function D as dual psi penalty 2}
        D(\bs{\Gamma}_{\mathrm{low}}, \bs{\Gamma}_{\mathrm{up}}) = -\Psi_{+}^{\star}\Bigl(\mathbf{V}_{\mathrm{low}}\mathrm{Diag}(\widehat{\bs{\Gamma}}_{\mathrm{low}})_{-\mathbf{b}}\mathbf{V}_{\mathrm{low}}^{\top}, \mathbf{V}_{\mathrm{up}}\mathrm{Diag}(\widehat{\bs{\Gamma}}_{\mathrm{up}})_{-\mathbf{b}}\mathbf{V}_{\mathrm{up}}^{\top}\Bigr).
    \end{equation}

    The explicit formulation of $\Psi_{+}^{\star}$ is given in the following lemma.

    \begin{lemma}
    \label{lem:psi dual function penalty 2}
        The Fenchel conjugate of $\Psi_+(\mathbf{A}_{1}, \mathbf{A}_{2}) =  \Omega_{+,\mathrm{low}}(\mathbf{A}_{1}) + \Omega_{+,\mathrm{up}}(\mathbf{A}_{2}) +\Omega_{\mathrm{pen}}(\mathbf{A}_{1}-\mathbf{A}_{2})$ writes
        \begin{equation*}
            \Psi_{+}^{\star}(\mathbf{B}_{1}, \mathbf{B}_{2}) = -\sup_{\mathbf{W}\in \mathbb{S}^{n}} 
            - \Omega_{+, \mathrm{low}}^{\star}(\mathbf{B}_{1} - \mathbf{W}) 
            - \Omega_{+, \mathrm{up}}^{\star}(\mathbf{B}_{2} + \mathbf{W}) 
            - \Omega_{\mathrm{pen}}^{\star}(\mathbf{W}).
        \end{equation*}
    \end{lemma}
    \begin{proof}
        The proof relies on the fact that $\Psi_{+}$ can be written as a composition $\Psi(\mathbf{A}_{1}, \mathbf{A}_{2}) = f\circ L (\mathbf{A}_{1}, \mathbf{A}_{2})$ of the linear operator $L\colon (\mathbf{A}_{1}, \mathbf{A}_{2}) \in (\mathbb{S}^{n})^{2} \mapsto (\mathbf{A}_{1}, \mathbf{A}_{2}, \mathbf{A}_{1}- \mathbf{A}_{2}) \in (\mathbb{S}^{n})^{3}$ and the function $f\colon (\mathbf{U}, \mathbf{V}, \mathbf{W})\in (\mathbb{S}^{n})^{3} \mapsto \Omega_{+, \mathrm{low}}(\mathbf{U}) + \Omega_{+, \mathrm{up}}(\mathbf{V}) +\Omega_{\mathrm{pen}}(\mathbf{W}) \in  \mathbb{R}$.
        The Fenchel conjugate of $\Psi_{+}$ then writes (see, e.g., Theorem 16.3 in \citet{rockafellar2015convex}):
        \begin{equation}
        \label{eq:psi conjugate def}
            \Psi_{+}^{\star}(\mathbf{B}_1, \mathbf{B}_2) =
            \inf_{\substack{
                \mathbf{U}, \mathbf{V}, \mathbf{W} \\
                \text{s.t. } L^{\star}(\mathbf{U}, \mathbf{V}, \mathbf{W}) = (\mathbf{B}_1, \mathbf{B}_2)
            }}
                \Omega_{+, \mathrm{low}}^{\star}(\mathbf{U})
                + \Omega_{+, \mathrm{up}}^{\star}(\mathbf{V})
                + \Omega_{\mathrm{pen}}^{\star}(\mathbf{W})
        \end{equation}
        where $L^{\star}$, the adjoint of $L$, can be deduced from
        \begin{align*}
            \langle L(\mathbf{A}_{1}, \mathbf{A}_{2}), (\mathbf{U}, \mathbf{V}, \mathbf{W})\rangle_{(\mathbb{S}^{n})^{3}} 
            &= \langle \mathbf{A}_1, \mathbf{U}\rangle + \langle \mathbf{A}_2, \mathbf{V}\rangle + \langle \mathbf{A}_1 - \mathbf{A}_2, \mathbf{W}\rangle \\
            &= \langle \mathbf{A}_1, \mathbf{U}+\mathbf{W}\rangle + \langle \mathbf{A}_2, \mathbf{V}-\mathbf{W}\rangle\\
            &= \langle (\mathbf{A}_{1}, \mathbf{A}_{2}), L^{\star}(\mathbf{U}, \mathbf{V}, \mathbf{W})\rangle_{(\mathbb{S}^{n})^{2}}
        \end{align*}
        such that $L^{\star}(\mathbf{U}, \mathbf{V}, \mathbf{W}) = (\mathbf{U} + \mathbf{W}, \mathbf{V}-\mathbf{W}) \in (\mathbb{S}^{n})^{2}$.
        Now \Cref{eq:psi conjugate def} writes
        \begin{equation*}
            \Psi_{+}^{\star}(\mathbf{B}_1, \mathbf{B}_2) =
            \inf_{\substack{
                \mathbf{U} + \mathbf{W} =\mathbf{B}_1\\
                \mathbf{V} - \mathbf{W} =\mathbf{B}_2\\
                \mathbf{W}
            }}
                \Omega_{+, \mathrm{low}}^{\star}(\mathbf{U})
                + \Omega_{+, \mathrm{up}}^{\star}(\mathbf{V})
                + \Omega_{\mathrm{pen}}^{\star}(\mathbf{W})
        \end{equation*}
        and the result follows by replacing $\mathbf{U}$ and $\mathbf{V}$ by $\mathbf{B}_{1} - \mathbf{W}$ and $\mathbf{B}_{2} + \mathbf{W}$, respectively.
    \end{proof}

    Before concluding the proof, we need to derive the dual function $\Omega_{\mathrm{pen}}^{\star}$ (this is not the same as $\Omega_{(\cdot)}^{\star}$, because it is defined on symmetric matrices and not positive semi-definite ones), which is addressed in the following lemma.

    \begin{lemma}
        \label{lem:conjugate_derivative_norms}
        For any symmetric matrix $\mathbf{X} \in \mathbb{S}^{n}$ such that $\mathbf{X}=\mathbf{U}^{\top}\mathrm{Diag}(\lambda(\mathbf{X}))\mathbf{U}$ where $\lambda(\mathbf{X})=(\lambda_1(\mathbf{X}),\ldots,\lambda_n(\mathbf{X}))$ denotes the eigenvalues of $X$, the function$f(\mathbf{X}) = \lambda_1\lVert\mathbf{X}\rVert_{\star} + \lambda_2\lVert\mathbf{X}\rVert_{F}^{2}$
        admits a Fenchel conjugate given by
        \begin{equation*}
            f^{\star}(\mathbf{Y}) = \frac{1}{4\lambda_2}\sum_{i=1}^{n}\max(0, \lvert\lambda_{i}(\mathbf{Y})\rvert-\lambda_1)^2
        \end{equation*}
        with gradient 
        \begin{equation*}
            \nabla f^{\star}(\mathbf{Y}) = \frac{1}{2\lambda_2}\mathbf{U}^{\top}\mathrm{Diag}(\mathrm{sign(\lambda(\mathbf{Y}))\times\max(0, \lvert\lambda(\mathbf{Y})\rvert-\lambda_1)})\mathbf{U}.
        \end{equation*}
        \end{lemma}
    \begin{proof}
        We first write $f$ as a function $\varphi$ of the singular values of $\mathbf{X}$. The nuclear and Frobenius norms are the $1$- and $2$-Schatten norms, respectively, which implies that
        \begin{align*}
            f(\mathbf{X}) = \lambda_1 \sum_{i=1}^{n}\sigma_{i}(\mathbf{X}) + \lambda_2 \sum_{i=1}^{n}\sigma_{i}(\mathbf{X})^2 = \varphi(\sigma(\mathbf{X}))
        \end{align*}
        where $\varphi((s_{1}, \ldots, s_{n})) = \lambda_1 \sum_{i=1}^{n} s_i + \lambda_2 \sum_{i=1}^{n} s_i^2$ and $\sigma(\mathbf{X})$ are the singular values of $\mathbf{X}$.
        Because $f$ is unitary invariant, from Theorem $2.4$ in \citet{lewis1995convex}, we know that \begin{equation*}
            f^{\star}(\mathbf{Y}) = \varphi^{\star}(\sigma(\mathbf{Y})).
        \end{equation*}

        Now, let us compute the dual function $\varphi^{\star}$. By definition, it is given by
        \begin{align*}
            \varphi^{\star}(t) &= \sup_{s\geq0}\ \langle t, s\rangle - \lambda_1 \sum_{i=1}^{n} s_i - \lambda_2 \sum_{i=1}^{n} s_i^2\\
            &= \sup_{(s_1,\ldots, s_n)\geq0}\ \sum_{i=1}^{n}(t_i s_i - \lambda_1 s_i - \lambda_2 s_{i}^2).
        \end{align*}
        Since this optimization problem is separable, we can maximize each term individually. Let $g(s) = ts-\lambda_{1}s - \lambda_{2}s^2$, which attains its maximum for $s=(t-\lambda_1)/2\lambda_2$. We now have two cases: either $t-\lambda_1 \leq 0$ and the supremum is reached for $s=0$ and its value is $0$, or $t-\lambda_1 > 0$, in which case the supremum is reached for $s=(t-\lambda_1)/2\lambda_2$ and its value is $(t-\lambda_1)^2/4\lambda_2$.
        All in one, $\sup_{s\geq0} g(s) = \max(0, (t-\lambda_1)/4\lambda_2)^2$ and $\varphi^{\star}(t) = \frac{1}{4\lambda_2}\sum_{i=1}^{n}\max(0, (t_i - \lambda_1))^{2}$.

        This means that the Fenchel conjugate of $f$ is $f^{\star}(\mathbf{Y}) = \frac{1}{4\lambda_2}\sum_{i=1}^{n}\max(0, \sigma_{i}(\mathbf{Y})-\lambda_1)^2$,
        and since $\mathbf{X}$ and $\mathbf{Y}$ are symmetric matrices, we have $\sigma_{i}(\mathbf{Y})=\lvert\lambda_{i}(\mathbf{Y})\rvert$ from which we deduce
        \begin{equation*}
            f^{\star}(\mathbf{Y}) = \frac{1}{4\lambda_2}\sum_{i=1}^{n}\max(0, \lvert\lambda_{i}(\mathbf{Y})\rvert-\lambda_1)^2.
        \end{equation*}

        Now, to compute its gradient, observe that $f^{\star}(\mathbf{Y})$ writes as some function $h$ applied to the eigenvalues of $\mathbf{Y}$ with $h(u) = \frac{1}{4\lambda_2}\sum_{i=1}^{n}\max(0, \vert u_i\vert -\lambda_1)^2$.
        Because $h$ is permutation invariant, we can apply Theorem $1.1$ from \citet{lewis1996derivatives} to $f^{\star}$ to get
        \begin{equation}
        \label{eq:grad_conjugate_derivative_with_h}
            \nabla f^{\star}(\mathbf{Y}) = \mathbf{U}^{\top}\mathrm{Diag}(h^{\prime}(\lambda(\mathbf{Y})))\mathbf{U}.
        \end{equation}

        The last step is to compute the gradient of $h$. For all $1\leq i\leq n$, we have
        \begin{equation*}
            \frac{\partial h(u)}{\partial u_i} = \frac{1}{4\lambda_2}\frac{d}{du_i} \max(0, \lvert u_i\rvert-\lambda_1)^{2}
        \end{equation*}
        and observe that:
        \begin{itemize}
            \item If $\lvert u_i \rvert \leq \lambda_1$, then $\max(0, \lvert u_i\rvert-\lambda_1)^{2}=0$ and the derivative is $0$.
            \item If $u_i > \lambda_1$, then $\max(0, \lvert u_i\rvert-\lambda_1)^{2}=(u_i-\lambda_1)^2$ and the derivative is $2(u_i-\lambda_1) \underset{u_i \rightarrow \lambda_1^+}{\rightarrow}0$.
            \item If $u_i < -\lambda_1$, then $\max(0, \lvert u_i\rvert-\lambda_1)^{2}=(u_i+\lambda_1)^2$ and the derivative is $2(u_i+\lambda_1) \underset{u_i \rightarrow -\lambda_1^-}{\rightarrow} 0$.
        \end{itemize}

        Consequently, $\max(0, \lvert u_i\rvert-\lambda_1)^{2}$ is differentiable with derivative $2\,\mathrm{sign}(u_i)\times \max(0, \lvert u_i\rvert-\lambda_1)$.
        Plugging this into \Cref{eq:grad_conjugate_derivative_with_h} gives
        \begin{equation*}
            \nabla f^{\star}(\mathbf{Y}) = \frac{1}{2\lambda_2}\mathbf{U}^{\top}\mathrm{Diag}(\mathrm{sign(\lambda(\mathbf{Y}))\times\max(0, \lvert\lambda(\mathbf{Y})\rvert-\lambda_1)})\mathbf{U}
        \end{equation*}
        which concludes the proof of \Cref{lem:conjugate_derivative_norms}.
    \end{proof}

    Finally, applying \Cref{lem:psi dual function penalty 2} together with the explicit formulation of $\Omega_{\mathrm{pen}}^{\star}$ from \Cref{lem:conjugate_derivative_norms} in \Cref{eq:primal problem penalty 2}, our main dual problem writes
    \begin{align*}
        d = 
        \underset{
        \substack{
            \bs{\Gamma}_{\mathrm{low}} \in \mathbb{R}_+^{n},\bs{\Gamma}_{\mathrm{up}} \in \mathbb{R}_+^{n}\\
            \mathbf{W} \in \mathbb{S}^{n}
        }
        }{\sup}
        &(\bs{\Gamma}_{\mathrm{up}}-\bs{\Gamma}_{\mathrm{low}})\mathbf{r}^{\top} 
        - \Omega_{\mathrm{pen}}^{\star}(\mathbf{W})\\
        &-\Omega^{\star}_{+,\mathrm{low}}(\mathbf{V}_{\mathrm{low}}\mathrm{Diag}({\bs{\Gamma}_{\mathrm{low}}}_{-\mathbf{b}})\mathbf{V}_{\mathrm{low}}^{\top} - \mathbf{W})
        -\Omega^{\star}_{+,\mathrm{up}}(\mathbf{V}_{\mathrm{up}}\mathrm{Diag}({\bs{\Gamma}_{\mathrm{up}}}_{-\mathbf{b}})\mathbf{V}_{\mathrm{up}}^{\top} + \mathbf{W})\nonumber\\
        = \underset{
        \substack{
            \bs{\Gamma}_{\mathrm{low}} \in \mathbb{R}_+^{n},\bs{\Gamma}_{\mathrm{up}} \in \mathbb{R}_+^{n}\\
            \mathbf{W} \in \mathbb{S}^{n}
        }
        }{\sup} &g(\bs{\Gamma}_{\mathrm{low}}, \bs{\Gamma}_{\mathrm{up}}, \mathbf{W})
    \end{align*}

    \paragraph{Gradient computation.}
    \begin{equation*}
        \frac{\partial g}{\partial \bs{\Gamma}_{\mathrm{low}}} = -\mathbf{r} - \mathrm{Diag}\left(\mathbf{V}_{\mathrm{low}}^{\top}\left[\nabla\Omega^{\star}_{+,\mathrm{low}}\left(\mathbf{V}_{\mathrm{low}}\mathrm{Diag}{(\bs{\Gamma}_{\mathrm{low}}}_{-\mathbf{b}})\mathbf{V}_{\mathrm{low}}^{\top} - \mathbf{W}\right)\right]^{\top}\mathbf{V}_{\mathrm{low}}\right)
    \end{equation*}
    \begin{equation*}
        \frac{\partial g}{\partial\bs{\Gamma}_{\mathrm{up}}} = 
        \mathbf{r} 
        - \mathrm{Diag}\left(\mathbf{V}_{\mathrm{up}}^{\top}\left[\nabla\Omega^{\star}_{+,\mathrm{up}}\left(\mathbf{V}_{\mathrm{up}}\mathrm{Diag}{(\bs{\Gamma}_{\mathrm{up}}}_{-\mathbf{b}})\mathbf{V}_{\mathrm{up}}^{\top} + \mathbf{W}\right)\right]^{\top}\mathbf{V}_{\mathrm{up}}\right)
    \end{equation*}
    \begin{align*}
        \frac{\partial g}{\partial \mathbf{W}} 
        = - \nabla\Omega^{\star}_{\mathrm{pen}}(\mathbf{W}) 
        &+ \nabla\Omega^{\star}_{+,\mathrm{low}}\left(\mathbf{V}_{\mathrm{low}}\mathrm{Diag}{(\bs{\Gamma}_{\mathrm{low}}}_{-\mathbf{b}} )\mathbf{V}_{\mathrm{low}}^{\top} +\mathbf{W}\right) \\
        &- \nabla\Omega^{\star}_{+,\mathrm{up}}\left(\mathbf{V}_{\mathrm{up}}\mathrm{Diag}{(\bs{\Gamma}_{\mathrm{up}}}_{-\mathbf{b}} )\mathbf{V}_{\mathrm{up}}^{\top} +\mathbf{W}\right)\\
    \end{align*}
    These gradients come from elementary computations as in previous sections, and $\nabla\Omega^{\star}_{\mathrm{pen}}$ is given in \Cref{lem:conjugate_derivative_norms}.

    \paragraph{Recovering the solution from optimal Lagrange multipliers.}
    Similarly to \Cref{prop:dual formulation}, by denoting $(\widehat{\bs{\Gamma}}_{\mathrm{low}}, \widehat{\bs{\Gamma}}_{\mathrm{up}}, \widehat{\mathbf{W}}) \in \mathbb{R}_{+}^{n}\times \mathbb{R}_{+}^{n}\times \mathbb{S}^{n}$ the optimal variables of the dual problem, to reconstruct the matrices $\widehat{\mathbf{A}}_{\mathrm{low}}, \widehat{\mathbf{A}}_{\mathrm{up}}$ we have:
    \begin{align*}
        \widehat{\mathbf{A}}_{\mathrm{low}} 
        &=\nabla\Omega^{\star}_{+,\mathrm{low}}\left(\mathbf{V}_{\mathrm{low}}\mathrm{Diag}({\widehat{\bs{\Gamma}}_{\mathrm{low}-\mathbf{b}}})\mathbf{V}_{\mathrm{low}}^{\top} - \widehat{\mathbf{W}}\right)\\
        &= \frac{1}{2\lambda_{\mathrm{low}2}}\left[\mathbf{V}_{\mathrm{low}}\mathrm{Diag}({\widehat{\bs{\Gamma}}_{\mathrm{low}-\mathbf{b}}})\mathbf{V}_{\mathrm{low}}^{\top}-\widehat{\mathbf{W}}-\lambda_{\mathrm{low}1}\mathbf{I}_{n}\right]_{+}\\
        \widehat{\mathbf{A}}_{\mathrm{up}} &= \nabla\Omega^{\star}_{+,\mathrm{up}}\left(\mathbf{V}_{\mathrm{up}}\mathrm{Diag}(\widehat{\bs{\Gamma}}_{\mathrm{up}-\mathbf{b}})\mathbf{V}_{\mathrm{up}}^{\top}+\widehat{\mathbf{W}}\right)\\
        &= \frac{1}{2\lambda_{\mathrm{up}2}}\left[\mathbf{V}_{\mathrm{up}}\mathrm{Diag}({\widehat{\bs{\Gamma}}_{\mathrm{up}-\mathbf{b}}})\mathbf{V}_{\mathrm{up}}^{\top}+\widehat{\mathbf{W}}-\lambda_{\mathrm{up}1}\mathbf{I}_{n}\right]_{+}
    \end{align*}    
    
\end{proof}

\newpage
\subsection{Asymmetric problem with training set penalty}
\label{sec:asymmetric problem with penalty 1}

For the asymmetric setting with training set penalty, our infinite dimensional problem writes:
\begin{align}
    \underset{
        \substack{
            \mathcal{A}_{\mathrm{low}} \in \mathcal{S}_{+}\left(\mathcal{H}_{\mathrm{low}}\right) \\
            \mathcal{A}_{\mathrm{up}} \in \mathcal{S}_{+}\left(\mathcal{H}_{\mathrm{up}}\right)
        }
    }{\inf}
    \quad& \frac{b}{n}\sum_{i=1}^{n} \left(f_{\mathcal{A_{\mathrm{low}}}}(X_{i}) + f_{\mathcal{A_{\mathrm{up}}}}(X_{i})\right) + \Omega_{\mathrm{low}}(\mathcal{A}_{\mathrm{low}}) + \Omega_{\mathrm{up}}(\mathcal{A_{\mathrm{up}}}) +\lambda_{\mathrm{pen}} \sum_{i=1}^{n}\left(f_{\mathcal{A_{\mathrm{low}}}}(X_{i}) - f_{\mathcal{A_{\mathrm{up}}}}(X_{i})\right)^2\nonumber \\
    \mathrm{s.t.} \quad& r_{\mathrm{low}}\left(X_i, Y_i\right) - f_{\mathcal{A}_{\mathrm{low}}}(X_{i})\leq 0, \;i \in \left[n\right] \label{eq:infdim2} \\
    \quad& r_{\mathrm{up}}\left(X_i, Y_i\right) - f_{\mathcal{A_{\mathrm{up}}}}(X_{i})\leq 0, \;i \in \left[n\right]\nonumber.
\end{align}
Once again, the problem is not separable because of the penalty term. To derive a representer theorem, we use \Cref{thm:generalization_mf_p_operators}, from which the following theorem is a special case.

\begin{theorem}[Representer theorem with training set penalty]
\label{thm:representer theorem with penalty 1}
    Let \((b,\lambda_{\mathrm{low}{1}}, \lambda_{\mathrm{up}{1}})\in\mathbb{R}_{+}^{3}\) and \(\lambda_{\mathrm{low}{2}}, \lambda_{\mathrm{up}{2}}, \lambda_{\mathrm{pen}}>0\).
    Then Problem (\ref{eq:infdim2}) admits a unique solution \((f_{\mathbf{B}_{\mathrm{low}}^{\star}}, f_{\mathbf{B}_{\mathrm{up}}^{\star}})\) of the form \((f_{\mathbf{B}_{\mathrm{low}}^{\star}}(X), f_{\mathbf{B}_{\mathrm{up}}^{\star}}(X)) = \bigl(\mathbf{k}_{\mathrm{low}}(X)^{\top}\mathbf{B}_{\mathrm{low}}^{\star}\mathbf{k}_{\mathrm{low}}(X),\ \mathbf{k}_{\mathrm{up}}(X)^{\top}\mathbf{B}_{\mathrm{up}}^{\star}\mathbf{k}_{\mathrm{up}}(X)\bigr)\) for some matrices \(\bigl(\mathbf{B}_{\mathrm{low}}^{\star}, \mathbf{B}_{\mathrm{up}}^{\star}\bigr)\in (\mathbb{S}_{+}^{n})^{2}\).
\end{theorem}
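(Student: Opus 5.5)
The plan is to obtain Theorem~\ref{thm:representer theorem with penalty 1} as a direct instance of \Cref{thm:generalization_mf_p_operators} with $p=2$, $\mathcal{H}_1=\mathcal{H}_{\mathrm{low}}$ and $\mathcal{H}_2=\mathcal{H}_{\mathrm{up}}$. Unlike the operator penalty, the coupling term only involves the values $f_{\mathcal{A}_{\mathrm{low}}}(X_i)$ and $f_{\mathcal{A}_{\mathrm{up}}}(X_i)$ at the training points. So I will put it inside the loss $L$ rather than the regularizer, which keeps the aggregated penalty separable. Concretely, set
\begin{equation*}
L\bigl((u_i,v_i)_{1\leq i\leq n}\bigr)=\frac{b}{n}\sum_{i=1}^{n}(u_i+v_i)+\lambda_{\mathrm{pen}}\sum_{i=1}^{n}(u_i-v_i)^2+\iota_C\bigl((u_i,v_i)_{i}\bigr),
\end{equation*}
where $\iota_C$ is the indicator ($0$ or $+\infty$) of the closed polyhedral set $C=\{r_{\mathrm{low}}(X_i,Y_i)\leq u_i,\ r_{\mathrm{up}}(X_i,Y_i)\leq v_i,\ i\in[n]\}$. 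The regularizer is $\Omega_{\mathrm{agg}}=\Omega_{\mathrm{low}}+\Omega_{\mathrm{up}}$.

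The hypotheses are checked as follows.
\begin{itemize}
\item Each $\Omega_{(\cdot)}$ satisfies Assumption~\ref{assumption:regularizer} with $q(\sigma)=\lambda_{(\cdot)1}|\sigma|+\lambda_{(\cdot)2}\sigma^2$. This $q$ is nondecreasing on $\mathbb{R}_+$, has $q(0)=0$, is continuous, and is coercive since $\lambda_{(\cdot)2}>0$. Lemma~\ref{lem:Omega_properties} then applies verbatim.
\item $L$ is lower semi-continuous, as a continuous function plus the indicator of a closed set.
\item $L$ is bounded below. On $C$ the arguments $u_i=f_{\mathrm{low}}(X_i)$ and $v_i=f_{\mathrm{up}}(X_i)$ are nonnegative because kSoS functions are nonnegative. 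The domain of the minimization only contains such values, and I would restrict $L$ to $\mathbb{R}_+^{2n}$ (setting $+\infty$ elsewhere) to make this explicit. Then $L\geq 0$ there, since the quadratic term is nonnegative.
\item A feasible point with finite objective exists. One can take $\mathcal{A}_{(\cdot)}=c\,\phi_{(\cdot)}(X_i)\phi_{(\cdot)}(X_i)^\top$ summed over $i$ with $c$ large, provided $k(X_i,X_i)>0$, which gives $f(X_i)\geq c\,k(X_i,X_i)^2$. This is the point $(\tilde{\mathcal{A}}_{0s})$ needed in step two of the proof of \Cref{prop:generalization_mf_p_operators}.
\end{itemize}
\Cref{thm:generalization_mf_p_operators} then yields existence of a minimizer of the form $\bigl(\mathbf{k}_{\mathrm{low}}(X)^\top\mathbf{B}_{\mathrm{low}}^\star\mathbf{k}_{\mathrm{low}}(X),\ \mathbf{k}_{\mathrm{up}}(X)^\top\mathbf{B}_{\mathrm{up}}^\star\mathbf{k}_{\mathrm{up}}(X)\bigr)$ with $\mathbf{B}_{(\cdot)}^\star\succeq 0$, via Lemma~\ref{lem:finite_representation_as_matrices}.

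For uniqueness I will argue by strict convexity. The map $\mathcal{A}\mapsto f_{\mathcal{A}}(X_i)$ is linear, so $L$ composed with it is convex: it is a linear term, plus a convex quadratic of a linear map, plus the indicator of a convex set. The term $\lambda_{(\cdot)2}\lVert\mathcal{A}_{(\cdot)}\rVert_F^2$ with $\lambda_{(\cdot)2}>0$ is strictly convex in each operator separately, and the nuclear norms are convex. Hence the total objective is strictly convex jointly in $(\mathcal{A}_{\mathrm{low}},\mathcal{A}_{\mathrm{up}})$ on the convex set $\mathcal{S}_+(\mathcal{H}_{\mathrm{low}})\times\mathcal{S}_+(\mathcal{H}_{\mathrm{up}})$, and the minimizing pair of operators is unique. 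Uniqueness of the functions follows, and so does uniqueness of $\mathbf{B}^\star_{(\cdot)}$, because $\mathbf{K}_{(\cdot)}$ is full rank, as assumed in \Cref{prop:formulation A p operators}. The same argument through \Cref{prop:formulation A p operators} gives the $\mathbf{A}$-formulation with $\tilde f$.

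The main obstacle is the lower-boundedness and lower semi-continuity of $L$ on all of $\mathbb{R}^{2n}$, as literally required by \Cref{thm:generalization_mf_p_operators}. There the linear term $\frac{b}{n}\sum(u_i+v_i)$ is not bounded below. I would handle this as described: intersect with the nonnegative orthant, which is harmless because $f_{\mathcal{A}}(X_i)\geq 0$ always. I would also note that the theorem's proof only uses the lower bound $c_0$ along kSoS evaluations.
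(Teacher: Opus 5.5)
Your proposal is correct and follows the paper's route. The paper likewise applies \Cref{thm:generalization_mf_p_operators} with $p=2$, puts the coupling term $\lambda_{\mathrm{pen}}\sum_i\bigl(f_{\mathrm{low}}(X_i)-f_{\mathrm{up}}(X_i)\bigr)^2$ and the constraint indicator inside $L$, and keeps $\Omega_{\mathrm{low}}+\Omega_{\mathrm{up}}$ as the separable regularizer; your explicit feasible point (giving $J_0<+\infty$) and your strict-convexity argument for uniqueness fill in steps the paper leaves implicit. The orthant restriction is unnecessary, because on $C$ you already have $u_i+v_i\geq r_{\mathrm{low}}(X_i,Y_i)+r_{\mathrm{up}}(X_i,Y_i)=0$, so $L\geq 0$ on all of $\mathbb{R}^{2n}$ and the ``main obstacle'' does not arise.
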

\begin{proof}
    We apply \Cref{thm:generalization_mf_p_operators} to the case $p=2$, with the loss function
    \begin{equation*}
        L((f_{{\mathcal{A}_{\mathrm{low}}}}(X_i), f_{{\mathcal{A}_{\mathrm{up}}}}(X_i))_{1\leq i\leq n}) = \begin{cases}
            \frac{b}{n}\sum_{i=1}^{n} \left(f_{\mathcal{A_{\mathrm{low}}}}(X_{i}) + f_{\mathcal{A_{\mathrm{up}}}}(X_{i})\right) \\
            + \lambda_{\mathrm{pen}} \sum_{i=1}^{n}\left(f_{\mathcal{A_{\mathrm{low}}}}(X_{i}) - f_{\mathcal{A_{\mathrm{up}}}}(X_{i})\right)^2 &\mathrm{if} \; (\mathcal{A}_{\mathrm{low}},\mathcal{A}_{\mathrm{up}})\in C\\
            +\infty & \mathrm{otherwise}
        \end{cases}
    \end{equation*}
    where $C$ is the set of all positive definite operators $(\mathcal{A}_{\mathrm{low}},\mathcal{A}_{\mathrm{up}})$ such that
    \begin{equation*}
         \begin{cases}
            r_{\mathrm{low}}\left(X_i, Y_i\right) - f_{\mathcal{A}_{\mathrm{low}}}(X_{i})\leq 0, \;i \in \left[n\right]\\
            r_{\mathrm{up}}\left(X_i, Y_i\right) - f_{\mathcal{A_{\mathrm{up}}}}(X_{i})\leq 0, \;i \in \left[n\right]
        \end{cases}
    \end{equation*}
     and with the penalty function $\Omega((\mathcal{A}_{\mathrm{low}}, \mathcal{A}_{\mathrm{up}})) = \Omega_{\mathrm{low}}(\mathcal{A}_\mathrm{low}) + \Omega_{\mathrm{up}}(\mathcal{A}_\mathrm{up})$, where $\Omega_{\mathrm{low}}$ and $\Omega_{\mathrm{up}}$ are defined as in \Cref{eq:regularizer function on one operator}.
     
     First, from Remark $3$ in \citet{marteauferey2020nonparametricmodelsnonnegativefunctions}, $\Omega_{\mathrm{low}}$ and $\Omega_{\mathrm{up}}$ verify Assumption \ref{assumption:regularizer}. Then, since $L\colon \mathbb{R}^{2n} \rightarrow \mathbb{R}$ is lower semi-continuous (notice that it is linear and bounded below by $0$) and $\Omega$ writes as a sum as in \Cref{eq:penalty_functions_1}, we can apply \Cref{thm:generalization_mf_p_operators} with $p=2$ to deduce that the solution is entirely characterized by two PSD matrices \(\bigl(\mathbf{B}_{\mathrm{low}}^{\star}, \mathbf{B}_{\mathrm{up}}^{\star}\bigr)\in (\mathbb{S}_{+}^{n})^{2}\) and
    \begin{equation*}
        (f_{\mathbf{B}_{\mathrm{low}}^{\star}}(X), f_{\mathbf{B}_{\mathrm{up}}^{\star}}(X)) = \bigl(\mathbf{k}_{\mathrm{low}}(X)^{\top}\mathbf{B}_{\mathrm{low}}^{\star}\mathbf{k}_{\mathrm{low}}(X),\ \mathbf{k}_{\mathrm{up}}(X)^{\top}\mathbf{B}_{\mathrm{up}}^{\star}\mathbf{k}_{\mathrm{up}}(X)\bigr).
    \end{equation*}
\end{proof}

Once again, a solution of the previous representer theorem can be recovered with matrices $\mathbf{A}_{\mathrm{low}}$, $\mathbf{A}_{\mathrm{up}}$ instead of $\mathbf{B}_{\mathrm{low}}$, $\mathbf{B}_{\mathrm{up}}$.

\begin{proposition}[Representer theorem with penalty $1$, formulation $\mathbf{A}$]
\label{thm:representer theorem with penalty 1 formulation A}
    Let \((b,\lambda_{\mathrm{low}{1}}, \lambda_{\mathrm{up}{1}})\in\mathbb{R}_{+}^{3}\) and \(\lambda_{\mathrm{low}{2}}, \lambda_{\mathrm{up}{2}}, \lambda_{\mathrm{pen}}>0\).
    The following problem admits a unique solution if $L$ is convex:
    \begin{align}
        \underset{
            \substack{
                \mathbf{A}_{\mathrm{low}} \in \mathbb{S}_{+}^{n} \\
                \mathbf{A}_{\mathrm{up}} \in \mathbb{S}_{+}^{n}
            }
        }{\inf}
        \quad& \frac{b}{n}\sum_{i=1}^{n} \left(\tilde{f}_{\mathbf{A_{\mathrm{low}}}}(X_{i}) + \tilde{f}_{\mathbf{A_{\mathrm{up}}}}(X_{i})\right) + \Omega_{\mathrm{low}}(\mathbf{A}_{\mathrm{low}}) + \Omega_{\mathrm{up}}(\mathbf{A_{\mathrm{up}}}) +\lambda_{\mathrm{pen}} \sum_{i=1}^{n}\left(\tilde{f}_{\mathbf{A_{\mathrm{low}}}}(X_{i}) - \tilde{f}_{\mathbf{A_{\mathrm{up}}}}(X_{i})\right)^2\nonumber \\
        \mathrm{s.t.} \quad& r_{\mathrm{low}}\left(X_i, Y_i\right) - \tilde{f}_{\mathbf{A}_{\mathrm{low}}}(X_{i})\leq 0, \;i \in \left[n\right] \label{eq:formulationA penalty 1} \\
        \quad& r_{\mathrm{up}}\left(X_i, Y_i\right) - \tilde{f}_{\mathbf{A_{\mathrm{up}}}}(X_{i})\leq 0, \;i \in \left[n\right]\nonumber.
    \end{align}
    Moreover, for any given solution $\mathbf{A}_{\mathrm{low}}^{\star}, \mathbf{A}_{\mathrm{up}}^{\star} \in \mathbb{S}_{+}^{n}$ of \Cref{eq:formulationA penalty 1}, the function $(\tilde{f}_{\mathbf{A}_{\mathrm{low}}^{\star}}, \tilde{f}_{\mathbf{A}_{\mathrm{up}}^{\star} })$ is a minimizer of \Cref{eq:infdim2}.
    
\end{proposition}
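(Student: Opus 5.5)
The plan is to mirror the proof of \Cref{prop:representer theorem with penalty 2 formulation A}, i.e. to apply \Cref{prop:formulation A p operators} with $p=2$, $\mathcal{H}_1=\mathcal{H}_{\mathrm{low}}$, $\mathcal{H}_2=\mathcal{H}_{\mathrm{up}}$ (now possibly different spaces). We take the loss $L$ used in the proof of \Cref{thm:representer theorem with penalty 1}: the linear width term plus the quadratic training-set penalty on the $2n$ values $(f_{\mathrm{low}}(X_i),f_{\mathrm{up}}(X_i))_i$, with $+\infty$ outside the constraint set. The regularizer is $\Omega_{\mathrm{low}}+\Omega_{\mathrm{up}}$. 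Unlike the operator penalty case, this regularizer is already separable as in \Cref{eq:penalty_functions_1}, so no extra isometry-invariance check on a coupling term is needed. The coupling $\lambda_{\mathrm{pen}}\sum_i(\cdot)^2$ lives entirely inside $L$, and $L$ only sees evaluations at the $X_i$.

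\textbf{Key steps, in order.}
\begin{enumerate}
\item Build, for each $s\in\{\mathrm{low},\mathrm{up}\}$, the isometry $O_{ns}=S_{ns}^*\mathbf{V}_s^\top(\mathbf{V}_s\mathbf{V}_s^\top)^{-1}$ as in the proof of \Cref{prop:formulation A p operators}, assuming $\mathbf{K}_s$ is full rank.
\item Use \Cref{eq:equality f tilde f} to get $f_{O_{ns}\mathbf{A}_sO_{ns}^*}=\tilde f_{\mathbf{A}_s}$. This shows that the objective terms and the constraints of \Cref{eq:formulationA penalty 1} coincide with those of \Cref{eq:infdim2} evaluated at $(O_{n,\mathrm{low}}\mathbf{A}_{\mathrm{low}}O_{n,\mathrm{low}}^*,O_{n,\mathrm{up}}\mathbf{A}_{\mathrm{up}}O_{n,\mathrm{up}}^*)$. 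The penalty term $\lambda_{\mathrm{pen}}\sum_i(\tilde f_{\mathbf{A}_{\mathrm{low}}}(X_i)-\tilde f_{\mathbf{A}_{\mathrm{up}}}(X_i))^2$ is included, since it depends only on these evaluations.
\item Use \Cref{eq:equality omega with O} to conclude that the regularizers are preserved.
\item By \Cref{eq:equality of space}, these images range over all of $\mathcal{K}_n^C(\mathcal{H}_{\mathrm{low}},\mathcal{H}_{\mathrm{up}})$. Combined with \Cref{eq:representer theorem step 1} (available since \Cref{thm:representer theorem with penalty 1} verified the hypotheses of \Cref{thm:generalization_mf_p_operators}), the infima agree, and any minimizer of \Cref{eq:formulationA penalty 1} maps to a minimizer of \Cref{eq:infdim2}.
\item Existence is inherited from \Cref{prop:generalization_mf_p_operators}, because the finite-dimensional problem is a reparametrization of the compact-set minimization there.
\end{enumerate}

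\textbf{Uniqueness.} We argue by strict convexity. The constraint set is convex, since each constraint $r_s(X_i,Y_i)-\bs{\Phi}_s(X_i)^\top\mathbf{A}_s\bs{\Phi}_s(X_i)\le 0$ is linear in $\mathbf{A}_s$, and $\mathbb{S}_+^n$ is a convex cone. The width term is linear. The training-set penalty is a convex quadratic of a linear map of $(\mathbf{A}_{\mathrm{low}},\mathbf{A}_{\mathrm{up}})$. The nuclear norms are convex, and $\lambda_{\mathrm{low}2}\|\mathbf{A}_{\mathrm{low}}\|_F^2+\lambda_{\mathrm{up}2}\|\mathbf{A}_{\mathrm{up}}\|_F^2$ is strictly convex jointly in the pair because $\lambda_{\mathrm{low}2},\lambda_{\mathrm{up}2}>0$. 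Hence the objective is strictly convex on a convex feasible set, and the minimizer is unique whenever the feasible set is nonempty.

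\textbf{Main obstacle.} Feasibility is the delicate point. We need a feasible pair, the point $J_0<\infty$ used in the second step of \Cref{prop:generalization_mf_p_operators}. We would take $\mathbf{A}_s=c\,\mathbf{I}_n$, so that $\tilde f_{\mathbf{A}_s}(X_i)=c\,\mathbf{k}_s(X_i)^\top\mathbf{K}_s^{-1}\mathbf{k}_s(X_i)=c\,k_s(X_i,X_i)>0$. Choosing $c$ large enough satisfies every residual constraint. The remaining care is to note that full rank of $\mathbf{K}_s$ is what makes $O_{ns}$ an isometry and $\bs{\Phi}_s$ well defined; we would state it as the standing assumption of \Cref{prop:formulation A p operators}.
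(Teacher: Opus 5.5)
Your proposal is correct and takes the same route as the paper, whose proof is a one-line application of \Cref{prop:formulation A p operators} with $p=2$, using the loss and separable penalty from the proof of \Cref{thm:representer theorem with penalty 1}; your steps 1--5 spell that application out. The strict-convexity argument for uniqueness (from $\lambda_{\mathrm{low}2},\lambda_{\mathrm{up}2}>0$) and the explicit feasible point $\mathbf{A}_s=c\,\mathbf{I}_n$, which gives $\tilde f_{\mathbf{A}_s}(X_i)=c\,k_s(X_i,X_i)$, are details the paper leaves implicit, and both are sound.
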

\begin{proof}
    This is a direct application of \Cref{prop:formulation A p operators} with $p=2$, and the penalty and loss functions as in the proof of \Cref{thm:representer theorem with penalty 1}.
\end{proof}

We now exhibit a dual formulation for Problem (\ref{eq:formulationA penalty 1}).

\begin{proposition}[Dual formulation with training set penalty]
\label{prop:dual formulation penalty 1}
    Let \((b,\lambda_{\mathrm{low}{1}}, \lambda_{\mathrm{up}{1}})\in\mathbb{R}_{+}^{3}\) and \(\lambda_{\mathrm{low}{2}}, \lambda_{\mathrm{up}{2}}, \lambda_{\mathrm{pen}}>0\). Problem (\ref{eq:formulationA penalty 1}) admits a dual formulation of the form
    \begin{align}
    \label{eq:asymmetric_dual_penalty_1}
        \underset{
        \substack{
            \bs{\Gamma}_{\mathrm{low}} \in \mathbb{R}_+^{n},\bs{\Gamma}_{\mathrm{up}} \in \mathbb{R}_+^{n}\\
            \bs{\alpha}_0 \in \mathbb{R}^{n}
        }
        }{\sup}
        &(\bs{\Gamma}_{\mathrm{up}}-\bs{\Gamma}_{\mathrm{low}})\mathbf{r}^{\top}
        - \frac{1}{4\lambda_{\mathrm{pen}}} \bs{\alpha}_0\bs{\alpha}_0^{\top}\\
        &-\Omega^{\star}_{+,\mathrm{low}}(\mathbf{V}_{\mathrm{low}}\mathrm{Diag}(({\bs{\Gamma}_{\mathrm{low}}}+\bs{\alpha}_0)_{-\mathbf{b}})\mathbf{V}_{\mathrm{low}}^{\top})
        -\Omega^{\star}_{+,\mathrm{up}}(\mathbf{V}_{\mathrm{up}}\mathrm{Diag}(({\bs{\Gamma}_{\mathrm{up}}}-\bs{\alpha}_0)_{-\mathbf{b}})\mathbf{V}_{\mathrm{up}}^{\top}) \nonumber
    \end{align}
    where $\mathbf{r}$ is the vector of residuals $r_i=Y_i-m(X_i)$, \(\Omega_{+,(\cdot)}^{\star}(\mathbf{B}) = \frac{1}{4\lambda_{(\cdot)2}}\lVert \left[\mathbf{B}-\lambda_{(\cdot)1}\mathbf{I}_{n}\right]_{+}\rVert_{F}^{2}\) and \(\forall x \in \mathbb{R}, \;\mathrm{Diag}\left({(\cdot)_x}\right) := \mathrm{Diag}\left({(\cdot)}\right) + \frac{x}{n}\mathbf{I}_{n}\).
    Moreover, if \((\widehat{\bs{\Gamma}}_{\mathrm{low}}, \widehat{\bs{\Gamma}}_{\mathrm{up}}, \widehat{\bs{\alpha}}_0)\) is a solution of \Cref{eq:asymmetric_dual_penalty_1}, a solution of Problem \ref{thm:representer theorem with penalty 1 formulation A} can be retrieved as 
    \begin{align*}
        \widehat{\mathbf{A}}_{\mathrm{low}} &= \frac{1}{2\lambda_{\mathrm{low}2}}\left[\mathbf{V}_{\mathrm{low}}\mathrm{Diag}((\widehat{\bs{\Gamma}}_{\mathrm{low}}+\widehat{\bs{\alpha}}_0)_{-\mathbf{b}})\mathbf{V}_{\mathrm{low}}^{\top}-\lambda_{\mathrm{low}{1}}\mathbf{I}_{n}\right]_{+}\\
        \widehat{\mathbf{A}}_{\mathrm{up}} &= \frac{1}{2\lambda_{\mathrm{up}2}}\left[\mathbf{V}_{\mathrm{up}}\mathrm{Diag}((\widehat{\bs{\Gamma}}_{\mathrm{up}}-\widehat{\bs{\alpha}}_0)_{-\mathbf{b}})\mathbf{V}_{\mathrm{up}}^{\top}-\lambda_{\mathrm{up}{1}}\mathbf{I}_{n}\right]_{+}.
    \end{align*}
\end{proposition}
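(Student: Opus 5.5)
The plan is to follow the Lagrangian-duality argument of \Cref{prop:dual formulation penalty 2}, with one change. The quadratic coupling term $\lambda_{\mathrm{pen}}\sum_i(\tilde{f}_{\mathbf{A}_{\mathrm{low}}}(X_i)-\tilde{f}_{\mathbf{A}_{\mathrm{up}}}(X_i))^2$ will be handled by introducing an auxiliary variable, not through a joint conjugate. The variables $\mathbf{z}\in\mathbb{R}^n$ are added with equality constraints
\begin{equation*}
z_i=\tilde{f}_{\mathbf{A}_{\mathrm{low}}}(X_i)-\tilde{f}_{\mathbf{A}_{\mathrm{up}}}(X_i),
\end{equation*}
and the penalty becomes $\lambda_{\mathrm{pen}}\lVert\mathbf{z}\rVert^2$. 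The multipliers are $\bs{\Gamma}_{\mathrm{low}},\bs{\Gamma}_{\mathrm{up}}\in\mathbb{R}_+^n$ for the two coverage constraints and a free $\bs{\alpha}_0\in\mathbb{R}^n$ for the equality constraint. The key identity is $\tilde{f}_{\mathbf{A}}(X_i)=\langle \mathbf{A},\bs{\Phi}(X_i)\bs{\Phi}(X_i)^\top\rangle$ with $\bs{\Phi}(X_i)=\mathbf{V}^{-\top}\mathbf{k}(X_i)$. Since $\mathbf{K}=\mathbf{V}^\top\mathbf{V}$, the vector $\bs{\Phi}(X_i)$ is the $i$-th column of $\mathbf{V}$. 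Hence, for any $\mathbf{c}\in\mathbb{R}^n$,
\begin{equation*}
\sum_i c_i\tilde{f}_{\mathbf{A}}(X_i)=\langle\mathbf{A},\mathbf{V}\mathrm{Diag}(\mathbf{c})\mathbf{V}^\top\rangle .
\end{equation*}

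With the sign convention $+\alpha_{0,i}(z_i-\tilde{f}_{\mathbf{A}_{\mathrm{low}}}(X_i)+\tilde{f}_{\mathbf{A}_{\mathrm{up}}}(X_i))$, the Lagrangian splits into three independent blocks. The $\mathbf{z}$-block is $\inf_{\mathbf{z}}\lambda_{\mathrm{pen}}\lVert\mathbf{z}\rVert^2+\bs{\alpha}_0\mathbf{z}^\top=-\frac{1}{4\lambda_{\mathrm{pen}}}\bs{\alpha}_0\bs{\alpha}_0^\top$. In the $\mathbf{A}_{\mathrm{low}}$-block, the coefficient of $\tilde{f}_{\mathbf{A}_{\mathrm{low}}}(X_i)$ is $b/n-\Gamma_{\mathrm{low},i}-\alpha_{0,i}$. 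The infimum over $\mathbf{A}_{\mathrm{low}}$ of $\Omega_{+,\mathrm{low}}(\mathbf{A}_{\mathrm{low}})-\langle\mathbf{A}_{\mathrm{low}},\mathbf{V}_{\mathrm{low}}\mathrm{Diag}((\bs{\Gamma}_{\mathrm{low}}+\bs{\alpha}_0)_{-\mathbf{b}})\mathbf{V}_{\mathrm{low}}^\top\rangle$ is therefore $-\Omega^\star_{+,\mathrm{low}}(\cdot)$. The up block is identical with $\bs{\Gamma}_{\mathrm{up}}-\bs{\alpha}_0$. The constant terms $\sum_i\Gamma_{(\cdot),i}r_{(\cdot)}(X_i,Y_i)$ combine into $(\bs{\Gamma}_{\mathrm{up}}-\bs{\Gamma}_{\mathrm{low}})\mathbf{r}^\top$, using $r_{\mathrm{low}}=-r$ and $r_{\mathrm{up}}=r$. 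The closed form of $\Omega^\star_{+,(\cdot)}$ and its gradient $\frac{1}{2\lambda_{(\cdot)2}}[\mathbf{B}-\lambda_{(\cdot)1}\mathbf{I}_n]_+$ are taken from \Cref{prop:dual formulation}, which in turn cites \citet{allain2025scalableandadaptivepredictionbandsusingkerenlsumofsquares}. The conjugate is computed the same way as in \Cref{lem:conjugate_derivative_norms}, but restricted to PSD arguments, so only the positive part of the spectrum survives. Assembling the three blocks gives \Cref{eq:asymmetric_dual_penalty_1}.

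Zero duality gap is what justifies the recovery formulas. The primal problem is convex: the objective is convex, since the penalty is a convex quadratic of linear maps of $\mathbf{A}$. The constraints are linear, and the only nonlinear ones are affine inequalities. Slater's condition holds by taking $\mathbf{A}_{(\cdot)}=t\mathbf{I}_n$ with $t$ large, so that $\tilde{f}(X_i)=t\lVert\bs{\Phi}(X_i)\rVert^2>|r_i|$, and setting $\mathbf{z}$ accordingly. Strong duality therefore holds, and the primal minimizer is the minimizer of the Lagrangian at the dual optimum. Since $\lambda_{(\cdot)2}>0$, each block minimizer is unique and equals the gradient of the conjugate, which gives the stated $\widehat{\mathbf{A}}_{\mathrm{low}}$ and $\widehat{\mathbf{A}}_{\mathrm{up}}$. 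The main point of care is the bookkeeping of signs for $\bs{\alpha}_0$ and the $-\mathbf{b}$ shift inside $\mathrm{Diag}$. The convention has to be fixed so that $\bs{\alpha}_0$ enters the low block with $+$ and the up block with $-$, as in the statement.
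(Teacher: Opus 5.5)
Your proposal is correct and yields exactly the stated dual, but it treats the coupling term differently from the paper. The paper works in two stages. It first dualizes the coverage constraints with $\bs{\Gamma}_{\mathrm{low}},\bs{\Gamma}_{\mathrm{up}}$. It then evaluates the inner infimum $D(\bs{\Gamma}_{\mathrm{low}},\bs{\Gamma}_{\mathrm{up}})=\inf_{\mathbf{x}}L(R\mathbf{x})+f(\mathbf{x})$ by Fenchel--Rockafellar duality (Theorem 3.3.5 of Borwein and Lewis). This forces it to conjugate the \emph{degenerate} quadratic $\tilde L(\mathbf{u})=\mathbf{a}^\top\mathbf{u}+\tfrac12\mathbf{u}^\top\mathbf{M}\mathbf{u}$ with $\mathbf{M}=2\lambda_{\mathrm{pen}}\bigl[\begin{smallmatrix}\mathbf{I}_n & -\mathbf{I}_n\\ -\mathbf{I}_n & \mathbf{I}_n\end{smallmatrix}\bigr]$. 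It must compute $\mathbf{M}^{\dagger}$ and parametrize the effective domain $\mathrm{Range}(\mathbf{M})+\mathbf{a}$ by $\bs{\alpha}_0$; that is where the $\pm\bs{\alpha}_0$ pattern and the term $\frac{1}{4\lambda_{\mathrm{pen}}}\bs{\alpha}_0\bs{\alpha}_0^\top$ come from.

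Your auxiliary variable $z_i=\tilde f_{\mathbf{A}_{\mathrm{low}}}(X_i)-\tilde f_{\mathbf{A}_{\mathrm{up}}}(X_i)$ is the primal counterpart of the same computation. With it, $\bs{\alpha}_0$ becomes an ordinary equality multiplier and the Lagrangian separates into three blocks. You then only need the conjugate of the strictly convex $\lambda_{\mathrm{pen}}\lVert\mathbf{z}\rVert^2$, so there is no pseudo-inverse or range bookkeeping.

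Your route also makes the argument self-contained. A single Slater point $\mathbf{A}_{(\cdot)}=t\mathbf{I}_n$ (valid since $\lVert\bs{\Phi}(X_i)\rVert^2=k(X_i,X_i)>0$) gives zero duality gap in $(\bs{\Gamma}_{\mathrm{low}},\bs{\Gamma}_{\mathrm{up}},\bs{\alpha}_0)$ at once. Uniqueness of each block minimizer, from $\lambda_{(\cdot)2}>0$ and $\lambda_{\mathrm{pen}}>0$, then turns the recovery formulas into an actual consequence. The paper instead leaves the constraint qualification implicit and cites Marteau-Ferey et al.\ and Allain et al.\ for primal recovery. In exchange, the paper's route reuses the generic template $\inf_{\mathbf{x}}L(R\mathbf{x})+f(\mathbf{x})$ from the kSoS literature it builds on.

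As a side remark, your identity $\bs{\Phi}(X_i)=\mathbf{V}\mathbf{e}_i$ gives the consistent pair: forward map $\mathbf{A}\mapsto\mathrm{diag}(\mathbf{V}^\top\mathbf{A}\mathbf{V})$ and adjoint $\mathbf{c}\mapsto\mathbf{V}\mathrm{Diag}(\mathbf{c})\mathbf{V}^\top$. The paper writes the forward map as $\mathrm{Diag}(\mathbf{V}\mathbf{A}\mathbf{V}^\top)$, a harmless transposition slip that does not change the final formula.
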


\begin{proof}
    As opposed to the separable case in \Cref{prop:dual formulation}, the interaction between $\mathbf{A}_{\mathrm{low}}^{\star}$ and $\mathbf{A}_{\mathrm{up}}^{\star}$ in the penalty term requires more attention. The dual problem is defined as
    \begin{equation}
        d = \underset{
        \substack{
            \bs{\Gamma}_{\mathrm{low}} \in \mathbb{R}_+^{n}\\
            \bs{\Gamma}_{\mathrm{up}} \in \mathbb{R}_+^{n}
        }
        }{\sup}
        \underset{
        \substack{
            \mathbf{A}_{\mathrm{low}} \in \mathbb{S}^{n}\\
            \mathbf{A}_{\mathrm{up}} \in \mathbb{S}^{n}
        }
        }{\inf}
        \mathcal{L}(\bs{\Gamma}_{\mathrm{low}}, \bs{\Gamma}_{\mathrm{up}}, \mathbf{A}_{\mathrm{low}}, \mathbf{A}_{\mathrm{up}})
        = \underset{
        \substack{
            \bs{\Gamma}_{\mathrm{low}} \in \mathbb{R}_+^{n}\\
            \bs{\Gamma}_{\mathrm{up}} \in \mathbb{R}_+^{n}
        }
        }{\sup} (\bs{\Gamma}_{\mathrm{up}}-\bs{\Gamma}_{\mathrm{low}})\mathbf{r}^{\top} + D(\bs{\Gamma}_{\mathrm{low}}, \bs{\Gamma}_{\mathrm{up}}) \label{eq:primal problem penalty 0}
    \end{equation}
    where we denote 
    \begin{align}
    D(\bs{\Gamma}_{\mathrm{low}}, \bs{\Gamma}_{\mathrm{up}})\coloneq \underset{
        \substack{
            \mathbf{A}_{\mathrm{low}} \in \mathbb{S}^{n},\;
            \mathbf{A}_{\mathrm{up}} \in \mathbb{S}^{n}
        }
        }{\inf}
        \overline{\mathcal{L}}(\bs{\Gamma}_{\mathrm{low}}, \bs{\Gamma}_{\mathrm{up}}, \mathbf{A}_{\mathrm{low}}, \mathbf{A}_{\mathrm{up}}) \label{eq:primal problem penalty 1}
    \end{align}
        with $\overline{\mathcal{L}}$ defined as
    \begin{align*}
        \overline{\mathcal{L}}(\bs{\Gamma}_{\mathrm{low}}, \bs{\Gamma}_{\mathrm{up}}, \mathbf{A}_{\mathrm{low}}, \mathbf{A}_{\mathrm{up}}) = 
        &\sum_{i=1}^{n} (\frac{b}{n} - \Gamma_{\mathrm{low},i})\tilde{f}_{\mathbf{A}_{\mathrm{low}}}(X_{i}) + \sum_{i=1}^{n} (\frac{b}{n} - \Gamma_{\mathrm{up},i})\tilde{f}_{\mathbf{A}_{\mathrm{up}}}(X_{i})\\
        &+ \lambda_{\mathrm{pen}}\sum_{i=1}^{n}(\tilde{f}_{\mathbf{A}_{\mathrm{low}}}(X_{i}) - \tilde{f}_{\mathbf{A}_{\mathrm{up}}}(X_{i}))^2\\
        & + \Omega_{+,\mathrm{low}}(\mathbf{A}_{\mathrm{low}}) + \Omega_{+,\mathrm{up}}(\mathbf{A}_{\mathrm{up}}).
    \end{align*}

    Contrary to \citet{allain2025scalableandadaptivepredictionbandsusingkerenlsumofsquares} where $D()$ has an explicit expression, here we follow \citet{marteauferey2020nonparametricmodelsnonnegativefunctions, allain2025scalableandadaptivepredictionbandsusingkerenlsumofsquares} and use Theorem $3.3.5$ from \citet{borwein2006convex} to get the following lemma.
    \begin{lemma}
    \label{lem:dual problem penalty 1}
        The dual problem associated to \Cref{eq:primal problem penalty 1} writes
        \begin{align}
        \label{eq:dual problem penalty 1}
            D(\bs{\Gamma}_{\mathrm{low}}, \bs{\Gamma}_{\mathrm{up}}) = \underset{
            \bs{\alpha}_{0} \in \mathbb{R}^{n}
            }{\sup}
            &-\frac{1}{4\lambda_{\mathrm{pen}}}\bs{\alpha}_0\bs{\alpha}_0^{\top}\\
            &- \Omega_{+,\mathrm{low}}^{\star}(\mathbf{V}_{\mathrm{low}}\mathrm{Diag}((\bs{\Gamma}_{\mathrm{low}}+\bs{\alpha}_0)_{-\mathbf{b}})\mathbf{V}_{\mathrm{low}}^{\top}) 
            - \Omega_{+,\mathrm{up}}^{\star}(\mathbf{V}_{\mathrm{up}}\mathrm{Diag}(({\bs{\Gamma}_{\mathrm{up}}}-\bs{\alpha}_0)_{-\mathbf{b}}))\mathbf{V}_{\mathrm{up}}^{\top})\nonumber
        \end{align}.
    \end{lemma}
    \begin{proof}
    First notice that $D$ can be decomposed as:
    \begin{equation*}
        D(\bs{\Gamma}_{\mathrm{low}}, \bs{\Gamma}_{\mathrm{up}}) = \underset{
        \mathbf{x} \in \mathbb{S}^{n}\times \mathbb{S}^{n}
        }{\inf}
        L(R\mathbf{x}) + f(\mathbf{x})
    \end{equation*}
    where $\mathbf{x} = (\mathbf{A}_{\mathrm{low}}, \mathbf{A}_{\mathrm{up}})$, $R:\  \mathbf{x} \in  \mathbb{S}^{n}\times \mathbb{S}^{n} \mapsto (\ldots, \tilde{f}_{\mathbf{A}_{\mathrm{low}}}(X_{i}), \ldots, \tilde{f}_{\mathbf{A}_{\mathrm{up}}}(X_{i}), \ldots)$, $f(\mathbf{x})= \Omega_{+,\mathrm{low}}(\mathbf{A}_{\mathrm{low}}) + \Omega_{+,\mathrm{up}}(\mathbf{A}_{\mathrm{up}})$ and 
    \begin{equation*}
        L(R\mathbf{x}) = \sum_{i=1}^{n} (\frac{b}{n} - \Gamma_{\mathrm{low},i})\tilde{f}_{\mathbf{A}_{\mathrm{low}}}(X_{i}) + \sum_{i=1}^{n} (\frac{b}{n} - \Gamma_{\mathrm{up},i})\tilde{f}_{\mathbf{A}_{\mathrm{up}}}(X_{i})\\
        + \lambda_{\mathrm{pen}}\sum_{i=1}^{n}(\tilde{f}_{\mathbf{A}_{\mathrm{low}}}(X_{i}) - \tilde{f}_{\mathbf{A}_{\mathrm{up}}}(X_{i}))^2.
    \end{equation*}

        Applying Theorem $3.3.5$ from \citet{borwein2006convex} to \Cref{eq:primal problem penalty 1} gives
        \begin{equation}
        \label{eq:dual function penalty 1}
            D(\bs{\Gamma}_{\mathrm{low}}, \bs{\Gamma}_{\mathrm{up}}) = \underset{
            \bs{\alpha}^{\star} = (\bs{\alpha}_{\mathrm{low}}^{\star}, \bs{\alpha}_{\mathrm{up}}^{\star}) \in \mathbb{R}^{2n}
            }{\sup}
            -L^{\star}(-\bs{\alpha}^{\star}) - f^{\star}(R^{\star}\bs{\alpha}^{\star}).
        \end{equation}
        We first derive the expression for $L^{\star}$, by expressing $L$ in quadratic form.
        Note that $L^{\star}(-\bs{\alpha}^{\star}) = [\tilde{L}(\mathbf{u})]^{\star}(\bs{\alpha}^{\star})$, where $\tilde{L}(\mathbf{u}) = L(-\mathbf{u})$, such that:
        \begin{align*}
            \tilde{L}(\mathbf{u}) &= L(-\mathbf{u})\\
            &= \sum_{i=1}^{n}(\Gamma_{\mathrm{low}, i}-\frac{b}{n})u_{\mathrm{low}, i} + \sum_{i=1}^{n}(\Gamma_{\mathrm{up}, i}-\frac{b}{n})u_{\mathrm{up}, i} + \lambda_{\mathrm{pen}}\sum_{i=1}^{n}(u_{\mathrm{up}, i}-u_{\mathrm{low}, i})^2\\
            &= \mathbf{b}^{\top}\mathbf{u} + \frac{1}{2}\mathbf{u}^{\top}\mathbf{M}\mathbf{u}
        \end{align*}
        where $\mathbf{a} = [\ldots, \Gamma_{\mathrm{low}, i}-\frac{b}{n}, \ldots, \Gamma_{\mathrm{up}, i}-\frac{b}{n}, \ldots]$ and $
            \mathbf{M} = 2\lambda_{\mathrm{pen}}
            \begin{bmatrix}
                I_n & -I_n \\
                -I_n & I_n
            \end{bmatrix}
        $. It follows that
        \begin{equation}
        \label{eq:dual_formulation_L_tilde_penalty_1}
            \tilde{L}^{\star}(\bs{\alpha}^{\star}) = \frac{1}{2}(\bs{\alpha}^{\star}-\mathbf{a})^{\top}\mathbf{M}^{\dagger}(\bs{\alpha}^{\star}-\mathbf{a})
        \end{equation}
        for $\bs{\alpha}^{\star} \in \mathrm{Range}(\mathbf{M}) + \mathbf{a}$ \citep{becker2019quasi} and where $\mathbf{M}^{\dagger}$ is the pseudo-inverse of $\mathbf{M}$. It remains to compute $\mathbf{M}^{\dagger}$ and $\mathrm{Range}(\mathbf{M}) + \mathbf{a}$.

        First, observe that the pseudo-inverse of the matrix $\begin{bmatrix}
                I_n & -I_n \\
                -I_n & I_n
            \end{bmatrix}$ is given by $\frac{1}{4}\begin{bmatrix}
                I_n & -I_n \\
                -I_n & I_n
            \end{bmatrix}$, which leads to:
        \begin{equation*}
            \mathbf{M}^{\dagger} = \left(2\lambda_{\mathrm{pen}}
            \begin{bmatrix}
                I_n & -I_n \\
                -I_n & I_n
            \end{bmatrix}\right)^{\dagger}  = \frac{1}{8\lambda_{\mathrm{pen}}}\begin{bmatrix}
                I_n & -I_n \\
                -I_n & I_n
            \end{bmatrix} = \frac{1}{16\lambda_{\mathrm{pen}}^2}\mathbf{M}.
        \end{equation*}

        For identifying $\mathrm{Range}(\mathbf{M}) + \mathbf{b}$, let $\mathbf{v} \in \mathbb{R}^{2n}$. Then,
        \begin{equation*}
            \mathbf{M}\mathbf{v} 
            = 2\lambda_{\mathrm{pen}}
            \begin{bmatrix}
                I_n & -I_n \\
                -I_n & I_n
            \end{bmatrix}
            \begin{bmatrix}
                v_{1:n} \\
                v_{(n+1):2n}
            \end{bmatrix}
            = 2\lambda_{\mathrm{pen}}
            \begin{bmatrix}
                v_{1:n} - v_{(n+1):2n} \\
                v_{(n+1):2n} - v_{1:n}
            \end{bmatrix}
        \end{equation*}
        and it follows that
        \begin{equation*}
            \mathrm{Range}(\mathbf{M}) = \left\{\mathbf{v}\in \mathbb{R}^{2n}\colon\; v_{1:n}=-v_{(n+1):2n}\right\}.
        \end{equation*}
        Denoting $\bs{\alpha}_0 \in \mathbb{R}^{n}$ the vector which consists of the first $n$ components of $\mathbf{v}$, then $\bs{\alpha}^{\star}$ in \Cref{eq:dual_formulation_L_tilde_penalty_1} can be parameterized as
        \begin{equation}
        \label{eq:definition alpha star}
            \bs{\alpha}^{\star} = (\bs{\alpha}_{\mathrm{low}}^{\star}, \bs{\alpha}_{\mathrm{up}}^{\star}) = [\bs{\alpha}_0 + \mathbf{a}_{1:n}, \; -\bs{\alpha}_0 + \mathbf{a}_{(n+1):2n}]\in \mathrm{Range}(\mathbf{M}) + \mathbf{a},
        \end{equation}
        such that 
        \Cref{eq:dual_formulation_L_tilde_penalty_1} becomes
        \begin{equation*}
            L^{\star}(-\bs{\alpha}^{\star}) = \frac{1}{4\lambda_{\mathrm{pen}}}\bs{\alpha}_0\bs{\alpha}_0^{\top}.
        \end{equation*}

        Next, since $f$ is defined as a separable sum, its conjugate is the sum of the conjugates: $f^{\star}(\mathbf{A}_{\mathrm{low}}^{\star}, \mathbf{A}_{\mathrm{low}}^{\star}) = \Omega_{+,\mathrm{low}}^{\star}(\mathbf{A}_{\mathrm{low}}^{\star}) + \Omega_{+,\mathrm{up}}^{\star}(\mathbf{A}_{\mathrm{up}}^{\star})$. Finally, to compute $R^{\star}\bs{\alpha}^{\star}$ notice that
        \begin{align*}
            R\mathbf{x} &= (\ldots, \tilde{f}_{\mathbf{A}_{\mathrm{low}}}(X_{i}), \ldots, \tilde{f}_{\mathbf{A}_{\mathrm{up}}}(X_{i}), \ldots)\\
            &= \Bigl(\mathrm{Diag}(\mathbf{V}_{\mathrm{low}}\mathbf{A}_{\mathrm{low}}\mathbf{V}_{\mathrm{low}}^{\top}), \mathrm{Diag}(\mathbf{V}_{\mathrm{up}}\mathbf{A}_{\mathrm{up}}\mathbf{V}_{\mathrm{up}}^{\top})\Bigr).
        \end{align*}
        By considering the Frobenius scalar product on $(\mathbb{S}^{n})^2$, we thus have
        \begin{align*}
            R^{\star}\bs{\alpha}^{\star} 
            &= \Bigl(\mathbf{V}_{\mathrm{low}}\mathrm{Diag}(\bs{\alpha}_{\mathrm{low}}^{\star})\mathbf{V}_{\mathrm{low}}^{\top}, \mathbf{V}_{\mathrm{up}}\mathrm{Diag}(\bs{\alpha}_{\mathrm{up}}^{\star})\mathbf{V}_{\mathrm{up}}^{\top}\Bigr)\\
            &= (R_{\mathrm{low}}^{\star}\bs{\alpha}_{\mathrm{low}}^{\star}, R_{\mathrm{up}}^{\star}\bs{\alpha}_{\mathrm{up}}^{\star}).
        \end{align*}
        where $R_{(\cdot)}^{\star}\bs{\alpha}_{(\cdot)}^{\star}=\mathbf{V}_{(\cdot)}\mathrm{Diag}(\bs{\alpha}_{(\cdot)}^{\star})\mathbf{V}_{(\cdot)}^{\top}$.
        The value of $f^{\star}$ at $R^{\star}\bs{\alpha}^{\star}$ is finally given by
        \begin{align*}
            f^{\star}(R^{\star}\bs{\alpha}^{\star}) 
            &= \Omega_{+,\mathrm{low}}^{\star}(\mathbf{V}_{\mathrm{low}}\mathrm{Diag}(\bs{\alpha}_{\mathrm{low}}^{\star})\mathbf{V}_{\mathrm{low}}^{\top}) + \Omega_{+,\mathrm{up}}^{\star}(\mathbf{V}_{\mathrm{up}}\mathrm{Diag}(\bs{\alpha}_{\mathrm{up}}^{\star})\mathbf{V}_{\mathrm{up}}^{\top})\\
            &= \Omega_{+,\mathrm{low}}^{\star}(\mathbf{V}_{\mathrm{low}}\mathrm{Diag}(\bs{\alpha}_0+\mathbf{b})\mathbf{V}_{\mathrm{low}}^{\top}) + \Omega_{+,\mathrm{up}}^{\star}(\mathbf{V}_{\mathrm{up}}\mathrm{Diag}(-\bs{\alpha}_0+\mathbf{b})\mathbf{V}_{\mathrm{up}}^{\top}),
        \end{align*}
        which concludes the proof of \Cref{lem:dual problem penalty 1}.
    \end{proof}
    Putting things together, using \Cref{lem:dual problem penalty 1} and \Cref{eq:primal problem penalty 0}, our main dual problem writes
    \begin{align*}
        d = \underset{
        \substack{
            \bs{\Gamma}_{\mathrm{low}} \in \mathbb{R}_+^{n},\bs{\Gamma}_{\mathrm{up}} \in \mathbb{R}_+^{n}\\
            \bs{\alpha}_0 \in \mathbb{R}^{n}
        }
        }{\sup}
        &(\bs{\Gamma}_{\mathrm{up}}-\bs{\Gamma}_{\mathrm{low}})\mathbf{r}^{\top}
        - \frac{1}{4\lambda_{\mathrm{pen}}} \bs{\alpha}_0\bs{\alpha}_0^{\top}\\
        &-\Omega^{\star}_{+,\mathrm{low}}(\mathbf{V}_{\mathrm{low}}\mathrm{Diag}(({\bs{\Gamma}_{\mathrm{low}}}+\bs{\alpha}_0)_{-\mathbf{b}})\mathbf{V}_{\mathrm{low}}^{\top})
        -\Omega^{\star}_{+,\mathrm{up}}(\mathbf{V}_{\mathrm{up}}\mathrm{Diag}(({\bs{\Gamma}_{\mathrm{up}}}-\bs{\alpha}_0)_{-\mathbf{b}})\mathbf{V}_{\mathrm{up}}^{\top}) \nonumber\\
        =\underset{
        \substack{
            \bs{\Gamma}_{\mathrm{low}} \in \mathbb{R}_+^{n},\bs{\Gamma}_{\mathrm{up}} \in \mathbb{R}_+^{n}\\
            \bs{\alpha}_0 \in \mathbb{R}^{n}
        }
        }{\sup}& g(\bs{\Gamma}_{\mathrm{low}},\bs{\Gamma}_{\mathrm{up}},\bs{\alpha}_0).
    \end{align*}
    \paragraph{Gradient computation.}
    \begin{equation*}
        \frac{\partial g}{\partial\bs{\Gamma}_{\mathrm{low}}} = 
        -\mathbf{r} 
        - \mathrm{Diag}\left(\mathbf{V}_{\mathrm{low}}^{\top}\left[\nabla\Omega^{\star}_{+,\mathrm{low}}\left(\mathbf{V}_{\mathrm{low}}\mathrm{Diag}({(\bs{\Gamma}_{\mathrm{low}}}+\bs{\alpha}_0)_{-\mathbf{b}})\mathbf{V}_{\mathrm{low}}^{\top}\right)\right]^{\top}\mathbf{V}_{\mathrm{low}}\right)
    \end{equation*}
    \begin{equation*}
        \frac{\partial g}{\partial\bs{\Gamma}_{\mathrm{up}}} = 
        \mathbf{r} 
        - \mathrm{Diag}\left(\mathbf{V}_{\mathrm{up}}^{\top}\left[\nabla\Omega^{\star}_{+,\mathrm{up}}\left(\mathbf{V}_{\mathrm{up}}\mathrm{Diag}({(\bs{\Gamma}_{\mathrm{up}}}-\bs{\alpha}_0)_{-\mathbf{b}})\mathbf{V}_{\mathrm{up}}^{\top}\right)\right]^{\top}\mathbf{V}_{\mathrm{up}}\right)
    \end{equation*}
    \begin{align*}
        \frac{\partial g}{\partial\bs{\alpha}_{0}} = 
        -\frac{1}{2\lambda_{\mathrm{pen}}}\bs{\alpha}_0
        &- \mathrm{Diag}\left(\mathbf{V}_{\mathrm{low}}^{\top}\left[\nabla\Omega^{\star}_{+,\mathrm{low}}\left(\mathbf{V}_{\mathrm{low}}\mathrm{Diag}({(\bs{\Gamma}_{\mathrm{low}}}+\bs{\alpha}_0)_{-\mathbf{b}})\mathbf{V}_{\mathrm{low}}^{\top}\right)\right]^{\top}\mathbf{V}_{\mathrm{low}}\right)\\
        &+ \mathrm{Diag}\left(\mathbf{V}_{\mathrm{up}}^{\top}\left[\nabla\Omega^{\star}_{+,\mathrm{up}}\left(\mathbf{V}_{\mathrm{up}}\mathrm{Diag}({(\bs{\Gamma}_{\mathrm{up}}}-\bs{\alpha}_0)_{-\mathbf{b}})\mathbf{V}_{\mathrm{up}}^{\top}\right)\right]^{\top}\mathbf{V}_{\mathrm{up}}\right)
    \end{align*}
    These gradients come from elementary computations following  \citet{allain2025scalableandadaptivepredictionbandsusingkerenlsumofsquares}.
    
    \paragraph{Recovering the solution from optimal Lagrange multipliers.}
    Similarly to \Cref{prop:dual formulation}, by denoting $(\widehat{\bs{\Gamma}}_{\mathrm{low}}, \widehat{ \bs{\Gamma}}_{\mathrm{up}}, \widehat{\bs{\alpha}}_0) \in \mathbb{R}_{+}^{n}\times \mathbb{R}_{+}^{n}\times \mathbb{R}^{n}$ the optimal variables of the dual problem, to reconstruct the matrices $\widehat{\mathbf{A}}_{\mathrm{low}}, \widehat{\mathbf{A}}_{\mathrm{up}}$ we have:
    \begin{align*}
        \widehat{\mathbf{A}}_{\mathrm{low}} &= \nabla\Omega^{\star}_{+,\mathrm{low}}\left(\mathbf{V}_{\mathrm{low}}\mathrm{Diag}({(\bs{\Gamma}_{\mathrm{low}}}+\bs{\alpha}_0)_{-\mathbf{b}})\mathbf{V}_{\mathrm{low}}^{\top}\right)\\
        &= \frac{1}{2\lambda_{\mathrm{low}2}}\left[\mathbf{V}_{\mathrm{low}}\mathrm{Diag}((\widehat{\bs{\Gamma}}_{\mathrm{low}}+\widehat{\bs{\alpha}}_0)_{-\mathbf{b}})\mathbf{V}_{\mathrm{low}}^{\top}-\lambda_{\mathrm{low}{1}}\mathbf{I}_{n}\right]_{+}\\
        \widehat{\mathbf{A}}_{\mathrm{up}} &= \nabla\Omega^{\star}_{+,\mathrm{up}}\left(\mathbf{V}_{\mathrm{up}}\mathrm{Diag}({(\bs{\Gamma}_{\mathrm{up}}}-\bs{\alpha}_0)_{-\mathbf{b}})\mathbf{V}_{\mathrm{up}}^{\top}\right)\\
        &= \frac{1}{2\lambda_{\mathrm{up}2}}\left[\mathbf{V}_{\mathrm{up}}\mathrm{Diag}((\widehat{\bs{\Gamma}}_{\mathrm{up}}-\widehat{\bs{\alpha}}_0)_{-\mathbf{b}})\mathbf{V}_{\mathrm{up}}^{\top}-\lambda_{\mathrm{up}{1}}\mathbf{I}_{n}\right]_{+}
    \end{align*}
    see Theorem 8 in \citet{marteauferey2020nonparametricmodelsnonnegativefunctions} and Appendix A.2 in \citet{allain2025scalableandadaptivepredictionbandsusingkerenlsumofsquares}.
\end{proof}

\newpage
\subsection{Error bounds}
\label{sec:apdx:error bounds}

To derive the error bounds for both penalties, we make the following assumption on the space $\mathcal{H}$ (Assumption 2(a) in \citet{rudi2025finding}).
\begin{assumption}
    \label{assumption:rkhs_1}
    For a bounded open set $\Omega\in\mathbb{R}^d$, the RKHS $\mathcal{H}$ of functions on $\Omega$ with norm $\Vert \cdot \Vert_{\mathcal{H}}$ satisfies $f\vert\Omega\in\mathcal{H}$, $\forall f \in C^{\infty}(\mathbb{R}^d)$. Moreover $\forall u,v\in\mathcal{H}$, $u\cdot v \in\mathcal{H}$ and $\exists M\geq 1$ such that
    \[\Vert u\cdot v \Vert_{\mathcal{H}} \leq \mathrm{M} \Vert u \Vert_{\mathcal{H}} \Vert v \Vert_{\mathcal{H}}.\]
\end{assumption}
This assumption has an important implication: if it holds, kernel SoS functions will live in the RKHS $\mathcal{H}$, and their norm can be controlled by the nuclear norm of their operator, as elaborated in the following lemma.
\begin{lemma}[Lemma 9 in \citet{rudi2025finding}.]
    \label{lemma:rudi}
    Let $\Omega$ and $\mathcal{H}$ satisfy Assumption \ref{assumption:rkhs_1}. If $\mathcal{A}$ is a trace-class operator, then $f_{\mathcal{A}}(X)=\langle \phi(X),\mathcal{A}\phi(X)\rangle \in \mathcal{H}$ and
    \[\Vert f_{\mathcal{A}}\Vert_{\mathcal{H}} \leq \mathrm{M} \Vert \mathcal{A} \Vert_{\star}.\]
\end{lemma}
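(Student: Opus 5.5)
The plan is to expand $\mathcal{A}$ spectrally and control the resulting series term by term using the multiplicative bound of Assumption \ref{assumption:rkhs_1}. Since $\mathcal{A}$ is trace-class, and in particular compact, I will write its singular value decomposition
\[
\mathcal{A} = \sum_{k\geq 1} \sigma_k\, u_k \otimes v_k, \qquad \sigma_k \geq 0,\quad \sum_{k} \sigma_k = \lVert \mathcal{A}\rVert_{\star} < \infty,
\]
with $(u_k)$ and $(v_k)$ orthonormal families in $\mathcal{H}$. When $\mathcal{A}$ is Hermitian, which is the case for $\mathcal{A}_{\mathrm{low}}$, $\mathcal{A}_{\mathrm{up}}$ and their difference, this is simply the eigendecomposition with $v_k=\pm u_k$. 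I keep the general form so the lemma applies verbatim to the non-PSD difference $\mathcal{A}_{\mathrm{low}}-\mathcal{A}_{\mathrm{up}}$ used later for $\Delta_f$.

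By the reproducing property, $\langle \phi(X), u_k\rangle_{\mathcal{H}} = u_k(X)$, so pointwise
\[
f_{\mathcal{A}}(X) = \sum_k \sigma_k\, u_k(X)\, v_k(X).
\]
This series converges absolutely for each $X$ because $|u_k(X)v_k(X)| \leq k(X,X)$ and $\sum_k \sigma_k<\infty$.

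Next I consider the series $g=\sum_k \sigma_k\, u_k\cdot v_k$ as a series in $\mathcal{H}$. By Assumption \ref{assumption:rkhs_1}, each product $u_k\cdot v_k$ belongs to $\mathcal{H}$ and satisfies
\[
\lVert u_k\cdot v_k\rVert_{\mathcal{H}} \leq \mathrm{M}\,\lVert u_k\rVert_{\mathcal{H}}\lVert v_k\rVert_{\mathcal{H}} = \mathrm{M}.
\]
Hence $\sum_k \sigma_k \lVert u_k v_k\rVert_{\mathcal{H}} \leq \mathrm{M}\lVert\mathcal{A}\rVert_{\star}<\infty$. The series is therefore absolutely convergent in the Hilbert space $\mathcal{H}$, and by completeness it converges to some $g\in\mathcal{H}$. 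The triangle inequality then gives
\[
\lVert g\rVert_{\mathcal{H}}\leq \mathrm{M}\lVert \mathcal{A}\rVert_{\star}.
\]

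The remaining step, and the only delicate one, is to identify the $\mathcal{H}$-limit $g$ with the pointwise function $f_{\mathcal{A}}$. I will use that evaluation functionals are continuous on an RKHS, with $|h(X)| \leq \sqrt{k(X,X)}\,\lVert h\rVert_{\mathcal{H}}$. Convergence of the partial sums in $\mathcal{H}$ thus implies their pointwise convergence to $g(X)$. The partial sums also converge pointwise to $f_{\mathcal{A}}(X)$, so $g=f_{\mathcal{A}}$ on $\Omega$. This yields $f_{\mathcal{A}}\in\mathcal{H}$ and $\lVert f_{\mathcal{A}}\rVert_{\mathcal{H}}\leq \mathrm{M}\lVert\mathcal{A}\rVert_{\star}$.
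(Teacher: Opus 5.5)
Your proof is correct. It is essentially the argument behind Lemma 9 of Rudi et al., which the paper cites without reproducing: decompose $\mathcal{A}$ spectrally, bound each rank-one term by $\mathrm{M}$ using the multiplicative inequality of Assumption \ref{assumption:rkhs_1}, sum via $\sum_k \sigma_k = \lVert\mathcal{A}\rVert_{\star}$, and identify the $\mathcal{H}$-limit with $f_{\mathcal{A}}$ through continuity of point evaluations. Using a general singular value decomposition instead of an eigendecomposition is a harmless generalization, and it covers the non-PSD difference $\mathcal{A}_{\mathrm{low}}-\mathcal{A}_{\mathrm{up}}$ directly.
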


\subsubsection{Operator penalty.}
We can now state the error bound with the operator penalty, which controls the difference between two kernel SoS functions with the nuclear norm of their operator difference.

\begin{proposition}[Error bound with operator penalty]
\label{prop:error bound penalty 2}
Let $\mathcal{H}$ be a reproducing kernel Hilbert space with associated kernel $k$ and feature map $\phi$ which satisfies Assumption \ref{assumption:rkhs_1}. If $k$ is bounded such that $\Vert k\Vert_{\infty}:= \sup_{x\in\Omega} \sqrt{k(X,X)} < \infty$, then for any two PSD operators $\mathcal{A}_1, \mathcal{A}_2 \in \mathbb{S}_{+}(\mathcal{H})$, we have
\begin{equation*}
    \sup_{x\in\Omega}\, \lvert f_{\mathcal{A}_1}(x) - f_{\mathcal{A}_2}(x) \rvert \leq \mathrm{M} \Vert k\Vert_{\infty} \,\lVert \mathcal{A}_1 - \mathcal{A}_2\rVert_{\star},
\end{equation*}
where $f_{\mathcal{A}_1}(X)=\langle \phi(X),\mathcal{A}_1\phi(X)\rangle \in \mathcal{H}$ and $f_{\mathcal{A}_2}(X)=\langle \phi(X),\mathcal{A}_2\phi(X)\rangle \in \mathcal{H}$ are the kernel sum-of-squares functions associated to $\mathcal{A}_1$ and $\mathcal{A}_2$, respectively.
\end{proposition}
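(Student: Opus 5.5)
The plan is to reduce everything to a single kernel SoS function attached to a (non-PSD) symmetric operator, and then combine \Cref{lemma:rudi} with the reproducing property.

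\textbf{Step 1: linearity.} The map $\mathcal{A}\mapsto f_{\mathcal{A}}(x)=\langle\phi(x),\mathcal{A}\phi(x)\rangle_{\mathcal{H}}$ is linear in $\mathcal{A}$. Set $\mathcal{B}=\mathcal{A}_1-\mathcal{A}_2$. It is a bounded Hermitian operator, not necessarily PSD, and
\[
f_{\mathcal{A}_1}(x)-f_{\mathcal{A}_2}(x)=f_{\mathcal{B}}(x)\quad\text{for all } x\in\Omega .
\]
If $\lVert\mathcal{B}\rVert_{\star}=\infty$ the bound is trivial, so we may assume $\mathcal{B}$ is trace class.

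\textbf{Step 2: $f_{\mathcal{B}}\in\mathcal{H}$ with controlled norm.} \Cref{lemma:rudi} is stated for trace-class operators and does not need positivity. If one prefers to use it only on PSD operators, split $\mathcal{B}=\mathcal{B}_+-\mathcal{B}_-$ through its spectral decomposition. Then $\lVert\mathcal{B}\rVert_{\star}=\lVert\mathcal{B}_+\rVert_{\star}+\lVert\mathcal{B}_-\rVert_{\star}$, and applying the lemma to each part and using the triangle inequality gives
\[
\lVert f_{\mathcal{B}}\rVert_{\mathcal{H}}\le \mathrm{M}\lVert\mathcal{B}\rVert_{\star}.
\]
This step is where the main care is needed: one must check that \Cref{lemma:rudi} transfers to signed operators. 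The spectral splitting above handles it.

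\textbf{Step 3: from RKHS norm to sup norm.} For any $g\in\mathcal{H}$ and $x\in\Omega$, the reproducing property and Cauchy–Schwarz give
\[
|g(x)|=|\langle g,\phi(x)\rangle_{\mathcal{H}}|\le\lVert g\rVert_{\mathcal{H}}\sqrt{k(x,x)}\le\lVert k\rVert_{\infty}\lVert g\rVert_{\mathcal{H}} .
\]

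\textbf{Conclusion.} Apply Step 3 to $g=f_{\mathcal{B}}$, use the bound from Step 2, and take the supremum over $x\in\Omega$. This yields
\[
\sup_{x\in\Omega}|f_{\mathcal{A}_1}(x)-f_{\mathcal{A}_2}(x)|\le \mathrm{M}\lVert k\rVert_{\infty}\lVert\mathcal{A}_1-\mathcal{A}_2\rVert_{\star},
\]
which is the claimed bound.

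A cruder route avoids Assumption~\ref{assumption:rkhs_1} altogether: $|\langle\phi(x),\mathcal{B}\phi(x)\rangle|\le\lVert\mathcal{B}\rVert_{\mathrm{op}}k(x,x)\le\lVert k\rVert_{\infty}^2\lVert\mathcal{B}\rVert_{\star}$. This gives the constant $\lVert k\rVert_{\infty}^2$ instead of $\mathrm{M}\lVert k\rVert_{\infty}$, so Step 2 is what produces the stated form.
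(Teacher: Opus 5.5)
Your proof is correct and follows the paper's argument. You write the difference as the kSoS function of $\mathcal{A}_1-\mathcal{A}_2$, bound its $\mathcal{H}$-norm by $\mathrm{M}\lVert\mathcal{A}_1-\mathcal{A}_2\rVert_{\star}$ via \Cref{lemma:rudi}, and pass to the sup norm using the reproducing property and $\lVert k\rVert_{\infty}$; your spectral splitting $\mathcal{B}=\mathcal{B}_+-\mathcal{B}_-$ is a harmless extra safeguard, since the paper applies \Cref{lemma:rudi} directly to the signed trace-class operator.
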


\begin{proof}
Writing $\Delta f=f_1-f_2$, if $\mathcal{H}$ satisfies Assumption \ref{assumption:rkhs_1} then $\Delta f \in \mathcal{H}$, such that
\begin{align*}
\Delta f(X) = \langle \Delta f, k(X,\cdot)\rangle_{\mathcal{H}} \leq \Vert \Delta f \Vert_{\mathcal{H}} \Vert k\Vert_{\infty},
\end{align*}
see \citet{steinwart2008support} Lemma 4.23. We conclude the proof by applying \Cref{lemma:rudi} to $\Delta f = \langle \phi(X),(\mathcal{A}_1-\mathcal{A}_2)\phi(X)\rangle$ with $\mathcal{A}=\mathcal{A}_1-\mathcal{A}_2$.
\end{proof}

\subsubsection{Training set penalty.}

While the operator penalty controls the difference of kSoS functions through the difference of their associated operators, the training set penalty relies instead on the difference of the functions on the training points.\\
For this penalty, we first need a new assumption on the space $\mathcal{H}$ (Assumption 2(d) in \citet{rudi2025finding} with $m=1$).
\begin{assumption}
    \label{assumption:rkhs_2}
    For a bounded open set $\Omega\in\mathbb{R}^d$, the RKHS $\mathcal{H}$ of functions on $\Omega$ with associated kernel $k$ satisfies
    \[\max_{\vert \alpha\vert=1} \sup_{x,y\in\Omega} \vert \partial^{\alpha}_x \partial^{\alpha}_y k(x,y)\vert \leq \mathrm{D}^2 < \infty\]
    for some $\mathrm{D}\geq 1$.
\end{assumption}
Assumption \ref{assumption:rkhs_2} implies that $\mathcal{H}\subseteq C^1(\Omega)$ and that $\forall u\in\mathcal{H}$,
\begin{equation}
    \label{eq:control_derivative_rkhs}
    \max_{\vert \alpha\vert=1} \sup_{x\in\Omega} \vert \partial^{\alpha}_x u(x)\vert \leq \mathrm{D}
\end{equation}
see Remark 2 in \citet{rudi2025finding}.\\
We also require the following geometric property on the domain $\Omega$:
\begin{assumption}
\label{assumption:geometric of the domain}
    $\Omega$ writes as $\Omega = \cup_{x\in S}\mathrm{B}_{r}(x)$, where $S$ is a bounded subset of $\mathbb{R}^d$ and $\mathrm{B}_{r}(x)$ is the ball of center $x$ and radius $r$.
\end{assumption}
In order to show that controlling the difference of functions on a finite subset $\widehat{X} = \{X_1, \ldots, X_n\}$ allows to control the difference of functions on their whole domain $\Omega$, we rely on traditional scattered data approximation techniques. In particular, they bring into play the distance of any data point X to $\widehat{X}$, known as the \emph{fill-in distance} defined by:
\begin{equation*}
    \rho_{\widehat{X}, \Omega} = \sup_{x\in\Omega}\min_{X_i\in\widehat{X}}\lVert X-X_i\rVert.
\end{equation*}

\begin{proposition}[Error bound with training set penalty]
\label{prop:error bound penalty 1}
Let $\Omega$ satisfying Assumption \ref{assumption:geometric of the domain} and $\widehat{X}= \{X_1, \ldots, X_n\}$ be a finite subset of $\Omega$. Let $\mathcal{H}$ be a RKHS of functions defined on $\Omega$ with associated kernel $k$ satisfying Assumptions \ref{assumption:rkhs_1} and \ref{assumption:rkhs_2}. Then for any two PSD operators $\mathcal{A}_1, \mathcal{A}_2 \in \mathcal{S}_{+}(\mathcal{H})$, it holds
\begin{equation*}
    \sup_{x\in\Omega}\, \lvert f_{\mathcal{A}_1}(x)-f_{\mathcal{A}_2}(x) \rvert \leq 2\,C_{f_1,f_2} \rho_{\widehat{X}, \Omega} + \sqrt{\sum_{i=1}^{n}(f_1(X_i) - f_2(X_i))^{2}}
\end{equation*}
where $f_{\mathcal{A}_1}(X)=\langle \phi(X),\mathcal{A}_1\phi(X)\rangle \in \mathcal{H}$ and $f_{\mathcal{A}_2}(X)=\langle \phi(X),\mathcal{A}_2\phi(X)\rangle \in \mathcal{H}$ are the kernel sum-of-squares functions associated to $\mathcal{A}_1$ and $\mathcal{A}_2$, respectively, and $C_{f_1,f_2}=2d\mathrm{D}\mathrm{M}\lVert \mathcal{A}_1-\mathcal{A}_2 \rVert_{\star}$ with $\mathrm{D}$ and $\mathrm{M}$ constants depending on kernel $k$.
\end{proposition}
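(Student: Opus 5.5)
We set $\Delta f := f_{\mathcal{A}_1}-f_{\mathcal{A}_2} = f_{\mathcal{A}_1-\mathcal{A}_2}$ and use a classical scattered-data argument: first bound the gradient of $\Delta f$ uniformly on $\Omega$, then transfer the values of $\Delta f$ at the training points to an arbitrary $x\in\Omega$ through the nearest training point. By linearity of $\mathcal{A}\mapsto f_{\mathcal{A}}$, Lemma~\ref{lemma:rudi} applies to the trace-class symmetric operator $\mathcal{A}_1-\mathcal{A}_2$ (Remark 3 of \citet{marteauferey2020nonparametricmodelsnonnegativefunctions} allows non-PSD symmetric operators). It gives $\Delta f\in\mathcal{H}$ with $\lVert \Delta f\rVert_{\mathcal{H}}\le \mathrm{M}\lVert \mathcal{A}_1-\mathcal{A}_2\rVert_\star$.

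\emph{Derivative control.} Under Assumption~\ref{assumption:rkhs_2}, \eqref{eq:control_derivative_rkhs} is to be read in its scaled form $\sup_{x}\lvert\partial^\alpha u(x)\rvert\le \mathrm{D}\lVert u\rVert_{\mathcal{H}}$ for $\lvert\alpha\rvert=1$. This follows from $\partial^\alpha u(x)=\langle u,\partial^\alpha_x k(x,\cdot)\rangle_{\mathcal{H}}$ and $\lVert\partial^\alpha_x k(x,\cdot)\rVert_{\mathcal{H}}^2=\partial^\alpha_x\partial^\alpha_y k(x,y)\vert_{y=x}\le \mathrm{D}^2$. Hence each partial derivative of $\Delta f$ is bounded by $\mathrm{D}\mathrm{M}\lVert\mathcal{A}_1-\mathcal{A}_2\rVert_\star$, and therefore
\[
\lVert\nabla \Delta f(z)\rVert\le \sum_{j=1}^d\lvert\partial_j\Delta f(z)\rvert\le d\,\mathrm{D}\mathrm{M}\lVert\mathcal{A}_1-\mathcal{A}_2\rVert_\star\le C_{f_1,f_2}
\]
for every $z\in\Omega$. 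The extra factor $2$ in $C_{f_1,f_2}$ is slack.

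\emph{Transfer to the whole domain.} Fix $x\in\Omega$ and let $X_i$ be a training point with $\lVert x-X_i\rVert\le\rho_{\widehat X,\Omega}$. We would like to write $\lvert\Delta f(x)-\Delta f(X_i)\rvert\le \sup\lVert\nabla\Delta f\rVert\,\lVert x-X_i\rVert$ by the mean value theorem along the segment $[x,X_i]$. The main obstacle is that $\Omega$ need not be convex, so this segment may leave $\Omega$. Assumption~\ref{assumption:geometric of the domain} is used to handle this. If $\rho_{\widehat X,\Omega}$ is small relative to $r$, then $x$ lies in a ball $B_r(s)\subset\Omega$, and one can join $x$ to a point of $\widehat X$ by a polygonal path inside $\Omega$ of length at most $2\rho_{\widehat X,\Omega}$. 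For instance, move from $x$ toward the center $s$ by at most $\rho_{\widehat X,\Omega}$, then connect to a training point within distance $\rho_{\widehat X,\Omega}$ while staying in the ball. This is the argument of \citet{rudi2025finding}, and it is where the factor $2$ in front of $C_{f_1,f_2}\rho_{\widehat X,\Omega}$ comes from. In the regime where $\rho_{\widehat X,\Omega}$ is large relative to $r$, I would either rely on the slack in $C$ or treat that case separately; I would follow their lemma rather than reprove it.

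\emph{Conclusion.} The path argument gives $\lvert\Delta f(x)\rvert\le 2C_{f_1,f_2}\rho_{\widehat X,\Omega}+\lvert\Delta f(X_i)\rvert$. Finally, $\lvert\Delta f(X_i)\rvert\le \max_j\lvert\Delta f(X_j)\rvert\le\big(\sum_j\Delta f(X_j)^2\big)^{1/2}$. Taking the supremum over $x\in\Omega$ yields the claimed bound.
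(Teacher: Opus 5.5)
Your proposal is correct and follows essentially the same route as the paper. You bound the first derivatives of $\Delta f$ by $\mathrm{D}\lVert \Delta f\rVert_{\mathcal{H}}\le \mathrm{D}\mathrm{M}\lVert \mathcal{A}_1-\mathcal{A}_2\rVert_\star$, feed this into the scattered-data sampling inequality of \citet{rudi2025finding} and \citet{wendland2005approximate}, and bound the maximum over the $X_i$ by the $\ell_2$ norm. The paper's proof also only reaches $d\mathrm{D}\mathrm{M}\lVert \mathcal{A}_1-\mathcal{A}_2\rVert_\star$, so you are right that the factor $2$ in $C_{f_1,f_2}$ is slack.

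The one difference is that you unpack that inequality through the explicit two-segment path inside a ball, which is valid when $\rho_{\widehat{X},\Omega}\le r$. The paper instead quotes the inequality with no fill-distance condition. As quoted, the $C^1$ inequality is false in general: take $\Omega$ a disjoint union of two balls, all $X_i$ in one of them, and $f$ equal to $0$ on one ball and $1$ on the other. So your caveat about large $\rho_{\widehat{X},\Omega}$ is a limitation the paper's argument shares, not one specific to yours.
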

\begin{proof}
The proof relies on the following general result (\citet{wendland2005approximate}, \citet{rudi2025finding} Theorem $13$ with $k=m=0$) which states that for any function $f\colon \Omega\rightarrow\mathbb{R}$, if $\Omega$ satisfies Assumption \ref{assumption:geometric of the domain} and $f$ is at least $C^{1}(\Omega)$ then
\begin{equation*}
    \sup_{x\in \Omega}\, \lvert f(x) \rvert \leq 2\,C_{f}\rho_{\widehat{X}, \Omega} + \max_{X_i \in\widehat{X}}\lvert f(X_i) \rvert
\end{equation*}
with $C_{f} = \sum_{\lvert \alpha \rvert = 1} \frac{1}{\alpha !}\sup_{x\in\Omega}\lvert \partial^{\alpha} f\rvert$.\\
The core idea of the proof is to apply this result to the difference of kSoS functions. First, notice that $\Delta f= f_1 - f_2 \in C^{1}$ as a difference of $C^{1}$ functions. Then, we need to explicit the bound $C_{f}$. By Assumption \ref{assumption:rkhs_2}, applying \Cref{eq:control_derivative_rkhs} to $\Delta f$ we have:
\begin{equation*}
    \max_{\lvert \alpha \rvert = 1}\sup_{x\in \Omega}\lvert \partial^{\alpha}\Delta f(x) \rvert \leq \mathrm{D} \lVert \Delta f \rVert_{\mathcal{H}}.
\end{equation*}
Next, applying \Cref{lemma:rudi} to $\Delta f$ we obtain $\lVert \Delta f \rVert_{\mathcal{H}} \leq \mathrm{M}\lVert \mathcal{A}_1 - \mathcal{A}_2 \rVert_{\star}$
where $\mathrm{M}$ is the constant from Assumption \ref{assumption:rkhs_1}.
Putting things together, we have:
\begin{align*}
    C_{\Delta f} &= \sum_{\lvert \alpha \rvert = 1} \frac{1}{\alpha !}\sup_{x\in\Omega}\lvert \partial^{\alpha} \Delta f(x)\rvert
    \leq d\mathrm{D} \mathrm{M}\lVert \mathcal{A}_1 - \mathcal{A}_2 \rVert_{\star}.
\end{align*}

To conclude the proof, we use
\begin{equation*}
    \max_{X_i \in\widehat{X}}\lvert \Delta f(X_i) \rvert \leq \sqrt{ \sum_{i=1}^{n} ( \Delta f(X_i))^{2}}.
\end{equation*}
\end{proof}

In our work, we focus on the Matérn $5/2$ kernel for its superior empirical performance. Crucially, the following remark shows that this kernel actually satisfies all the assumptions needed for \Cref{prop:error bound penalty 1} and \Cref{prop:error bound penalty 2}.
\begin{remark}
\label{remark:apdx:matern constant}
    Following Proposition $1$ in \citet{rudi2025finding}, the Matérn $5/2$ kernel is a Sobolev kernel with $\nu = s-d/2$ which satisfies Assumptions \ref{assumption:rkhs_1} and \ref{assumption:rkhs_2} with constants
    \begin{equation*}
        \mathrm{M} = 2^{3+d/2}(2\pi)^{d/2} \quad \mathrm{and} \quad\mathrm{D} = \frac{1}{\theta^{f}}(2\pi)^{d/4}\sqrt{d/3}.
    \end{equation*}
\end{remark}

\subsubsection{Post-optimization error bounds.}

\Cref{prop:error bound penalty 2} and \Cref{prop:error bound penalty 1} give error bounds for the infinite dimensional operators. We need to show that the finite representation of those operators still controls the difference between the finite representation of the functions. This is done using the following lemma.
\begin{lemma}
    \label{lem:partial_isometries}
    Let $\mathcal{H}$ be a reproducing kernel Hilbert space with associated kernel $k$ and feature map $\phi$.
    For any matrix $\mathbf{A} \in \mathbb{S}^{n}$, any partial isometry $\mathrm{U}\colon \mathcal{H}\rightarrow \mathbb{R}^{n}$ and any $p\geq1$, we have
    \begin{equation*}
        \lVert \mathbf{A} \rVert_{p} = \lVert \mathrm{U}^{\star}\mathbf{A}\mathrm{U} \rVert_{p},
    \end{equation*}
    where $\lVert\cdot\rVert_{p}$ is the $p$-Schatten norm.
\end{lemma}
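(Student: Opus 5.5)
The plan is to reduce everything to a comparison of spectra. I will show that $\mathrm{U}^{\star}\mathbf{A}\mathrm{U}$ is a finite-rank self-adjoint operator on $\mathcal{H}$ whose nonzero eigenvalues, counted with multiplicity, are exactly those of $\mathbf{A}$. Since the $p$-Schatten norm of a self-adjoint compact operator is $\bigl(\sum_i \lvert\lambda_i\rvert^{p}\bigr)^{1/p}$, and zero eigenvalues contribute nothing, equality of the norms for every $p\geq 1$ will follow at once.

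\textbf{Step 1 (normalizing the partial isometry).} The one point needing care is how $\mathrm{U}$ interacts with $\mathbb{R}^n$. For a partial isometry, $P:=\mathrm{U}\mathrm{U}^{\star}$ is the orthogonal projection onto the final space of $\mathrm{U}$ in $\mathbb{R}^n$. The statement can only hold if $\mathbf{A}$ lives on that space, that is, $P\mathbf{A}P=\mathbf{A}$. Otherwise, taking $\mathrm{U}=0$ gives a counterexample. In the intended use, $\mathrm{U}=O_n^{\star}$ with $O_n$ as in the proof of \Cref{prop:formulation A p operators}, and $\mathrm{U}$ is surjective, so $P=\mathbf{I}_n$. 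I will therefore make explicit that $\mathrm{U}\mathrm{U}^{\star}=\mathbf{I}_n$, or more generally $P\mathbf{A}P=\mathbf{A}$. I expect this to be the main obstacle: it is a matter of stating the hypothesis correctly rather than a computation.

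\textbf{Step 2 (transporting the spectral decomposition).} Write $\mathbf{A}=\sum_{i=1}^{n}\lambda_i e_ie_i^{\top}$ with $(e_i)$ an orthonormal eigenbasis of $\mathbb{R}^n$. Under the assumption of Step 1, restrict the sum to eigenvectors lying in $\mathrm{range}(P)$; the remaining eigenvalues are zero. Set $h_i:=\mathrm{U}^{\star}e_i\in\mathcal{H}$. Then
\[
\langle h_i,h_j\rangle_{\mathcal{H}}=\langle e_i,\mathrm{U}\mathrm{U}^{\star}e_j\rangle=\langle e_i,Pe_j\rangle=\delta_{ij},
\]
so the $h_i$ are orthonormal. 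Moreover
\[
\mathrm{U}^{\star}\mathbf{A}\mathrm{U}=\sum_i\lambda_i\,(\mathrm{U}^{\star}e_i)(\mathrm{U}^{\star}e_i)^{\star}=\sum_i\lambda_i\,h_i\otimes h_i .
\]
This is a spectral decomposition of $\mathrm{U}^{\star}\mathbf{A}\mathrm{U}$: it has eigenvalue $\lambda_i$ on $h_i$ and is $0$ on the orthogonal complement of $\mathrm{span}\{h_i\}$.

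\textbf{Step 3 (conclusion).} The singular values of a self-adjoint operator are the absolute values of its eigenvalues. Hence the nonzero singular values of $\mathrm{U}^{\star}\mathbf{A}\mathrm{U}$ and of $\mathbf{A}$ coincide as multisets, and therefore $\lVert\mathbf{A}\rVert_p=\lVert\mathrm{U}^{\star}\mathbf{A}\mathrm{U}\rVert_p$ for all $p\geq1$. In particular this covers the nuclear norm ($p=1$) used in \Cref{prop:error bound penalty 2} and \Cref{prop:error bound penalty 1}.

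This is the bridge needed after the lemma. With $\mathcal{A}_{(\cdot)}=O_n\mathbf{A}_{(\cdot)}O_n^{\star}$, which by \Cref{eq:equality f tilde f} satisfies $f_{\mathcal{A}_{(\cdot)}}=\tilde f_{\mathbf{A}_{(\cdot)}}$, the lemma gives $\lVert\mathcal{A}_{\mathrm{low}}-\mathcal{A}_{\mathrm{up}}\rVert_{\star}=\lVert\mathbf{A}_{\mathrm{low}}-\mathbf{A}_{\mathrm{up}}\rVert_{\star}$. This yields \Cref{prop:error bounds} in matrix form.
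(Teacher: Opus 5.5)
Your proof is correct. Your Step~1 hypothesis is the one the paper's proof actually uses: it glosses ``partial isometry'' parenthetically as $\mathrm{U}\mathrm{U}^{\star}=\mathbf{I}_n$. Your $\mathrm{U}=0$ example rightly shows that the literal statement needs it. It holds in the application, since $\mathrm{U}=O_n^{\star}$ with $O_n^{\star}O_n=\mathbf{I}_n$.

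The two arguments differ in the key step. The paper first computes $(\mathrm{U}^{\star}\mathbf{A}\mathrm{U})^{\star}(\mathrm{U}^{\star}\mathbf{A}\mathrm{U})=\mathrm{U}^{\star}\mathbf{A}^{\star}\mathbf{A}\mathrm{U}$. It then cites the general fact that $XY$ and $YX$ share their nonzero spectrum, applied to $X=\mathrm{U}^{\star}\mathbf{A}^{\star}$ and $Y=\mathbf{A}\mathrm{U}$, which identifies the nonzero singular values of $\mathrm{U}^{\star}\mathbf{A}\mathrm{U}$ with those of $\mathbf{A}$. You instead push an orthonormal eigenbasis of $\mathbf{A}$ through the isometry $\mathrm{U}^{\star}$ and read off the explicit spectral decomposition $\sum_i\lambda_i\, h_i\otimes h_i$.

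What each route buys:
\begin{itemize}
\item Your version is more elementary and makes multiplicities transparent, which Schatten norms require. The spectral identity the paper cites is a statement about sets, so the paper tacitly relies on the true but unstated fact that nonzero eigenvalues of $XY$ and $YX$ also have equal multiplicities.
\item The paper's version never uses the symmetry of $\mathbf{A}$, so it applies verbatim to arbitrary square matrices.
\end{itemize}

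One small addition if you keep the more general hypothesis $P\mathbf{A}P=\mathbf{A}$. Justify in one line that every eigenvector with nonzero eigenvalue lies in $\mathrm{range}(P)$: from $P\mathbf{A}=P(P\mathbf{A}P)=\mathbf{A}$ you get $e=\lambda^{-1}\mathbf{A}e=\lambda^{-1}P\mathbf{A}e\in\mathrm{range}(P)$. This is exactly what legitimizes restricting the sum in Step~2.
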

\begin{proof}
    Let $\mathrm{U}\colon \mathcal{H}\rightarrow \mathbb{R}^{n}$ be a partial isometry (i.e. $\mathrm{U}\mathrm{U}^{\star}$ is the identity over $\mathbb{R}^{n}$).

    By first noticing that $\left(\mathrm{U}^{\star}\mathbf{A}\mathrm{U}\right)^{\star}\left(\mathrm{U}^{\star}\mathbf{A}\mathrm{U}\right) = \mathrm{U}^{\star}\mathbf{A}^{\star}\mathbf{A}\mathrm{U}$, non-zero singular values of $\mathrm{U}^{\star}\mathbf{A}\mathrm{U}$ are given by
    \begin{align*}
        \sigma(\mathrm{U}^{\star} \mathbf{A} \mathrm{U}) &= \sqrt{\lambda(\left(\mathrm{U}^{\star}\mathbf{A}\mathrm{U}\right)^{\star}\left(\mathrm{U}^{\star}\mathbf{A}\mathrm{U}\right))}\\
        &=\sqrt{\lambda(\mathrm{U}^{\star}\mathbf{A}^{\star}\mathbf{A}\mathrm{U})}\\
        &=\sqrt{\lambda(\mathbf{A}^{\star}\mathbf{A})}\\
        &=\sigma(\mathbf{A}),
    \end{align*}
    where the third equality comes from \citet{pedersen2012analysis} Exercise 4.1.3, which states that $\lambda(\mathcal{A}\mathcal{B})\setminus \{0\} = \lambda(\mathcal{B}\mathcal{A})\setminus \{0\}$ for any two bounded operators, applied to \(\mathrm{U}^{\star}\mathbf{A}^{\star}\) and \(\mathbf{A}\mathrm{U}\) (which are bounded because \(\mathrm{U}^{\star}\mathbf{A}^{\star}\) is defined on \(\mathbb{R}^{n}\) and \(\mathbf{A}\mathrm{U}\) is its adjoint).
    This concludes the proof because $p$-Schatten norms are solely defined by the singular values.
\end{proof}

\begin{proposition}
\label{prop:error bound penalty 2 matrices}
    Under the same hypothesis as \Cref{prop:error bound penalty 2}, if \((\mathbf{A}_{\mathrm{low}}, \mathbf{A}_{\mathrm{up}})\) are solutions of \Cref{eq:formulationA penalty 2} then
    \begin{equation*}
        \sup_{x\in\Omega}\, \lvert \tilde{f}_{\mathbf{A}_{\mathrm{low}}}(x) - \tilde{f}_{\mathbf{A}_{\mathrm{up}}}(x) \rvert \leq \mathrm{M} \Vert k\Vert_{\infty} \,\lVert \mathbf{A}_{\mathrm{low}} - \mathbf{A}_{\mathrm{low}}\rVert_{\star}
    \end{equation*}
    where $\tilde{f}_{\mathbf{A}_{\mathrm{low}}}(X)=\bs{\Phi}(X)^{\top}\mathbf{A}_{\mathrm{low}}\bs{\Phi}(X) \in \mathcal{H}$ and $\tilde{f}_{\mathbf{A}_{\mathrm{up}}}(X)=\bs{\Phi}(X)^{\top} \mathbf{A}_{\mathrm{up}}\bs{\Phi}(X)  \in \mathcal{H}$ are the kernel sum-of-squares functions associated to $\mathbf{A}_{\mathrm{low}}$ and $\mathbf{A}_{\mathrm{up}}$, respectively.
\end{proposition}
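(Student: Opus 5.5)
The plan is to reduce the statement to \Cref{prop:error bound penalty 2} by lifting the matrices $\mathbf{A}_{\mathrm{low}},\mathbf{A}_{\mathrm{up}}$ to operators on $\mathcal{H}$ through the same partial isometry used in the proof of \Cref{prop:formulation A p operators}. (The right-hand side should read $\lVert \mathbf{A}_{\mathrm{low}} - \mathbf{A}_{\mathrm{up}}\rVert_{\star}$; with the literal $\mathbf{A}_{\mathrm{low}}-\mathbf{A}_{\mathrm{low}}$ it is zero and the claim would be false in general, so I prove the intended version.)

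\textbf{Lifting to operators.} Since $\mathcal{H}_{\mathrm{low}}=\mathcal{H}_{\mathrm{up}}=\mathcal{H}$, both functions share the same empirical feature map $\bs{\Phi}(X)=O_n^{*}\phi(X)$, with $O_n=S_n^{*}\mathbf{V}^{\top}(\mathbf{V}\mathbf{V}^{\top})^{-1}\colon\mathbb{R}^n\to\mathcal{H}$. I first check that $O_n$ is an isometry, i.e.\ $O_n^{*}O_n=\mathbf{I}_n$. Using $S_nS_n^{*}=\mathbf{K}=\mathbf{V}^{\top}\mathbf{V}$,
\[
O_n^{*}O_n=(\mathbf{V}\mathbf{V}^{\top})^{-1}\mathbf{V}\,\mathbf{V}^{\top}\mathbf{V}\,\mathbf{V}^{\top}(\mathbf{V}\mathbf{V}^{\top})^{-1}=\mathbf{I}_n .
\]
Setting $\mathcal{A}_{(\cdot)}=O_n\mathbf{A}_{(\cdot)}O_n^{*}$, which is PSD and of finite rank (hence trace class), \Cref{eq:equality f tilde f} gives $\tilde{f}_{\mathbf{A}_{(\cdot)}}=f_{\mathcal{A}_{(\cdot)}}$ pointwise on $\Omega$.

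\textbf{Applying the operator bound.} \Cref{prop:error bound penalty 2} applied to $\mathcal{A}_1=\mathcal{A}_{\mathrm{low}}$ and $\mathcal{A}_2=\mathcal{A}_{\mathrm{up}}$ yields
\[
\sup_{x\in\Omega}\lvert\tilde{f}_{\mathbf{A}_{\mathrm{low}}}(x)-\tilde{f}_{\mathbf{A}_{\mathrm{up}}}(x)\rvert\le \mathrm{M}\lVert k\rVert_\infty\lVert O_n(\mathbf{A}_{\mathrm{low}}-\mathbf{A}_{\mathrm{up}})O_n^{*}\rVert_{\star}.
\]
Here the difference $\mathbf{A}_{\mathrm{low}}-\mathbf{A}_{\mathrm{up}}$ is symmetric but not PSD. \Cref{lemma:rudi} only needs a trace-class operator, and the proof of \Cref{prop:error bound penalty 2} only uses that lemma on the difference, so this causes no problem.

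\textbf{Removing the isometry.} It remains to show that $\lVert O_n\mathbf{B}O_n^{*}\rVert_{\star}=\lVert\mathbf{B}\rVert_{\star}$ for symmetric $\mathbf{B}=\mathbf{A}_{\mathrm{low}}-\mathbf{A}_{\mathrm{up}}$. This follows from \Cref{lem:partial_isometries} with $p=1$ and $\mathrm{U}=O_n^{*}\colon\mathcal{H}\to\mathbb{R}^n$, which satisfies $\mathrm{U}\mathrm{U}^{*}=O_n^{*}O_n=\mathbf{I}_n$. This step is the main obstacle. One has to make sure the convention of \Cref{lem:partial_isometries} (that $\mathrm{U}\mathrm{U}^{\star}$ is the identity on $\mathbb{R}^n$) matches $O_n^{*}$ rather than $O_n$, which requires the full-rank assumption on $\mathbf{K}$.

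If that convention caused trouble, the fallback is direct. Since $O_n^{*}O_n=\mathbf{I}_n$, the operator $O_n\mathbf{B}O_n^{*}$ has the same nonzero eigenvalues as $\mathbf{B}$: the eigenvector $v$ of $\mathbf{B}$ maps to the eigenvector $O_nv$, and the operator vanishes on $\mathrm{range}(O_n)^{\perp}$. Hence the two nuclear norms coincide. Combining the three steps gives the claim.
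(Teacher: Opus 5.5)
Your proposal is correct and follows essentially the same route as the paper. The paper lifts each $\mathbf{A}_{(\cdot)}$ to $\mathrm{U}^{\star}\mathbf{A}_{(\cdot)}\mathrm{U}$ via the partial isometry $\mathrm{U}\colon f\mapsto\mathbf{V}^{-\top}(\langle\phi(X_i),f\rangle)_{i}$, which is exactly your $O_n^{*}$, then applies \Cref{prop:error bound penalty 2} and removes the isometry with \Cref{lem:partial_isometries} for $p=1$. You are also right that the right-hand side is a typo for $\lVert\mathbf{A}_{\mathrm{low}}-\mathbf{A}_{\mathrm{up}}\rVert_{\star}$, which is what the paper's proof actually establishes.
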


\begin{proof}
    Let \((\mathbf{A}_{\mathrm{low}}, \mathbf{A}_{\mathrm{up}})\) be solutions of \Cref{eq:formulationA penalty 2} and write \((\mathcal{A}_{\mathrm{low}}, \mathcal{A}_{\mathrm{up}})\) their associated operators.
    Let us consider $\mathrm{U}\colon \mathcal{H}\rightarrow \mathbb{R}^{n}$, the following partial isometry $f \longmapsto \mathbf{V}^{-\top}\left(\langle\phi(X_{1}), f\rangle, \ldots, \langle\phi(X_{n}), f\rangle\right)$.

    First, we show that \(f_{\mathcal{A}_{\mathrm{low}}}(X) = \tilde{f}_{\mathbf{A}_{\mathrm{low}}}(X)\) for any \(X\in \mathcal{X}\):
    \begin{align*}
        \tilde{f}_{\mathbf{A}_{\mathrm{low}}}(X) &= \langle \bs{\Phi}(X), \mathbf{A}_{\mathrm{low}}\bs{\Phi}(X)\rangle \\
        &= \langle \mathrm{U}\phi(X), \mathbf{A}_{\mathrm{low}}\mathrm{U}\phi(X)\rangle \\
        &= \langle \phi(X), \mathrm{U}^{\star}\mathbf{A}_{\mathrm{low}}\mathrm{U}\phi(X)\rangle \\
        &= \langle \phi(X), \mathcal{A}_{\mathrm{low}}\phi(X)\rangle \\
        &= f_{\mathcal{A}_{\mathrm{low}}}(X).
    \end{align*}
    The same results holds for \(_{\mathrm{up}}\): \(f_{\mathcal{A}_{\mathrm{up}}}(X) = \tilde{f}_{\mathbf{A}_{\mathrm{up}}}(X)\) for any \(X\in \mathcal{X}\).

    By applying \Cref{prop:error bound penalty 2} to \((\mathcal{A}_{\mathrm{low}}, \mathcal{A}_{\mathrm{up}})\), we have 
    \begin{equation*}
        \sup_{x\in\Omega}\, \lvert f_{\mathcal{A}_{\mathrm{low}}}(x) - f_{\mathcal{A}_{\mathrm{up}}}(x) \rvert \leq \mathrm{M} \Vert k\Vert_{\infty} \,\lVert \mathcal{A}_{\mathrm{low}} - \mathcal{A}_{\mathrm{up}}\rVert_{\star},
    \end{equation*}
    and
    \begin{equation*}
        \sup_{x\in\Omega}\, \lvert \tilde{f}_{\mathbf{A}_{\mathrm{low}}}(x) - \tilde{f}_{\mathbf{A}_{\mathrm{up}}}(x) \rvert \leq \mathrm{M} \Vert k\Vert_{\infty} \,\lVert \mathcal{A}_{\mathrm{low}} - \mathcal{A}_{\mathrm{up}}\rVert_{\star}.
    \end{equation*}

    Finally, by applying \Cref{lem:partial_isometries} with \(p=1\) to \(\mathbf{A}_{\mathrm{low}} - \mathbf{A}_{\mathrm{up}}\) and \(\mathrm{U}\), we have that \(\lVert \mathbf{A}_{\mathrm{low}} - \mathbf{A}_{\mathrm{up}} \rVert_{\star} = \lVert \mathrm{U}^{\star}(\mathbf{A}_{\mathrm{low}} - \mathbf{A}_{\mathrm{up}})\mathrm{U} \rVert_{\star} = \lVert \mathcal{A}_{\mathrm{low}} - \mathcal{A}_{\mathrm{up}} \rVert_{\star}\), which concludes the proof.
\end{proof}

\begin{proposition}
    Under the same hypothesis as \Cref{prop:error bound penalty 1}, if \((\mathbf{A}_{\mathrm{low}}, \mathbf{A}_{\mathrm{up}})\) are solutions of \Cref{eq:formulationA penalty 1} then
    \begin{equation*}
        \sup_{x\in\Omega}\, \lvert \tilde{f}_{\mathbf{A}_{\mathrm{low}}}(x) - \tilde{f}_{\mathbf{A}_{\mathrm{up}}}(x) \rvert \leq 2\,C_{\tilde{f}_{\mathbf{A}_{\mathrm{low}}},\tilde{f}_{\mathbf{A}_{\mathrm{up}}}} \rho_{\widehat{X}, \Omega} + \sqrt{\sum_{i=1}^{n}(\tilde{f}_{\mathbf{A}_{\mathrm{low}}}(X_i) - \tilde{f}_{\mathbf{A}_{\mathrm{up}}}(X_i))^{2}}
    \end{equation*}
    where $\tilde{f}_{\mathbf{A}_{\mathrm{low}}}(X)=\bs{\Phi}(X)^{\top}\mathbf{A}_{\mathrm{low}}\bs{\Phi}(X) \in \mathcal{H}$ and $\tilde{f}_{\mathbf{A}_{\mathrm{up}}}(X)=\bs{\Phi}(X)^{\top} \mathbf{A}_{\mathrm{up}}\bs{\Phi}(X)  \in \mathcal{H}$ are the kernel sum-of-squares functions associated to $\mathbf{A}_{\mathrm{low}}$ and $\mathbf{A}_{\mathrm{up}}$, respectively, and with $C_{\tilde{f}_{\mathbf{A}_{\mathrm{low}}},\tilde{f}_{\mathbf{A}_{\mathrm{up}}}}=2d\mathrm{D}\mathrm{M}\lVert \mathbf{A}_{\mathrm{low}}-\mathbf{A}_{\mathrm{up}} \rVert_{\star}$.
\end{proposition}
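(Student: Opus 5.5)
This result transfers the bound of \Cref{prop:error bound penalty 1}, stated for operators, to the finite-dimensional matrices. The argument mirrors the one used for \Cref{prop:error bound penalty 2 matrices}.

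Let \((\mathbf{A}_{\mathrm{low}}, \mathbf{A}_{\mathrm{up}})\) solve \Cref{eq:formulationA penalty 1}. Take the same partial isometry \(\mathrm{U}\colon \mathcal{H}\rightarrow\mathbb{R}^{n}\), \(f\mapsto \mathbf{V}^{-\top}(\langle\phi(X_1),f\rangle,\ldots,\langle\phi(X_n),f\rangle)\), and define the operators \(\mathcal{A}_{(\cdot)} = \mathrm{U}^{\star}\mathbf{A}_{(\cdot)}\mathrm{U}\). These are PSD because the \(\mathbf{A}_{(\cdot)}\) are, and they have finite rank, hence are trace class. This places us within the hypotheses of \Cref{prop:error bound penalty 1} and \Cref{lemma:rudi}.

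\textbf{Step 1 (identify the functions).} Exactly as in the proof of \Cref{prop:error bound penalty 2 matrices}, we have \(\bs{\Phi}(X)=\mathrm{U}\phi(X)\). Hence \(\tilde{f}_{\mathbf{A}_{(\cdot)}}(X) = \langle\phi(X),\mathrm{U}^{\star}\mathbf{A}_{(\cdot)}\mathrm{U}\phi(X)\rangle = f_{\mathcal{A}_{(\cdot)}}(X)\) for every \(X\in\Omega\). In particular, the two sides agree both on the training points \(X_i\) and in the supremum over \(\Omega\).

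\textbf{Step 2 (apply the operator bound).} Applying \Cref{prop:error bound penalty 1} to \((\mathcal{A}_{\mathrm{low}},\mathcal{A}_{\mathrm{up}})\) gives the desired inequality with \(f_{\mathcal{A}_{(\cdot)}}\) in place of \(\tilde{f}_{\mathbf{A}_{(\cdot)}}\) and the constant \(2d\mathrm{D}\mathrm{M}\lVert\mathcal{A}_{\mathrm{low}}-\mathcal{A}_{\mathrm{up}}\rVert_{\star}\). By Step 1, the left-hand side and the sum of squares on the training points are already in the required matrix form.

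\textbf{Step 3 (transfer the nuclear norm).} Apply \Cref{lem:partial_isometries} with \(p=1\) to the symmetric, but not necessarily PSD, matrix \(\mathbf{A}_{\mathrm{low}}-\mathbf{A}_{\mathrm{up}}\). By linearity, \(\mathrm{U}^{\star}(\mathbf{A}_{\mathrm{low}}-\mathbf{A}_{\mathrm{up}})\mathrm{U}=\mathcal{A}_{\mathrm{low}}-\mathcal{A}_{\mathrm{up}}\), so \(\lVert\mathcal{A}_{\mathrm{low}}-\mathcal{A}_{\mathrm{up}}\rVert_{\star}=\lVert\mathbf{A}_{\mathrm{low}}-\mathbf{A}_{\mathrm{up}}\rVert_{\star}\). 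Therefore the constant equals \(C_{\tilde{f}_{\mathbf{A}_{\mathrm{low}}},\tilde{f}_{\mathbf{A}_{\mathrm{up}}}}\), which concludes the proof.

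The only delicate point is checking that \(\mathrm{U}\) is a genuine partial isometry, i.e.\ that \(\mathrm{U}\mathrm{U}^{\star}=\mathbf{I}_n\). We have \(\mathrm{U}\mathrm{U}^{\star}=\mathbf{V}^{-\top}\mathbf{K}\mathbf{V}^{-1}\). Using \(\mathbf{K}=\mathbf{V}^{\top}\mathbf{V}\), this equals \(\mathbf{V}^{-\top}\mathbf{V}^{\top}\mathbf{V}\mathbf{V}^{-1}=\mathbf{I}_n\), which relies on \(\mathbf{K}\) being invertible, i.e.\ on the training points being distinct. This is the same full-rank assumption already used in \Cref{prop:formulation A p operators}.

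Optimality of \((\mathbf{A}_{\mathrm{low}},\mathbf{A}_{\mathrm{up}})\) plays no role in the argument: the bound holds for any pair of PSD matrices.
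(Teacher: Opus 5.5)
Your proposal is correct and follows the paper's proof essentially step by step: you identify $\tilde{f}_{\mathbf{A}_{(\cdot)}}$ with $f_{\mathrm{U}^{\star}\mathbf{A}_{(\cdot)}\mathrm{U}}$, apply \Cref{prop:error bound penalty 1} to the resulting operators, and transfer the nuclear norm back to the matrices via \Cref{lem:partial_isometries} with $p=1$. Your additional checks make explicit details the paper leaves implicit: $\mathrm{U}\mathrm{U}^{\star}=\mathbf{I}_n$ via invertibility of $\mathbf{K}$, the operators being finite rank and hence trace class, and the optimality of the pair playing no role.
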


\begin{proof}
    Let \((\mathbf{A}_{\mathrm{low}}, \mathbf{A}_{\mathrm{up}})\) be solutions of \Cref{eq:formulationA penalty 1} and write \((\mathcal{A}_{\mathrm{low}}, \mathcal{A}_{\mathrm{up}})\) their associated operators. As in the proof of \Cref{prop:error bound penalty 2 matrices}, we have \(f_{\mathcal{A}_{\mathrm{low}}}(X) = \tilde{f}_{\mathbf{A}_{\mathrm{low}}}(X)\) and \(f_{\mathcal{A}_{\mathrm{up}}}(X) = \tilde{f}_{\mathbf{A}_{\mathrm{up}}}(X)\) for any \(X\in \mathcal{X}\).

    We now apply \Cref{prop:error bound penalty 1} to \((\mathcal{A}_{\mathrm{low}}, \mathcal{A}_{\mathrm{up}})\) to get 
    \begin{equation*}
        \sup_{x\in\Omega}\, \lvert f_{\mathcal{A}_{\mathrm{low}}}(x) - f_{\mathcal{A}_{\mathrm{up}}}(x) \rvert \leq 2\,C_{f_{\mathcal{A}_{\mathrm{low}}},f_{\mathcal{A}_{\mathrm{up}}}} \rho_{\widehat{X}, \Omega} + \sqrt{\sum_{i=1}^{n}(f_{\mathcal{A}_{\mathrm{low}}}(X_i) - f_{\mathcal{A}_{\mathrm{up}}}(X_i))^{2}}
    \end{equation*}
    where \(C_{f_{\mathcal{A}_{\mathrm{low}}},f_{\mathcal{A}_{\mathrm{up}}}}=2d\mathrm{D}\mathrm{M}\lVert \mathcal{A}_{\mathrm{low}}-\mathcal{A}_{\mathrm{up}} \rVert_{\star}\) and
    \begin{equation*}
        \sup_{x\in\Omega}\, \lvert \tilde{f}_{\mathbf{A}_{\mathrm{low}}}(x) - \tilde{f}_{\mathbf{A}_{\mathrm{up}}}(x) \rvert \leq 2\,C_{f_{\mathcal{A}_{\mathrm{low}}},f_{\mathcal{A}_{\mathrm{up}}}} \rho_{\widehat{X}, \Omega} + \sqrt{\sum_{i=1}^{n}(\tilde{f}_{\mathbf{A}_{\mathrm{low}}}(X_i) - \tilde{f}_{\mathbf{A}_{\mathrm{up}}}(X_i))^{2}}
    \end{equation*}

    Finally, by applying \Cref{lem:partial_isometries} with \(p=1\) to \(\mathbf{A}_{\mathrm{low}} - \mathbf{A}_{\mathrm{up}}\) and \(\mathrm{U}\), we have that \(\lVert \mathbf{A}_{\mathrm{low}} - \mathbf{A}_{\mathrm{up}} \rVert_{\star} = \lVert \mathrm{U}^{\star}(\mathbf{A}_{\mathrm{low}} - \mathbf{A}_{\mathrm{up}})\mathrm{U} \rVert_{\star} = \lVert \mathcal{A}_{\mathrm{low}} - \mathcal{A}_{\mathrm{up}} \rVert_{\star}\), which gives \(C_{f_{\mathcal{A}_{\mathrm{low}}},f_{\mathcal{A}_{\mathrm{up}}}} = C_{\tilde{f}_{\mathbf{A}_{\mathrm{low}}},\tilde{f}_{\mathbf{A}_{\mathrm{up}}}}\) and concludes the proof.
\end{proof}

\subsection{Bounds on local coverage}
\label{sec:apdx:local coverage}
Before giving a detailed proof of our bounds, we first recall the definitions of the maximum mean discrepancy and the Hilbert-Schmidt independence criterion.
\begin{definition}[Maximum Mean Discrepancy \citep{smola2007hilbert}]
Let $X$ and $Y$ be random vectors defined on a topological space $\mathcal{Z}$, with respective Borel probability measures $P_X$ and $P_Y$. 
Let $k : \mathcal{Z} \times \mathcal{Z} \rightarrow \mathbb{R}$ be a kernel function and let $\mathcal{H}(k)$ be the associated reproducing kernel Hilbert space. The maximum mean discrepancy between $P_X$ and $P_Y$ is defined as
\begin{align*}
    \mathrm{MMD}_k(P_X, P_Y) = \sup_{\|f\|_{\mathcal{H}(k)} \leq 1} |\mathbb{E}_{X \sim P_X}[f(X)] - \mathbb{E}_{Y\sim P_Y}[f(Y)]|\,.
\end{align*}
\end{definition}
The squared MMD admits the following closed-form expression:
\begin{align*}
    \mathrm{MMD}_k(P_X, P_Y)^2 &= \mathbb{E}_{X\sim P_X,X' \sim P_X}[k(X,X')] + \mathbb{E}_{Y \sim P_Y,Y' \sim P_Y}[k(Y,Y')] \\
    &- 2\mathbb{E}_{X \sim P_X,Y \sim P_Y}[k(X,Y)]\,,
\end{align*}
which can be estimated thanks to U- or V-statistics.

Now given a pair of random vectors $(U,V)\in\mathcal{X}\times\mathcal{Y}$ with probability distribution $P_{UV}$, we define the product RKHS $\mathcal{H}=\mathcal{F}\times\mathcal{G}$ with kernel $k_\mathcal{H}((u,v),(u',v'))=k_\mathcal{X}(u,u')k_\mathcal{Y}(v,v')$. A measure of the dependence between $U$ and $V$ can then be defined as the distance between the mean embedding of $P_{UV}$ and $P_{U}\otimes P_{V}$, the joint distribution with independent marginals $P_{U}$ and $P_{V}$:
\begin{equation*}
\mathrm{MMD}^2(P_{UV},P_{U}\otimes P_{V}) = \Vert \mu_{P_{UV}} - \mu_{P_{U}}\otimes \mu_{P_{V}}\Vert_{\mathcal{H}}^2.
\end{equation*}
This measure is the so-called \textit{Hilbert-Schmidt independence criterion} (HSIC, see \citet{gretton2005measuring}) and can be expanded as
\begin{align*}
\mathrm{HSIC}(U,V) &= \mathrm{MMD}^2(P_{UV},P_{U}\otimes P_{V})\nonumber \\
 &= \mathbb{E}_{U,U',V,V'} k_\mathcal{X}(U,U')k_\mathcal{Y}(V,V')\nonumber \\
&+ \mathbb{E}_{U,U'} k_\mathcal{X}(U,U')\mathbb{E}_{V,V'} k_\mathcal{Y}(V,V')\nonumber \\
&- 2 \mathbb{E}_{U,V}\left[\mathbb{E}_{U'} k_\mathcal{X}(U,U')\mathbb{E}_{V'} k_\mathcal{Y}(V,V')\right] \label{eq:hsic}
\end{align*}
where $(U',V')$ is an independent copy of $(U,V)$. Once again, the reproducing property implies that HSIC can be expressed as expectations of kernels, which facilitates its estimation when compared to other dependence measures such as the mutual information.
Let us now state our proposition again, before giving a detailed proof.
\begin{proposition}
\label{apdx:prop:hyperparameter tuning criterion}
    Let \(\widehat{C}_{\mathcal{D}_N}\) be the prediction intervals built from a score function \(S(X, Y) = \max\bigl(l(X) - Y, Y - u(X) \bigr)\) through split CP with \(\mathcal{D}_N=\mathcal{D}_n\cup \mathcal{D}_m\). Then for any \(\omega_X\) in \(\mathcal{F}_X\) such that \(\mathbb{P}(X\in \omega_{X})\geq \delta\), denoting \(p_{\mathcal{D}_N}=\mathbb{P}(Y_{N+1}\in\widehat{C}_{\mathcal{D}_N}(X_{N+1}) \lvert \mathcal{D}_N,X_{N+1}\in \omega_{X})\) we have
    \begin{equation}
    \label{apdx:eq:neighborhood coverage mi}
        p_{\mathcal{D}_N} \geq 1 - \alpha -\frac{1}{\delta}\sqrt{1-\alpha_1\exp(-\mathrm{MI}(\tilde{r}_{\mathcal{D}_n}(X,Y),W_{\mathcal{D}_n}(X)))}
    \end{equation}
    and
    \begin{equation}
    \label{apdx:eq:neighborhood coverage hsic}
        p_{\mathcal{D}_N} \geq 1 - \alpha -\frac{1}{\delta}\sqrt{1-\frac{\alpha_1}{1-\alpha_2\mathrm{HSIC}(\tilde{r}_{\mathcal{D}_n}(X,Y),W_{\mathcal{D}_n}(X))}},
    \end{equation}
    where \(\tilde{r}_{\mathcal{D}_n}(X,Y) = \lvert Y - (\widehat{u}_{\mathcal{D}_n}(X)+\widehat{l}_{\mathcal{D}_n}(X))/2 \rvert\) are the centered residuals and \(W_{\mathcal{D}_n}(X)=(\widehat{u}_{\mathcal{D}_n}(X)-\widehat{l}_{\mathcal{D}_n}(X))/2\) is the width of the prediction bands. \(\alpha_1\) is a constant and \(\alpha_2\) only depends on the kernel used for \(\mathrm{HSIC}\), which must be characteristic.
\end{proposition}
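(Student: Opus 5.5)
The first step is a deterministic reduction of the asymmetric score to a normalized symmetric one, after which the argument of \citet{deutschmann2023adaptiveconformalregressionjackknife}, as generalized by \citet{allain2025scalableandadaptivepredictionbandsusingkerenlsumofsquares}, applies.

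\textbf{Step 1: reduce the score.} Writing \(c(X)=(\widehat{u}_{\mathcal{D}_n}(X)+\widehat{l}_{\mathcal{D}_n}(X))/2\) and \(W=W_{\mathcal{D}_n}(X)\), we have
\[
\max\bigl(l(X)-Y,\,Y-u(X)\bigr)=\max\bigl(-(Y-c)-W,\,(Y-c)-W\bigr)=\lvert Y-c(X)\rvert-W(X)=\tilde{r}_{\mathcal{D}_n}-W_{\mathcal{D}_n}.
\]
Hence \(S\) is a function of the pair \(Z=(\tilde r_{\mathcal{D}_n},W_{\mathcal{D}_n})\). Conditional on \(\mathcal{D}_n\), the functions \(l,u\) are fixed, so the calibration scores remain exchangeable. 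The event \(\{Y_{N+1}\in\widehat C_{\mathcal{D}_N}(X_{N+1})\}\) therefore equals \(\{\tilde r-W\le\widehat q_\alpha\}\), where \(\widehat q_\alpha\) is the split-CP quantile.

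\textbf{Step 2: compare local and marginal coverage.} Condition on \(\mathcal{D}_N\). Let \(g(x)=\mathbb{P}(S\le \widehat q_\alpha\mid X=x,\mathcal{D}_N)\). Marginally, split CP gives \(\mathbb{E}[\mathbb{E}g(X)]\ge1-\alpha\). For the event \(\omega_X\) we have
\[
p_{\mathcal{D}_N}-\mathbb{P}(S\le\widehat q_\alpha\mid\mathcal{D}_N)=\frac{\mathbb{E}[(g(X)-\bar g)\mathbf 1_{\omega_X}]}{\mathbb{P}(\omega_X)},
\]
and its absolute value is at most \(\frac1\delta\) times the total variation distance between the joint law of \((S,X)\) and the product of its marginals. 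Following \citet{deutschmann2023adaptiveconformalregressionjackknife}, I will pass from \(X\) to \(W(X)\). Since \(S\) depends on \(X\) only through \(W\) and \(\tilde r\), the dependence relevant to coverage is that between \(\tilde r\) and \(W\). In the spirit of the symmetric case, the bound to aim for is
\[
\lvert p_{\mathcal{D}_N}-\text{marginal}\rvert\le\frac1\delta\,\mathrm{TV}\bigl(P_{\tilde rW},P_{\tilde r}\otimes P_W\bigr).
\]
This holds because independence of \(\tilde r\) and \(W\), with \(\tilde r\) conditionally independent of \(X\) given \(W\), makes coverage constant across \(\omega_X\). I will follow the coupling argument of \citet{allain2025scalableandadaptivepredictionbandsusingkerenlsumofsquares} verbatim, with their \(r^2/f\) replaced by \(\tilde r-W\).

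\textbf{Step 3: convert TV to MI and then to HSIC.} I will apply the Bretagnolle–Huber inequality,
\[
\mathrm{TV}\le\sqrt{1-\exp(-\mathrm{KL})},
\]
with \(\mathrm{KL}(P_{\tilde rW}\,\|\,P_{\tilde r}\otimes P_W)=\mathrm{MI}\), and absorb constants into \(\alpha_1\). This yields \Cref{apdx:eq:neighborhood coverage mi}. For \Cref{apdx:eq:neighborhood coverage hsic} I will reuse the lemma of \citet{allain2025scalableandadaptivepredictionbandsusingkerenlsumofsquares} that lower-bounds \(\exp(-\mathrm{MI})\) by \(1/(1-\alpha_2\mathrm{HSIC})\) up to \(\alpha_1\). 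That lemma relies on a characteristic, bounded kernel, which is why the statement requires a characteristic kernel.

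\textbf{Main obstacle.} The delicate step is Step 2. One must justify that the coverage gap is controlled by the dependence between \(\tilde r\) and \(W\) alone, rather than between \(S\) and \(X\), and handle the conditioning on \(\omega_X\) and on the random quantile \(\widehat q_\alpha\) cleanly. If this reduction fails, I would fall back to the data-processing inequality: \(\mathrm{MI}(S,X)\) upper-bounds the relevant dependence, and \(S=\tilde r-W\) is a measurable function of \((\tilde r,W)\). The plan is then to bound \(\mathrm{MI}(S,X)\) using the bivariate variables \((\tilde r,W)\), exactly as the symmetric result does.
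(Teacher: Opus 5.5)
Up to the bound in terms of $\mathrm{TV}$ between the law of $(S,X)$ and the product of its marginals, your Step 2 is the Deutschmann et al.\ argument and is fine. The gap is the passage from $(S,X)$ to $(\tilde r,W)$. The claimed inequality $\lvert p_{\mathcal{D}_N}-\text{marginal}\rvert\le\frac{1}{\delta}\mathrm{TV}(P_{\tilde r W},P_{\tilde r}\otimes P_W)$ is false, and it points in the opposite direction from the result.

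Here is a counterexample. Take homoscedastic noise $Y=\mu(X)+\epsilon$ with bands centered on $\mu$, so that $\tilde r=\lvert\epsilon\rvert$ is independent of $X$, hence of $W$. Give the bands a non-constant width, e.g.\ $W(X)=1+X$. The right-hand side is then $0$. Yet $\mathbb{P}(S\le\widehat q_\alpha\mid X=x)=\mathbb{P}(\lvert\epsilon\rvert\le\widehat q_\alpha+W(x))$ increases with $W(x)$, so coverage on an $\omega_X$ where $W$ is small is strictly below the marginal level.

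Your justification conflates ``the law of $\tilde r$ does not depend on $X$'' with ``the law of $S=\tilde r-W(X)$ does not depend on $X$''. What makes $S$ nearly independent of $X$ is that $\tilde r$ tracks $W$, i.e.\ strong dependence between $\tilde r$ and $W$. This is why the paper's bound improves as $\mathrm{MI}(W,\tilde r)$ or $\mathrm{HSIC}$ grows, and why HSIC is maximized during tuning. Your bound would improve as they shrink.

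The missing idea is the chain-rule computation the paper uses in place of your reduction. Keep the bound in terms of $\mathrm{MI}(X,S)$. Since $W$ is a function of $X$, $\mathrm{MI}(W,R\mid X)=0$, hence $\mathrm{MI}(X,R)-\mathrm{MI}(W,R)=\mathrm{MI}(X,R\mid W)$, and
\[
\mathrm{MI}(X,S)\le\mathrm{MI}(X,(R,W))\le\mathrm{MI}((X,W),(R,W))=\mathrm{MI}(X,R\mid W)+H(W)\le\mathrm{MI}(X,R)+H(X)-\mathrm{MI}(W,R).
\]
This gives $1-e^{-\mathrm{MI}(X,S)}\le1-\alpha_1e^{+\mathrm{MI}(W,R)}$ with $\alpha_1=e^{-\mathrm{MI}(X,R)-H(X)}$. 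Your fallback (data processing through $S=\tilde r-W$) stops at the first inequality and never produces the $-\mathrm{MI}(W,R)$ term.

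Note the sign. The proof yields $\alpha_1\exp(+\mathrm{MI})$, which implies the displayed $\exp(-\mathrm{MI})$ form since $\mathrm{MI}\ge0$. The HSIC bound is derived from this stronger form.

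Your Step 3 therefore cannot work in your orientation. Bretagnolle--Huber together with $\mathrm{TV}\ge\mathrm{MMD}_k/(2\sqrt{M_k})$ gives $e^{-\mathrm{MI}(W,R)}\le1-\alpha_2\mathrm{HSIC}$, i.e.\ $e^{\mathrm{MI}(W,R)}\ge1/(1-\alpha_2\mathrm{HSIC})$. That is exactly what the $+$ form needs. A lower bound $e^{-\mathrm{MI}}\ge\alpha_1/(1-\alpha_2\mathrm{HSIC})$, which your route requires, would amount to upper-bounding mutual information by HSIC. This is impossible in general: $W=\tilde r$ continuous gives infinite MI, while HSIC with a bounded kernel stays bounded.
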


\begin{proof}
    First, let us rewrite the score as a function of the width and the centered residuals.
    \begin{align*}
        S(X, Y) &= \max\bigl(l(X) - Y, Y - u(X) \bigr)\\
        &= - \frac{u(X)-l(X)}{2} + \max\bigl(l(X) - Y + \frac{u(X)-l(X)}{2}, Y - u(X) + \frac{u(X)-l(X)}{2} \bigr)\\
        &= - \frac{u(X)-l(X)}{2} + \max\bigl(\frac{u(X)+l(X)}{2} - Y, Y - \frac{u(X)+l(X)}{2}\bigr)\\
        & = - \frac{u(X)-l(X)}{2} + \lvert Y - \frac{u(X)+l(X)}{2} \rvert\\
        & = - \frac{W(X)}{2} + \lvert Y - \tilde{m}(X) \rvert\\
        & = - \frac{W(X)}{2} + R(X, Y)\\
        & = f(W(X), R(X, Y)).
    \end{align*}
    Then, placing ourselves in the context of split CP, we are working conditionally on \(\mathcal{D}_N\), thus \(\widehat{u}_{\mathcal{D}_n}(\cdot)\) and \(\widehat{l}_{\mathcal{D}_n}(\cdot)\) are deterministic functions and to lighten notations we will write \(W = W(X)\) and \(R = R(X,Y)\).
    The chain rule for mutual information gives
    \begin{align*}
        \mathrm{MI}((X,W),R) &= \mathrm{MI}(X,R) + \mathrm{MI}(W,R \vert X) \\
        &= \mathrm{MI}(W,R) + \mathrm{MI}(X,R \vert W).
    \end{align*}
    Conditionally on $X$, $W$ is constant and then $R$ and $W$ are independent. This implies $\mathrm{MI}(W,R \vert X)=0$ and
    \begin{align*}
        \mathrm{MI}(X,R) - \mathrm{MI}(W,R) = \mathrm{MI}(X,R \vert W).
    \end{align*}
    We now write
    \begin{align*}
        \mathrm{MI}(X,S) &= \mathrm{MI}(X,f(W, R))\\
        & \leq \mathrm{MI}(X,(R,W)) \quad (\forall g,\; \mathrm{MI}(g(X),Y) \leq \mathrm{MI}(X,Y))\\
        & \leq \mathrm{MI}((X,W),(R,W)) \quad (\mathrm{MI}((X_1,X_2),Y) \geq \mathrm{MI}(X_1,Y))\\
        &= \mathrm{MI}(X,R \vert W) + H(W) \quad (\mathrm{MI}(X,Y\vert Z) = \mathrm{MI}((X,Z),(Y,Z)) - H(Z))\\
        &\leq \mathrm{MI}(X,R \vert W) + H(X) \quad (\forall g,\; H(g(X))\leq H(X))\\
        & = \mathrm{MI}(X,R) - \mathrm{MI}(W,R) + H(X)\\
    \end{align*}
    and we can observe that only $\mathrm{MI}(W,R)$ depends on $W$. We thus deduce that
    \begin{align*}
        1-\exp(-\mathrm{MI}(X,S)) \leq 1-\alpha_1\exp(\mathrm{MI}(W,R))
    \end{align*}
    where $\alpha_1=\exp(-\mathrm{MI}(X,R)-H(X))$ is independent from $W$. \citet{deutschmann2023adaptiveconformalregressionjackknife} showed that
    \begin{equation*}
        p_{\mathcal{D}_N} \geq 1 - \alpha -\frac{1}{\delta}\sqrt{1-\exp(-\mathrm{MI}(X,S))}
    \end{equation*}
    and we obtain \Cref{apdx:eq:neighborhood coverage mi} by using the previous bound.
    
    For the second part of the proposition, from \citet[Equation~15]{wang2023seminonparametricestimationdistributiondivergence}, we have the bound
    \begin{align*}
        \mathrm{TV}(\mathbb{P},\mathbb{Q}) \geq \frac{1}{2\sqrt{M_k}} \mathrm{MMD}_k(\mathbb{P},\mathbb{Q})
    \end{align*}
    where $\mathrm{TV}(\mathbb{P},\mathbb{Q})=\underset{A\in\mathcal{F}}{\sup} \vert\mathbb{P}(A)-\mathbb{Q}(A)\vert$ for $\mathbb{P}$, $\mathbb{Q}$ defined on a measurable space $(\Omega,\mathcal{F})$ and $\mathrm{MMD}_k(\mathbb{P},\mathbb{Q})$ are the total variation and the maximum mean discrepancy between probability distributions $\mathbb{P}$ and $\mathbb{Q}$, respectively. Here, the MMD depends on the choice of a kernel $k$, which is bounded by $M_k = \underset{x\in\mathcal{X}}{\sup}\; k(x,x)$, and must be characteristic for the inequality to hold. We then apply this inequality to $\mathbb{P}=P_{WR}$ the joint distribution of $(W,R)$ and $\mathbb{Q}=P_{W} \otimes P_{R}$ the joint distribution with independent marginals $P_{W}$ and $P_{R}$, to get
    \begin{align*}
        1 - \exp(-\mathrm{MI}(W,R)) \geq \mathrm{TV}^2(P_{WR},P_{W} \otimes P_{R}) \geq \alpha_2 \mathrm{HSIC}(W,R),
    \end{align*}
    where the inequality on the left is the Bretagnolle-Huber inequality, the inequality on the right comes from the HSIC definition $\mathrm{HSIC}(X,Y)=\mathrm{MMD}^2(P_{XY},P_X \otimes P_Y)$ and we denote $\alpha_2 = 1/(4M_k)$ with $k$ the kernel used in HSIC. We finally have
    \begin{align*}
        1 - \alpha_1 \exp(\mathrm{MI}(W,R)) \leq 1 - \frac{\alpha_1}{1-\alpha_2 \mathrm{HSIC}(W,R)}
    \end{align*}
    and \Cref{apdx:eq:neighborhood coverage hsic} follows.

\end{proof}

\newpage
\section{Additional experiments and details}

\subsection{Cross-validation for kernel hyperparameter estimation and Kruskal-Wallis rank test}
\label{sec:cross_val}

\paragraph{Cross-validation.} Let \(K\) be the number of folds. For \(k\in [K]\), we write \(\mathcal{D}_{k}\) the fold dataset \(k\) and \(\mathcal{D}_{-k} = \mathcal{D}_n \setminus \mathcal{D}_{k}\). We denote by \(\hat{f}_{\mathrm{low},-k}, \hat{f}_{\mathrm{up},-k}\) the lower and upper bands trained on \(\mathcal{D}_{-k}\). Define two sets,
\begin{equation*}
    R_{K} = \bigcup_{k=1}^{K} \{\vert Y_i - \hat{m}_n(X_i) -  \frac{\hat{f}_{\mathrm{up},-k}(X_i)-\hat{f}_{\mathrm{low},-k}(X_i)}{2}\vert\}_{i\in\mathcal{D}_{k}} \quad \text{and}\quad  W_{K} = \bigcup_{k=1}^{K} \{\hat{f}_{\mathrm{up},-k}(X_i)+\hat{f}_{\mathrm{low},-k}(X_i)\}_{i\in\mathcal{D}_{k}}.
\end{equation*}
We seek
\begin{equation}
\label{equation:hsic optimization problem}
    \max_{\theta^{f}\,\in\, \mathbb{R}^{d}} \quad \widehat{\textrm{HSIC}}\left(W, R\right),
\end{equation}
where \(\widehat{\textrm{HSIC}}(W,R)\) is estimated with samples \(W_K\) and \(R_K\).
In all our experiments, we use the energy distance kernel \(k(x,x')=\vert x \vert + \vert x'\vert - \vert x-x'\vert\), which has been shown to be characteristic by \citet{sejdinovic2013equivalence}.

\paragraph{Kruskal-Wallis rank test.} To assess whether HSIC varies significantly across different $\lambda_{\mathrm{pen}}$, we treat each value of $\lambda_{\mathrm{pen}}$ as a “group” and the HSIC values computed from each cross-validation replicate as repeated observations. However, because cross-validation replicates are not independent and identically distributed across groups (they overlap, share data, and violate the independent-samples assumption of classical parametric tests), we use a permutation-based nonparametric framework rather than standard ANOVA or rank-based tests with asymptotic null distributions. Specifically, we first compute the classical Kruskal–Wallis test statistic $H_{\mathrm{obs}}$ on the pooled data (after rank-transforming all HSIC values), but derive its null distribution via permuting the $\lambda_{\mathrm{pen}}$ labels across all replicates. We generate $B=2000$ random label permutations and compute the permuted $H$-statistics, producing an empirical null distribution. The p-value is the fraction of permutations whose permuted $H$ exceeds or equals the observed $H_{\mathrm{obs}}$. This yields a valid significance test under the null hypothesis that the distributions of HSIC are identical across $\lambda_{\mathrm{pen}}$, without requiring independence of replicates or parametric assumptions.

\subsection{Implementation details}
\label{sec:implementation details}

\paragraph{Optimization.}

For the primal problems with the SDP formulation, we use the SCS algorithm \cite{odonoghue2021operatorsplittinghomogeneousembeddinglinearcomplementaryproblem,odonoghue2023software} available in the convex optimization software \texttt{CVXPY} \cite{diamond2016cvxpy,agrawal2018rewriting}, with a maximum number of iterations equal to $10000$.

For dual problems, unlike \citet{allain2025scalableandadaptivepredictionbandsusingkerenlsumofsquares} who developed a projected gradient method with Nesterov acceleration, we prefer to use the L-BFGS algorithm \citep{liu1989limited} for its robustness and faster convergence, since we only have at most a few thousands optimization variables in our experiments. We use \texttt{SciPy}'s minimize algorithm \citep{2020SciPy-NMeth}, with a maximum number of iterations equal to $10000$, tolerance equal to $10^{-2}$ and initialize all Lagrange multipliers to $0$ (except when using the warm-start strategy detailed below).

\paragraph{Scaling with respect to \(n\).}
To illustrate the advantage of our dual formulation over the primal one, in Figure \ref{fig:computation_time_primal_dual} we compare the computation time for both of them when the number of samples $n$ increases. The primal formulation can only handle up to $n=200$ samples, while the dual solver easily scales to $n=1000$. 

\begin{figure}[htbp]
    \centering
    \includegraphics[scale=0.5]{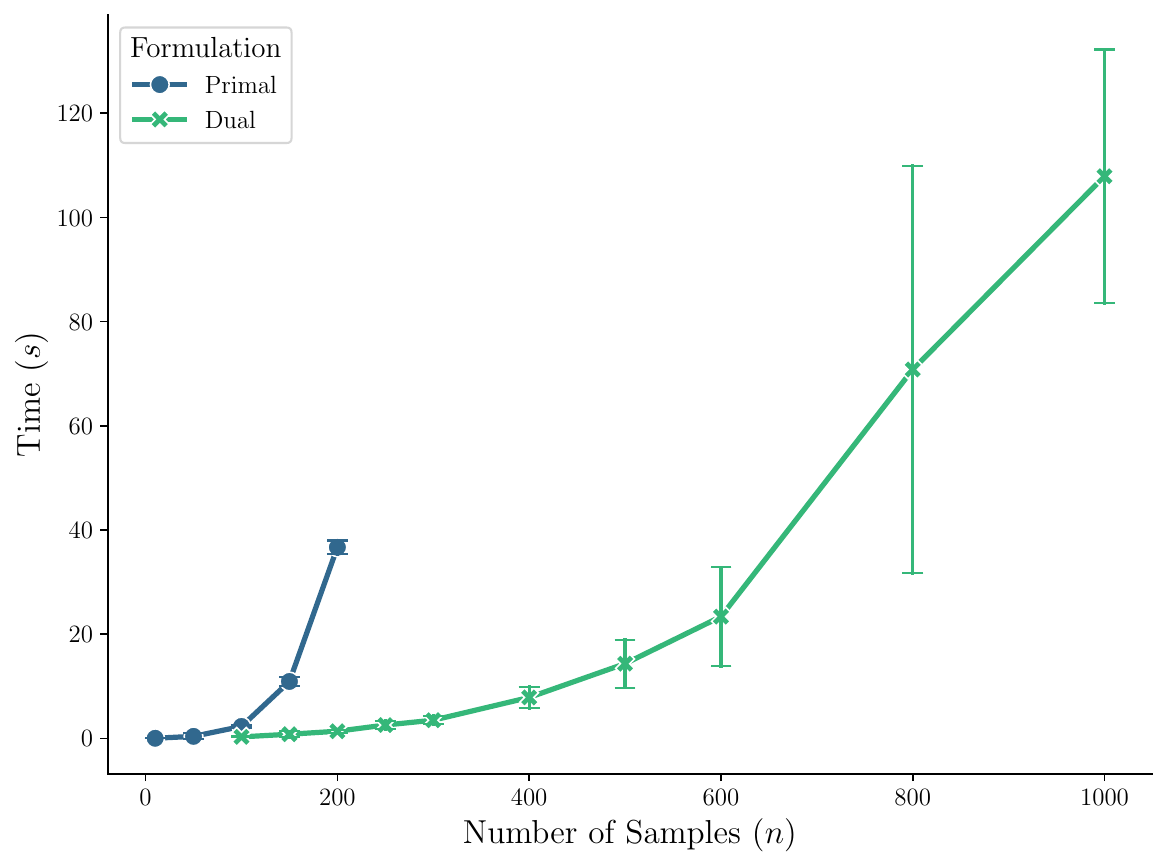}
    \caption{Dataset 1: time for SDP and dual formulation as a function of $n$ (penalty 1, $b=0$, \(\theta_{\mathrm{low}}=\theta_{\mathrm{up}}=0.3\), \(\lambda_{\mathrm{pen}}=1\), max iter = $10^4$), mean$\pm$sd over 20 repetitions.}
    \label{fig:computation_time_primal_dual}
\end{figure}

The simulations were performed on an AMD Ryzen 7 9700X 8-Core Processor (3.80 GHz), with four max threads.

\clearpage
\paragraph{Interaction between $b$ and $\theta$.}

In \citet{allain2025scalableandadaptivepredictionbandsusingkerenlsumofsquares}, it was observed that the penalty intensity \(b\) on the one hand, and the kernel lengthscales \(\theta^f\) (for the symmetric kSoS model) on the other hand, compensate each other through an interaction. Indeed, both influence the optimal kSoS function complexity: for a given \((b,\theta^f)\), similar bands with equivalent adaptivity can be obtained for higher values of both hyperparameters, as long as \(b\) is sufficiently large (typically $10$ or larger). 

In order to corroborate this assertion for our new penalized kSoS, we focus on one of our analytical test cases and compute the HSIC criterion for a grid of \((\lambda_{\mathrm{pen}},\theta^f)\) values (we fix \(\theta_{\mathrm{low}}=\theta_{\mathrm{up}}=\theta^f\)) and four different values of \(b=1,\, 10,\, 100,\, 1000\). Figure \ref{fig:contour_plots_evolution_with_b} displays these contour plots for three different random seeds.

\begin{figure}[htbp]
    \centering
    \includegraphics[width=\linewidth]{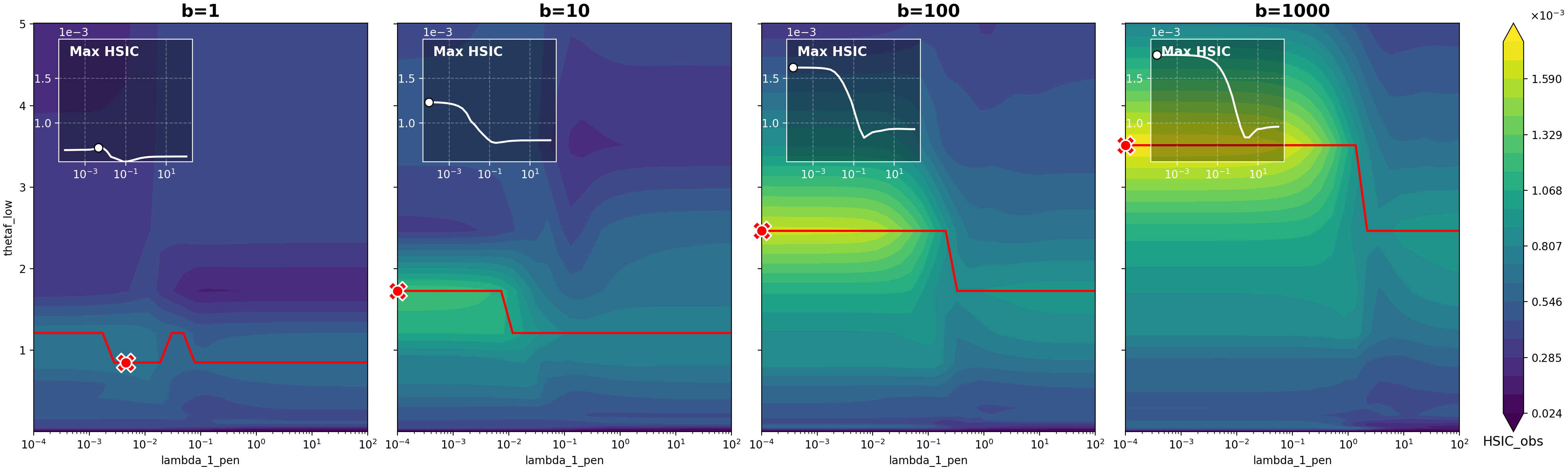}
    \includegraphics[width=\linewidth]{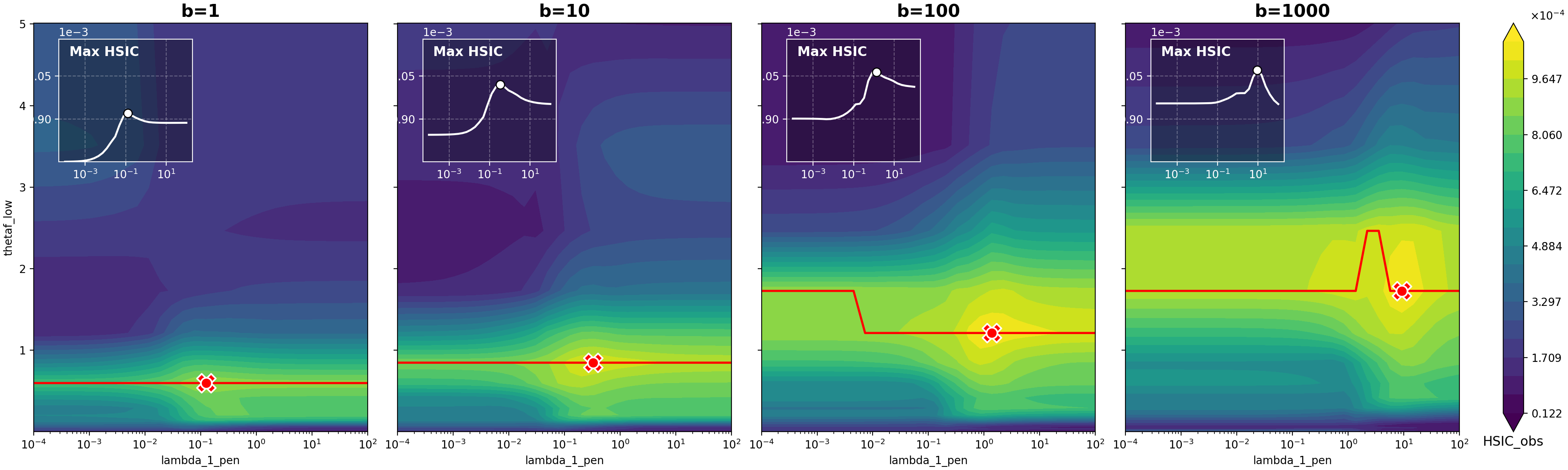}
    \includegraphics[width=\linewidth]{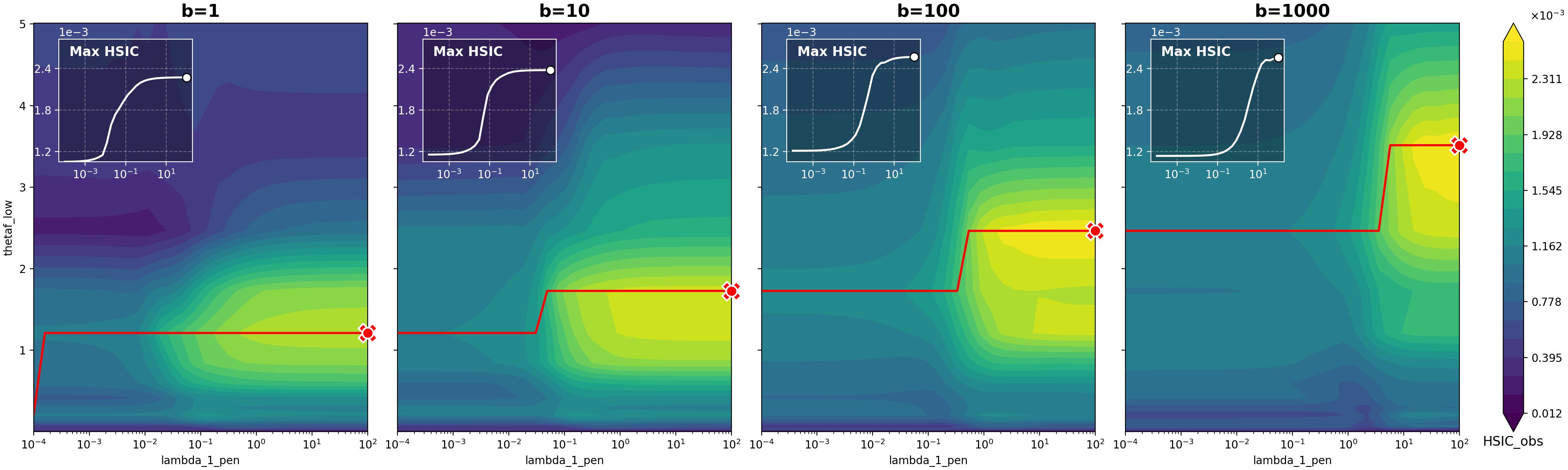}
    \caption{Contour plots of the HSIC criterion for a grid of \((\lambda_{\mathrm{pen}},\theta^f)\) and \(b=1,\, 10,\, 100,\, 1000\), case 2, \(n=100\).}
    \label{fig:contour_plots_evolution_with_b}
\end{figure}

For all seeds, we recover the expected behavior: when \(b\) increases, the optimal HSIC is reached for larger values of \(\theta^f\), and we observe that the value of the optimal HSIC is attained as soon as \(b=10\) or \(b=100\) in average. Interestingly, we can deepen the analysis by having a closer look at the differences between the seeds. At the top, we see that our model systematically chooses the smallest value for \(\lambda_{\mathrm{pen}}\), thus indicating that asymmetry is strongly favored for this seed, and only \(\theta^f\) increases to reach the optimal HSIC. Similarly, we notice the exact same phenomenon, although mirrored, in the bottom contour plots. This time, a strong symmetry is identified for the random samples, with only large selected values of \(\lambda_{\mathrm{pen}}\). In between, the seed points towards including a small amount of symmetry for better adaptivity, where intermediate penalty values are preferred. But the optimal \(\lambda_{\mathrm{pen}}\) increases with \(b\) and \(\theta^f\): to preserve the same amount of beneficial symmetry, the model necessitates a stronger constraint to compensate for the effect of increasing both \(b\) and \(\theta^f\).

\paragraph{Warm-start optimization strategy.}

To search for the best \((\theta_{\mathrm{low}}, \theta_{\mathrm{up}}, \lambda_{\mathrm{pen}})\) that maximizes \(\mathrm{HSIC}\), a brute-force and naive approach consists in solving the dual problem separately for all hyperparameter values to test. However, for fixed kernel lengthscales  \((\theta_{\mathrm{low}}, \theta_{\mathrm{up}})\), it is reasonable and intuitive to expect that the solution of the dual will only slightly vary between two close values of \(\lambda_{\mathrm{pen}}\). This means that once a penalized kSoS is trained for a given value of \(\lambda_{\mathrm{pen}}\), the corresponding optimal Lagrange multipliers can be used as initialization for training a subsequent penalized kSoS with a new value of \(\lambda_{\mathrm{pen}}\). If both are close, we anticipate a drastic reduction in the number of iterations required for convergence.

In order to validate this intuition, we consider one of our analytical test case and record the number of iterations at convergence for two strategies: a) the brute-force approach where all dual problems are solved separately and are initialized at the same value (the \emph{cold-start approach}) and b) an iterative approach where we first solve the dual for \(\lambda_{\mathrm{pen}}=10^{-4}\) and gradually increase its value while using the previous optimum as the initial point for the next problem (the \emph{warm-start approach}). Results are reported in Figure \ref{fig:cold_vs_warm_start_iterations}.

\begin{figure}[htbp]
    \centering
    \includegraphics[scale=0.4]{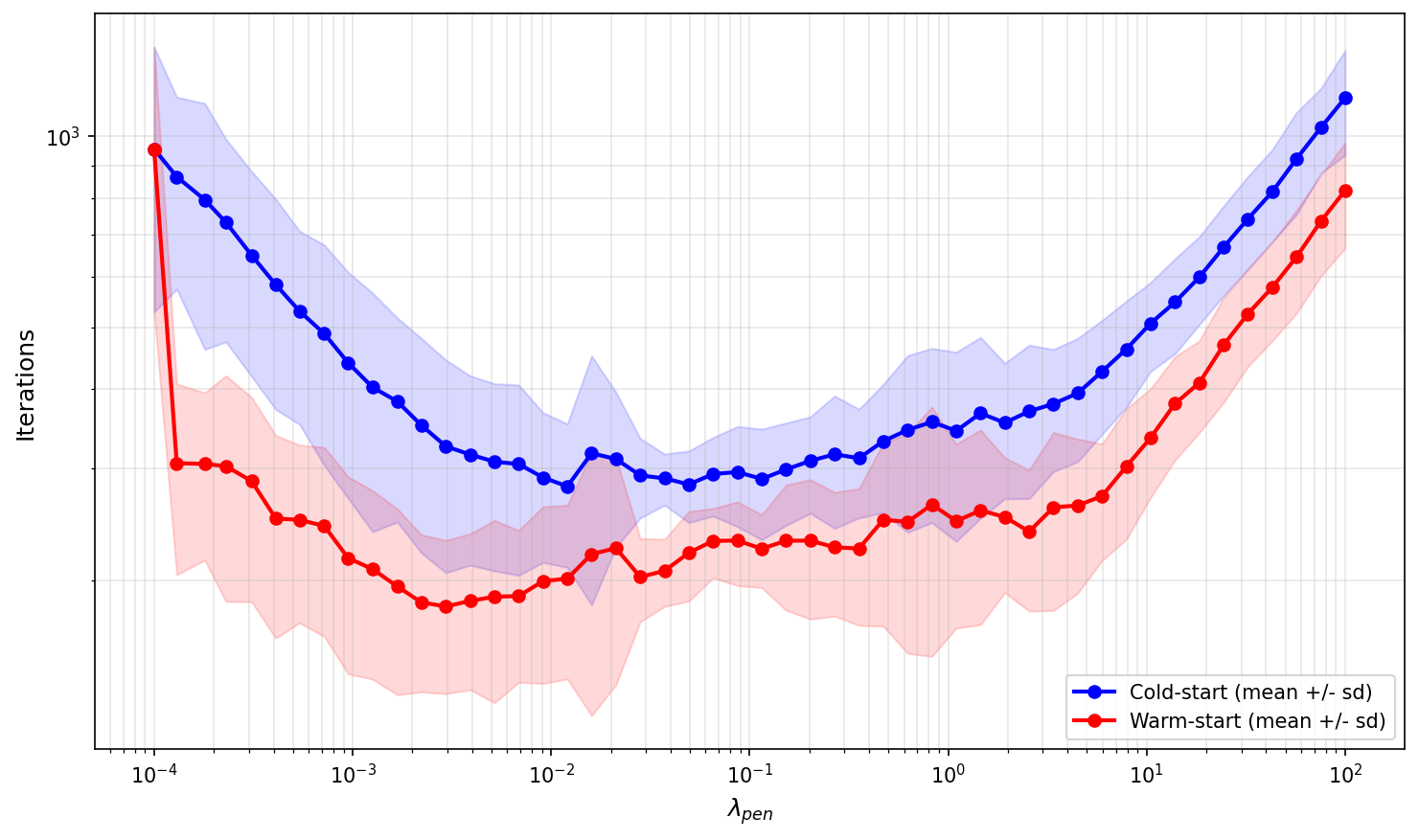}
    \caption{Number of iterations at convergence for the cold-start (blue) and warm-start (red) approach with the dual formulation and penalty on the training set, case 1, \(n=100\), \(b=10\) (mean$\pm$sd on $10$ repetitions).}
    \label{fig:cold_vs_warm_start_iterations}
\end{figure}

We observe first that for both approaches, the number of iterations can substantially vary with the value of \(\lambda_{\mathrm{pen}}\): this means that some dual problems are harder to solve than others, typically those associated to very low or very high values of \(\lambda_{\mathrm{pen}}\). But crucially, we also clearly see that the warm-start approach yields considerable computational savings, with a reduction factor that can reach $3$ or $4$. In average, warm-start is $65\%$ cheaper than cold-start: this means that in practice, we can investigate $10$ different values of \(\lambda_{\mathrm{pen}}\) for maximizing HSIC at the cost of $3$ single kSoS trainings only.

\subsection{Additional numerical experiments}
\label{sec:additional numerical experiments}

\paragraph{Discussion on evaluation metrics.}

As is usually done in the literature to compare interval adaptivity, we compute the mean width of the prediction intervals on the test set. Unfortunately, when considered alone, this performance metric is of limited relevance. Indeed it is very common to have intervals with similar mean width but with highly different local coverage quality (we illustrate this phenomenon below). As such, to better measure adaptivity, we also consider additional metrics. 

For our experiments on analytical test cases, we consider the local coverage, obtained by approximating  \(\mathbb{P}(Y_{N+1}\in \widehat{C}(X_{N+1}) \mid X_{N+1}=x )\) by its empirical counterpart with samples from \(Y\) (of size \(n_Y\)) at different random locations \(X_{i}\) (of size \(n_X\)), denoted $\hat{p}(X_i)$. We then compute a global measure of local coverage quality, the \emph{absolute coverage gap}, by considering the average distance to the target level $1-\alpha$:
\begin{equation*}
    \mathrm{AbsCovGap} = \frac{1}{n_X} \sum_{i=1}^{n_X} \vert \hat{p}(X_i) - (1-\alpha)\vert.
\end{equation*}
For real-world datasets, since such conditional samples are not available, we rely instead on the worst-set coverage introduced by \citet{thurin2025optimal}. Starting from a partition \(\{\mathcal{R}_l\}_{l=1,\ldots,L}\) of the input space, we compute the marginal coverage in each region \(\mathbb{P}(Y_{N+1}\in\widehat{C}_{\mathcal{D}_N}(X_{N+1}) \lvert X_{N+1}\in \mathcal{R}_l)\). The worst-set coverage $\mathrm{WSC}$ is defined as the minimum of all these coverages: the closer it is to the target $(1-\alpha)$, the more adaptive the intervals. In practice, we follow the ideas of \citet{thurin2025optimal} to define the regions, with a procedure that may not yield a partition: we randomly select \(L=10\) samples from the test set, and for each of them we identity the \(100\)-th closest neighbors in the feature space to estimate the marginal coverage.

These two measures indicate how well prediction intervals capture the noise distribution and are powerful in practice to compare CP procedures. However, they only focus on the central part of the intervals: for asymmetric noise distributions, they may fail to detect if a prediction interval is better than another one. Instead, we can consider their asymmetric variants, where we evaluate local coverage below or above the prediction interval, i.e. \(p^{\mathrm{low}}(X_{N+1})=\mathbb{P}(Y_{N+1}>l(X_{N+1}) \mid X_{N+1}=x )\) and \(p^{\mathrm{upp}}(X_{N+1})=\mathbb{P}(Y_{N+1}<u(X_{N+1}) \mid X_{N+1}=x )\) for an interval \(\widehat{C}_{\mathcal{D}_N}(X_{N+1})=[l(X_{N+1}),u(X_{N+1})]\), see \citet{linussonSignedErrorConformalRegression2014} and \citet{romano2019conformalizedquantileregression}. Ideally, we would like \(p^{\mathrm{low}}(X_{N+1})\) and \(p^{\mathrm{upp}}(X_{N+1})\) to be greater than \(1-\alpha/2\) for all \(X_{N+1}\), which would mean the prediction interval also captures well the lower and upper tails of the distribution. If this is the case, traditional local coverage also holds since
\begin{align*}
    \mathbb{P}(Y_{N+1}\notin \widehat{C}(X_{N+1}) \mid X_{N+1}=x ) &= \mathbb{P}(Y_{N+1}<l(X_{N+1}) \; \cup \;  Y_{N+1}>u(X_{N+1} \mid X_{N+1}=x ) \\
    & \leq \mathbb{P}(Y_{N+1}<l(X_{N+1})\mid X_{N+1}=x ) + \mathbb{P}(  Y_{N+1}>u(X_{N+1}) \mid X_{N+1}=x ) \\
    &= (1-p^{\mathrm{low}}(X_{N+1})) + (1-p^{\mathrm{upp}}(X_{N+1})) \\
    &\leq \alpha/2 + \alpha/2 = \alpha.
\end{align*}
Following this idea, we can define the lower and upper absolute coverage gaps
\begin{align*}
    \mathrm{AbsCovGap}^{\mathrm{low}} &= \frac{1}{n_X} \sum_{i=1}^{n_X} \vert \hat{p}^{\mathrm{low}}(X_i) - (1-\alpha/2)\vert \\
    \mathrm{AbsCovGap}^{\mathrm{upp}} &= \frac{1}{n_X} \sum_{i=1}^{n_X} \vert \hat{p}^{\mathrm{upp}}(X_i) - (1-\alpha/2)\vert.
\end{align*}
Similarly, we consider the lower and upper worst-set coverage \(\mathrm{WSC}^{\mathrm{low}}\) and \(\mathrm{WSC}^{\mathrm{upp}}\). When aggregating such lower and upper adaptivity measures over several experimental replications, we will concatenate lower and upper indicators to draw boxplots or compute averages: we will thus refer to them as "combined", with notations \(\mathrm{AbsCovGap}^{c}\) and \(\mathrm{WSC}^{c}\).

Now, we illustrate numerically two important facts which motivate the evaluation metrics discussed so far:
\begin{enumerate}
    \item Mean width alone is not sufficient to measure adaptivity, since two intervals can have similar mean width but one can be locally adaptive while the other one is not.
    \item For asymmetric noise distribution, local coverage is not sufficient because an interval can have equivalent or better local coverage than another one, while failing at capturing lower and upper tails.
\end{enumerate}
We focus on the analytical test case 4 (described later on), and compare the intervals produced by heteroscedastic GP and our penalized kSoS (we consider both methods after split CP calibration which guarantees marginal coverage). For two replications, Figure \ref{fig:mw_loccov_case_5} shows the prediction intervals from the two procedures as well as their mean width and \(\mathrm{AbsCovGap}\) variants. Visually, it is clear that heteroscedastic GP is not adaptive, since it does not capture the noise distribution with an interval composed of "holes" on the right. On the contrary, kSoS learns the shape of the noise and is much more adaptive. But when looking at the metrics, we observe that heteroscedastic GP has equivalent or better mean width and \(\mathrm{AbsCovGap}\) than kSoS, which clearly shows that they are not sufficient to properly evaluate adaptivity. On the other side, both \(\mathrm{AbsCovGap}^{\mathrm{low}}\) and \(\mathrm{AbsCovGap}^{\mathrm{upp}}\) indicate that kSoS certainly outperforms heteroscedastic GP.

\begin{figure}[htbp]
    \centering
    \includegraphics[width=\linewidth]{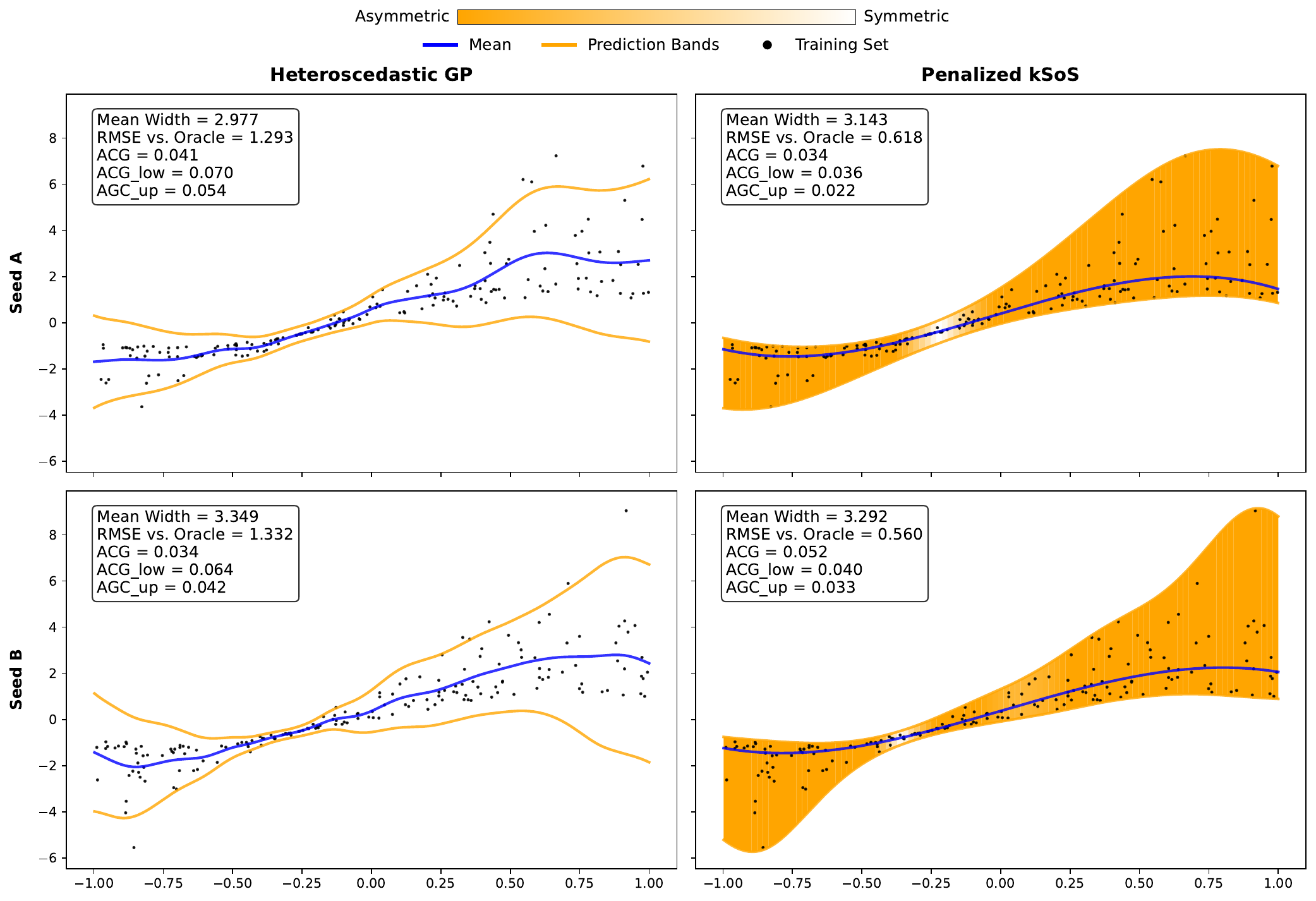}
    \caption{Prediction intervals for heteroscedastic GP and penalized kSoS for two random seeds on test case 5.}
    \label{fig:mw_loccov_case_5}
\end{figure}

All in one, we advocate the comparison of \(\mathrm{AbsCovGap}^{\mathrm{c}}\) or \(\mathrm{WSC}^{c}\) first (since in practice we do not know beforehand if the noise distribution is symmetric or not, it may be misleading to use \(\mathrm{AbsCovGap}\) or \(\mathrm{WSC}\)), and for methods that have similar lower and upper adaptivity, to compare as a second step their mean width to potentially break the ties. This is similar in spirit to the assertion "maximize the sharpness of the predictive distributions subject to calibration" of \citet{Gneiting01032007}, which was later emphasized again in \citet{chung2021beyond}.

\clearpage
\paragraph{Symmetric versus asymmetric calibration.}

As discussed in \citet{romano2019conformalizedquantileregression}, score functions of the form \(S(X, Y) = \max\bigl(l(X) - Y, Y - u(X) \bigr)\), as in CQR or penalized kSoS, can easily be used in an asymmetric calibration procedure. Denoting \(\widehat{q}_{\alpha_{\mathrm{low}}}\) and \(\widehat{q}_{\alpha_{\mathrm{upp}}}\) the adjusted quantiles of the sets \(\{l(X_i)-Y_i, \; i\in \mathcal{D}_m\}\) and \(\{Y_i-l(X_i), \; i\in \mathcal{D}_m\}\), respectively, then 
\begin{equation*}
\widehat{C}_{N}(X_{N+1}) = \left[l(X_{N+1}) - \widehat{q}_{\alpha_{\mathrm{low}}},u(X_{N+1}) + \widehat{q}_{\alpha_{\mathrm{upp}}}\right]
\end{equation*}
has \(1-\alpha\) marginal coverage as long as \(\alpha_{\mathrm{low}}+\alpha_{\mathrm{upp}}=\alpha\). But \citet{romano2019conformalizedquantileregression} also mentions that the stronger coverage guarantee (marginal coverage below and above the interval) comes at the cost of slightly longer intervals. We also investigate this behavior on twelve real-world datasets (detailed later on). For both CQR and penalized kSoS, we examine how often asymmetric calibration produces longer intervals, and how close to local coverage intervals after either calibration are, when measured with \(\mathrm{WSC}^{\mathrm{c}}\). Results are given in Table \ref{tab:percent_times_calib_realworld}, averaged over 10 repetitions.

\begin{table}[ht]
\centering
\begin{minipage}[t]{0.60\textwidth}
\centering
\footnotesize
\setlength{\tabcolsep}{3pt}
\caption{Percentage of times asymmetric calibration yields larger intervals than symmetric calibration, and mean distance between worst-set coverage (lower and upper) and target level $1-\alpha/2$, over 10 repetitions and for both CQR and penalized kSoS.}\label{tab:percent_times_calib_realworld}
\begin{tabular}{l|ccc}
\toprule
Dataset & $ \mathrm{MW}_{\mathrm{asym}} > \mathrm{MW}_{\mathrm{sym}}$ & $\vert \mathrm{WSC}^{\mathrm{c}}_{\mathrm{asym}}-(1-\frac{\alpha}{2})\vert $ & $\vert \mathrm{WSC}^{\mathrm{c}}_{\mathrm{sym}}-(1-\frac{\alpha}{2})\vert $ \\
\midrule
\textbf{All datasets}   & $69.17\%$ & $0.0322$ & $0.0315$ \\
\midrule
Concrete   & $65\%$ & $0.01700$ & $0.01425$ \\
Bike       & $75\%$ & $0.05400$ & $0.05775$ \\
Diabetes   & $70\%$ & $0.02100$ & $0.02350$ \\
Housing log& $65\%$ & $0.02975$ & $0.02725$ \\
Housing    & $55\%$ & $0.02900$ & $0.02675$ \\
MPG        & $95\%$ & $0.02375$ & $0.02375$ \\
Boston     & $60\%$ & $0.01700$ & $0.01025$  \\
Energy     & $85\%$ & $0.02500$ & $0.02375$  \\
Miami      & $40\%$ & $0.04500$ & $0.04225$  \\
Sulfur     & $75\%$ & $0.04175$ & $0.04425$  \\
Power      & $75\%$ & $0.06600$ & $0.06525$  \\
Yacht      & $70\%$ & $0.01725$ & $0.01950$  \\
\bottomrule
\end{tabular}
\end{minipage}
\hfill
\begin{minipage}[t]{0.30\textwidth}
\centering
\footnotesize
\setlength{\tabcolsep}{3pt}
\caption{Mean width relative increase for intervals with asymmetric calibration vs symmetric calibration, averaged over 10 repetitions.}\label{tab:percent_increase_calib_realworld} 
\begin{tabular}{l|c}
\toprule
Dataset & $\frac{\mathrm{MW}_{\mathrm{asym}}-\mathrm{MW}_{\mathrm{sym}}}{\mathrm{MW}_{\mathrm{sym}}}$ \\
\midrule
\textbf{All datasets}   & $1.48\%$ \\
\midrule
Concrete CQR  & $1.47\%$ \\
Bike CQR      & $1.20\%$ \\
Bike kSoS      & $1.30\%$ \\
Boston CQR      & $2.96\%$ \\
Diabetes CQR  & $2.02\%$ \\
Energy kSoS & $3.06\%$ \\
Housing kSoS & $1.45\%$ \\
MPG CQR & $8.32\%$ \\
MPG kSoS & $5.68\%$\\
Yacht CQR & $2.46\%$ \\
Yacht kSoS & $3.52\%$\\
\bottomrule
\end{tabular}
\end{minipage}
\end{table}

We observe first that both calibration methods have equivalent lower and upper worst-set coverage, meaning that asymmetric calibration does not improve local coverage in these examples. However, except for Miami, asymmetric calibration predominantly produces longer intervals, in $69.17\%$ of the cases in average. To go further, we study in Table \ref{tab:percent_increase_calib_realworld} the relative increase of mean width induced by asymmetric calibration on some of these datasets. We observe that in average the increase is limited, but it can be very large in specific instances. Since asymmetric calibration was observed to come with no benefits on lower and upper local coverage in these experiments, we only consider symmetric calibration in all the following experiments.

\paragraph{Operator penalty versus training set penalty.}

Both penalties achieve similar goals but differ in implementation and theoretical properties. The operator penalty provides a continuous functional view of the problem, and consequently inherits tighter bound independent of the fill-in distance. But in terms of computational complexity, the operator penalty involves \(\mathcal{O}(n^2)\) dual variables, as opposed to \(\mathcal{O}(n)\) for the training set one, and is also less flexible since the exact same kernels (and lengthscales) must be used for both bands. In this particular case, we illustrate in Figure \ref{fig:comparison_penalties} that the operator penalty does not yield improvement in either mean width or coverage over the training set penalty for two test cases, but we observe this phenomenon for all the datasets we investigated.

\begin{figure}[htbp]
  \centering
  \begin{subfigure}{0.44\textwidth}
    \centering
    \includegraphics[width=\linewidth]{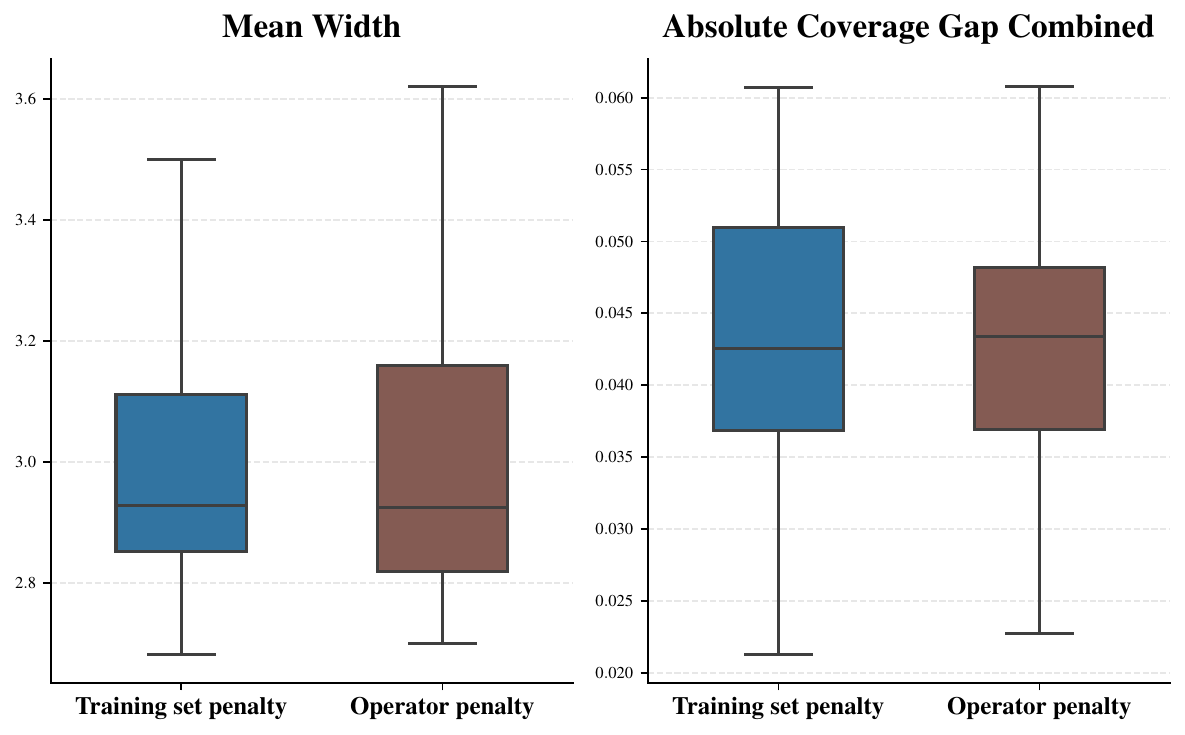}
    \caption{Mean width and absolute coverage gap combined for both penalties on dataset 1.}
    \label{fig:comparison_penalties_case_1}
  \end{subfigure}
  \hfill
  \begin{subfigure}{0.44\textwidth}
    \centering
    \includegraphics[width=0.49\linewidth]{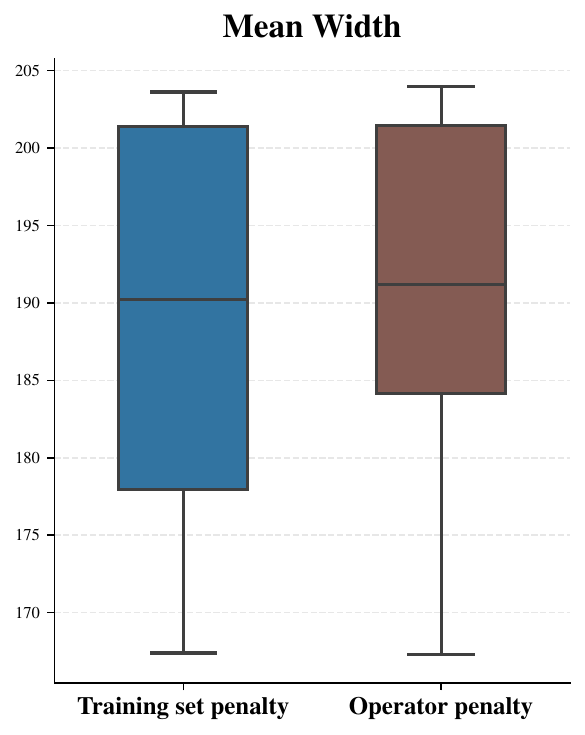}
    \includegraphics[width=0.42\linewidth]{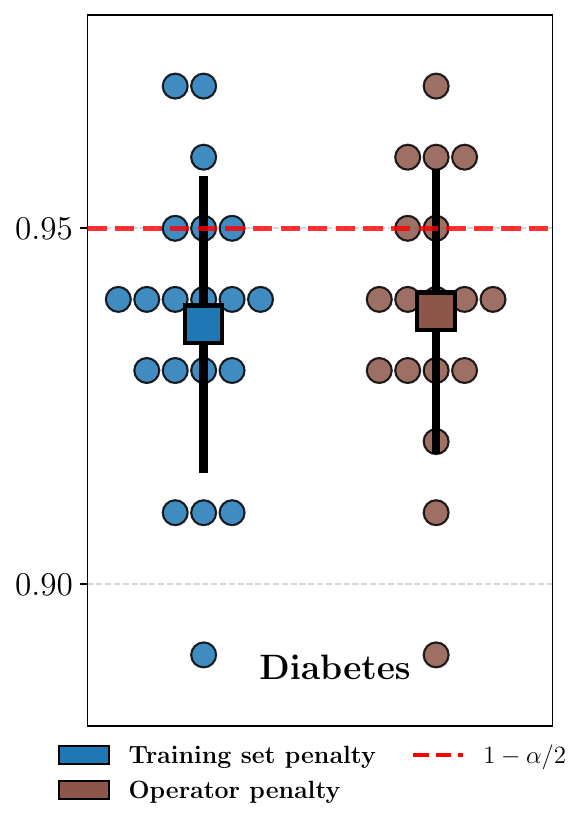}
    \caption{Mean width and worst-set coverage combined for both penalties on dataset Diabetes.}
    \label{fig:comparison_penalties_case_diabtes}
  \end{subfigure}
  \caption{Comparison of operator and training set penalties on an analytical test case (left) and a real-world dataset (right).}
  \label{fig:comparison_penalties}
\end{figure}

We thus recommend to use the operator penalty for small datasets where strongest theoretical control is desired and computation is not limiting, but use the training set one in all other instances. 

\clearpage
\paragraph{Additional analytical test cases and results.}

For all experiments related to adaptivity metrics, we perform $20$ replications with different random seeds, and local coverage is estimated with $n_X=100$ independent random locations $X_{N+1}$ for which we generate $n_Y=1000$ independent samples from $Y_{N+1}$. Mean width is estimated with a test set of size \(n_{\textrm{test}}=1000\). We consider CQR with random forests and both homoscedastic and heteroscedastic GPs with Mat\'ern $5/2$ kernel. As for kSoS, we also use a Mat\'ern $5/2$ kernel with a predictive model given by a homoscedastic GP for symmetric cases and a cubic spline for asymmetric ones. We evaluate the symmetric variant of \citet{allain2025scalableandadaptivepredictionbandsusingkerenlsumofsquares} and our penalized version with the penalty on the training set only, since we did not observe differences with the operator penalty in our experiments. For both kSoS methods, we train first an initial model with $\theta_{\mathrm{low}}=\theta_{\mathrm{up}}$ equal to the median of the feature distances (a usual rule-of-thumb for kernel methods) and extract the values of the mean-width and the norms, which serve as a normalization before setting the hyperparameter values \(\lambda_{(\cdot)1}=\lambda_{(\cdot)2}=1\) and \(b\) (depending on the test case). For all methods, we use a calibration set of size $m=2000$, and also compute the root mean-squared error (RMSE) with respect to the oracle prediction bands.

Case 1. Inspired from \citet{gramacy2009adaptivedesignanalysissupercomputerexperiments}.
    \begin{align*}
        &d=1,\quad X\sim \mathcal{U}[-1,1],\quad Y=m(X)+\sigma(X)\epsilon, \quad\epsilon\sim\mathcal{N}(0,1)\\
        &m(X) = 
        \begin{cases}
            \sin(\pi (2X+1/5)) + 0.2\cos(4\pi(2X+1/5))\quad \text{if}\; 10X+1 \leq 9.6 \\
            X - 9/10 \quad \text{otherwise}
        \end{cases}\\
        &\sigma(X) = \sqrt{0.1+2X^2}
    \end{align*}
    
We display the adaptivity metrics in Figure \ref{fig:comparisons_case_1}. CQR and homoscedastic GP tend to produce intervals that overcover and are too large. Heteroscedastic GP, symmetric and penalized kSoS yield better local coverage (with a slight advantage for penalized kSoS), but both kSoS models have smaller mean width among all competitors. This illustrates that the automatic choice of the penalty allows to recover the best performing results of symmetric kSoS, which is also confirmed with both of them reaching the smallest RMSE. 

\begin{figure}[htbp]
        \centering
        \includegraphics[width=0.8\linewidth]{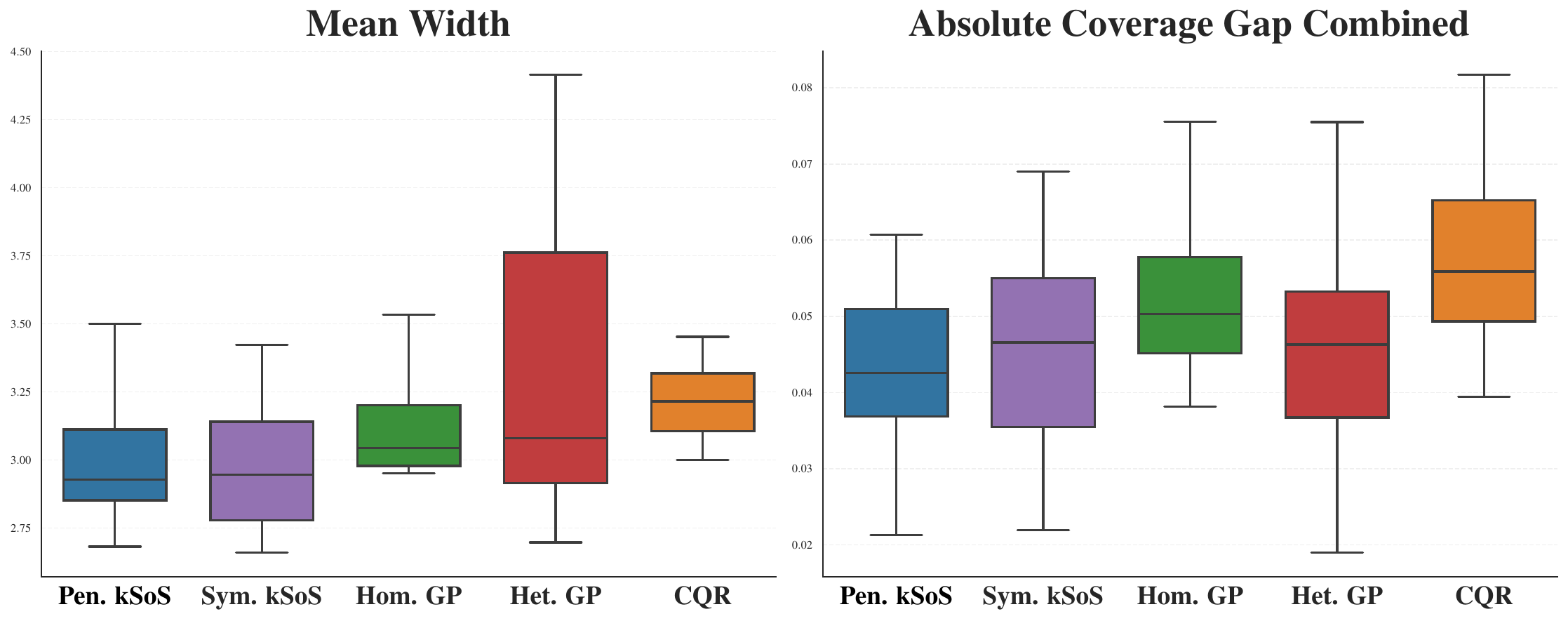}
        \includegraphics[width=0.4\linewidth]{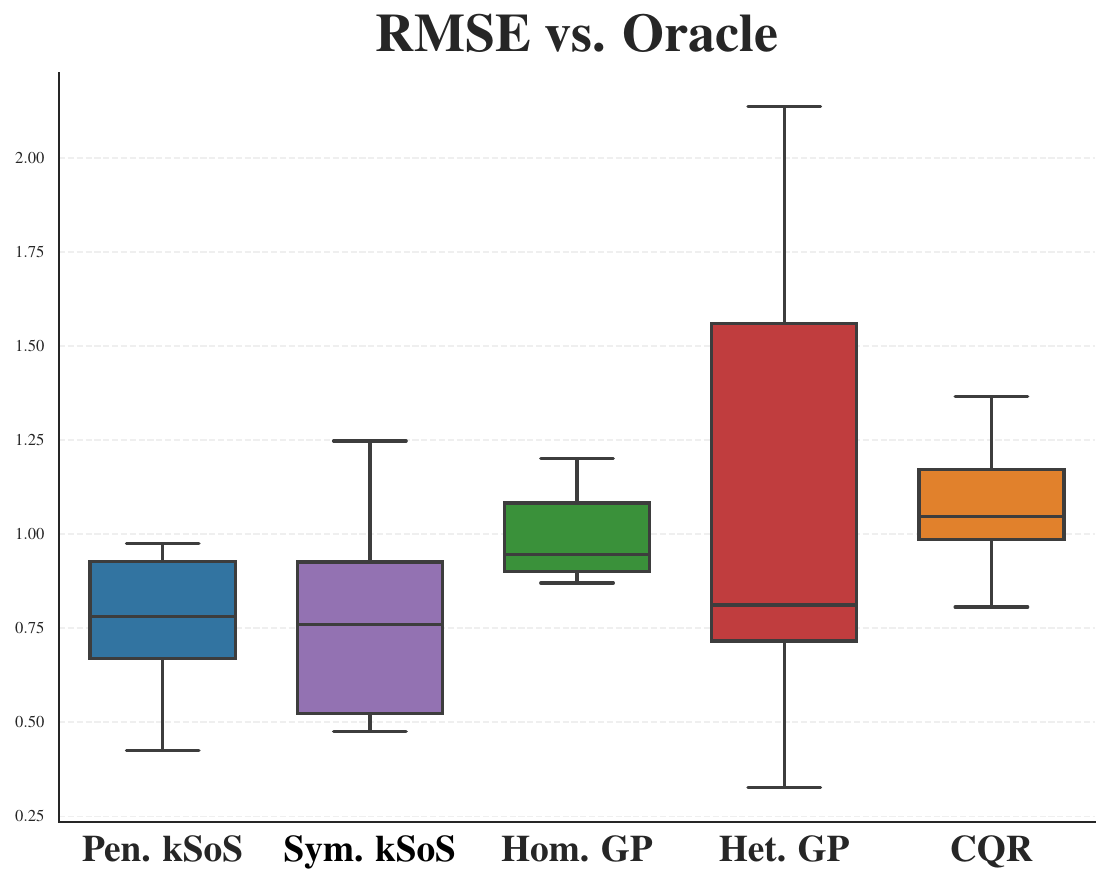}
        \caption{Test case 1 with \(d=1\) and \(n=100\). Mean width, local coverage lower and upper combined and RMSE versus oracle, \(b=10\).}
        \label{fig:comparisons_case_1}
\end{figure}

Figure \ref{fig:case_1_n_1000} gives the optimal solution of our dual formulation for $n=1000$.

\begin{figure}[htbp]
        \centering
        \includegraphics[width=0.6\linewidth]{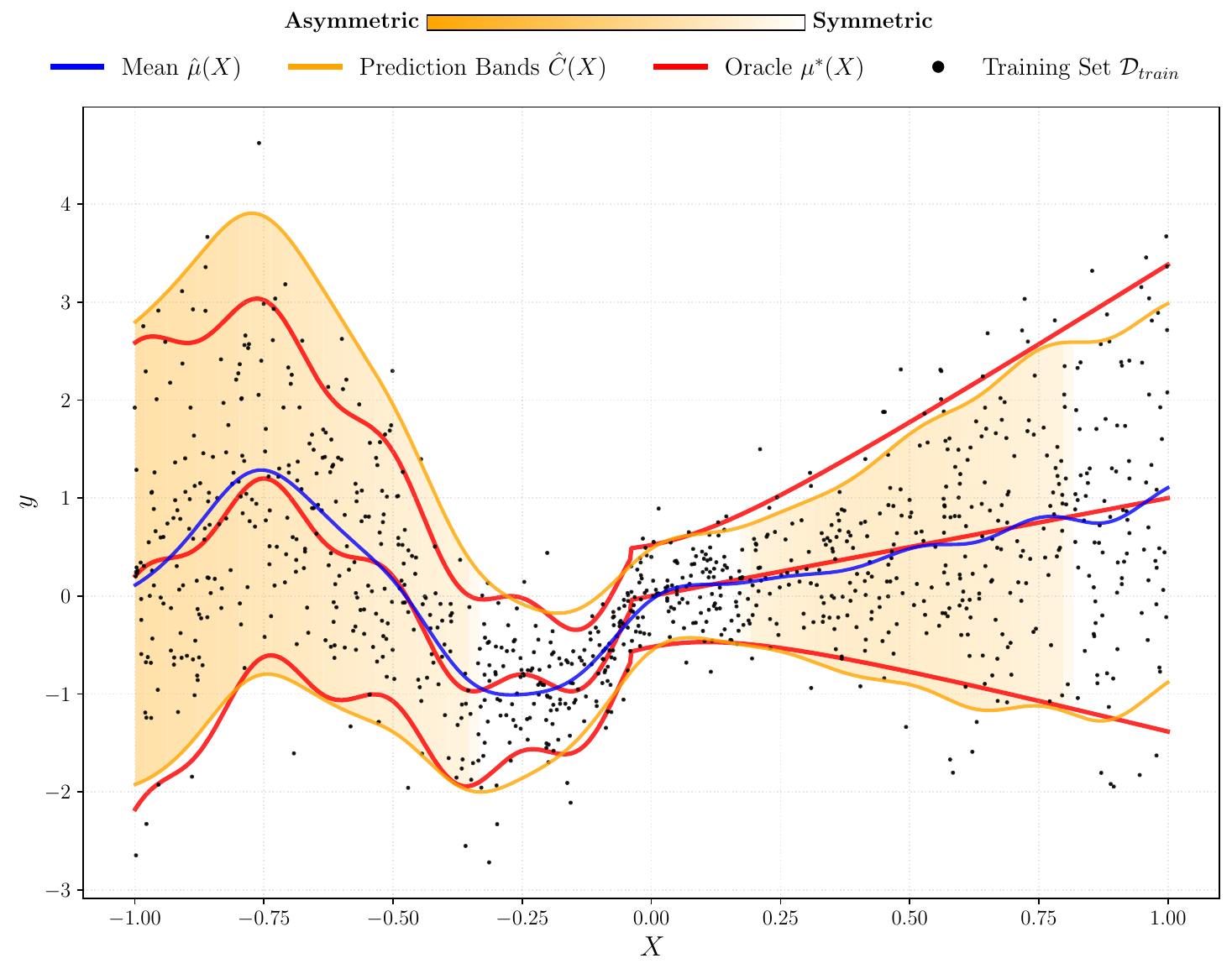}
        \caption{Test case 1 with \(d=1\) and \(n=1000\). Optimal solution of dual formulation with penalty 1.}
        \label{fig:case_1_n_1000}
\end{figure}
    
Case 2. Corresponds to setting 1 in \citet{hore2024conformalpredictionlocalweights}.
    \begin{align*}
        &X\sim \mathcal{N}_{d}(0,I_{d}),\quad Y=m(X)+\sigma(X)\epsilon, \quad\epsilon\sim\mathcal{N}(0,1)\\
        &m(X) = 0.5\sum_{i=1}^d X^{(i)}\\
        &\sigma(X) = \sum_{i=1}^d \vert\sin(X^{(i)})\vert
    \end{align*}
    
Similarly to the previous case, penalized kSoS and heteroscedastic GP have better local coverage than other competitors and equivalent to symmetric kSoS, as can be seen in Figure \ref{fig:comparisons_case_2}. But penalized kSoS has much smaller mean width and RMSE, although it does not reach the mean width or RMSE of symmetric kSoS.
    
\begin{figure}[htbp]
    \centering
    \includegraphics[width=0.8\linewidth]{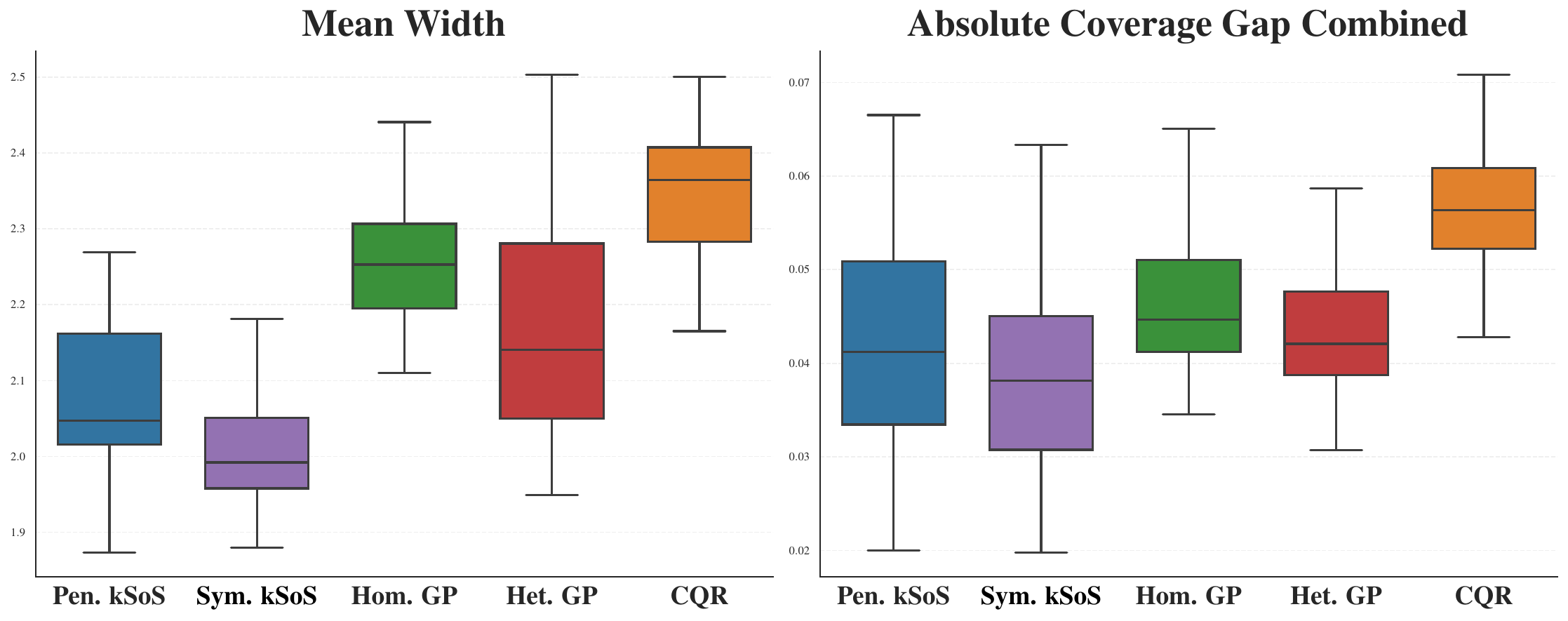}
        \includegraphics[width=0.4\linewidth]{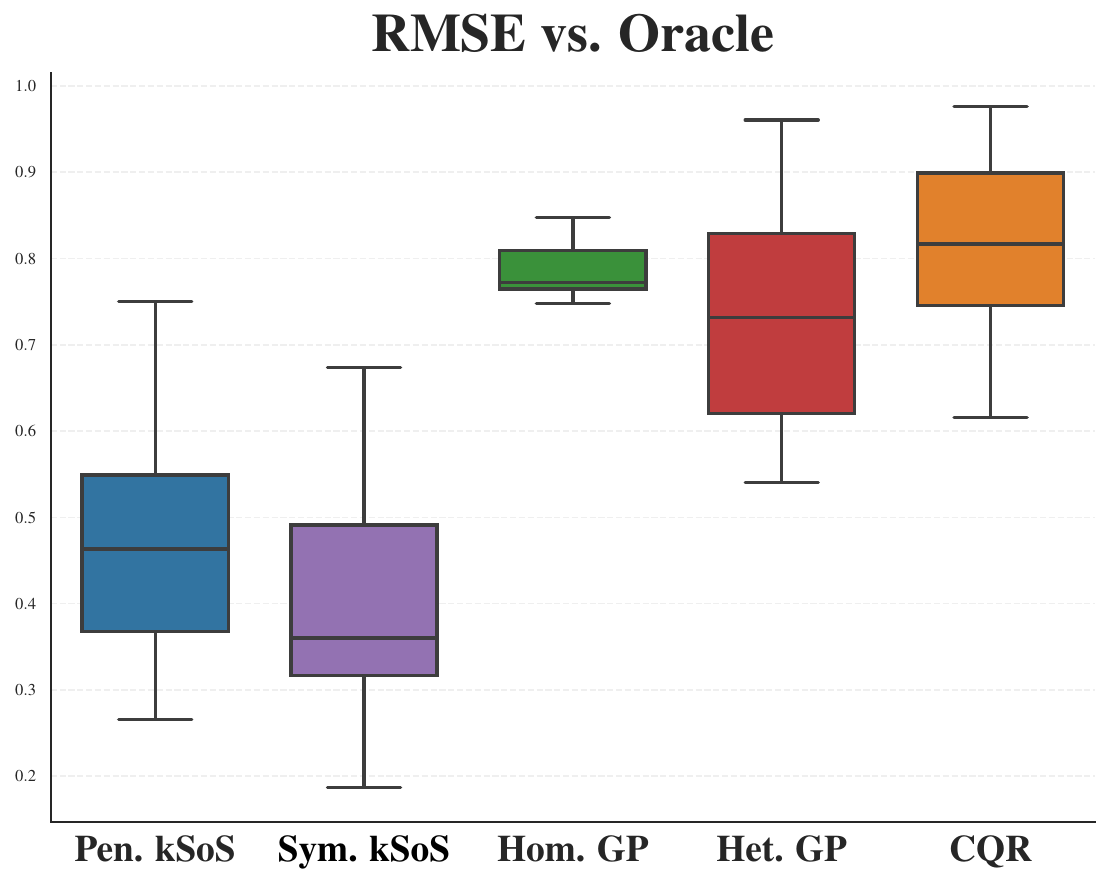}
    \caption{Test case 2 with \(d=1\) and \(n=100\). Mean width, local coverage lower and upper combined and RMSE versus oracle, \(b=100\).}
    \label{fig:comparisons_case_2}
\end{figure}

With $n=1000$, we obtain in Figure \ref{fig:case_2_n_1000} the following optimal solution of the dual formulation.

\begin{figure}[htbp]
        \centering
        \includegraphics[width=0.6\linewidth]{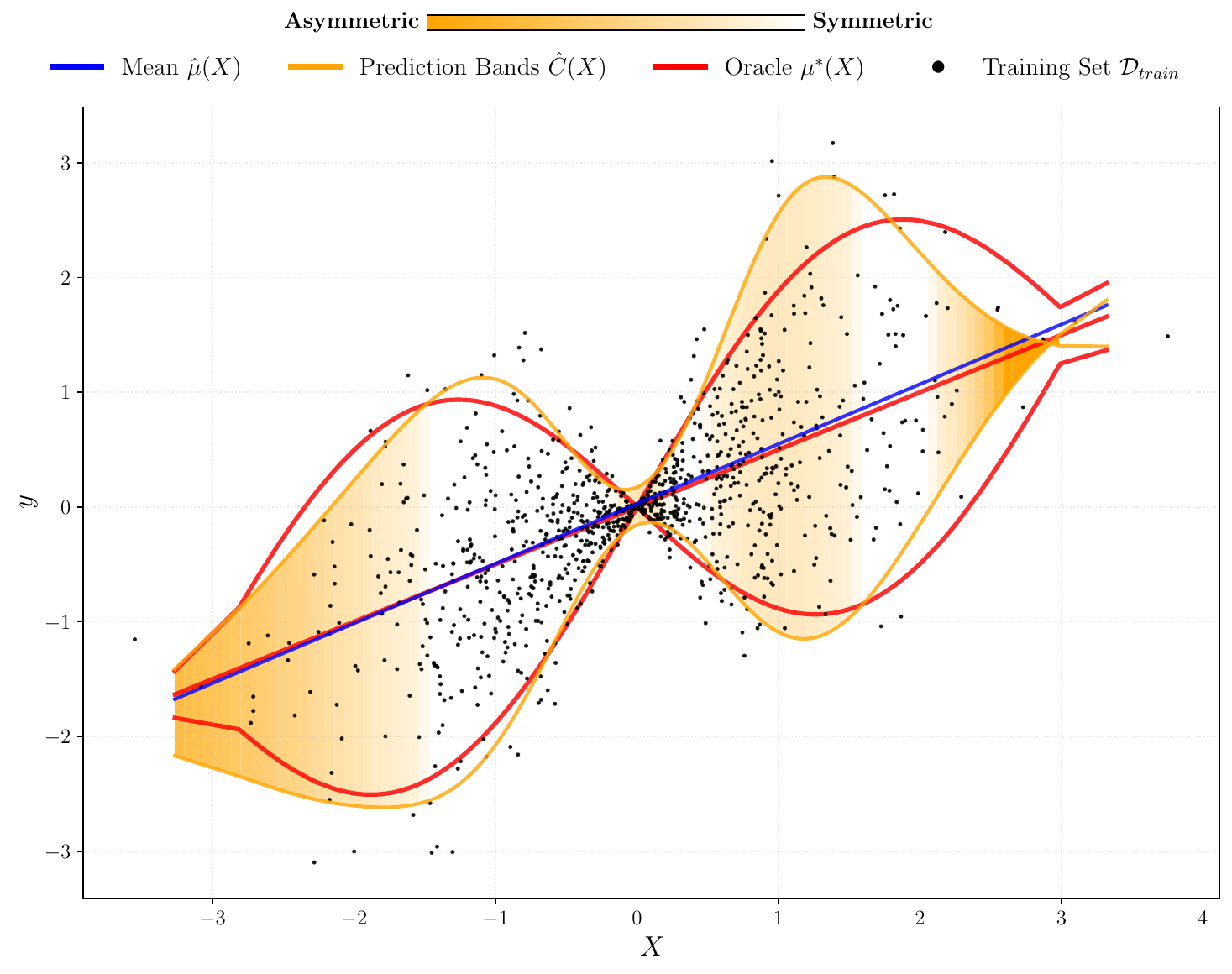}
        \caption{Test case 2 with \(d=1\) and \(n=1000\). Optimal solution of dual formulation with penalty 1.}
        \label{fig:case_2_n_1000}
\end{figure}
    
Case 3. Inspired from \citet{braun2025minimumvolumeconformalsets}, with a lognormal noise: 
\begin{align*}
    &d=1,\quad X\sim \mathcal{U}[-1,1],\quad Y=m(X)+\sigma(X)\epsilon, \quad\epsilon\sim\mathrm{Lognormal}(0,1)\\
    &m(X) = \sin(5X)\\
    &\sigma(X) = X
\end{align*}

Figure \ref{fig:comparisons_case_3} demonstrates once again that on such asymmetric case, penalized kSoS produces intervals with much better local coverage and smaller mean width than all competitors, except for homoscedastic GP in terms of mean width. But as already discussed, using this indicator only can be misleading. As in the previous test case, penalized kSoS also approximates the oracle prediction intervals with a much higher accuracy, with a very small RMSE.

\begin{figure}[htbp]
    \centering
    \includegraphics[width=0.8\linewidth]{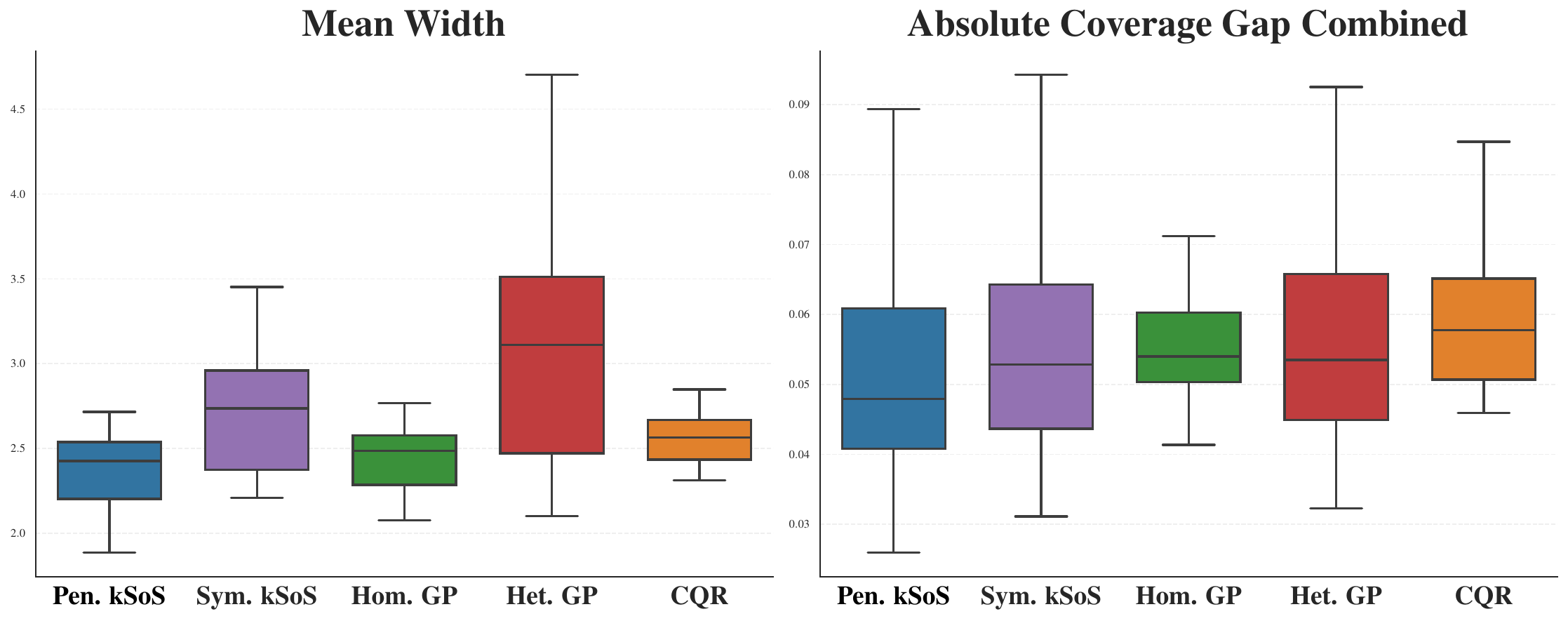}
        \includegraphics[width=0.4\linewidth]{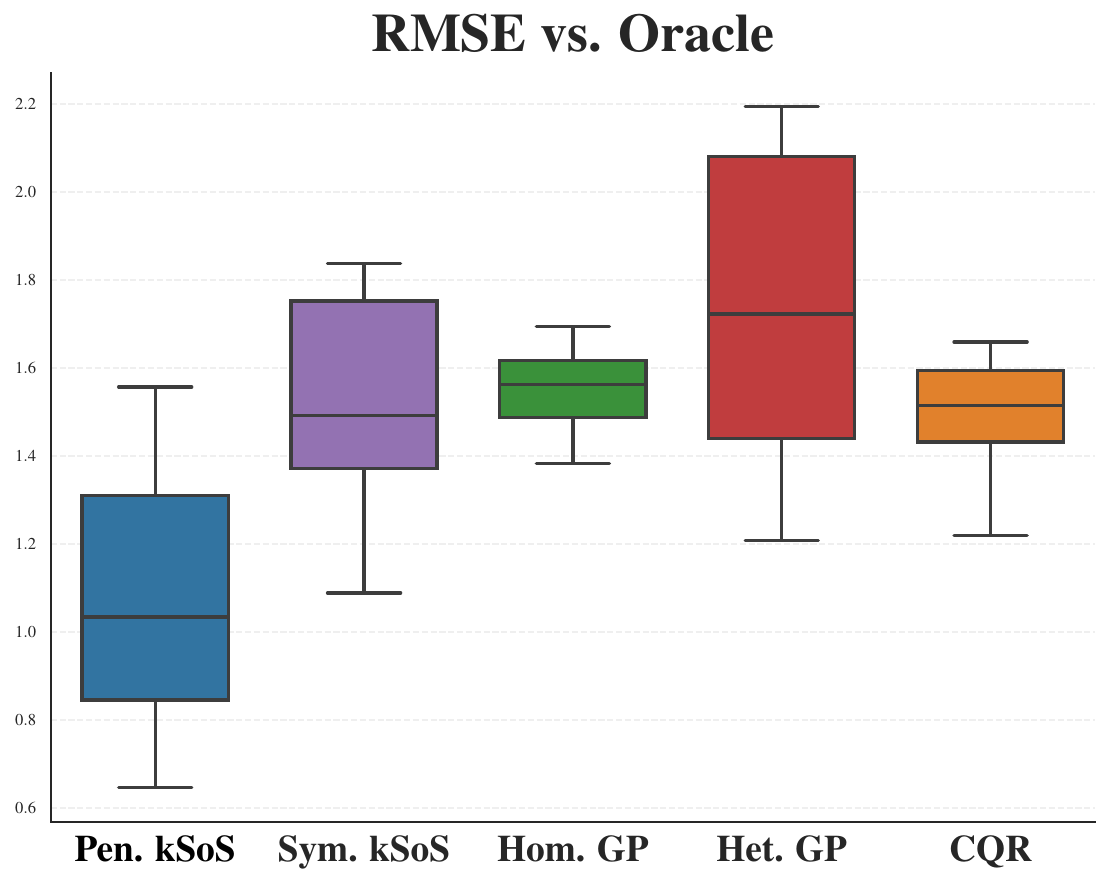}
    \caption{Test case 3 with \(d=1\) and \(n=100\). Mean width, local coverage lower and upper combined and RMSE versus oracle, \(b=10\).}
    \label{fig:comparisons_case_3}
\end{figure}

In Figure \ref{fig:case_3_n_1000} we display the optimal solution of the dual formulation for $n=1000$.

\begin{figure}[htbp]
        \centering
        \includegraphics[width=0.6\linewidth]{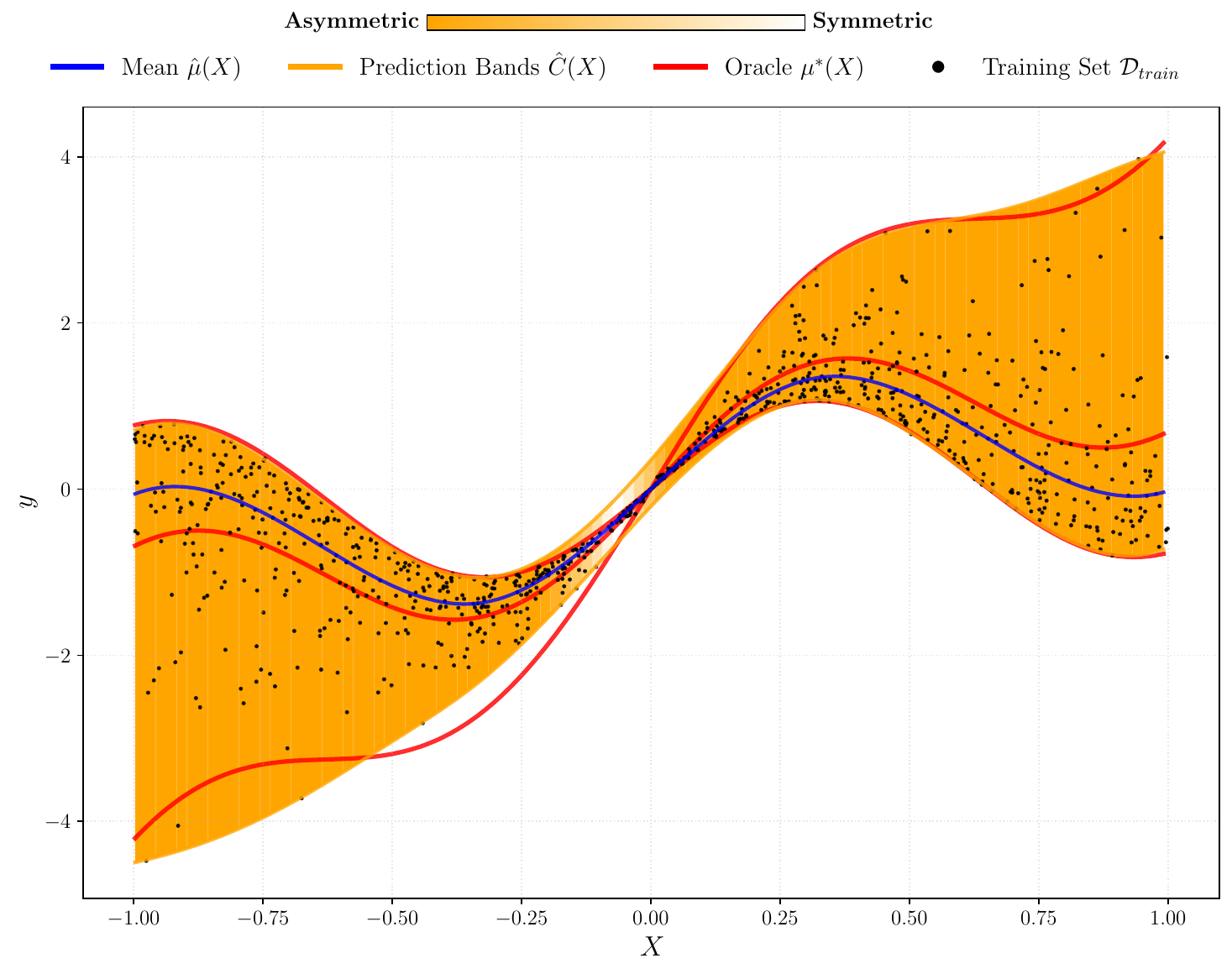}
        \caption{Test case 3 with \(d=1\) and \(n=1000\). Optimal solution of dual formulation for with penalty 1.}
        \label{fig:case_3_n_1000}
\end{figure}

Case 4. Asymmetric test case with split-normal noise.
\begin{align*}
    &d=1,\quad X\sim \mathcal{U}[0,4\pi],\quad Y=m(X)+\epsilon\Bigl[\sigma_-(X)\,\mathbf{1}_{\{\epsilon<0\}} + \sigma_+(X)\,\mathbf{1}_{\{\epsilon\ge 0\}}\Bigr], \quad\epsilon\sim\mathcal{N}(0,1)\\
    &m(X) = \sin(X)\\
    &\sigma_-(X) = 0.2\\
    & \sigma_+(X) = 0.4(\sin(X) + 1) + 0.1
\end{align*}

\begin{figure}[htbp]
    \centering
    \includegraphics[width=0.8\linewidth]{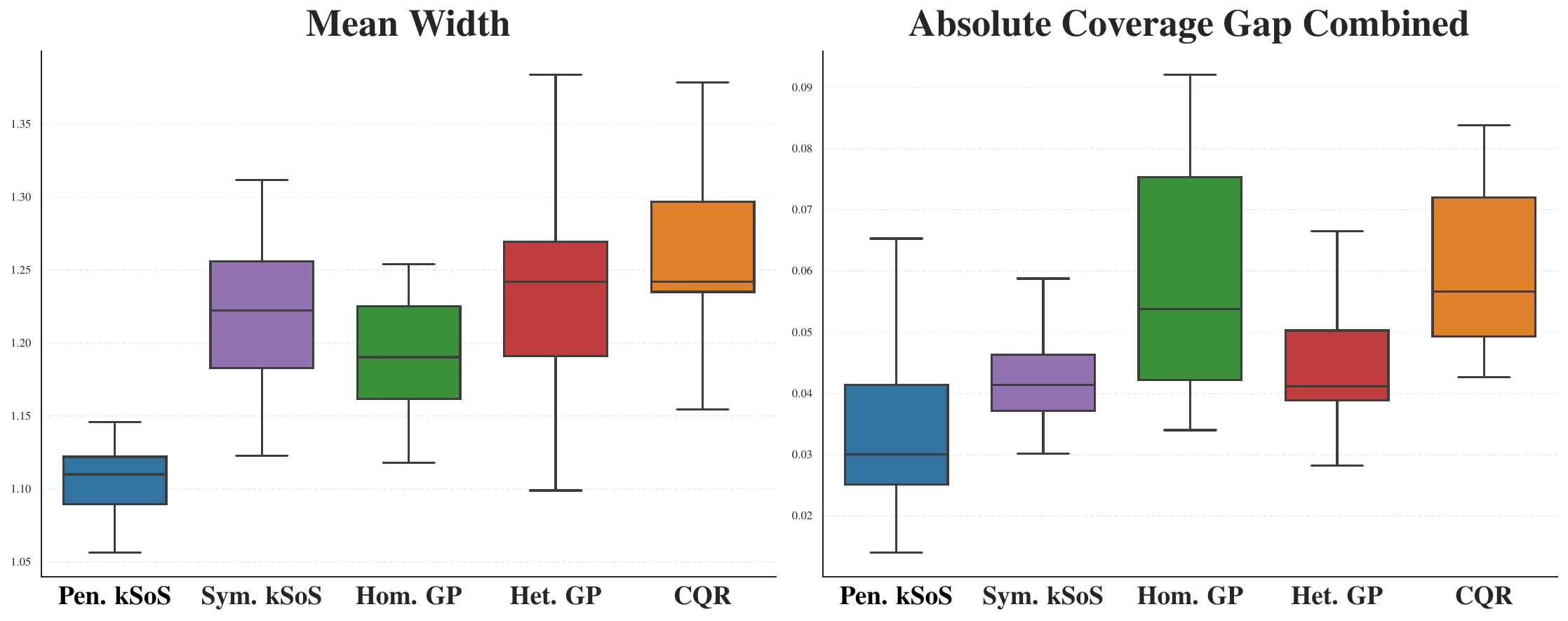}
        \includegraphics[width=0.4\linewidth]{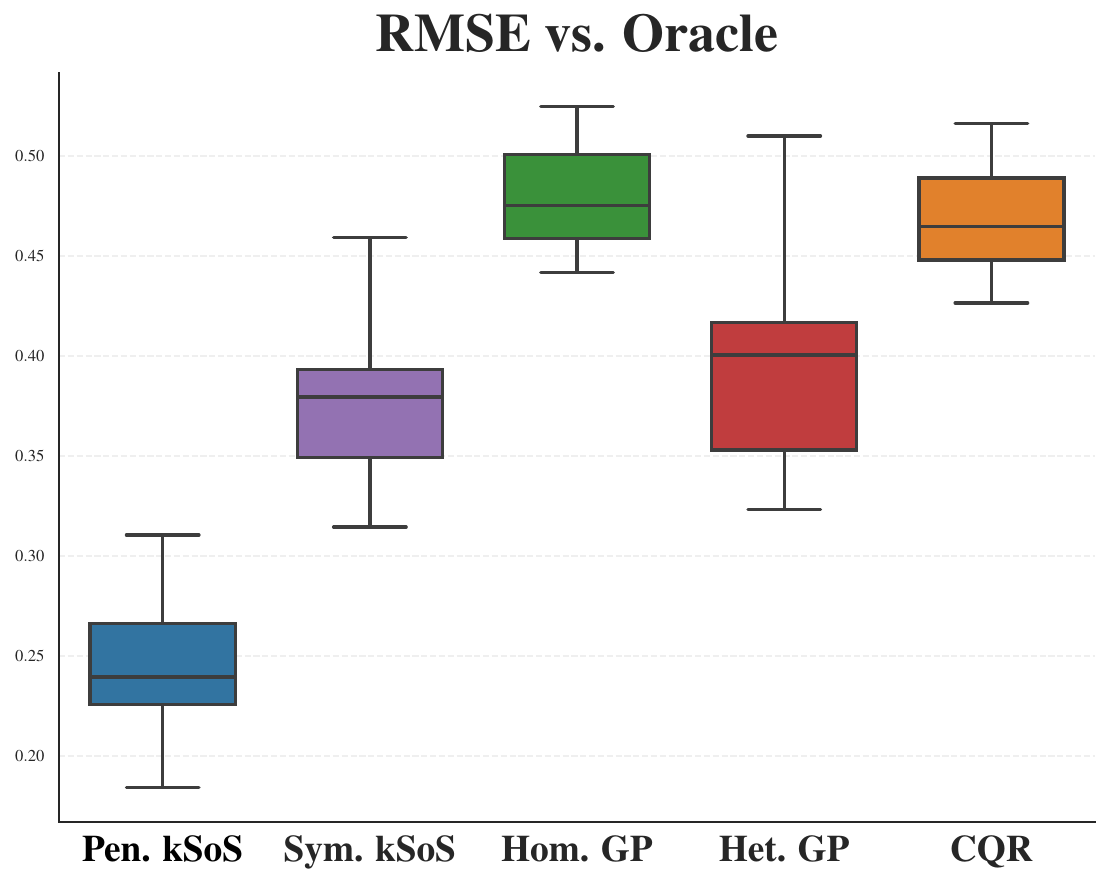}
    \caption{Test case 4 with \(d=1\) and \(n=500\). Mean width, local coverage lower and upper combined and RMSE versus oracle, \(b=10\).}
    \label{fig:comparisons_case_4}
\end{figure}

We give in Figure \ref{fig:case_4_n_1000} the optimal solution of the dual formulation obtained with $n=1000$.

\begin{figure}[htbp]
        \centering
        \includegraphics[width=0.6\linewidth]{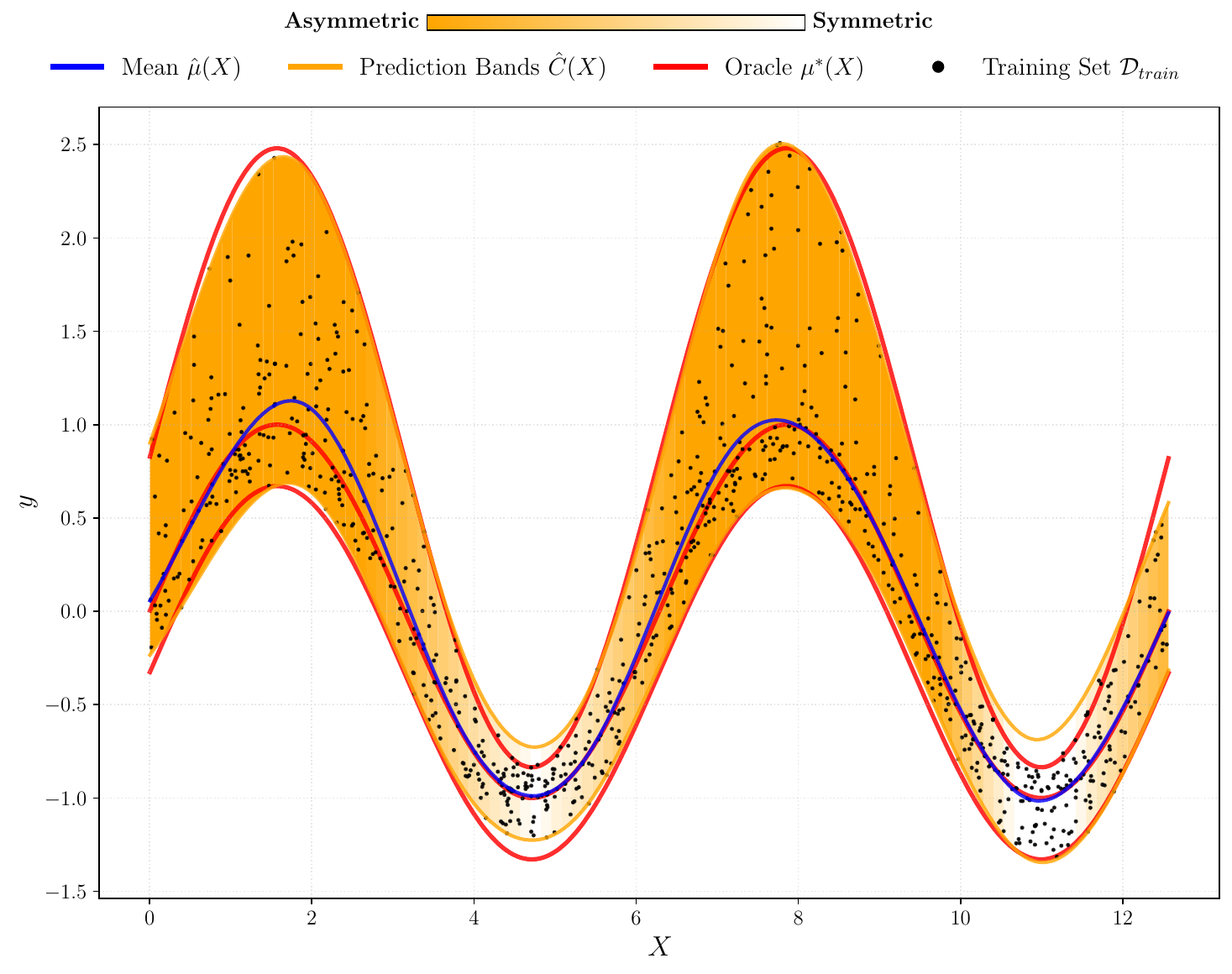}
        \caption{Test case 4 with \(d=1\) and \(n=1000\). Optimal solution of dual formulation for with penalty 1.}
        \label{fig:case_4_n_1000}
\end{figure}

Case 5. Test case from \citet{braun2025minimumvolumeconformalsets}, which involves an exponentially distributed noise: 
\begin{align*}
    &d=1,\quad X\sim \mathcal{U}[-1,1],\quad Y=m(X)+\sigma(X)\epsilon, \quad\epsilon\sim\mathcal{E}(1)\\
    &m(X) = \sin(2X)\\
    &\sigma(X) = 0.5+2X
\end{align*}

For this asymmetric dataset, Figure \ref{fig:comparisons_case_5} shows that penalized kSoS has much better local coverage than all competitors, where all symmetric procedures have poor adaptivity. In addition, penalized kSoS also yields the smallest mean width, with a much smaller RMSE with respect to oracle prediction bands.

\begin{figure}[htbp]
    \centering
    \includegraphics[width=0.8\linewidth]{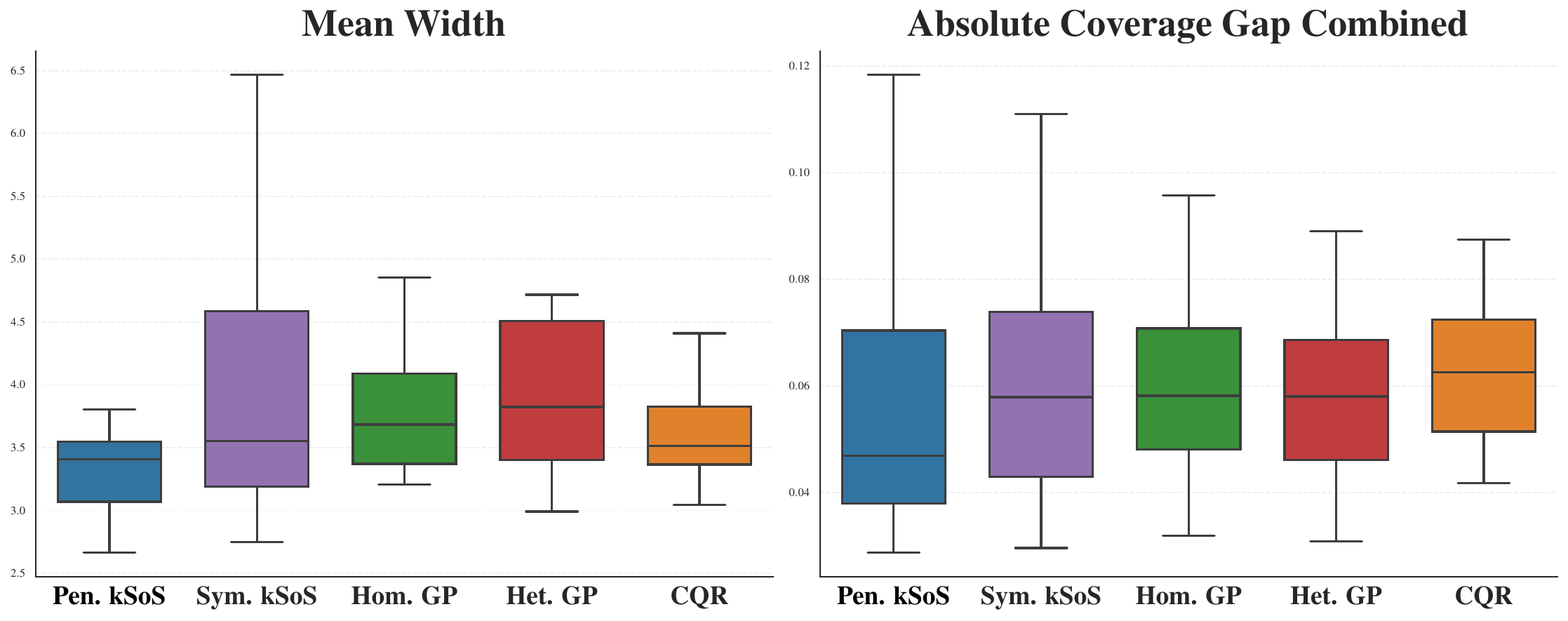}
        \includegraphics[width=0.4\linewidth]{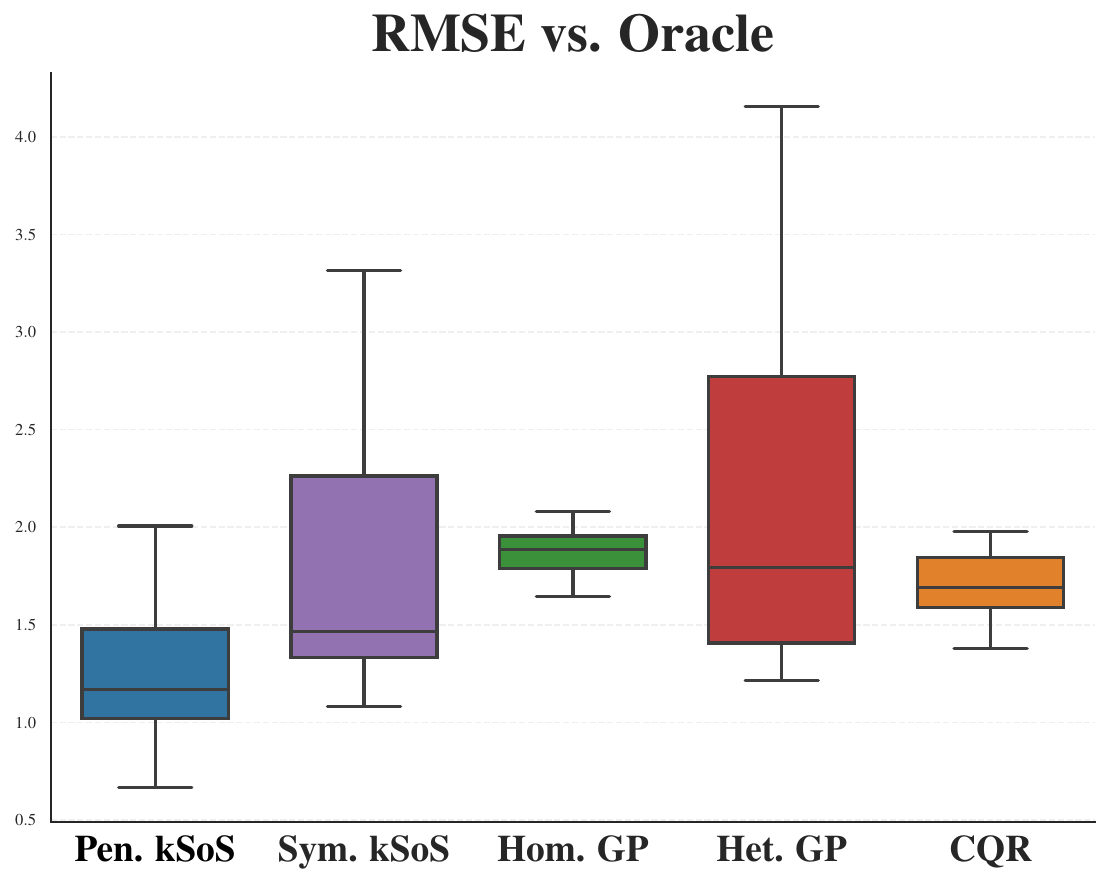}
    \caption{Test case 5 with \(d=1\) and \(n=100\). Mean width, local coverage lower and upper combined and RMSE versus oracle, \(b=10\).}
    \label{fig:comparisons_case_5}
\end{figure}

The optimal solution of the dual formulation obtained for $n=1000$ is given in Figure \ref{fig:case_5_n_1000}.

\begin{figure}[htbp]
        \centering
        \includegraphics[width=0.6\linewidth]{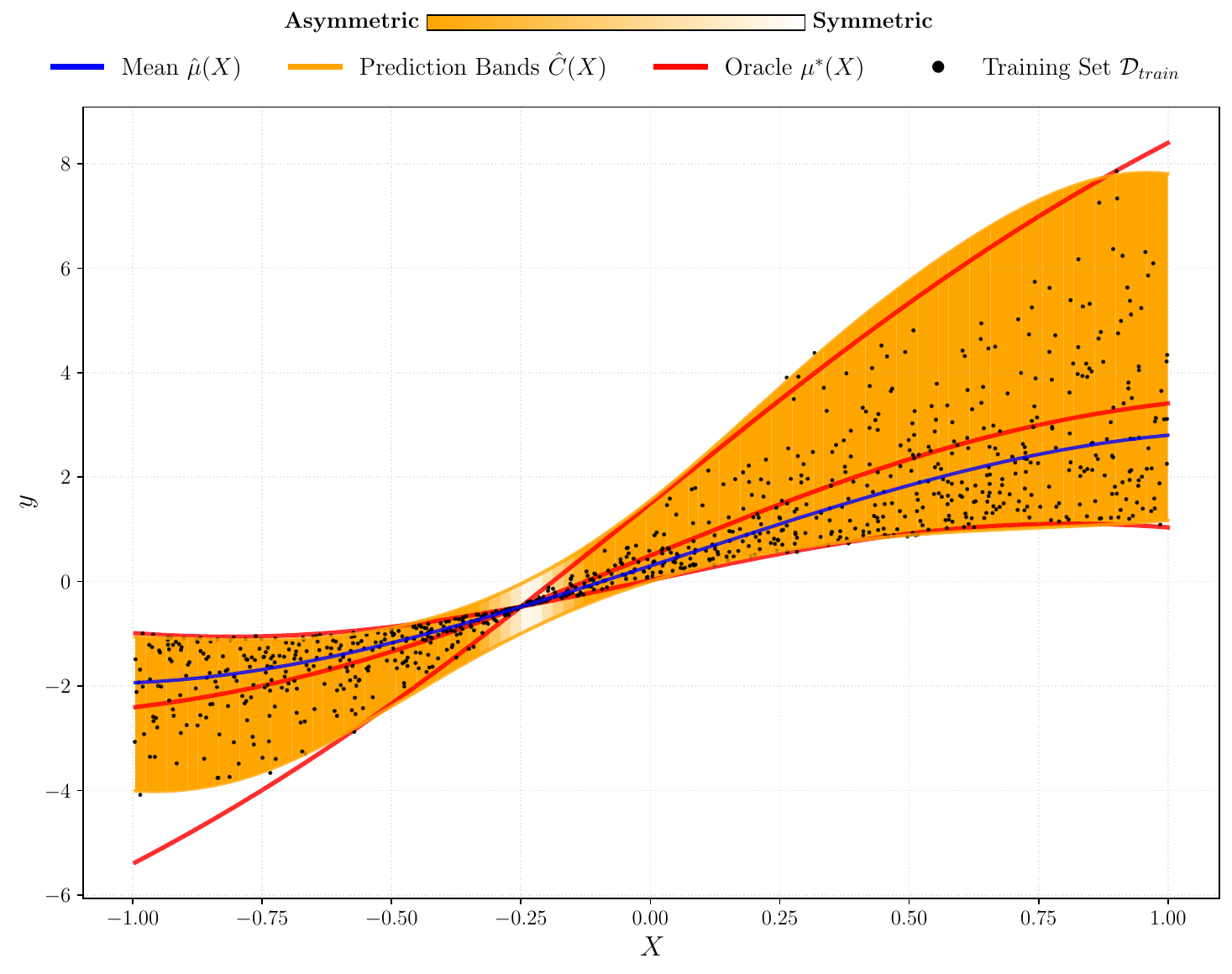}
        \caption{Test case 5 with \(d=1\) and \(n=1000\). Optimal solution of dual formulation for with penalty 1.}
        \label{fig:case_5_n_1000}
\end{figure}

Case 6. Inspired from \citet{kivaranovic2020adaptive}.
    \begin{align*}
        &X\sim \mathcal{U}[0,1]^d,\quad Y=m(X)+\sigma(X)\epsilon, \quad\epsilon\sim\mathcal{N}(0,1)\\
        &m(X) = 2 \sin(\pi \beta^{\top}X) + \pi \beta^{\top}X \\
        &\sigma(X) = \sqrt{1+(\beta^{\top}X)^2}
    \end{align*}

The oracle prediction bands are close to be constant for this test case: we expect the automatic HSIC independence test to activate. In dimension $d=1$ we set $\beta=1$ and obtain the remaining adaptivity metrics given in Figure \ref{fig:comparisons_case_6}.

\begin{figure}[htbp]
    \centering
    \includegraphics[width=0.8\linewidth]{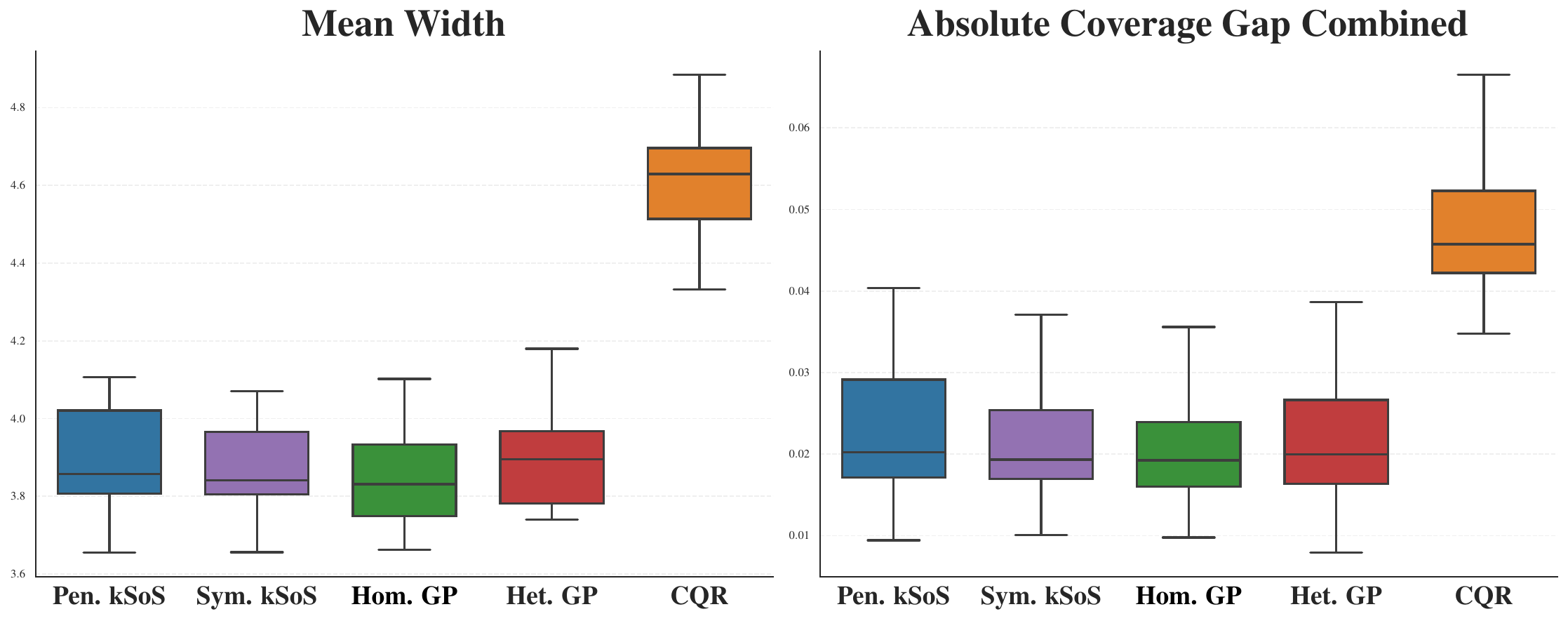}
        \includegraphics[width=0.4\linewidth]{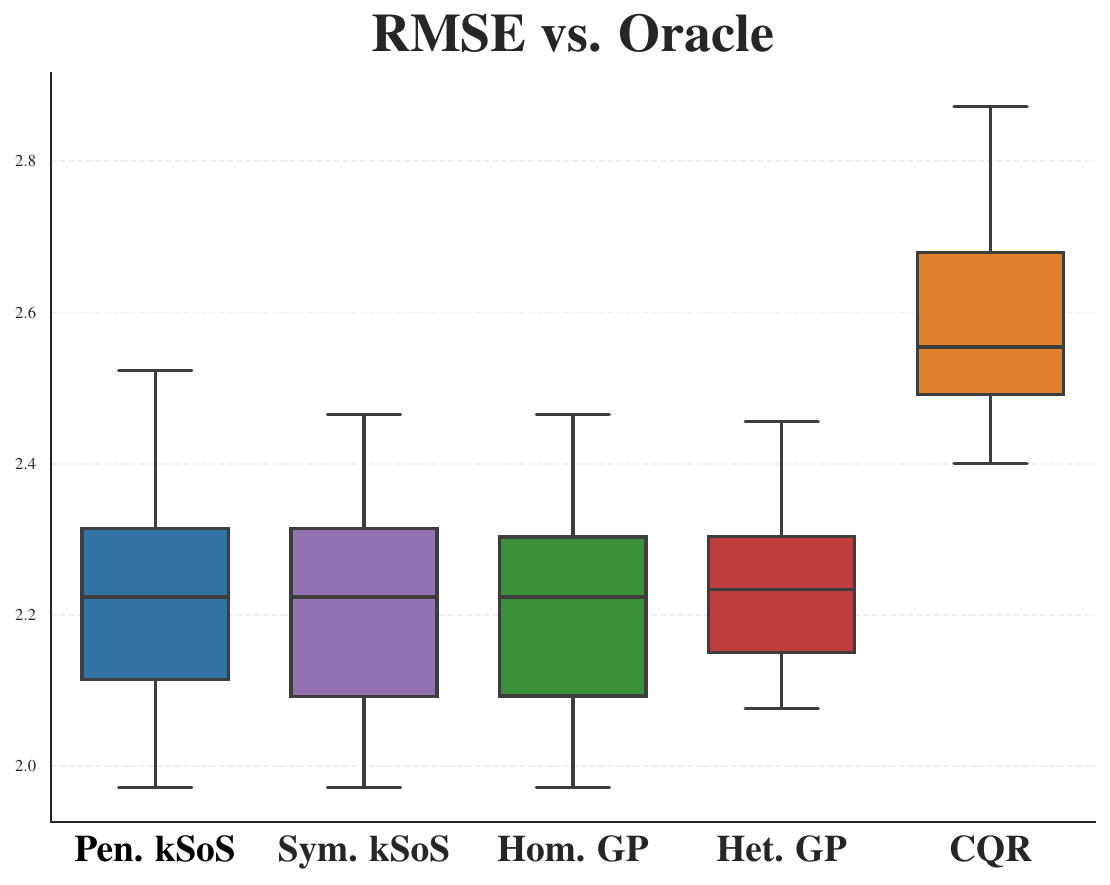}
    \caption{Test case 6 with \(d=1\) and \(n=100\). Mean width, local coverage lower and upper combined and RMSE versus oracle, \(b=0\).}
    \label{fig:comparisons_case_6}
\end{figure}

As expected, homoscedastic GP, which produces almost constant intervals, performs the best in this setting. Symmetric kSoS activates the HSIC test of independence on most random seeds, and thus yields similar performance. Interestingly, our penalized kSoS almost always selects a symmetric model, thus allowing to get metrics close to the best ones.

Figure \ref{fig:case_6_n_1000} shows the optimal solution of the dual formulation obtained for $n=1000$.

\begin{figure}[htbp]
        \centering
        \includegraphics[width=0.6\linewidth]{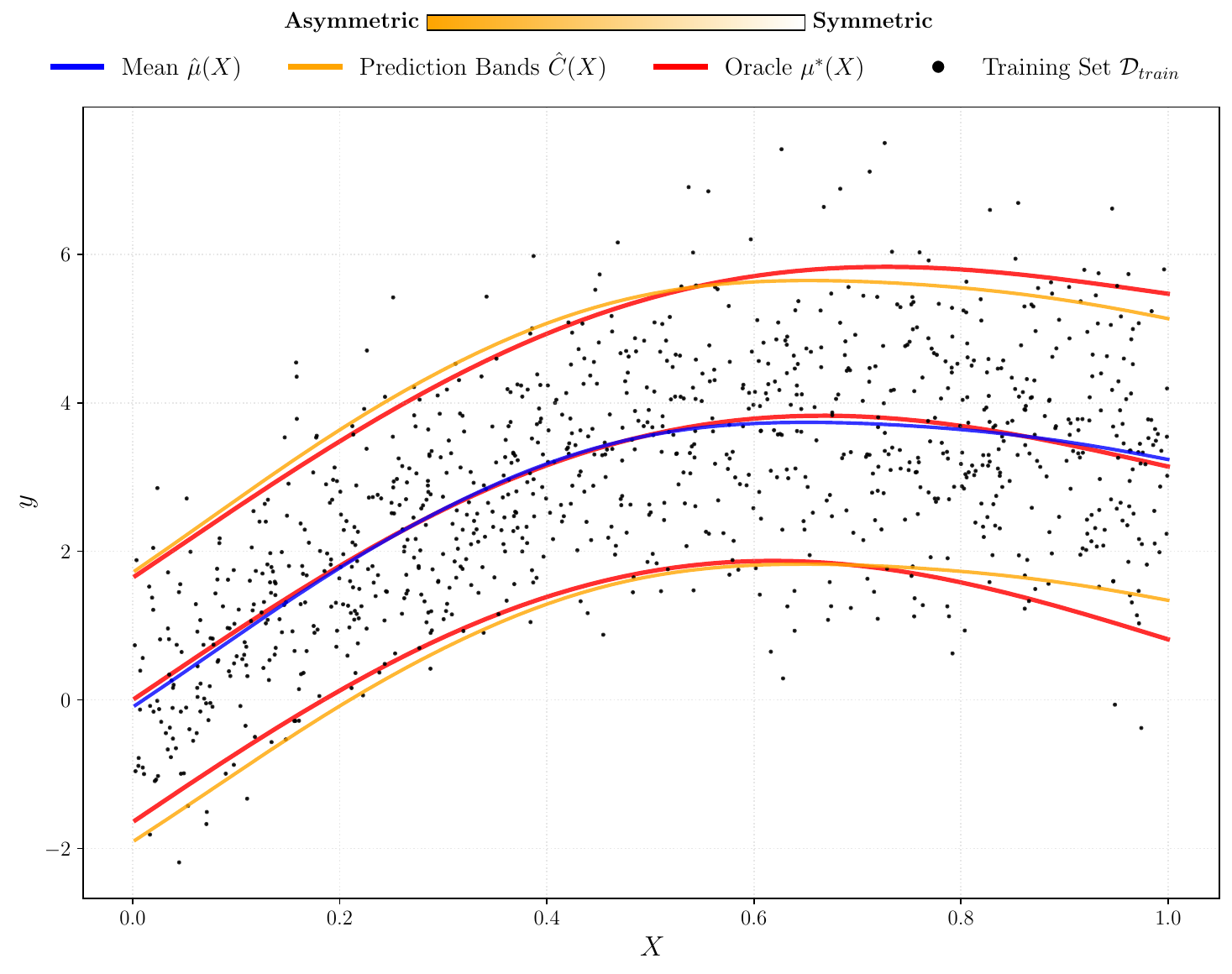}
        \caption{Test case 6 with \(d=1\) and \(n=1000\). Optimal solution of dual formulation for with penalty 1.}
        \label{fig:case_6_n_1000}
\end{figure}

\clearpage
\paragraph{Additional real-world datasets and results.}

The 12 real-world datasets we consider are the following:
\begin{enumerate}
    \item Concrete \citep{concrete_compressive_strength_165} and Bike \citep{bike_sharing_275} are taken from \citet{romano2019conformalizedquantileregression}.
    \item  Diabetes \citep{efron2004least}, Housing \citep{pace1997sparse}, MPG \citep{auto_mpg_9}, Boston \citep{harrison1978hedonic}, Energy \citep{energy_efficiency_242}, Miami \citep{mayer2022machine}, Sulfur \citep{fortuna2007soft}, Power \citep{combined_cycle_power_plant_294} and Yacht \citep{yacht_hydrodynamics_243} are standard regression datasets.
\end{enumerate}

Table \ref{tab:descr_realworld} provides a detailed description of each of them in terms of number of features and total sample size. Note that for some of them, we perform a preliminary preprocessing step, by removing outliers (with a homoscedastic GP model) and removing inactive features (following previous literature where they were investigated for CP or regression tasks). For Housing specifically, we remove censored data (target $\geq 5$) and consider two variants with or without logarithm transformation of the target. We also apply such transformation for Yacht.\\

\begin{table}[ht]
\centering
\caption{Description of twelve real-world datasets: number of features (before and after filtering, if applicable), total sample size, training set sample size, calibration set sample size, test set sample size, percentage of removed outliers in training set.}
\begin{tabular}{l|cccccc}
\toprule
Dataset & Nb features & Total sample size & $n_\textrm{train}$ & $n_\textrm{cal}$ & $n_\textrm{test}$ & Outliers \\
\midrule
Concrete   & 8 & 1030 & 412 & 412 & 206 & $30\%$\\
\midrule
Bike       & 13 (18) & 10886 & 1000 & 1000 & 1000 & $20\%$\\
\midrule
Diabetes   & 10 & 442 & 101 & 170 & 171 & NA\\
\midrule
Housing log    & 8 & 20640 & 1000 & 1000 & 1000 & NA \\
\midrule
Housing    & 8 & 20640 & 1000 & 1000 & 1000 & NA \\
\midrule
MPG        & 7 & 398 & 100 & 146 & 146 & NA\\
\midrule
Boston        & 10 (13) & 490 & 100 & 200 & 190 & NA\\
\midrule
Energy        & 6 (8) & 768 & 300 & 234 & 234 & NA\\
\midrule
Miami        & 7 (16) & 13932 & 1000 & 1000 & 1000 & NA\\
\midrule
Sulfur        & 6 & 10081 & 1000 & 1000 & 1000 & NA\\
\midrule
Power        & 4 & 9568 & 1000 & 1000 & 1000 & $30\%$\\
\midrule
Yacht        & 1 (6) & 308 & 100 & 108 & 100 & $20\%$\\
\bottomrule
\end{tabular}
\label{tab:descr_realworld}
\end{table}

Each experiment is repeated 10 times, where we randomly sample the training, calibration and test datasets. We compare our penalized kSoS (predictive model obtained with a homoscedastic GP and hyperparameters estimated with our HSIC criterion, isotropic Mat\'ern 5/2 kernel) with CQR, homoscedastic GP and heteroscedastic GP (both also with Mat\'ern 5/2 kernel, but anisotropic). For kSoS, we apply the same normalizing preprocessing step as for analytical test cases, and only consider the training set penalty which scales linearly with the number of training samples. For completeness, we first provide in Table \ref{tab:cov_realworld} the estimated marginal coverage on the test set: as expected from theory, all methods achieve the target coverage ($\alpha=0.9$ here).

\begin{table}[ht]
\centering
\caption{Estimated marginal coverage on the test test for twelve real-world datasets (mean$\pm$sd on 10 repetitions).}
\begin{tabular}{l|ccccc}
\toprule
Dataset & CQR & Het GP & Hom GP & Pen. kSoS \\
\midrule
Concrete   & $0.906 \pm 0.023$ & $0.900 \pm 0.027$ & $0.903 \pm 0.027$ & $0.916 \pm 0.026$ \\
Bike       & $0.897 \pm 0.017$ & $0.897 \pm 0.020$ & $0.902 \pm 0.018$ & $0.894 \pm 0.014$ \\
Diabetes   & $0.880 \pm 0.044$ & $0.888 \pm 0.024$ & $0.907 \pm 0.023$ & $0.899 \pm 0.022$ \\
Housing log   & $0.900 \pm 0.009$ & $0.900 \pm 0.016$ & $0.897 \pm 0.016$ & $0.899 \pm 0.011$\\
Housing    & $0.903 \pm 0.007$ & $0.903 \pm 0.012$ & $0.899 \pm 0.014$ & $0.888 \pm 0.011$\\
MPG        & $0.901 \pm 0.048$ & $0.905 \pm 0.025$ & $0.889 \pm 0.048$ & $0.896 \pm 0.046$ \\
Boston        & $0.910 \pm 0.021$ & $0.900 \pm 0.032$ & $0.902 \pm 0.022$ & $0.914 \pm 0.016$ \\
Energy        & $0.902 \pm 0.024$ & $0.896 \pm 0.021$ & $0.897 \pm 0.026$ & $0.903 \pm 0.016$ \\
Miami        & $0.899 \pm 0.016$ & $0.899 \pm 0.016$ & $0.903 \pm 0.016$ & $0.900 \pm 0.011$ \\
Sulfur        & $0.897 \pm 0.015$ & $0.900 \pm 0.014$ & $0.891 \pm 0.014$ & $0.902 \pm 0.011$ \\
Power        & $0.896 \pm 0.007$ & $0.894 \pm 0.010$ & $0.898 \pm 0.010$ & $0.897 \pm 0.008$ \\
Yacht        & $0.895 \pm 0.055$ & $0.901 \pm 0.034$ & $0.901 \pm 0.036$ & $0.893 \pm 0.048$ \\
\bottomrule
\end{tabular}
\label{tab:cov_realworld}
\end{table}

The mean width achieved by each method is given in Table \ref{tab:meanwidth_realworld}. First observe that homoscedastic GP very often yields the smaller intervals, in particular for Housing and Boston. But we will see below that it comes at the price of a poorer local coverage, thus confirming our assertion that mean width alone is not sufficient to differentiate competitors. Our penalized kSoS model also exhibits the smaller mean width in most instances, but with much better local coverage, as elaborated in what follows.

\begin{table}[ht]
\centering
\caption{Mean width of prediction intervals on the test test for twelve real-world datasets (median$\pm$sd on 10 repetitions). Mean width values within $1\%$ of the minimum are displayed in bold.}
\begin{tabular}{l|ccccc}
\toprule
Dataset & CQR & Het GP & Hom GP & Pen. kSoS \\
\midrule
Concrete   & $22.68 \pm 1.06$ & $\mathbf{21.42} \pm 1.58$ & $\mathbf{21.32} \pm 1.58$ & $\mathbf{21.56} \pm 1.19$ \\
Bike       & $216.19 \pm 6.54$ & $196.91 \pm 14.27$ & $168.57 \pm 7.83$ & $\mathbf{162.66} \pm 5.86$ \\
Diabetes   & $\mathbf{189.07} \pm 12.59$ & $193.04 \pm 16.01$ & $194.86 \pm 15.8$ & $\mathbf{190.23} \pm 13.58$ \\
Housing log   & $0.98 \pm 0.039$ & $0.86 \pm 0.041$ & $\mathbf{0.77} \pm 0.04$ & $0.83 \pm 0.037$\\
Housing    & $1.76 \pm 0.05$ & $1.73 \pm 0.14$ & $\mathbf{1.60 }\pm 0.08$ & $1.84 \pm 0.14$\\
MPG        & $9.86 \pm 1.06$ & $9.40 \pm 1.31$ & $\mathbf{9.15} \pm 1.02$ & $\mathbf{9.35} \pm 1.13$ \\
Boston        & $12.51 \pm 1.18$ & $10.51 \pm 0.92$ & $\mathbf{9.58} \pm 0.80$ & $11.42 \pm 1.22$ \\
Energy        & $1.46 \pm 0.16$ & $1.45 \pm 0.10$ & $1.74 \pm 0.08$ & $\mathbf{1.36} \pm 0.12$ \\
Miami        & $31.5e^5 \pm 1.17e^5$ & $34.1e^5 \pm 8.2e^5$ & $29.1e^5 \pm 2.29e^5$ & $\mathbf{26.4e^5} \pm 1.45e^5$ \\
Sulfur        & $\mathbf{0.05} \pm 0.003$ & $0.052 \pm 0.009$ & $\mathbf{0.050} \pm 0.003$ & $\mathbf{0.050} \pm 0.002$ \\
Power        & $13.27 \pm 0.30$ & $\mathbf{13.00} \pm 0.40$ & $\mathbf{12.91} \pm 0.36$ & $\mathbf{12.97} \pm 0.29$ \\
Yacht        & $0.611 \pm 0.085$ & $\mathbf{0.579} \pm 0.086$ & $0.614 \pm 0.082$ & $\mathbf{0.577} \pm 0.25$ \\
\bottomrule
\end{tabular}
\label{tab:meanwidth_realworld}
\end{table}

To investigate adaptivity, we thus also analyze the worst-set coverage (where lower and upper variants are combined) in Figure \ref{fig:minRC_all}. In average, CQR and kSoS have better worst-set coverage than both GPs, which suggests that some datasets exhibit asymmetric noise distribution, as already pointed out by \citet{pouplin24a}. But kSoS almost always outperforms CQR, while the latter systematically yields intervals with much larger mean width.

\begin{figure}[ht]
    \centering
    \includegraphics[width=\linewidth]{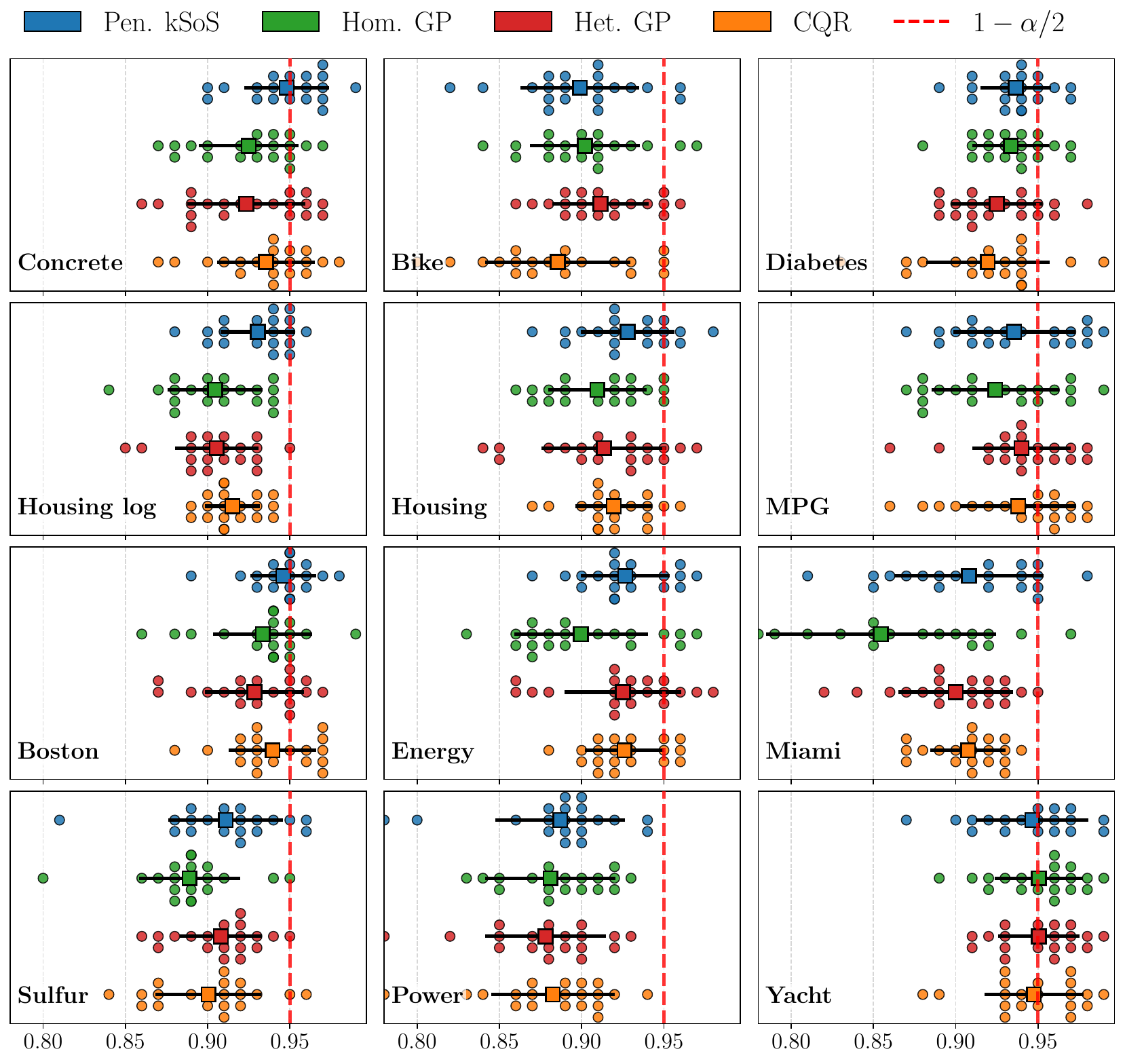}
    \caption{Mean and standard deviation of worst-set coverage lower and upper combined, 10 repetitions.}
    \label{fig:minRC_all}
\end{figure}

As for the comparison between kSoS and GPs when they have similar worst-set coverage:
\begin{itemize}
    \item Bike: homoscedastic GP and kSoS also have similar mean width, which hints towards a symmetric dataset. In Figure \ref{fig:hist_lambda_rw}, selected \(\lambda_{\mathrm{pen}}\) values for kSoS indicate that pure asymmetry is never considered for our model.
    \item Diabetes: kSoS has mean width equivalent to homoscedastic GP, once again suggesting this is a symmetric dataset. This intuition is confirmed by Figure \ref{fig:hist_lambda_rw}, where we automatically select a symmetric model half of the time.
    \item Energy: kSoS clearly outperforms both GPs in terms of mean width.
    \item Miami: same behavior as for Energy.
    \item Sulfur: mean widths for kSoS and homoscedastic GP are similar, from which we can infer underlying symmetry. Figure \ref{fig:hist_lambda_rw} shows that our model indeed heavily favors high values of \(\lambda_{\mathrm{pen}}\).
    \item Power: heteroscedastic GP produces intervals with the same mean width as kSoS. As can be seen in Figure \ref{fig:hist_lambda_rw}, our model also selects larger penalty values in average, which corroborates the hypothesis of a symmetric noise distribution.
    \item Yacht: same behavior as for Power.
\end{itemize}

\begin{figure}[ht]
    \centering
    \includegraphics[width=\linewidth]{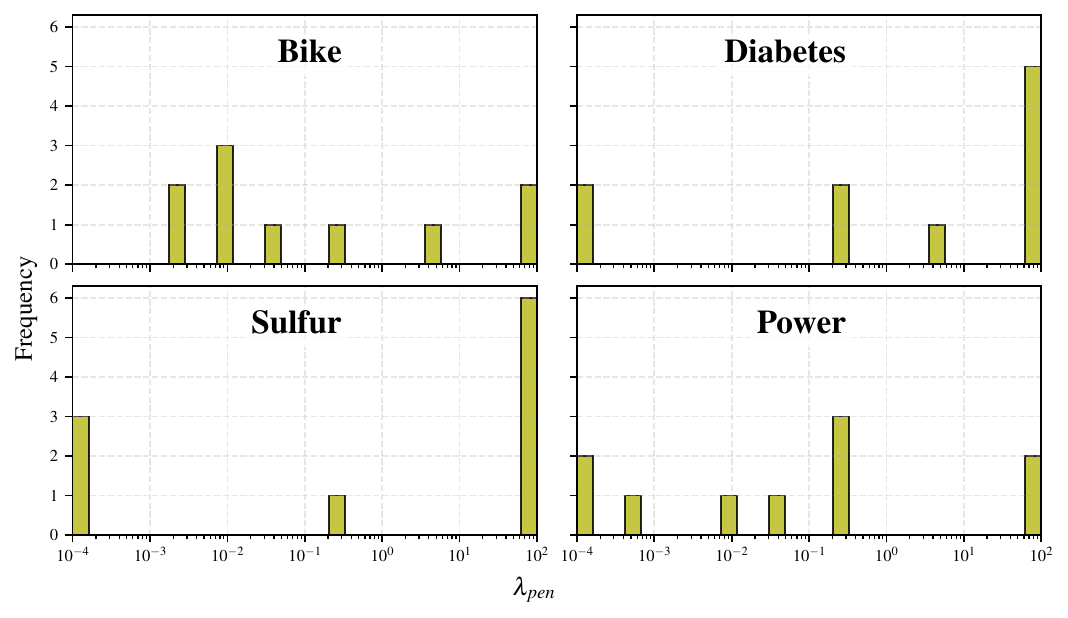}
    \caption{Histogram of automatically selected value of \(\lambda_{\mathrm{pen}}\) with HSIC and Kruskal-Wallis test, 10 repetitions.}
    \label{fig:hist_lambda_rw}
\end{figure}


\end{document}